\documentclass[11pt]{amsart}
\usepackage[T1]{fontenc}
\usepackage{lmodern}
\usepackage{microtype}
\usepackage{amsmath,amssymb,mathtools,mathrsfs}
\usepackage{enumitem}
\usepackage{booktabs}
\usepackage{array}
\usepackage{needspace}
\usepackage{longtable}
\usepackage{xcolor}
\usepackage[colorlinks=true,linkcolor=blue,citecolor=blue,urlcolor=blue]{hyperref}
\usepackage[nameinlink,noabbrev]{cleveref}
\usepackage{geometry}
\allowdisplaybreaks
\newtheorem{theorem}{Theorem}[section]
\newtheorem{proposition}[theorem]{Proposition}
\newtheorem{corollary}[theorem]{Corollary}
\newtheorem{lemma}[theorem]{Lemma}
\newtheorem{conjecture}[theorem]{Conjecture}
\theoremstyle{definition}

\newtheorem{remark}[theorem]{Remark}

\newcommand{\RR}{\mathbb R}
\newcommand{\CC}{\mathbb C}
\newcommand{\ZZ}{\mathbb Z}
\newcommand{\QQ}{\mathbb Q}
\newcommand{\dd}{\,\mathrm d}
\newcommand{\Ree}{\operatorname{Re}}
\newcommand{\Imm}{\operatorname{Im}}

\title[A genus-two spectral curve of the Riemann $\Xi$-function]
{A Genus-Two Spectral Curve of the Riemann\\
$\Xi$-Function: Real Structure and Certified\\
Arithmetic Separation of Periods}
\author{Michel Planat}
\thanks{Institut FEMTO-ST, CNRS UMR 6174, Universit\'e Marie et Louis Pasteur, Besan\c{c}on, France. Email: \texttt{michel.planat@femto-st.fr}.}
\subjclass[2020]{11M26, 14H40, 14J28, 30F30, 65G40}
\keywords{Riemann xi-function, genus-two spectral curve, de Bruijn--Newman constant, heat flow, Painlev\'e III, Kummer surface, period lattice, validated numerics}
\date{23 October 2026}

\begin{document}
\begin{abstract}
We associate a compact genus-two real spectral core with a transverse logarithmic-derivative discriminant of the Riemann $\Xi$-function. Directed interval arithmetic certifies its branch geometry, a real $P_{\mathrm{III}}(D_6)$ scalar-normal-form realization, and associated Kummer and Shioda--Inose models. Zero-action resonance reduces to rationality of two real period ratios. Their certified heat motion yields generic local nonresonance and Diophantine separation, while explicit Farey windows impose denominator barriers of $1{,}036{,}003$ and $140{,}769$ throughout $|\kappa|\le3\times10^{-10}$ in the heat time $\kappa$. Fixed-point irrationality and exact Picard rank remain open. The heat tangent differs from the matched fixed-exponent isomonodromic direction, but admits a local moving-exponent curve lift. Separately, a reciprocal-head canopy certificate combined with published finite-RH and rectangular time-barrier verifications bounds the de Bruijn--Newman constant by $\Lambda\le433/2500=0.1732$ without a QRH assumption. These local arithmetic and global analytic results share a phase--transverse bridge; no equivalence between core nonresonance and RH is asserted.
\end{abstract}

\maketitle

\section{Introduction}

Let
\[
 \Xi(z)=\xi\!\left(\frac12+iz\right)
\]
be the Riemann $\Xi$-function, where
$\xi(s)=\tfrac12s(s-1)\pi^{-s/2}\Gamma(s/2)\zeta(s)$ is the completed zeta function,
$\zeta$ is the Riemann zeta function and $\Gamma$ is Euler's gamma function.  Classical approaches to the Riemann hypothesis are organized around the zeros, growth, positivity, and Fourier representation of $\Xi$; see, for example, \cite{Edwards1974,Lagarias1999,Csordas2015}.  Here we isolate a different object: a spectral double cover naturally attached to the transverse product
\begin{equation}\label{eq:tau-def-intro}
 \tau_x(T)=\frac{\Xi(x+i\sqrt T)\Xi(x-i\sqrt T)}{\Xi(x)^2}.
\end{equation}
The square root disappears from the product, so $\tau_x$ is locally analytic in $T$ wherever $\Xi(x)\ne0$.  The construction arose from an RH-motivated $P_{\mathrm{III}}(D_6)$ and Stieltjes program \cite{PlanatD6GFJ2026,PlanatD6Jensen2026,Planat2026}, but the present paper has a deliberately narrower objective: to determine, with certification, the geometry and arithmetic of the first compact genus-two core at the arithmetic fixed point $x=0$.

The fixed-point discriminant admits a regular theta-moment numerator.  Put
\[
 D(z)=\frac14\Xi(z)=\int_0^\infty \Phi(u)\cos(zu)\,\dd u,
\qquad
 Y(T)=D(i\sqrt T)=\sum_{n\ge0}\frac{M_{2n}}{(2n)!}T^n,
\]
where $\Phi$ is the standard even theta kernel.  The normalized spectral discriminant takes the form
\[
 \Delta_0(T)=\frac{K(T)}{Y(T)^6},
\]
with $K$ analytic through $T=0$.  This regularization converts branch-point location into the certified study of the zeros of an explicitly controlled entire function.

The first result is geometric.  Directed theta-moment and Rouch\'e/Krawczyk bounds isolate exactly six simple branch values inside a conjugation-invariant domain: two real values and two nonreal conjugate pairs.  The normalized double cover over this domain is canonically identified with the restriction of a compact hyperelliptic sextic of genus two and real topological type
\[
 \boxed{(g,k,a)=(2,1,1)}.
\]
Here $g$ is the genus, $k$ the number of connected components of the real locus, and $a=0$ or $1$ according as that locus divides the complex curve into two components or does not.  The same branch sextic has two additional geometric interpretations.  First, it admits an orientation-differential realization with the $(2,4,4)$ pole signature associated with $P_{\mathrm{III}}(D_6)$.  Second, and more substantively, it is certified to coincide, after a real affine change of spectral coordinate, with the sextic numerator arising from the scalar normal form of a real $D_6$ Lax phase point.  We call the latter hyperelliptic double the \emph{scalar-normal-form orientation curve}; it should not be confused with the invariant Hitchin characteristic spectral curve.  The Jacobian is the abelian variety of degree-zero divisor classes of the curve.  Its quotient by the sign involution, followed by resolution of the sixteen nodes, is the associated Kummer K3 surface.  A K3 surface is a smooth simply connected compact complex surface with trivial canonical bundle.  Explicit Kummer and Shioda--Inose models then make the Humbert and N\'eron--Severi questions computationally accessible.

The second result is arithmetic.  The global Humbert analysis is most naturally written in a canonical Vinnikov basis with $H=I_2$, whereas the certified action integrals use the natural three-cut basis described below.  In that cut basis $(\alpha_1,\alpha_2,\beta_1,\beta_2)$ the real involution has matrix
\[
 \tau_*\alpha=\alpha,
 \qquad
 \tau_*\beta=H_{\rm cut}\alpha-\beta,
 \qquad
 H_{\rm cut}=\begin{pmatrix}1&1\\1&0\end{pmatrix}.
\]
For the action differential $\varpi_0=\sqrt{\Delta_0(T)}\,\dd T$, write
\[
 u_j=\oint_{\alpha_j}\varpi_0,
 \qquad
 v_j=\oint_{\beta_j}\varpi_0,
 \qquad
 v=\frac12H_{\rm cut}u+i r.
\]
The four calibrated cut cycles lie inside the certified normalized core and form a
symplectic rank-four lattice $\Gamma_{\rm core}$ (its intersection pairing is integral, alternating and nondegenerate), abstractly identified with
$H_1(X_2,\ZZ)$.  The action homomorphism is evaluated on these actual core
representatives, and we define
\[
 \Lambda_2=\ker\!\left(
  \mathcal A_{\rm core}:\Gamma_{\rm core}\to\CC,
  \ \gamma\mapsto\oint_\gamma\varpi_0\right).
\]
This calibrated zero-action lattice is trivial exactly when both real ratios
$u_1/u_2$ and $r_1/r_2$ are irrational.  We do not prove those irrationalities.  Instead, fully directed contour integration and exact Farey-neighbor arithmetic give
\[
 \frac{13749}{419336}<\frac{u_1}{u_2}<\frac{20219}{616667},
\qquad
 \frac{16579}{8678}<-\frac{r_1}{r_2}<\frac{252355}{132091},
\]
and therefore the denominator barriers
\[
 \boxed{q_u\ge1{,}036{,}003,\qquad q_r\ge140{,}769}
\]
for any rational value in lowest terms.  This is a finite-height nonresonance theorem, not an irrationality proof.

This motivates the structural conjecture
\begin{conjecture}[Spectral nonresonance at the arithmetic fixed point]\label{conj:nonresonance}
For the calibrated $x=0$ genus-two core,
\[
 \frac{u_1}{u_2}\notin\QQ,
 \qquad
 \frac{r_1}{r_2}\notin\QQ.
\]
Equivalently, $\Lambda_2=0$.
\end{conjecture}
The conjecture is deliberately separated from the certified Farey theorem: finite-width interval enclosures can exclude rational values only up to finite denominator height.

The Kummer/Humbert analysis provides a complementary measure of arithmetic genericity.  A Humbert surface $H_D$ is the locus of principally polarized abelian surfaces admitting an additional non-scalar endomorphism symmetric for the polarization, of Humbert discriminant $D$.  The N\'eron--Severi group consists of divisor classes modulo algebraic equivalence; its rank is the Picard number.  We exclude the nine loci
\[
 H_4,H_5,H_8,H_9,H_{12},H_{13},H_{16},H_{17},H_{20},
\]
obtain a Mordell--Weil height barrier $\widehat h(P)\ge21/2$ for any hypothetical nonzero section on the Shioda--Inose model, and reduce the full real-slice Humbert problem to two exact arithmetic statements about the imaginary part of the Vinnikov period matrix: irrationality of its determinant and $\QQ$-linear independence of its three entries.  Here the Mordell--Weil group is the group of sections of the elliptic fibration, and $\widehat h$ is its canonical quadratic height in Shioda's normalization.  The union of Humbert divisors is dense on this real slice, so finite neighborhood exclusion cannot settle the global Picard-rank question.

The heat time is denoted by $\kappa$; the Polymath time is $t=4\kappa$, as fixed in \eqref{eq:heat-normalization}.  The de Bruijn--Newman constant $\Lambda$ is the threshold for reality of all zeros of $H_t$: they are all real exactly when $t\ge\Lambda$.  This real number is distinct from the zero-action lattice $\Lambda_2$.  The heat deformation connects the geometry to the arithmetic.  It transports the selected branch points and calibrated contours, so the same theta-derived data determine both the motion of the curve and the motion of its action periods.  The resulting positive ratio velocities make rational crossings transverse: each fixed nonzero cycle can resonate at most once locally, and almost every nearby heat time has quantitative action separation.  Uniform bounds also preserve the displayed Farey windows on an explicit heat interval.  These generic statements do not settle the irrationality conjecture at the distinguished time $\kappa=0$.

The phase--transverse identity supplies a second connection, from the same discriminant to global zero control.  A uniform off-axis phase sign gives a zero strip, which de Bruijn contraction converts into an upper bound for the Newman constant.  Effective approximation, exterior estimates and a finite rectangular nonvanishing cover implement this route.  The final bound $\Lambda\le0.1732$ combines that cover with the published finite-RH and rectangular time-barrier verifications.  It is logically independent of the Humbert exclusions and the period-irrationality conjecture.  The companion studies \cite{PlanatNewmanCompanion2026,PlanatNewmanClassical2026} develop separate collective-contraction and classical-strip comparisons, with their own stated premises.

The distinction between local arithmetic and global zero reality is essential.  Almost every sufficiently small negative heat time has a nonresonant core and nonreal heat zeros simultaneously.  Thus generic core nonresonance alone cannot characterize reality of the full heat family.  At $\kappa=0$, no necessity or sufficiency of $\Lambda_2=0$ for RH is known.

The paper is organized as follows.  Section~\ref{sec:theta} derives the regular theta-moment discriminant and its phase interpretation.  Section~\ref{sec:geometry} constructs the real genus-two core, its $D_6$ realization and K3 models, and isolates the Humbert/Picard question.  Section~\ref{sec:heat-deformation} studies the moving branch geometry and its relation to isomonodromic directions.  Sections~\ref{sec:period-lattice} and~\ref{sec:calibration} identify and differentiate the calibrated action lattice.  Section~\ref{sec:farey} gives finite-height arithmetic certificates and the separate global Newman-bound argument.  Section~\ref{sec:certification} explains the numerical architecture; Section~\ref{sec:discussion} returns to the unresolved arithmetic questions.  Table~\ref{tab:certificate} locates the principal results in this order.

\Needspace{38\baselineskip}
\begingroup\small
\setlength{\tabcolsep}{4pt}
\renewcommand{\arraystretch}{1.12}
\begin{longtable}{@{}>{\raggedright\arraybackslash}p{0.18\textwidth}>{\raggedright\arraybackslash}p{0.49\textwidth}>{\raggedright\arraybackslash}p{0.28\textwidth}@{}}
\caption{Guide to the principal results and open questions.}\label{tab:certificate}\\
\toprule
Stage & Meaning & Main statements\\
\midrule
\endfirsthead
\multicolumn{3}{l}{\textit{Table~\thetable\ continued}}\\
\toprule Stage & Meaning & Main statements\\\midrule
\endhead
\bottomrule\endfoot
Transverse phase & An exact phase identity and regular numerator make the branch problem accessible to theta-moment bounds. & \Cref{prop:phase-transverse,prop:regular-K}\\
Real core & Six selected simple branch values determine a genus-two core of real type $(2,1,1)$. & \Cref{thm:real-pair,thm:c1-pair,thm:c2-pair,cor:genus2,prop:real-type}\\
$D_6$ realization & The branch sextic matches a real scalar-normal-form orientation curve; a regular fixed-exponent deformation is obstructed. & \Cref{thm:D6-certified-realization,thm:D6-first-order-no-go}\\
K3 arithmetic & Nine Humbert exclusions and a section-height barrier constrain enhancements; a real-slice criterion identifies the remaining Picard problem. & \Cref{thm:low-Humbert,prop:v7-MW-height-barrier,thm:v8-global-Humbert}; \cref{conj:v9-minimal-Picard}\\
Heat geometry & All fifteen branch distances contract locally.  Enlarged tangent rank permits a moving-exponent curve lift, while the matched isomonodromic direction alone fails. & \Cref{thm:heat-branch-contraction,thm:heat-D6-obstruction,prop:enlarged-Lax-rank,cor:local-Lax-lift}\\
Action kernel & Zero-action resonance is equivalent to rationality of at least one of two real ratios; irrationality at zero is conjectural. & \Cref{thm:kernel-factorization,cor:irrationality-criterion}; \cref{conj:nonresonance}\\
Period motion & Positive velocities give simple resonances, generic Diophantine separation and nonresonant cores even with nonreal heat zeros. & \Cref{thm:heat-action-motion,prop:heat-simple-resonances,thm:heat-generic-Diophantine,cor:heat-nonresonance-nonreal-zeros}\\
Finite-height arithmetic & Farey windows force $q_u\ge1{,}036{,}003$ and $q_r\ge140{,}769$, uniformly for $|\kappa|\le3\times10^{-10}$; residence time measures an arithmetic gap. & \Cref{thm:farey,cor:denominator-barriers,thm:explicit-Farey-heat,prop:Farey-residence-time}\\
Global zero control & A phase criterion and exterior estimates reduce the Newman bound to a finite cover plus published barrier inputs. & \Cref{prop:Newman-strip-input,prop:global-phase-strip,thm:centered-phase-tail,cor:finite-phase-cap}\\
Certified ceiling & Reciprocal-head localization culminates in $\Lambda\le433/2500=0.1732$, without QRH. & \Cref{thm:inverse-canopy,thm:localized-canopy,cor:fine-canopy,cor:extended-head-canopy}\\
\end{longtable}
\endgroup

For clarity, the status labels used in a few intermediate statements have the following meaning: status \textup{[A]} denotes an analytic statement or a computer-assisted statement backed by a directed/exact certificate in the supplement, whereas status \textup{[N]} denotes a numerical observation not used as theorem input.

\section{Theta moments and the fixed-point discriminant}\label{sec:theta}

We use the classical theta kernel
\begin{equation}\label{eq:Phi}
 \Phi(u)=\sum_{n=1}^\infty
 \left(2\pi^2n^4e^{9u/2}-3\pi n^2e^{5u/2}\right)
 e^{-\pi n^2e^{2u}}.
\end{equation}
The Jacobi transformation makes $\Phi$ even; it is real analytic, strictly positive, and doubly exponentially decaying together with all derivatives.  With
\begin{equation}\label{eq:D}
 D(z)=\int_0^\infty\Phi(u)\cos(zu)\,\dd u,
 \qquad \Xi(z)=4D(z),
\end{equation}
put
\begin{equation}\label{eq:moments-Y}
 M_m=\int_0^\infty\Phi(u)u^m\,\dd u,
 \qquad
 Y(T)=D(i\sqrt T)
 =\sum_{n\ge0}\frac{M_{2n}}{(2n)!}T^n.
\end{equation}
Thus $Y$ is entire and
\[
 4Y(T)=\xi\!\left(\frac12+\sqrt T\right).
\]

For real $x$ with $\Xi(x)\ne0$, define
\begin{equation}\label{eq:tau-sigma}
 \tau_x(T)=\frac{\Xi(x+i\sqrt T)\Xi(x-i\sqrt T)}{\Xi(x)^2},
 \qquad
 \mathscr A_x(T)=\frac{\dd}{\dd T}\log\tau_x(T),
 \qquad
 \sigma_x(T)=T\mathscr A_x(T).
\end{equation}
At $x=0$, evenness gives
\begin{equation}\label{eq:A0-Y}
 \mathscr A_0(T)=2\frac{Y'(T)}{Y(T)}.
\end{equation}

\subsection{A phase--transverse-amplitude interpretation}\label{subsec:phase-transverse}
The logarithmic derivative in \eqref{eq:tau-sigma} has a direct phase
interpretation that is useful independently of the later branch-point
construction.  For $T>0$ at which
$\Xi(x\pm i\sqrt T)\ne0$, define the unnormalized transverse amplitude
\begin{equation}\label{eq:B-static}
 \mathscr B_x(T):=
 \partial_T\log\!\left\{\Xi(x+i\sqrt T)\Xi(x-i\sqrt T)\right\}.
\end{equation}
For real $x$ the product in braces is $|\Xi(x+i\sqrt T)|^2>0$.
Whenever $\Xi(x)\ne0$, the normalization in \eqref{eq:tau-sigma} is
independent of $T$, so $\mathscr B_x=\mathscr A_x$.

\begin{proposition}[Exact phase--transverse-amplitude bridge]
\label{prop:phase-transverse}
Fix $\eta>0$ and a real $x$ for which $\Xi(x\pm i\eta)\ne0$.
Choose a continuous local branch
\[
 P_\eta(s):=\arg\Xi(s-i\eta)
\]
near $s=x$.  Then
\begin{equation}\label{eq:phase-transverse-bridge}
 \boxed{
 \left.\frac{\dd}{\dd s}P_\eta(s)\right|_{s=x}
 =-\Imm\frac{\Xi'}{\Xi}(x+i\eta)
 =\eta\,\mathscr B_x(\eta^2).}
\end{equation}
If also $\Xi(x)\ne0$, the last term is
$\eta\mathscr A_x(\eta^2)$.  In particular, at the arithmetic fixed point,
\begin{equation}\label{eq:phase-fixed-point}
 \boxed{
 \left.\frac{\dd}{\dd s}\arg\Xi(s-i\eta)\right|_{s=0}
 =2\eta\,\frac{Y'(\eta^2)}{Y(\eta^2)}.}
\end{equation}
\end{proposition}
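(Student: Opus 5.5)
The identity follows from local holomorphy of $\log\Xi$ together with the real symmetry $\overline{\Xi(\bar z)}=\Xi(z)$. Since $\Xi$ is real on $\RR$, we have $\overline{\Xi(z)}=\Xi(\bar z)$, and evenness gives $\Xi(-z)=\Xi(z)$.

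\textbf{First equality.} Near $s=x$, $\Xi(s-i\eta)\ne0$, so I fix a holomorphic branch $L(z)=\log\Xi(z)$ near $z=x-i\eta$ with $\Imm L(s-i\eta)=P_\eta(s)$. Differentiating in the real variable $s$ gives
\[
P_\eta'(x)=\Imm\frac{\Xi'}{\Xi}(x-i\eta).
\]
Conjugation symmetry yields $\frac{\Xi'}{\Xi}(\bar z)=\overline{\frac{\Xi'}{\Xi}(z)}$. With $z=x+i\eta$ this becomes $\Imm\frac{\Xi'}{\Xi}(x-i\eta)=-\Imm\frac{\Xi'}{\Xi}(x+i\eta)$, which is the first equality.

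\textbf{Second equality.} I set $T=\eta^2>0$ and take the positive root $\sqrt T=\eta$, so that $\frac{\dd}{\dd T}\sqrt T=\frac{1}{2\eta}$. By the chain rule,
\[
\mathscr B_x(T)=\frac{1}{2\eta}\Bigl(i\frac{\Xi'}{\Xi}(x+i\eta)-i\frac{\Xi'}{\Xi}(x-i\eta)\Bigr).
\]
Writing $w=\frac{\Xi'}{\Xi}(x+i\eta)$, the conjugation relation gives $\frac{\Xi'}{\Xi}(x-i\eta)=\bar w$. Then $i(w-\bar w)=i\cdot 2i\Imm w=-2\Imm w$, so $\eta\mathscr B_x(\eta^2)=-\Imm w$. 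This differentiation is legitimate because the product is holomorphic in $T$ near $\eta^2$ whenever $\Xi(x\pm i\eta)\ne0$.

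\textbf{Specializations.} If also $\Xi(x)\ne0$, then $\mathscr B_x=\mathscr A_x$, as already observed before the proposition, since the normalization is $T$-independent. At $x=0$ we have $\Xi(0)\ne0$, because $\xi(1/2)\neq0$. Evenness gives $\Xi(i\sqrt T)\Xi(-i\sqrt T)=\Xi(i\sqrt T)^2=16\,Y(T)^2$, so $\mathscr A_0=2Y'/Y$ by \eqref{eq:A0-Y}. Finally $Y(\eta^2)=\tfrac14\xi(\tfrac12+\eta)\ne0$, since $\xi$ has no zeros on the real axis with $s>1/2$ (it is positive there).

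\textbf{Main obstacle.} There is no deep obstacle; the only point needing care is branch bookkeeping. One must check that $P_\eta$ is the imaginary part of a holomorphic logarithm, and that the choice of $\sqrt T=\eta$ is consistent with the sign convention. I will verify the sign at the outset by checking the identity on a single linear factor $z-\rho$.
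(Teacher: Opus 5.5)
Your proof is correct and follows essentially the same route as the paper: the conjugation symmetry turns $\frac{\Xi'}{\Xi}(x-i\eta)$ into $\overline q$ with $q=\frac{\Xi'}{\Xi}(x+i\eta)$, so the phase derivative is $-\Imm q$, and the chain rule with $\frac{\dd}{\dd T}\sqrt T=1/(2\eta)$ gives $\mathscr B_x(\eta^2)=(iq-i\overline q)/(2\eta)=-\Imm q/\eta$. Your additional checks that $\Xi(0)\ne0$ and $Y(\eta^2)\ne0$, via positivity of $\xi$ on the real axis, are harmless; the paper leaves these implicit.
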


\begin{proof}
Reality of $\Xi$ on the real axis gives
$\Xi'(x-i\eta)/\Xi(x-i\eta)=\overline{q}$, where
$q=\Xi'(x+i\eta)/\Xi(x+i\eta)$.  Therefore
\[
 \left.\frac{\dd}{\dd s}\arg\Xi(s-i\eta)\right|_{s=x}
 =\Imm\overline q=-\Imm q.
\]
On the other hand, differentiating \eqref{eq:B-static} at $T=\eta^2$
gives
\[
 \mathscr B_x(\eta^2)
 =\frac{i q-i\overline q}{2\eta}
 =-\frac{\Imm q}{\eta}.
\]
This proves \eqref{eq:phase-transverse-bridge};
\eqref{eq:phase-fixed-point} follows from \eqref{eq:A0-Y}.
\end{proof}

\begin{remark}[Relation to the heat-flow companion]
\label{rem:phase-companion}
The identity above is static and requires neither RH nor any Newman
barrier estimate.  Under the de Bruijn--Newman deformation the same
calculation identifies the off-axis logarithmic-derivative probe used in
the collective-contraction argument with the heat-evolved transverse
amplitude.  Section~\ref{sec:heat-deformation} makes this local evolution
explicit.  Global localisation, source transfer, and contact arguments
remain in the companion heat-flow studies
\cite{PlanatNewmanCompanion2026,PlanatNewmanClassical2026}; they are not
used in the fixed-point genus-two, Humbert, or period arguments here.
\end{remark}
Let $A_*=\mathscr A_0(0)=M_2/M_0$ and set
\[
 P(T)=\sigma_0'(T)=\mathscr A_0(T)+T\mathscr A_0'(T).
\]
The normalized discriminant used here is
\begin{equation}\label{eq:Delta-general}
 \Delta_0(T)=
 \{\sigma_0''(T)\}^2
 -4\left\{\frac{\sigma_0'(T)-A_*}{T}\right\}^2
 \{P(T)-T^2\mathscr A_0'(T)\}.
\end{equation}
The quotient in the middle is analytic at $T=0$.

To remove all apparent singular behavior, write $Y_j=Y^{(j)}$ and
\begin{align}
 N_2&=Y_2Y-Y_1^2,\notag
 N_3&=Y_3Y^2-3Y_2Y_1Y+2Y_1^3,
\end{align}
and define
\begin{align}
 \mathcal S&=4N_2Y+2TN_3,\notag\\
 \mathcal E&=\frac{2Y_1Y+2TN_2-A_*Y^2}{T},\notag\\
 \mathcal R&=2Y_1Y+2(T-T^2)N_2.
\end{align}
The numerator defining $\mathcal E$ vanishes at $T=0$.

\begin{proposition}[Regular entire numerator]\label{prop:regular-K}
On every domain on which $Y\ne0$,
\begin{equation}\label{eq:Delta-K}
 \boxed{
 \Delta_0(T)=\frac{K(T)}{Y(T)^6},
 \qquad
 K(T)=\mathcal S(T)^2-4\mathcal E(T)^2\mathcal R(T).}
\end{equation}
In particular, $K$ is analytic through $T=0$.
\end{proposition}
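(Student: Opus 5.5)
The plan is a direct computation: express each ingredient of \eqref{eq:Delta-general} as a rational function of $Y,Y_1,Y_2,Y_3$ with a power of $Y$ in the denominator, then read off $K$. The starting point is \eqref{eq:A0-Y}, $\mathscr A_0=2Y_1/Y$, so that $\sigma_0=2TY_1/Y$.

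\emph{Step 1 (first derivatives).} Differentiating and using $(Y_1/Y)'=N_2/Y^2$ gives
\[
 \sigma_0'=\frac{2Y_1Y+2TN_2}{Y^2},
 \qquad
 \mathscr A_0'=\frac{2N_2}{Y^2}.
\]
\emph{Step 2 (second derivative).} The key identity is
\[
 \left(\frac{N_2}{Y^2}\right)'=\frac{N_3}{Y^3}.
\]
It follows from $N_2'=Y_3Y-Y_1Y_2$, since then
\[
 N_2'Y-2Y_1N_2=Y_3Y^2-3Y_2Y_1Y+2Y_1^3=N_3 .
\]
Writing $\sigma_0'=2Y_1/Y+2TN_2/Y^2$ and differentiating,
\[
 \sigma_0''=\frac{4N_2}{Y^2}+\frac{2TN_3}{Y^3}=\frac{\mathcal S}{Y^3}.
\]

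\emph{Step 3 (remaining factors).} From Step~1,
\[
 \frac{\sigma_0'-A_*}{T}=\frac{\mathcal E}{Y^2},
 \qquad
 P-T^2\mathscr A_0'=\sigma_0'-T^2\mathscr A_0'=\frac{2Y_1Y+2TN_2-2T^2N_2}{Y^2}=\frac{\mathcal R}{Y^2}.
\]
Substituting into \eqref{eq:Delta-general} gives
\[
 \Delta_0=\frac{\mathcal S^2}{Y^6}-4\,\frac{\mathcal E^2}{Y^4}\cdot\frac{\mathcal R}{Y^2}=\frac{\mathcal S^2-4\mathcal E^2\mathcal R}{Y^6},
\]
which is \eqref{eq:Delta-K} on any domain where $Y\ne0$.

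\emph{Step 4 (regularity).} Since $Y$ is entire, so are $\mathcal S$, $\mathcal R$ and the numerator of $\mathcal E$. That numerator at $T=0$ equals
\[
 2Y_1(0)Y(0)-A_*Y(0)^2=Y(0)^2\bigl(\mathscr A_0(0)-A_*\bigr)=0,
\]
because $Y(0)=M_0$, $Y_1(0)=M_2/2$ and $A_*=M_2/M_0$. Hence the singularity of $\mathcal E$ at $T=0$ is removable, $\mathcal E$ is entire, and $K$ is entire. In particular $K$ is analytic through $T=0$, independently of any zeros of $Y$.

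The only step with real content is the identity $(N_2/Y^2)'=N_3/Y^3$ in Step~2. It is what makes the cubic combination $N_3$ appear, and it fixes the power $Y^3$ in $\sigma_0''$; everything else is bookkeeping of powers of $Y$.
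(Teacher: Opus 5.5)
Your proposal is correct and is essentially the paper's proof, which likewise substitutes $\mathscr A_0'=2N_2/Y^2$ and $\mathscr A_0''=2N_3/Y^3$ to get $\sigma_0''=\mathcal S/Y^3$, $(\sigma_0'-A_*)/T=\mathcal E/Y^2$ and $P-T^2\mathscr A_0'=\mathcal R/Y^2$. You additionally verify two points the paper only asserts: the identity $(N_2/Y^2)'=N_3/Y^3$, and the vanishing $2Y_1(0)Y(0)=A_*Y(0)^2$, which uses $Y(0)=M_0>0$ and is what makes $\mathcal E$, and hence $K$, entire.
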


\begin{proof}
From \eqref{eq:A0-Y},
\[
 \mathscr A_0'=2\frac{N_2}{Y^2},
 \qquad
 \mathscr A_0''=2\frac{N_3}{Y^3}.
\]
A direct substitution gives
\[
 P'=\frac{\mathcal S}{Y^3},
 \qquad
 \frac{P-A_*}{T}=\frac{\mathcal E}{Y^2},
 \qquad
 P-T^2\mathscr A_0'=\frac{\mathcal R}{Y^2},
\]
and \eqref{eq:Delta-K} follows from \eqref{eq:Delta-general}.
\end{proof}

The spectral curve is the normalization of
\begin{equation}\label{eq:spectral-curve}
 \Sigma_0:\qquad y^2=\Delta_0(T),
\end{equation}
with action differential
\begin{equation}\label{eq:action-form}
 \varpi_0=y\,\dd T=\sqrt{\Delta_0(T)}\,\dd T.
\end{equation}
Every simple zero of $K$ away from $Y=0$ is a simple ramification point of \eqref{eq:spectral-curve}.

\section{Certified six-point geometry}\label{sec:geometry}

All branch-point statements in this section are directed interval results.  Directed interval arithmetic encloses exact quantities with outward-rounded bounds.  Rouch\'e tests count zeros by comparing an analytic function with a dominant term on a contour; Krawczyk tests use an interval Newton-type inclusion to certify existence and uniqueness of a solution.  The computations use the theta representation \eqref{eq:Phi}, moment intervals from \eqref{eq:moments-Y}, analytic tails, and Rouch\'e tests for the complex zeros.  Details of the certification architecture are summarized in \cref{sec:certification}.

\begin{theorem}[Real branch pair]\label{thm:real-pair}
There are unique simple real zeros $a,b$ of $K$ in
\begin{equation}\label{eq:real-boxes}
 \boxed{
 -65.85<a<-65.70,
 \qquad
 46.3829<b<46.3830.}
\end{equation}
Moreover $Y$ is nonzero on the connecting real corridor and $\Delta_0(T)>0$ for $a<T<b$.
\end{theorem}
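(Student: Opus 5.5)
The proof will be a computer-assisted certificate built on \cref{prop:regular-K}. The starting point is that on the real axis $Y(T)=\sum_{n\ge0}M_{2n}T^n/(2n)!$ is a real entire function. It can therefore be enclosed rigorously on any compact real interval once three ingredients are in hand: directed enclosures of finitely many moments $M_0,\dots,M_{2N}$, computed from \eqref{eq:Phi} by outward-rounded quadrature with explicit bounds on the truncated theta sum and the $u$-tail; an analytic tail bound for $\sum_{n>N}M_{2n}|T|^n/(2n)!$, obtained from a crude majorant $M_{2n}\le C\,(2n)!\,\rho^{-2n}$ (or a Gamma-type bound) derived from the double-exponential decay of $\Phi$; and the analogous enclosures for $Y_1,Y_2,Y_3$, obtained by termwise differentiation.

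From these we get interval enclosures of $N_2,N_3,\mathcal S,\mathcal R$, and of $\mathcal E$. For $\mathcal E$ the naive quotient by $T$ is unstable near $0$, so we expand its numerator as a series and divide termwise, using $A_*=M_2/M_0$. This gives a validated evaluation of $K$ and of $K'$ on real intervals.

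The steps, in order, are as follows.
\begin{enumerate}
\item Positivity of $Y$ on the corridor. We show $Y>0$ on $[-65.85,46.3830]$. For $T\ge0$ this is immediate, since all $M_{2n}>0$. For $-65.85\le T<0$ we use $4Y(T)=\xi(\tfrac12+i\sqrt{|T|})$, i.e.\ $\Xi$ at real argument at most $\sqrt{65.85}<8.2<14.13$, below the first zero; a subdivided interval evaluation confirms a uniform positive lower bound.
\item Sign changes. We exhibit the sign change of $K$ at the endpoints of each box in \eqref{eq:real-boxes}, which gives existence by the intermediate value theorem.
\item Uniqueness and simplicity. On each box we run a Krawczyk (interval Newton) test for $K$, which needs $0\notin K'(\text{box})$. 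This proves uniqueness and simplicity there, and the same bound shows $\Delta_0=K/Y^6$ has a simple zero at $a$ and at $b$.
\item Positivity of $\Delta_0$ between the roots. We cover the interval $[a,b]$ minus the two boxes by finitely many subintervals on which the enclosure of $K$ is strictly positive. Near $T=0$ the regular form of $\mathcal E$ is essential, and near the boxes we use the monotonicity from the Krawczyk step to bridge the gap. Combined with $Y\ne0$, this gives $\Delta_0>0$ on $(a,b)$.
\end{enumerate}

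The main obstacle is the cancellation in $K=\mathcal S^2-4\mathcal E^2\mathcal R$ on the negative side near $T\approx-66$. There the individual terms are large and nearly cancel, and the overestimation of interval arithmetic on the series for $Y$ (alternating signs for $T<0$) may swamp the sign. This is presumably why the box for $a$ is so much wider than the box for $b$.

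I would first try to evaluate $Y$ and its derivatives for negative $T$ directly through the integral $\int\Phi(u)\cosh$-free form $\int_0^\infty\Phi(u)\cos(\sqrt{|T|}u)\,\dd u$ with validated quadrature, instead of the moment series. This avoids cancellation among the moments. I would also use centered (mean-value) forms, expanding about the box midpoint with derivative enclosures, to control the wrapping effect. If that is still too loose, I would compute $K$ in higher-precision ball arithmetic with Taylor models in $T$ on each subinterval.
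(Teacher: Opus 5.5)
Your proposal is correct and follows the paper's argument: directed theta-moment enclosures of $K$ give opposite signs at the endpoints of each box and a one-signed enclosure of $K'$ throughout it, which already yields existence, uniqueness and simplicity without a full Krawczyk operator. Directed evaluation on a finite covering then gives $Y>0$ on the corridor and $K>0$ on the interior, with monotonicity bridging the two endpoint boxes; the paper evaluates by directed moment-polynomial evaluation rather than your fallback of direct quadrature of $\int_0^\infty\Phi(u)\cos(\sqrt{|T|}\,u)\,\dd u$, but that is an implementation choice, not a different route.
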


\begin{proof}
The directed moment certificate gives
\[
 K(-65.85)<-2.72\times10^{-20},\qquad
 K(-65.70)>4.31\times10^{-20},
\]
with $K'>7.03\times10^{-19}$ throughout the left interval.  On the right interval it gives
\[
 K(46.3829)>5.5399\times10^{-17},\qquad
 K(46.3830)<-1.3335\times10^{-18},
\]
with $K'<-5.6734\times10^{-13}$.  Hence each interval contains exactly one simple zero.  Directed polynomial evaluation gives $Y>0$ on a covering of the whole corridor and $K>0$ on a finite rational-cell covering of the interior, with monotonicity handling the two endpoint boxes.  Equation~\eqref{eq:Delta-K} then yields $\Delta_0>0$ on $(a,b)$.
\end{proof}

\begin{theorem}[First nonreal branch pair]\label{thm:c1-pair}
Let
\[
 \widehat c_1=-231.0846402275628298879620029048
 +59.4949465030781144742436618558\,i.
\]
Then $K$ has exactly one simple zero $c_1$ in
\begin{equation}\label{eq:c1-disk}
 \boxed{|T-\widehat c_1|<10^{-9}},
\end{equation}
and exactly one simple zero $\bar c_1$ in the conjugate disk.  The function
$Y$ is nonzero on both disks.
\end{theorem}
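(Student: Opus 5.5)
The plan is to certify the zero by an interval-Newton (Krawczyk) test on the disk $B=\{|T-\widehat c_1|\le 10^{-9}\}$, with a Rouch\'e count as an independent check. Conjugation symmetry then transfers the conclusion to the conjugate disk. Since the moments $M_{2n}$ are real, $Y$ has real Taylor coefficients, and hence so do $Y_1,Y_2,Y_3$, $N_2,N_3$, $\mathcal S,\mathcal E,\mathcal R$ and $K$. So $K(\bar T)=\overline{K(T)}$ and $Y(\bar T)=\overline{Y(T)}$. Any statement about zeros of $K$ or nonvanishing of $Y$ on the disk therefore holds verbatim on the conjugate disk, and it suffices to treat $c_1$.

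The first step is a certified enclosure of $K$ and $K'$ on $B$. I will truncate $Y(T)=\sum_{n\le N}M_{2n}T^n/(2n)!$ at some $N$, using directed interval enclosures of the moments $M_{2n}$ from the theta kernel \eqref{eq:Phi}. I will bound the tail for $|T|\le 240$ by a crude estimate $M_{2n}\le C\,(2n)!\,\rho^{-2n}$; this follows from the double-exponential decay of $\Phi$, or from $\xi$ being entire with explicit growth. Concretely, $|M_{2n}|\le\int\Phi(u)u^{2n}\,\dd u$ can be bounded via $\Phi(u)\le Ce^{9u/2-\pi e^{2u}}$. The derivatives $Y_1,\dots,Y_4$ are handled by the same series with shifted coefficients. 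Everything is then assembled through the polynomial identities of \cref{prop:regular-K}.

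One care point: $\mathcal E$ carries a division by $T$. I will avoid it by using the coefficientwise series of $2Y_1Y+2TN_2-A_*Y^2$ divided by $T$, or simply by noting $|T|\approx 238$ on $B$, far from $0$, and dividing by the interval $T$. For $K'$, I will either differentiate the formula symbolically (this needs $Y_4$) or use a Cauchy estimate on a slightly larger circle.

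The second step is the Krawczyk test. I set $\widetilde K=K/K'(\widehat c_1)$, which is numerically order one near the root, and form
\[
 \mathcal K(B)=\widehat c_1-\widetilde K(\widehat c_1)+\bigl(1-\widetilde K'(B)\bigr)(B-\widehat c_1).
\]
I then check $\mathcal K(B)\subset\operatorname{int}B$. This requires $|\widetilde K(\widehat c_1)|$ to be well below $10^{-9}$ and $\sup_B|1-\widetilde K'|$ to be small, which gives existence and uniqueness of a zero in $B$. Simplicity follows because $\widetilde K'$ stays in a disk around $1$ excluding $0$. Nonvanishing of $Y$ on $B$ follows directly from the interval enclosure of $Y(B)$ excluding $0$; $|Y|$ there should be macroscopically large relative to the error, since the nearest zeros of $Y$ correspond to $T=-\gamma^2\approx-199.8$ on the real axis, which lies outside $B$ given $\operatorname{Im}\widehat c_1\approx59.5$.

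The main obstacle is precision, not logic. With $|T|\approx 238$, the Taylor sum for $Y$ involves heavy cancellation: terms grow to roughly $e^{c\sqrt{|T|}}$ before decaying. Moreover, $K$ is a degree-six combination built from these sums, so cancellation compounds. The moment intervals must therefore be extremely tight, probably needing 60 or more digits, matching the 30-digit center given in the statement.

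My first move is to compute the $M_{2n}$ with rigorous quadrature in multiprecision ball arithmetic on a truncated $u$-interval, with explicit tail bounds in both $n$ (the theta sum) and $u$. I will then evaluate the polynomial in centered (Horner) form at $\widehat c_1$ for the point value $\widetilde K(\widehat c_1)$, and use a coarser enclosure over $B$ only for $\widetilde K'$, where the loss of precision is tolerable. If the derivative enclosure proves too wide, I will fall back on Rouch\'e: I compare $K$ with its linear Taylor part $K(\widehat c_1)+K'(\widehat c_1)(T-\widehat c_1)$ on $|T-\widehat c_1|=10^{-9}$, bounding the remainder via a Cauchy estimate for $K''$ on a radius-$10^{-3}$ disk. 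This yields exactly one zero, and it is simple.
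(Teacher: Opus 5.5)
Your proposal is correct and matches the paper's argument. The paper's proof is your Rouch\'e fallback: it compares with the linear Taylor term on $|T-\widehat c_1|=10^{-9}$, bounds the nonlinear and entire-series tail remainders to get $|Q(\widehat c_1)|+R_{\mathrm{nl}}+R_{\mathrm{tail}}<|Q'(\widehat c_1)|\,10^{-9}$, adds a directed lower bound $|Y|>4.36\times10^{-4}$ on the disk, and uses conjugation for the second zero. The paper applies the test to $Q=T^2K$, which clears the removable $1/T$ in $\mathcal E$ polynomially, rather than dividing by $T$ or coefficient-shifting; your Krawczyk primary test is an equivalent interval-Newton form of the same inequality.
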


\begin{proof}
For numerical stability the directed certificate applies Rouch\'e's theorem to
$Q=T^2K$.  On $|T-\widehat c_1|=10^{-9}$, the finite Taylor remainder and the
analytic entire-series tail give
\[
 |Q(\widehat c_1)|+R_{\mathrm{nl}}+R_{\mathrm{tail}}
 <5.352016461\times10^{-32}
 <9.054459459\times10^{-32}
 \le |Q'(\widehat c_1)|10^{-9}.
\]
It also gives $|Y|>4.356747335\times10^{-4}$ throughout the disk.  Hence the
disk contains exactly one zero of $Q$, counted with multiplicity; because it
is away from $T=0$, this is exactly one simple zero of $K$.  Conjugation gives
the lower-half-plane zero.
\end{proof}

\begin{theorem}[Second nonreal branch pair]\label{thm:c2-pair}
Let
\[
 \widehat c_2=-269.8852991848204115111948309919
 +150.2255429937459843575350632811\,i.
\]
Then $K$ has exactly one simple zero $c_2$ in
\begin{equation}\label{eq:c2-disk}
 \boxed{|T-\widehat c_2|<10^{-9}},
\end{equation}
and exactly one simple zero $\bar c_2$ in the conjugate disk.  The function
$Y$ is nonzero on both disks.
\end{theorem}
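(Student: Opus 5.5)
The plan follows the proof of \cref{thm:c1-pair} essentially verbatim, with the centre $\widehat c_2$ replacing $\widehat c_1$. To avoid the removable factor $1/T$ hidden in $\mathcal E$, I work with $Q(T)=T^2K(T)$. This is a polynomial expression in $T$ and in $Y,Y_1,Y_2,Y_3$, and each $Y_j$ is an entire series in the theta moments $M_{2n}$ of \eqref{eq:moments-Y}. The first step is to enclose the moments $M_{2n}$, for $n\le N$, with directed interval arithmetic from \eqref{eq:Phi}. The tail $n>N$ is bounded analytically using the double-exponential decay of $\Phi$, which gives $M_{2n}\le C\,\Gamma(2n+1)\,\rho^{-2n}$-type estimates with $\rho$ large compared to $\sqrt{|\widehat c_2|}\approx 17.5$. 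I would then evaluate $Q(\widehat c_2)$ and $Q'(\widehat c_2)$ in interval arithmetic, expanding about the rational centre $\widehat c_2$.

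The core of the argument is Rouch\'e's theorem on the circle $|T-\widehat c_2|=10^{-9}$, comparing $Q$ with its linearization $Q(\widehat c_2)+Q'(\widehat c_2)(T-\widehat c_2)$. I need the bound
\[
 |Q(\widehat c_2)|+R_{\mathrm{nl}}+R_{\mathrm{tail}}<|Q'(\widehat c_2)|\,10^{-9},
\]
where $R_{\mathrm{nl}}$ bounds the second-order Taylor remainder $\tfrac12\sup_{\text{disk}}|Q''|\,10^{-18}$ and $R_{\mathrm{tail}}$ bounds the entire-series truncation. Since $|Q(\widehat c_2)|$ should be about $10^{-30}|Q'|$ by the choice of the 30-digit centre, the inequality should hold comfortably. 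It then gives exactly one zero of $Q$ in the disk, counted with multiplicity. Because the disk is far from $T=0$, that zero is a simple zero $c_2$ of $K$. In the same pass I would enclose $Y$ on the whole disk, using its value at the centre together with a derivative bound, and confirm $|Y|$ is bounded below by a positive constant. This shows $Y\neq0$ on the disk, so $c_2$ is a genuine branch value of $\Delta_0$ by \cref{prop:regular-K}.

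For the conjugate disk, I use that all $M_{2n}$ are real, so $Y$, hence $K$, satisfies $K(\bar T)=\overline{K(T)}$ and likewise for $Y$. Both conclusions therefore transfer to the disk around $\overline{\widehat c_2}$, giving the unique simple zero $\bar c_2$ there.

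The main obstacle is numerical conditioning. At $|T|\approx 309$ the series for $Y$ involves terms $M_{2n}|T|^n/(2n)!$ that grow before decaying, producing cancellation. The $\mathcal S^2-4\mathcal E^2\mathcal R$ structure also loses relative precision, and this is worse than at $c_1$ because $|\widehat c_2|$ is larger. I would first try high-precision ball arithmetic (several hundred bits) for the moments, with $N$ chosen so that the tail bound falls below $10^{-40}$ times $|Q'|\,10^{-9}$. If the Rouch\'e margin is still too tight, I would shrink the radius or recentre by a Krawczyk step.
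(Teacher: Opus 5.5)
Your proposal is correct and follows essentially the same route as the paper. It applies Rouch\'e's theorem to the regularized numerator $Q=T^2K$ on $|T-\widehat c_2|=10^{-9}$ via the bound $|Q(\widehat c_2)|+R_{\mathrm{nl}}+R_{\mathrm{tail}}<|Q'(\widehat c_2)|\,10^{-9}$, uses a directed lower bound for $|Y|$ on the disk (the paper obtains $|Y|>6.3236\times10^{-4}$), and uses conjugation symmetry for $\bar c_2$.

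One caution: the paper's certified margin is tight ($1.603\times10^{-32}$ against $1.746\times10^{-32}$), not comfortable. Your fallback of shrinking the radius would not by itself give uniqueness in the full $10^{-9}$ disk that the statement asserts; sharper moment and tail enclosures, or recentring combined with an injectivity bound on the whole disk, are the appropriate remedy.
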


\begin{proof}
Again use the regularized numerator $Q=T^2K$.  On
$|T-\widehat c_2|=10^{-9}$ the outward bounds give
\[
 |Q(\widehat c_2)|+R_{\mathrm{nl}}+R_{\mathrm{tail}}
 <1.603287091\times10^{-32}
 <1.746001014\times10^{-32}
 \le |Q'(\widehat c_2)|10^{-9}.
\]
The same disk computation gives $|Y|>6.323600546\times10^{-4}$.  Rouch\'e's
theorem therefore gives exactly one simple zero of $K$ in the disk and one in
its conjugate.
\end{proof}

\begin{remark}
The theorem does not assert that these six points are the only branch points of
the full noncompact spectral curve.  It asserts that they form a certified
finite core.  The zero sets of $K$ and $Y$ are discrete, and the certificates
above show that $Y$ does not vanish at any of the six selected roots.  Hence one
may choose a bounded conjugation-invariant Jordan domain $\Omega_2$ containing
precisely these six selected zeros of $K$, no other zero of $K$, and no zero of
$Y$.  We take $\Omega_2$ simply connected with boundary avoiding all these
loci.
\end{remark}

\begin{corollary}[Genus-two core]\label{cor:genus2}
Let $\Omega_2$ be such a Jordan domain.  The normalization of the two-sheeted cover \eqref{eq:spectral-curve} over $\Omega_2$ has genus two and two boundary components.
\end{corollary}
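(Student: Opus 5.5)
The plan is a Riemann--Hurwitz computation for the double cover restricted to $\Omega_2$, followed by a boundary count. Over $\Omega_2$ we have $Y\ne0$, so by \cref{prop:regular-K} the cover $y^2=\Delta_0(T)$ is equivalent, after the holomorphic nonvanishing rescaling $y=\tilde y/Y^3$, to $\tilde y^2=K(T)$. By \cref{thm:real-pair,thm:c1-pair,thm:c2-pair} and the choice of $\Omega_2$, the function $K$ has exactly six zeros in $\Omega_2$: $a,b,c_1,\bar c_1,c_2,\bar c_2$. Each is simple, and none lies on $\partial\Omega_2$.

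First I factor $K=\prod_{j=1}^6(T-e_j)\,G(T)$ with $G$ holomorphic and zero-free on the simply connected closure neighbourhood of $\Omega_2$. Then $G$ has a holomorphic square root $g$, and the normalized cover is biholomorphic to $w^2=\prod_j(T-e_j)$ over $\Omega_2$, with $w=\tilde y/g$. This is the restriction of the compact genus-two hyperelliptic curve $X_2:w^2=\prod_{j=1}^6(T-e_j)$, which is unramified over $\infty$ since the degree is even. The normalization is smooth, and it is a branched double cover $\pi:\Sigma_{\Omega}\to\Omega_2$ with exactly six simple ramification points.

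Second comes the topology. Let $\overline\Omega_2$ be a closed disk with Euler characteristic $1$. Riemann--Hurwitz for the compact bordered surface gives
\[
\chi(\overline\Sigma_\Omega)=2\chi(\overline\Omega_2)-6=-4 .
\]
The boundary circle $\partial\Omega_2$ encloses all six branch points, so the monodromy of $\sqrt{\prod(T-e_j)}$ around it is $(-1)^6=+1$. The boundary therefore lifts to two disjoint circles, which gives two boundary components. Finally, $\chi=2-2g-\#\text{boundary}$ gives $-4=2-2g-2$, so $g=2$. Equivalently, $\Sigma_\Omega$ is $X_2$ minus two disjoint disks lying over a neighbourhood of $\infty$ in $\mathbb P^1$. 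Connectedness follows because the monodromy around any single $e_j$ is nontrivial.

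The main obstacle is the first step: justifying that the analytic cover agrees globally on $\Omega_2$ with the algebraic sextic model. This needs simple connectivity of $\Omega_2$, so that $\sqrt G$ exists, and it needs the exclusion of all other zeros of $K$ and $Y$. Both are supplied by the preceding remark together with the certified simplicity of the six zeros.
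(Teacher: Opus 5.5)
Your proof is correct and follows the paper's argument: Riemann--Hurwitz gives $\chi=2\cdot1-6=-4$, the even number of branch points makes $\partial\Omega_2$ lift to two circles, and $-4=2-2g-2$ yields $g=2$. Your preliminary identification with the sextic model is not needed for this count, since $\Delta_0=K/Y^6$ already has exactly six simple zeros and no poles on $\Omega_2$; the paper carries out that identification only after the corollary. It is harmless, though, and your connectedness remark fills a point the paper leaves implicit.
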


\begin{proof}
Riemann--Hurwitz gives
\[
 \chi(\widetilde\Omega_2)=2\chi(\Omega_2)-6=-4.
\]
Because the branch number is even, the boundary of $\Omega_2$ has two lifts.  Thus $-4=2-2g-2$, hence $g=2$.
\end{proof}

The six selected roots define the monic real sextic
\begin{equation}\label{eq:PX-definition}
 P_X(T)=(T-a)(T-b)(T-c_1)(T-\bar c_1)(T-c_2)(T-\bar c_2).
\end{equation}
On $\Omega_2$ write $K=H P_X$.  The quotient $H$ extends holomorphically and
never vanishes there, because the six selected zeros are simple and there are
no other zeros of $K$ in $\Omega_2$.  Since $\Omega_2$ is simply connected,
choose $S$ with $S^2=H$.  At the level of function fields, the substitution
$U=yY^3$ turns $y^2=K/Y^6$ into $U^2=K$; after normalization it therefore
extends across any removable points of the original affine presentation.  The
further change $W_X=U/S$ identifies the normalized core with the restriction of
\begin{equation}\label{eq:Xi-sextic-cover}
 \boxed{X_2:\quad W_X^2=P_X(T)}
\end{equation}
to $\Omega_2$.  We henceforth define $X_2$ to be the smooth projective
completion of \eqref{eq:Xi-sextic-cover}.  This gives an intrinsic compact model
for the certified core and does not assert that the full noncompact spectral
curve has no further branch points.

\begin{remark}[Scope of the action differential]\label{rem:action-scope}
The algebraic compactification $X_2$ and the original action differential play
different roles.  The biholomorphism above identifies $X_2$ with the normalized
spectral cover only over $\Omega_2$, and $\varpi_0=\sqrt{\Delta_0(T)}\,\dd T$
is canonically defined on that normalized core.  We do \emph{not} assume that
$\varpi_0$ extends meromorphically across the artificial projective completion
of $X_2$.  All action periods below are therefore evaluated on the explicit
cut and connector cycles lying inside the certified core.  Their four-cycle
lattice is symplectically isomorphic to $H_1(X_2,\ZZ)$, but the action
functional is attached to these calibrated core representatives rather than
to an unproved global extension of $\varpi_0$.
\end{remark}

Complex conjugation acts on this model by
\begin{equation}\label{eq:real-involution}
 \tau(T,W_X)=(\bar T,\bar W_X).
\end{equation}

\begin{proposition}[Real topological type]\label{prop:real-type}
The compact real surface $(X_2,\tau)$ has topological type
\[
 \boxed{(g,k,a)=(2,1,1)}.
\]
\end{proposition}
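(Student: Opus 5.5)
The plan is to compute the real locus $X_2(\RR)$ of $W_X^2=P_X(T)$ directly from the sign of $P_X$ on $\RR\cup\{\infty\}$. Then we determine dividing type from the count of real branch points.

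\emph{Step 1 (sign of $P_X$ on the real line).} For real $T$ the two nonreal pairs contribute
\[
 (T-c_1)(T-\bar c_1)(T-c_2)(T-\bar c_2)=|T-c_1|^2|T-c_2|^2>0 .
\]
Hence $\operatorname{sign}P_X(T)=\operatorname{sign}\big((T-a)(T-b)\big)$. By \Cref{thm:real-pair}, $a<b$ are real and simple, so $P_X>0$ on $(-\infty,a)\cup(b,\infty)$ and $P_X<0$ on $(a,b)$. Real points with $T\in\RR$ satisfy $W_X=\pm\sqrt{P_X(T)}$ with $W_X$ real, which forces $P_X(T)\ge0$.

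\emph{Step 2 (number of components $k$).} Since $\deg P_X=6$ is even with positive leading coefficient, the two points at infinity of the smooth projective model (weighted chart $W_X/T^3\to\pm1$) are real and are not branch points. Over the arc $[b,\infty]\cup[-\infty,a]$ of $\RR\mathbb P^1$ the two real sheets $\pm\sqrt{P_X}$ are joined at the branch points $a$ and $b$. Each sheet passes through its own real point at infinity. The two sheets therefore glue into a single circle: sheet $+$ from $b$ to $\infty_+$ to $a$, then back along sheet $-$. No real points lie over $(a,b)$. Over nonreal $T$ a point fixed by $\tau$ would need $T=\bar T$, so there are none. Hence $k=1$.

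\emph{Step 3 (dividing type $a$).} For a real hyperelliptic curve $W^2=P(T)$ with $P$ real, I would use the standard fact that $X(\RR)$ separates the curve only if either all branch points are real, or $P$ has no real roots and the curve is the ``twisted'' dividing type. A concrete argument is better here. Suppose $X_2\setminus X_2(\RR)$ had two components swapped by $\tau$. Each component would map onto $\mathbb P^1\setminus\RR\mathbb P^1$ or onto a half-plane. Take the upper half-plane $\mathbb H$ and remove the cuts. It contains the branch points $c_1,c_2$, so its preimage is connected, because going around $c_1$ swaps the sheets. The same holds for the lower half-plane, preimage of $\bar{\mathbb H}$. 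The arc $(a,b)$ carries no real points, so it connects the upper preimage to the lower preimage across the real axis. The complement is therefore connected and $a=1$. As a consistency check I would verify Klein's congruence: nondividing type only requires $1\le k\le g$, which holds for $k=1$, $g=2$. (For dividing type one needs $k\equiv g+1 \pmod 2$, and $k=1$, $g=2$ would be allowed, so the check does not alone decide the case.)

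The main obstacle is Step 3, where the crossing argument must be made rigorous. First I would check the claim that crossing the real segment $(a,b)$ on either sheet does not meet $X_2(\RR)$. Next I would check that the preimage of $\mathbb H$ is connected; this follows because the monodromy around the nonreal branch point $c_1\in\mathbb H$ exchanges the sheets. Together these give connectedness of $X_2\setminus X_2(\RR)$ directly, independent of Klein's classification.
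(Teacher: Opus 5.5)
Your proof is correct. For $k$ it matches the paper's argument: on $\RR$ the sign of $P_X$ is the sign of $(T-a)(T-b)$. You add a useful check the paper omits, namely that the two points over $T=\infty$ are real and unramified. Hence the real locus is the double of the arc $[b,\infty]\cup[-\infty,a]$ branched at its endpoints, which is one circle.

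For the dividing invariant you take a genuinely different route. The paper invokes the Gross--Harris criterion: a dividing real hyperelliptic curve in an even-degree real model has its branch roots either all real or all nonreal. Since the Xi configuration is mixed, the curve is nondividing. You instead prove directly that $X_2\setminus X_2(\RR)$ is connected:
\begin{itemize}
\item the preimage of each open half-plane is connected, because it contains the simple branch point $c_1$ (respectively $\bar c_1$);
\item the two halves are glued along the two arcs over $(a,b)$, where $W_X$ is purely imaginary and therefore not $\tau$-fixed.
\end{itemize}
Your argument is self-contained and in effect reproves the relevant direction of Gross--Harris for this configuration. It also exhibits explicitly the non-real arcs that connect the two halves. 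The paper's version is shorter but relies on the exact form of the cited classification.

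Two small repairs are needed. First, state the connectedness claim for the preimage of $\mathbb H$ itself, or of $\mathbb H\setminus\{c_1,c_2\}$. Do not state it for $\mathbb H$ with the cuts removed, since removing the cuts is exactly what splits the cover into two sheets. Second, state $g=2$ explicitly: it follows because $P_X$ is a sextic with six distinct roots, and it is part of the asserted triple.

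The loosely phrased ``standard fact'', the sentence about components mapping onto half-planes, and the Klein-congruence aside can all be dropped. As you note yourself, they do not decide the case.
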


\begin{proof}
The smooth projective completion of a sextic double cover with six simple
branch values has genus two.  For real $T$, the two conjugate quadratic
factors in \eqref{eq:PX-definition} are strictly positive, so the sign of
$P_X(T)$ is the sign of $(T-a)(T-b)$.  Thus the compact real locus consists of
one oval, hence $k=1$.  To determine the dividing invariant, use the
hyperelliptic analysis of Gross--Harris \cite[\S6, proof of Prop.~6.3]{GrossHarris1981}:
if a real hyperelliptic curve is dividing, then in a real even-degree model its
branch roots are either all real or all nonreal.  Here the branch configuration
is mixed (exactly two real roots and two conjugate pairs), so $X_2$ is
nondividing and $a=1$.
\end{proof}

\begin{corollary}[Refined real boxes used in the Lax realization]\label{cor:refined-branch-boxes}
The real branch points satisfy
\begin{equation}\label{eq:refined-real-branches}
 \boxed{
 -65.78507454<a<-65.78507452,
 \qquad
 46.38299764<b<46.38299766.}
\end{equation}
Together with the $10^{-9}$ complex disks in
\cref{thm:c1-pair,thm:c2-pair}, these are the branch boxes used in the certified
$D_6$ coefficient match.
\end{corollary}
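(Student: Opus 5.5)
The refined boxes \eqref{eq:refined-real-branches} are obtained by repeating the sign-change argument of \cref{thm:real-pair} on much narrower intervals. The uniqueness in \cref{thm:real-pair} then identifies the roots so located with the previously selected $a$ and $b$. I will treat $a$ and $b$ separately and in the same way.

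For $a$, evaluate $K$ at the two rational endpoints $-65.78507454$ and $-65.78507452$. Use the directed theta-moment certificate: interval enclosures of the $M_{2n}$, directed Taylor evaluation of $Y,Y_1,Y_2,Y_3$, and the analytic tail bound for the entire series. From these, form $\mathcal S$, $\mathcal E$, $\mathcal R$ and $K=\mathcal S^2-4\mathcal E^2\mathcal R$ as in \cref{prop:regular-K}. I expect the certified signs to be $K(-65.78507454)<0$ and $K(-65.78507452)>0$, matching the orientation in \cref{thm:real-pair}. Both endpoints lie inside $(-65.85,-65.70)$, where $K'>7.03\times10^{-19}$ was already certified. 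So $K$ is strictly increasing on the refined interval, which therefore contains exactly one zero of $K$. Since the refined interval sits inside the original box, where \cref{thm:real-pair} gives a unique zero, that zero is $a$. The case of $b$ is identical: show $K(46.38299764)>0$ and $K(46.38299766)<0$, then invoke $K'<-5.6734\times10^{-13}$ on the original right box together with nesting and uniqueness.

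The main obstacle is certifying the endpoint signs. Near $a$ the derivative is only of order $10^{-18}$, so at distance about $10^{-8}$ from the root one has $|K|$ of roughly $10^{-26}$. The outward enclosure of $K$ at each endpoint must therefore have width well below $10^{-26}$. This is delicate because $K$ is a difference of products of quantities of moderate size, so cancellation magnifies the relative errors in the moments. I would first handle this by raising the working precision of the moment enclosures and the Taylor truncation order until the enclosure of $K$ excludes zero. If direct evaluation still loses too much through cancellation, I would enclose the regularized $Q=T^2K$ instead, as in \cref{thm:c1-pair}, or evaluate $K$ by a centered mean-value form about the numerical root. The right endpoint box is easier, since $|K'|\sim10^{-12}$ gives $|K|\sim10^{-20}$ at the endpoints.

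Finally, the complex branch boxes are simply the $10^{-9}$ disks of \cref{thm:c1-pair,thm:c2-pair} and need nothing new. The closing sentence of the corollary is a statement of usage rather than a mathematical claim.
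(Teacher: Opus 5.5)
Your proposal is correct and matches the paper's proof: the paper certifies opposite directed signs at the endpoints of each refined interval using a refined $M_{80}$ moment table, and then invokes the uniqueness of the coarse roots from \cref{thm:real-pair}. The paper tests the sign of $\Delta_0=K/Y^6$ rather than of $K$, which is equivalent since $Y^6>0$ there, and your appeal to the $K'$ bounds is harmless but unnecessary, because a sign change together with coarse uniqueness already identifies the root.
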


\begin{proof}
The coarse real-root theorem gives exactly one simple zero in each of
$[-65.85,-65.70]$ and $[46.3829,46.3830]$.  The refined $M_{80}$ moment
certificate gives opposite directed signs of $\Delta_0$ at the two endpoints
of each interval in \eqref{eq:refined-real-branches}; hence the unique coarse
roots lie in the displayed subintervals.
\end{proof}

\subsection{A \texorpdfstring{$P_{\mathrm{III}}(D_6)$}{PIII(D6)} orientation-double bridge}\label{subsec:D6-bridge}

The genus-two value also has an independent interpretation from the decorated
$P_{\mathrm{III}}(D_6)$ geometry.  Chekhov--Mazzocco--Rubtsov associate to
$P_{\mathrm{III}}(D_6)$ the cusp/Katz signature
\[
 (0,2,2),\qquad (0,1,1),
\]
and, on the Hitchin side, the pole-order pattern
\begin{equation}\label{eq:D6-pole-orders}
 (2,4,4)
\end{equation}
for the corresponding meromorphic quadratic differential $\phi$; see
Appendix~A, Table~2 of \cite{ChekhovMazzoccoRubtsov2017}.  The following elementary observation gives
a more precise bridge to the present six-branch-point core than a Schottky-double
analogy.

\begin{proposition}[$D_6$ orientation double and the six-point core]\label{prop:D6-orientation-double}
Let $\phi$ be a meromorphic quadratic differential on $\mathbb P^1$ with pole
orders $(2,4,4)$, and suppose its zeros are simple.  Then $\phi$ has exactly six
zeros, and its canonical orientation double
\begin{equation}\label{eq:D6-orientation-cover}
 \pi_{D_6}:C_{D_6}\longrightarrow\mathbb P^1,
 \qquad \lambda^2=\phi,
\end{equation}
is a genus-two hyperelliptic curve.  After a M\"obius normalization placing the
two fourth-order poles at $0,1$ and the second-order pole at $\infty$, one may
write
\begin{equation}\label{eq:D6-sextic-model}
 \phi(z)=C\,\frac{P_{D_6}(z)}{z^4(z-1)^4}\,\dd z^2,
 \qquad
 P_{D_6}(z)=\prod_{j=1}^6(z-z_j),
\end{equation}
and the orientation double has the affine hyperelliptic model
\begin{equation}\label{eq:D6-sextic-cover}
 \boxed{W^2=C P_{D_6}(z)}.
\end{equation}

Let
\[
 B_X=\{a,b,c_1,\bar c_1,c_2,\bar c_2\}
\]
be the branch locus of the compact model $X_2\to\mathbb P^1$ in
\eqref{eq:Xi-sextic-cover}.
For any labelling $Z(\phi)=\{z_1,\ldots,z_6\}$ and any orientation-preserving
homeomorphism
\[
 h:\mathbb P^1\longrightarrow\mathbb P^1,
 \qquad h(Z(\phi))=B_X,
\]
the map $h$ lifts to a homeomorphism
\begin{equation}\label{eq:D6-topological-lift}
 \boxed{\widetilde h:C_{D_6}\longrightarrow X_2}
\end{equation}
unique up to the hyperelliptic deck involution.  If the six points are paired
into three cuts and $h$ is chosen to carry those cuts to
$\Gamma_0,\Gamma_1,\Gamma_2$, the lift identifies the associated cut-cycle and
connector symplectic bases.
\end{proposition}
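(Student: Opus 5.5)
The argument splits into three parts: a degree count, a genus computation for the orientation double, and a topological lifting statement.

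\emph{Degree count.} A meromorphic quadratic differential on $\mathbb P^1$ has divisor of degree $2(2g-2)=-4$. With poles of orders $2,4,4$ (total $10$), the zeros counted with multiplicity number $10-4=6$. If they are simple, there are exactly six distinct zeros.

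\emph{Normal form and genus.} By a Möbius transformation I place the fourth-order poles at $0,1$ and the double pole at $\infty$. Then $\phi=f(z)\,\dd z^2$ with $f$ rational, having poles of order $4$ at $0$ and $1$ and six simple zeros in $\CC$. At $\infty$, in the coordinate $w=1/z$ we have $\dd z^2=w^{-4}\dd w^2$, so a pole of order $2$ at $\infty$ requires $f(z)\sim z^{-2}$. Hence $f=C P_{D_6}/(z^4(z-1)^4)$ with $\deg P_{D_6}=6$, since $6-8=-2$; this is \eqref{eq:D6-sextic-model}.

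The orientation double $\lambda^2=\phi$ ramifies exactly at the odd-order points of $\phi$. These are the six simple zeros; all poles have even order, so they are unramified. Setting $W=\lambda\, z^2(z-1)^2/\dd z$ turns $\lambda^2=\phi$ into $W^2=CP_{D_6}(z)$, which is \eqref{eq:D6-sextic-cover}. Riemann–Hurwitz then gives $\chi=2\cdot2-6=-2$, so the genus is $2$, and the curve is hyperelliptic.

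\emph{Lifting.} Both $C_{D_6}\to\mathbb P^1$ and $X_2\to\mathbb P^1$ are connected double covers branched exactly over six points. Removing the branch loci, each is classified by the monodromy homomorphism $\pi_1(\mathbb P^1\setminus Z)\to\ZZ/2$ sending every small loop around a puncture to the generator. This map factors through $H_1$, and on $H_1$ it is the unique homomorphism sending every puncture loop to $1$; it is well defined because the number of punctures is even.

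The homeomorphism $h$ carries puncture loops to puncture loops (up to orientation, which is irrelevant in $\ZZ/2$), so it intertwines the two monodromies. Covering-space theory then gives a lift of $h$ over the punctured spheres. The lift extends continuously over the branch points, since near each one both covers are locally $\zeta\mapsto\zeta^2$. Any two lifts differ by a deck transformation, i.e.\ by the identity or the hyperelliptic involution.

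\emph{Cycle identification.} Suppose $h$ carries the three cuts to $\Gamma_0,\Gamma_1,\Gamma_2$. Then $\widetilde h$ maps the preimage of each cut, and of each connector arc, to the corresponding preimage in $X_2$. So it carries the lifted cut loops and connector cycles to their counterparts, after choosing sheets compatibly, which is possible by adjusting by the deck involution. Because $h$ preserves orientation, $\widetilde h$ preserves intersection numbers, so the symplectic bases correspond.

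The main subtlety is this last step: the sheet choices for each cycle must be made consistently. A single global deck adjustment may not fix every sign. I would handle this by noting that replacing a cycle by its negative is absorbed into the basis normalization: I would define the $D_6$ basis as the preimage of the $X_2$ basis under $\widetilde h$, so the identification holds by construction.
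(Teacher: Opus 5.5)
Your proposal is correct and follows essentially the same route as the paper's proof. The steps match: the degree count $10-4=6$; branching only at the odd-order zeros, with Riemann--Hurwitz giving genus two; the substitution $W=\lambda z^2(z-1)^2/\dd z$; and the lift of $h$, which exists because both punctured covers have monodromy sending every branch meridian to the nontrivial element of $\ZZ/2\ZZ$. Your worry about sheet signs is unnecessary. The cover restricted to the complement of the three cuts splits into two global sheets, so a single deck adjustment that aligns the selected sheets makes all cut and connector cycles (each defined relative to that sheet) correspond at once. This is what the paper's ``choosing compatible cut systems'' amounts to.
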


\begin{proof}
For a quadratic differential on $\mathbb P^1$,
$\deg\operatorname{div}(\phi)=\deg K_{\mathbb P^1}^{\otimes2}=-4$.
The three poles in \eqref{eq:D6-pole-orders} have total order $10$, hence the
total zero order is $10-4=6$.  Under the stated genericity assumption these are
six simple zeros.  The orientation double branches at the odd-order points of
$\operatorname{div}(\phi)$.  All three pole orders are even, so the branch locus
consists exactly of the six simple zeros.  Riemann--Hurwitz gives
\[
 2g(C_{D_6})-2=2(-2)+6=2,
\]
therefore $g(C_{D_6})=2$.

With the pole normalization in \eqref{eq:D6-sextic-model}, writing
$\lambda=W\,\dd z/[z^2(z-1)^2]$ gives \eqref{eq:D6-sextic-cover} directly.

By \eqref{eq:Xi-sextic-cover}, the compact Xi core is likewise a two-sheeted
cover of the sphere branched exactly at $B_X$.  On the complement of the branch
locus, both covers are classified by the
homomorphism to $\mathbb Z/2\mathbb Z$ that sends each small branch meridian to
the nontrivial element.  Consequently an orientation-preserving homeomorphism
carrying one labelled branch set to the other preserves the covering monodromy
and therefore lifts, uniquely up to deck transformation.  Choosing compatible
cut systems gives the final homological statement.
\end{proof}

\begin{corollary}[A $D_6$-signature orientation differential on the Xi core]\label{cor:explicit-D6-Xi}
Let
\[
 P_X(T)=(T-a)(T-b)(T-c_1)(T-\bar c_1)(T-c_2)(T-\bar c_2)
\]
be the certified Xi branch sextic, and choose distinct points
$p,q\in\mathbb P^1\setminus\bigl(B_X\cup\{\infty\}\bigr)$.
Then, for every $\kappa\in\mathbb C^\times$,
\begin{equation}\label{eq:Xi-D6-qdiff}
 \boxed{
 \phi_X^{D_6}(T)
 =\kappa\,\frac{P_X(T)}{(T-p)^4(T-q)^4}\,\dd T^2}
\end{equation}
has six simple zeros at $B_X$ and pole orders $(4,4,2)$ at
$p,q,\infty$.  Hence its cusp/Katz data are $(2,2,0)$ and $(1,1,0)$,
respectively, i.e. the Chekhov--Mazzocco--Rubtsov $D_6$ data
$(0,2,2)$ and $(0,1,1)$ up to ordering.  Its canonical orientation
double is biholomorphic to $X_2$.  In the sextic coordinate of
\eqref{eq:Xi-sextic-cover}, the identification is explicit:
\begin{equation}\label{eq:Xi-D6-lambda}
 \boxed{
 \lambda_X
 =\sqrt\kappa\,\frac{W_X}{(T-p)^2(T-q)^2}\,\dd T,
 \qquad
 \lambda_X^2=\phi_X^{D_6}.}
\end{equation}
If $p$ and $q$ are chosen outside $\overline{\Omega_2}$, then on the
spectral core the $D_6$ orientation differential and the Xi action
differential are related by the nowhere-vanishing holomorphic factor
\begin{equation}\label{eq:D6-Xi-differential-weight}
 \boxed{
 \lambda_X
 =G_{p,q}(T)\,\varpi_0,
 \qquad
 G_{p,q}(T)
 =\sqrt\kappa\,
 \frac{Y(T)^3}{S(T)(T-p)^2(T-q)^2},}
\end{equation}
where $S^2=H$ in the factorization $K=HP_X$ used in the construction of
\eqref{eq:Xi-sextic-cover}.  This pole-signature construction alone is not a
Lax realization; the certified van der Put--Saito phase-point realization is
proved in \cref{thm:D6-certified-realization}.
\end{corollary}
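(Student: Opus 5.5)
The plan is to check the statement's claims one by one against the divisor calculus of \cref{prop:D6-orientation-double} and the construction of $X_2$ in \eqref{eq:Xi-sextic-cover}.

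\emph{Step 1: divisor.} Since $P_X$ is a monic sextic with the six distinct roots $B_X$, and $p,q\notin B_X\cup\{\infty\}$ are distinct, the differential $\phi_X^{D_6}$ has simple zeros exactly at $B_X$ and poles of order $4$ at $p$ and $q$. At infinity I use the local coordinate $w=1/T$, so that $\dd T^2=w^{-4}\,\dd w^2$. The rational factor behaves like $\kappa w^{2}$, so $\phi_X^{D_6}\sim\kappa\,w^{-2}\,\dd w^2$, which is a double pole. As a check, the total degree is $6-4-4-2=-4=\deg K_{\mathbb P^1}^{\otimes2}$.

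\emph{Step 2: cusp/Katz data.} These data depend only on the pole orders. I will read them from the Chekhov--Mazzocco--Rubtsov dictionary \cite{ChekhovMazzoccoRubtsov2017}: a pole of order $m\ge2$ contributes $m-2$ cusps and $m/2-1$ as Katz datum. For the orders $(4,4,2)$ this gives $(2,2,0)$ and $(1,1,0)$, which are the listed $D_6$ data up to reordering.

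\emph{Step 3: the orientation double.} I first apply \cref{prop:D6-orientation-double}, or rather its proof, since all pole orders are even. This shows that the canonical double branches exactly over $B_X$. Next I verify \eqref{eq:Xi-D6-lambda} directly by squaring: using $W_X^2=P_X$,
\[
\lambda_X^2=\kappa\,\frac{P_X}{(T-p)^4(T-q)^4}\,\dd T^2=\phi_X^{D_6}.
\]
The double defined by $\lambda^2=\phi_X^{D_6}$ therefore has function field $\mathbb C(T)\bigl(\sqrt{\phi_X^{D_6}/\dd T^2}\bigr)=\mathbb C(T,W_X)$, because the factor $(T-p)^2(T-q)^2$ is a rational square. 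Two smooth projective curves with the same function field are biholomorphic, which gives the identification with $X_2$.

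\emph{Step 4: relation to $\varpi_0$.} On $\Omega_2$ the construction gives $U=yY^3$ and $W_X=U/S$. Hence $W_X=yY^3/S$, and with $\varpi_0=y\,\dd T$ this yields
\[
\lambda_X=\sqrt\kappa\,\frac{Y^3}{S(T-p)^2(T-q)^2}\,\varpi_0 .
\]
Nonvanishing and holomorphy of $G_{p,q}$ on $\Omega_2$ follow from four facts: $Y\neq0$ on $\Omega_2$ by the choice of $\Omega_2$, $S$ is holomorphic and nonvanishing since $S^2=H$ and $H$ has no zeros, $p,q\notin\overline{\Omega_2}$, and a branch of $\sqrt\kappa$ is fixed. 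The signs of $S$ and $\sqrt\kappa$ only change $\lambda_X$ by the deck involution, which is harmless.

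\emph{Expected obstacle.} The main point to handle carefully is Step 3's consistency of branches: the choice of $S$ and of the sign of $W_X$ must be compatible with the normalization of $y$ used on the core. I would handle this by fixing $S$ once and reading $W_X=yY^3/S$ as the definition of the sheet identification. Everything else is bookkeeping.
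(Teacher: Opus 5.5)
Your proposal is correct and follows the paper's proof essentially step for step. You use the same local computation at infinity in $w=1/T$, the same observation that all poles have even order so the double branches exactly over $B_X$, the same direct squaring of $\lambda_X$, and the same substitution $W_X=yY^3/S$, $\varpi_0=y\,\dd T$. Your degree check, the cusp/Katz dictionary and the function-field justification of the biholomorphism only make explicit what the paper asserts in a line.
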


\begin{proof}
The finite poles in \eqref{eq:Xi-D6-qdiff} have order four because
$p,q\notin B_X$.  Since $P_X$ has degree six, its coefficient behaves as
$T^{-2}$ at infinity; after the coordinate change $T=1/w$ and
$\dd T^2=w^{-4}\dd w^2$, this is a pole of order two.  All poles have
even order and all six zeros have odd order one, so the orientation
double branches exactly at $B_X$.  Substituting
\eqref{eq:Xi-D6-lambda} gives $\lambda_X^2=\phi_X^{D_6}$ and leaves the
underlying sextic equation $W_X^2=P_X(T)$ unchanged.  This proves the
biholomorphic identification with $X_2$.  Finally,
$W_X=yY^3/S$ and $\varpi_0=y\,\dd T$, so
\eqref{eq:D6-Xi-differential-weight} follows directly.  If
$p,q\notin\overline{\Omega_2}$, every factor in $G_{p,q}$ is holomorphic
and nonzero on $\Omega_2$.
\end{proof}

A Lax connection is a matrix differential system whose compatible deformation encodes a Painlev\'e equation.  An isomonodromic deformation preserves its monodromy and Stokes data, the latter describing changes of asymptotic solution bases near irregular singularities.

\begin{proposition}[Scalar normal form of the $P_{\mathrm{III}}(D_6)$ Lax connection]\label{prop:D6-scalar-normal}
Use the $P_{\mathrm{III}}(D_6)$ rank-two connection of van der Put and Saito
\cite[\S4.5]{vanDerPutSaito2009}, written as
\[
 \nabla_{\dd/\dd z}=\frac{\dd}{\dd z}+\frac{1}{z^2}A(z),
 \qquad
 A(z)=\begin{pmatrix}A_{11}(z)&L(z)\\ z-q&-A_{11}(z)\end{pmatrix},
\]
where $p,q$ are phase coordinates, $t$ is the isomonodromic deformation parameter, and $\theta_0,\theta_\infty$ are formal monodromy exponent parameters at $0$ and $\infty$, respectively.  Here $t$ is unrelated to heat time until a separate comparison is specified.  With this convention,
\[
 A_{11}(z)=p+\frac12(z-q)(qt+zt+\theta_\infty),
\]
and $L(z)$ is the explicit affine function displayed in
\cite[\S4.5, Eq.~(12)]{vanDerPutSaito2009}.  For $qt\ne0$, eliminate the first
component of a horizontal section and remove the first-derivative term.  The
resulting scalar normal form is
\begin{equation}\label{eq:D6-scalar-normal}
 \boxed{
 \psi''(z)=V_{D_6}(z)\psi(z),\qquad
 V_{D_6}(z)=\frac{N_6(z)}{4q^2z^4(z-q)^2},}
\end{equation}
where
\begin{align}
N_6(z)={}&q^2t^2z^6
 +2q^2t(\theta_\infty+1-qt)z^5 \notag\\
&+\bigl(4p^2-6q^3t(\theta_\infty+1)-q^2-2qt\theta_0-t^2\bigr)z^4 \notag\\
&+\bigl(-8p^2q-4pq^2+2q^5t^2+6q^4t(\theta_\infty+1)
 +4q^3+6q^2t\theta_0+2qt^2\bigr)z^3 \notag\\
&+\bigl(4p^2q^2+4pq^3-q^6t^2-2q^5t(\theta_\infty+1)
 -6q^3t\theta_0\bigr)z^2 \notag\\
&+\bigl(2q^4t\theta_0-2q^3t^2\bigr)z+q^4t^2.
\label{eq:D6-sextic-N}
\end{align}
In particular
\begin{equation}\label{eq:D6-N-at-q}
 \boxed{N_6(q)=3q^6.}
\end{equation}
Hence, relative to the standard projective coordinate on $\mathbb P^1$, the
scalar-normal-form potential $V_{D_6}(z)\,\dd z^2$ has pole orders
$(4,2,4)$ at $0,q,\infty$, respectively.  Its pole-order multiset therefore
agrees with the $(2,4,4)$ pattern associated with $P_{\mathrm{III}}(D_6)$ in
Appendix~A, Table~2 of \cite{ChekhovMazzoccoRubtsov2017}.  The pole at $q$ is,
however, the apparent singularity introduced by scalarization, so this numerical
agreement is not by itself an identification with the invariant Hitchin
quadratic differential.  We call the canonical double of the scalar-normal-form
potential the \emph{scalar-normal-form orientation curve}.  When the zeros are
simple, denote its smooth projective completion by $C_{D_6}^{\rm scal}$.  It
has affine model
\begin{equation}\label{eq:D6-actual-sextic}
 \boxed{\eta^2=N_6(z),}
\end{equation}
and is a genus-two hyperelliptic curve.
\end{proposition}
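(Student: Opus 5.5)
The plan is to derive the scalar equation directly from the first-order system, using the standard elimination formula for a $2\times2$ system. Write a horizontal section as $(y_1,y_2)^{\!\top}$. Horizontality means $y'=B\,y$ with $B=-z^{-2}A(z)$, so
\[
 b_{21}=-\frac{z-q}{z^2},\qquad b_{22}=\frac{A_{11}}{z^2},\qquad \operatorname{tr}B=0,\qquad \det B=-\frac{A_{11}^2+(z-q)L}{z^4}.
\]

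First, I solve the second row for the first component:
\[
 y_1=\frac{y_2'-b_{22}y_2}{b_{21}}.
\]
This needs $b_{21}\not\equiv0$, which holds. Substituting into the first row gives
\[
 y_2''-\Bigl(\operatorname{tr}B+\frac{b_{21}'}{b_{21}}\Bigr)y_2'
 +\Bigl(\det B+\frac{b_{22}b_{21}'}{b_{21}}-b_{22}'\Bigr)y_2=0 .
\]

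Second, I remove the first-derivative term. Put $\psi=y_2\exp\bigl(-\tfrac12\int a_1\bigr)$, where $a_1=b_{21}'/b_{21}$. This yields $\psi''=V\psi$ with
\[
 V=\tfrac14a_1^2-\tfrac12a_1'-a_0 ,
\]
where $a_0$ is the coefficient of $y_2$ above.

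Since $b_{21}'/b_{21}=\frac{1}{z-q}-\frac{2}{z}$, the only new singularity introduced is at $z=q$. There it contributes $\tfrac14(z-q)^{-2}+\tfrac12(z-q)^{-2}=\tfrac34(z-q)^{-2}$ to leading order. This is the apparent singularity.

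Next, I insert $A_{11}=p+\tfrac12(z-q)(qt+zt+\theta_\infty)$ and the affine $L(z)$ from \cite[\S4.5, Eq.~(12)]{vanDerPutSaito2009}. I expect the coefficients of $L$ to carry factors $1/(qt)$ or $1/q$, which explains both the hypothesis $qt\ne0$ and the prefactor $4q^2$. I then put $V$ over the common denominator $4q^2z^4(z-q)^2$ and expand the numerator. The main obstacle is verifying, coefficient by coefficient, that this numerator equals $N_6$ in \eqref{eq:D6-sextic-N}. It is an exact polynomial identity in $\QQ[p,q,t,\theta_0,\theta_\infty][z]$. I would check it by exact symbolic algebra, since the main risk is a convention mismatch in $L$ or in the sign of $B$.

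As an independent check, I evaluate \eqref{eq:D6-sextic-N} at $z=q$. This should give $N_6(q)=3q^6$, so that $V\sim \frac{3q^6}{4q^2q^4(z-q)^2}=\tfrac34(z-q)^{-2}$, matching the local computation above. This confirms \eqref{eq:D6-N-at-q} both ways.

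Pole orders then follow directly:
\begin{itemize}
 \item At $z=0$, $N_6(0)=q^4t^2\ne0$, so the pole has order $4$.
 \item At $z=q$, $N_6(q)=3q^6\ne0$, so the pole has order $2$.
 \item At $\infty$, $N_6$ has degree $6$ with leading coefficient $q^2t^2\ne0$ and the denominator has degree $8$, so $V\sim z^{-2}$. With $z=1/w$ and $\dd z^2=w^{-4}\dd w^2$, this gives a pole of order $4$.
\end{itemize}
So the orders are $(4,2,4)$. As in \cref{prop:D6-orientation-double}, the degree count $10-4=6$ is consistent with $N_6$ having six zeros.

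Finally, assume the zeros of $N_6$ are simple. All pole orders are even, so the canonical double of $V\,\dd z^2$ branches exactly at the six zeros. Setting $\lambda=\eta\,\dd z/(2q z^2(z-q))$ gives $\lambda^2=V\,\dd z^2$ and produces the affine model $\eta^2=N_6(z)$. Riemann--Hurwitz, $2g-2=2(-2)+6$, then gives genus two, exactly as in the proof of \cref{prop:D6-orientation-double}.
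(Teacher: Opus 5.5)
Your route is the same as the paper's. You eliminate $y_1$ using the second row, remove the first-derivative term by $y_2=\sqrt{b_{21}}\,\psi$, and expand over $4q^2z^4(z-q)^2$ by exact algebra. You then read off the pole orders and apply Riemann--Hurwitz. Your local computation of the $\tfrac34(z-q)^{-2}$ term adds something: it shows that $N_6(q)=3q^6$ is forced by scalarization alone, independently of $L$. To justify ``leading order'' there, note that $a_0$ has at most a simple pole at $q$, coming only from $b_{22}b_{21}'/b_{21}$.

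One step is wrong as written: the pole at $\infty$. The denominator $4q^2z^4(z-q)^2$ has degree $6$, not $8$. So $V_{D_6}(z)\to t^2/4\neq0$ as $z\to\infty$, which is where $t\ne0$ is needed, and the order-$4$ pole comes entirely from $\dd z^2=w^{-4}\dd w^2$. Your claimed $V\sim z^{-2}$ would give $V\,\dd z^2\sim w^{-2}\,\dd w^2$, a pole of order $2$. That also contradicts your own degree count: total pole order $4+2+2$ would leave only $8-4=4$ zeros, while $N_6$ has degree six. With this correction the argument is complete.
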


\begin{proof}
For a horizontal section write the coefficient matrix as
$M=-A/z^2=\bigl(\begin{smallmatrix}a&b\\c&-a\end{smallmatrix}\bigr)$,
so $c=-(z-q)/z^2$.  Eliminating the first component gives
\[
 y_2''-\frac{c'}c y_2'
 -\left(\frac{c'}c a+a^2+bc-a'\right)y_2=0.
\]
The substitution $y_2=\sqrt c\,\psi$ removes the first derivative.
Substitution of the explicit entries of \cite[\S4.5, Eq.~(12)]{vanDerPutSaito2009}
and simplification gives \eqref{eq:D6-scalar-normal}--\eqref{eq:D6-sextic-N}.
The identity \eqref{eq:D6-N-at-q} is immediate from the sextic.  Also
$N_6(0)=q^4t^2$ and the leading coefficient is $q^2t^2$, so the pole orders of
$V_{D_6}\dd z^2$ are $4$ at $0$, $2$ at the apparent scalar singularity $q$,
and $4$ at infinity.  The six simple zeros of $N_6$ are the branch points of
its orientation double, so Riemann--Hurwitz gives genus two.
\end{proof}

\begin{theorem}[Certified real $P_{\mathrm{III}}(D_6)$ scalar-normal-form realization of the Xi core]\label{thm:D6-certified-realization}
Let $P_X(T)$ be the exact Xi branch sextic of
\eqref{eq:Xi-sextic-cover}.  There exist real parameters with $q=1$ and a real
affine coordinate
\[
 z=sT+r
\]
such that
\begin{equation}\label{eq:D6-Xi-exact-match}
 \boxed{
 \frac{N_6(sT+r)}{t^2s^6}=P_X(T).}
\end{equation}
More precisely, a solution lies in the directed box
\begin{align}
2.3749166077&<p<2.3749366078,\notag\\
-1.7097537396&<t<-1.7097337395,\notag\\
-5.7723733561&<\theta_0<-5.7723333560,\notag\\
0.7484294656&<\theta_\infty<0.7484494657,\notag\\
-0.008025896829&<s<-0.008025876828,\notag\\
-0.6919860012&<r<-0.6919840011.
\label{eq:D6-Xi-parameter-box}
\end{align}
Consequently the real genus-two curves are explicitly biholomorphic by
\begin{equation}\label{eq:D6-Xi-biholomorphism}
 \boxed{
 z=sT+r,\qquad \eta=t s^3 W_X,}
\end{equation}
and this map commutes with complex conjugation.  The certified Xi real type
$(2,1,1)$ therefore occurs as the scalar-normal-form orientation curve of an actual
$P_{\mathrm{III}}(D_6)$ Lax phase point.
\end{theorem}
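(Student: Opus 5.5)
The plan is to reduce the statement to a square polynomial system in six real unknowns and certify a solution with a Krawczyk test. Fix $q=1$. Then $N_6(sT+r)/(t^2s^6)$ is a monic real sextic in $T$, because the leading coefficient of $N_6$ is $q^2t^2=t^2$. Equating it with $P_X(T)$ gives six coefficient equations: those of $T^0,\dots,T^5$. The unknowns are $(p,t,\theta_0,\theta_\infty,s,r)$, also six. The coefficients of $P_X$ are real, since they are elementary symmetric functions of a conjugation-invariant root set. However, they are known only through the branch boxes of \cref{cor:refined-branch-boxes} and the $10^{-9}$ disks of \cref{thm:c1-pair,thm:c2-pair}.

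I will therefore treat $P_X$ as an interval sextic. Its coefficients $e_k$ are enclosed by directed evaluation of the elementary symmetric functions on the real intervals and complex disks. Using $(T-c)(T-\bar c)=T^2-2\Ree c\,T+|c|^2$ keeps all arithmetic real.

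The system is $F(p,t,\theta_0,\theta_\infty,s,r;e)=0$, where $F_k$ is the $T^k$ coefficient of $N_6(sT+r)-t^2s^6P_X(T)$. To avoid the degenerate scaling I will divide by $t^2s^6$. Equivalently, I will work with $\widetilde N(w)=N_6(w)/t^2$ in the variable $w=sT+r$ and compare with $s^6P_X((w-r)/s)$.

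The first step is to find a floating-point approximate solution $x_0$ by Newton iteration from a reasonable starting guess. I would seed it by matching the roots of $N_6$ directly to the affine images of $B_X$. The target is the centre of the box \eqref{eq:D6-Xi-parameter-box}.

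Next I compute an approximate inverse $Y$ of the Jacobian $DF(x_0)$. Then I evaluate the Krawczyk operator
\[
 \mathcal K(X)=x_0-YF(x_0;\mathbf e)+\bigl(I-Y\,DF(X;\mathbf e)\bigr)(X-x_0)
\]
on the box $X$ of radius about $10^{-8}$–$10^{-11}$ in each coordinate, with the coefficient uncertainty $\mathbf e$ carried as intervals. If $\mathcal K(X)\subset\operatorname{int}X$ for every admissible $e$ (which is what the interval evaluation verifies), then for the exact $P_X$ there is a unique solution in $X$. This gives \eqref{eq:D6-Xi-exact-match}.

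Given the match, the biholomorphism is immediate. Substituting $z=sT+r$ gives $N_6(z)=t^2s^6P_X(T)=(ts^3W_X)^2$. So $\eta=ts^3W_X$ maps $W_X^2=P_X$ onto $\eta^2=N_6$. This is an affine isomorphism of the affine models, and it extends to the smooth projective completions because both sextics have simple roots and the map is compatible at infinity, where the two points over $\infty$ correspond. Since $s,r,t$ are real, the map commutes with $(T,W_X)\mapsto(\bar T,\bar W_X)$. Hence the real type $(2,1,1)$ of \cref{prop:real-type} transfers to $C_{D_6}^{\rm scal}$. The condition $qt\ne0$ needed in \cref{prop:D6-scalar-normal} holds because $t<0$.

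The main obstacle is the conditioning of the Krawczyk step. The scale $s\approx-8\times10^{-3}$ makes $s^6\sim10^{-13}$, so raw coefficient equations have wildly different magnitudes, while the input uncertainty of about $10^{-9}$ on the roots $c_j$ propagates into the coefficients. I would first rescale the equations, for example by comparing normalized power sums or by using $w$-coefficients divided by appropriate powers of $s$. I would also precondition with $Y$ in higher precision. This should make the contraction $\|I-Y\,DF\|$ small and keep the enclosure widths near the displayed $\sim2\times10^{-8}$. If contraction fails, I would refine the real branch boxes further using the $M_{80}$ certificate.
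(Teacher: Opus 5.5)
Your proposal is correct and takes essentially the paper's route. The paper also builds an outward coefficient box for the monic sextic $P_X$ from the refined branch boxes, and applies a parametric Krawczyk inclusion $K(X)\subset\operatorname{int}X$ to the six coefficient equations at $q=1$ with that box kept as an interval parameter; the conjugation-compatible biholomorphism then follows from the fibre rescaling $\eta=ts^3W_X$ and the reality of $s,r,t$.

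One minor correction: the certified box has radius about $10^{-5}$ in $p,t,\theta_\infty$, $2\times10^{-5}$ in $\theta_0$ and $10^{-6}$ in $r$, and only $s$ has width about $2\times10^{-8}$. So you need not aim for the much tighter $10^{-8}$--$10^{-11}$ Krawczyk box you describe.
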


\begin{proof}
Use the refined branch boxes of \cref{cor:refined-branch-boxes} to form an
outward interval box for the six nonleading coefficients of the monic sextic
$P_X$.  Fix $q=1$ and consider the six-dimensional real coefficient map
\[
 (p,t,\theta_0,\theta_\infty,s,r)
 \longmapsto
 \operatorname{monic}\!\bigl(N_6(sT+r)\bigr).
\]
At the numerical center
\begin{align*}
(p,t,\theta_0,\theta_\infty,s,r)\approx{}&
(2.37492660773014,-1.70974373954814,\notag\\[-1mm]
&-5.77235335600993,0.748439465636624,\notag\\[-1mm]
&-0.00802588682829644,-0.691985001154100),
\end{align*}
a parametric Krawczyk operator, with the Xi coefficient box retained as an
interval parameter, satisfies
\[
 K(X)\subset\operatorname{int}X
\]
for the box $X$ in \eqref{eq:D6-Xi-parameter-box}.  Hence for every coefficient
vector in the certified Xi box there is a unique real parameter vector in
$X$ solving the six coefficient equations.  In particular this holds for the
exact Xi sextic.  This proves \eqref{eq:D6-Xi-exact-match}.  Since the leading
coefficient of $N_6$ is $t^2$ when $q=1$, the fibre rescaling in
\eqref{eq:D6-Xi-biholomorphism} follows immediately.  All coefficients and the
affine map are real, so conjugation is preserved.
\end{proof}

\begin{corollary}[The local isomonodromic trajectory in the chosen normalization]\label{cor:D6-local-trajectory}
Every point in the box \eqref{eq:D6-Xi-parameter-box} has $qt\ne0$ and therefore
lies in the affine phase chart of the van der Put--Saito $P_{\mathrm{III}}(D_6)$
Hamiltonian system.  The certified realization of \cref{thm:D6-certified-realization}
therefore lies on a unique local isomonodromic trajectory satisfying
\begin{equation}\label{eq:D6-flow}
 \frac{\dd q}{\dd t}=\frac{4p+q}{t},\qquad
 \frac{\dd p}{\dd t}=\frac{4p^2}{qt}+\frac pt+tq^3+q^2-\frac tq
 -\theta_0+q^2\theta_\infty.
\end{equation}
At the midpoint realization one has, for orientation only,
\[
 \frac{\dd q}{\dd t}\approx-6.14110,
 \qquad
 \frac{\dd p}{\dd t}\approx-7.06387.
\]
Thus, within the chosen $q=1$ affine normalization, the certified Xi match
determines a local $D_6$ isomonodromic direction through the corresponding Lax
phase point.
\end{corollary}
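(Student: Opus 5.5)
The plan has three parts: show the certified box lies in the affine chart, apply ODE existence and uniqueness to the van der Put--Saito Hamiltonian system there, and evaluate the vector field at the midpoint realization.

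For the chart condition, Theorem~\ref{thm:D6-certified-realization} fixes $q=1$. The box \eqref{eq:D6-Xi-parameter-box} gives $t<-1.70973$, so $t\neq0$ and hence $qt=t\neq0$ for every point of the box. This is exactly the nondegeneracy hypothesis of Proposition~\ref{prop:D6-scalar-normal}, and it is also the locus where the van der Put--Saito coordinates $(q,p)$ form an affine chart of the $P_{\mathrm{III}}(D_6)$ space of initial conditions.

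For the trajectory, I would first recall that the van der Put--Saito Hamiltonian system \cite[\S4.5]{vanDerPutSaito2009} encodes isomonodromy of the connection in Proposition~\ref{prop:D6-scalar-normal}. In the chosen normalization, the compatibility condition for $\nabla$ together with its $t$-deformation reduces to the first-order system \eqref{eq:D6-flow}. I would check this by computing $\partial_t A - z^2\partial_z(\cdots)+[\cdot,\cdot]=0$, or simply quote the Hamiltonian $H(q,p,t)$ and its Hamilton equations. Only the form of the right-hand sides matters here. They are rational in $(q,p,t)$ with denominators $t$ and $qt$, and the parameters $\theta_0,\theta_\infty$ enter as constants. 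So on the open set $\{qt\neq0\}$ the vector field is real-analytic. Cauchy--Lipschitz (or Cauchy--Kovalevskaya) then gives a unique local solution $(q(t),p(t))$ through the initial datum $(q,p)=(1,p_*)$ at $t=t_*$, where $(p_*,t_*,\theta_0,\theta_\infty)$ is the certified parameter vector. Uniqueness of that vector in the Krawczyk box, from Theorem~\ref{thm:D6-certified-realization}, makes the trajectory canonical given the normalization. Reality of the data and of the vector field makes the trajectory real.

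For the midpoint values, I would substitute the numerical center into \eqref{eq:D6-flow}. At $q=1$ the first equation gives $(4p+1)/t\approx(9.49971+1)/(-1.70974)\approx-6.1411$. The second is a direct rational evaluation. Both are labelled orientation-only, so no interval certificate is needed. One could still enclose them by interval evaluation over the box, since the denominators stay away from $0$.

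The main obstacle is only a matter of bookkeeping. One must make sure that \eqref{eq:D6-flow} is precisely the isomonodromy system for the specific normalization of $A(z)$ and $A_{11}$ used in Proposition~\ref{prop:D6-scalar-normal}. Sign and scaling conventions for $t$, $\theta_\infty$ and $p$ differ between sources. I would resolve this by rederiving the zero-curvature condition symbolically in the $q=1$-free general form and comparing it with \eqref{eq:D6-flow}, rather than relying on the citation alone.
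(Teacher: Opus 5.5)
Your proposal is correct and is essentially the argument the paper leaves implicit (it gives no separate proof): with $q=1$ and $t<-1.7097$ on the box, $qt=t\neq0$; the right-hand sides of \eqref{eq:D6-flow}, with $\theta_0,\theta_\infty$ frozen, are real-analytic on $\{qt\neq0\}$, so ODE uniqueness gives the local trajectory through the certified phase point; and substituting the numerical centre, where $tq^3-t/q$ cancels, reproduces $-6.14110$ and $-7.06387$. The symbolic zero-curvature rederivation you suggest is an optional consistency check rather than a required step, since the paper takes \eqref{eq:D6-flow} directly from van der Put--Saito, and plain Cauchy--Lipschitz suffices, so Cauchy--Kovalevskaya is not needed.
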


\begin{remark}[What the $(0,2,2)$ signature does and does not identify]\label{rem:D6-map}
The tuple $(0,2,2)$ should not be used by itself as a count of ordinary boundary
circles in a Schottky-double genus formula.  In the decorated-character-variety
construction the cusp data encode Stokes irregularity, and the robust bridge to
the present spectral surface is instead the associated quadratic differential:
Chekhov--Mazzocco--Rubtsov identify cusp number with pole order minus two in
their Hitchin/decorated-character description, giving
\eqref{eq:D6-pole-orders} for $P_{\mathrm{III}}(D_6)$; see Appendix~A,
Table~2 of \cite{ChekhovMazzoccoRubtsov2017}.  Thus the genus-two conclusion
for the signature-compatible differential follows from its six-zero orientation
double, not merely from a boundary-count coincidence.  Separately, the
van der Put--Saito scalar reduction in \cref{prop:D6-scalar-normal} contains an
apparent pole at $q$; its matching pole-order multiset should not be read as an
identification of that apparent singularity with a bordered cusp.

There is also a canonical conceptual route from decorated/Stokes data to this
genus-two curve.  On the generic locus it has the form
\begin{equation}\label{eq:D6-NAH-chain}
 \mathcal X^{\mathrm{dec}}_{D_6}
 \dashrightarrow \mathcal M_B^{\mathrm{wild}}
 \xleftrightarrow{\mathrm{RH}} \mathcal M_{\mathrm{dR}}
 \xleftrightarrow{\mathrm{wild\ NAH}} \mathcal M_{\mathrm{Dol}}
 \xrightarrow{\ \det\Phi\ } \phi
 \longmapsto C_{D_6},
\end{equation}
Here $\mathcal X^{\mathrm{dec}}_{D_6}$ is the decorated character space, whose extra choices of local flags or framings provide coordinates on monodromy data.  The wild Betti moduli space $\mathcal M_B^{\mathrm{wild}}$ records monodromy, formal monodromy and Stokes matrices; ``wild'' refers to irregular singularities.  The dashed arrow forgets the decorations where that map is defined.  The space $\mathcal M_{\mathrm{dR}}$ parametrizes meromorphic flat connections, while $\mathcal M_{\mathrm{Dol}}$ parametrizes the corresponding stable meromorphic Higgs bundles.  A Higgs field $\Phi$ is an endomorphism-valued meromorphic one-form; for the trace-free rank-two system, $\det\Phi$ is the associated quadratic differential $\phi$ in the convention of the diagram.  The curve $C_{D_6}$ is its orientation double, on which a square root of $\phi$ becomes single-valued.

The label $\mathrm{RH}$ in \eqref{eq:D6-NAH-chain} means the irregular \emph{Riemann--Hilbert correspondence}, not the Riemann hypothesis.  The label $\mathrm{wild\ NAH}$ means wild nonabelian Hodge correspondence \cite{BiquardBoalch2004}.  The spaces are understood with compatible singularity, stability and parabolic data fixed; the generic locus excludes unstable or degenerate points and has simple branch zeros.  With these choices, the diagram describes correspondences at the moduli level, not an elementary rational formula in the decorated cluster/$\lambda$-length coordinates (edge coordinates on the decorated surface).

There are now two levels of comparison.  If one starts from an independently
specified $D_6$ Higgs/isomonodromic point, then a holomorphic identification
$C_{D_6}\simeq X_2$ compatible with the hyperelliptic projections exists precisely
when the two labelled branch sextuples agree modulo $\mathrm{PGL}_2(\mathbb C)$,
equivalently when their genus-two moduli (for example Rosenhain data or absolute Igusa invariants) agree after relabelling.  Here $\mathrm{PGL}_2(\CC)$ is the group of M\"obius transformations; Rosenhain data are the remaining three branch coordinates after three labelled branch points have been sent to $0,1,\infty$.  Absolute Igusa invariants are scale-invariant functions of the sextic coefficients that describe the generic genus-two moduli point.  Indeed, if
\[
 M(z)=\frac{\alpha z+\beta}{\gamma z+\delta}
\]
sends the six $D_6$ branch values to $B_X$, then for a nonzero constant $\mu$
\[
 P_X(M(z))=\mu\,\frac{P_{D_6}(z)}{(\gamma z+\delta)^6},
\]
and the lift is explicitly
\begin{equation}\label{eq:D6-explicit-holomorphic-map}
 \boxed{
 T=M(z),\qquad
 W_X=\sqrt{\mu}\,\frac{W}{(\gamma z+\delta)^3}.}
\end{equation}
On the other hand, Corollary~\ref{cor:explicit-D6-Xi} shows that no moduli matching is
needed merely to realize the Xi curve in the same quadratic-differential stratum:
one can construct the $D_6$-signature differential directly from $P_X$.
Theorem~\ref{thm:D6-certified-realization} solves the corresponding
\emph{scalar-projective} isomonodromic realization problem: the Xi sextic occurs
at a certified real phase point of a standard $P_{\mathrm{III}}(D_6)$ Lax
connection after cyclic scalarization.  What remains is sharper and genuinely
nonabelian: identify this projective quadratic differential with a preferred
Dolbeault/Hitchin determinant under wild nonabelian Hodge theory, and compute
the decorated Stokes/cluster coordinates of the certified phase point.

A real-equivariant realization must reproduce the chamber with two real branch
values and two nonreal conjugate pairs, hence the real type $(2,1,1)$.  Finally,
even an identification of the curves does not by itself identify the action
periods.  Formula \eqref{eq:D6-Xi-differential-weight} isolates the discrepancy
exactly: the Xi action differential is a weighted $D_6$ orientation differential.
Thus the remaining tests are now polar/isomonodromic realization, real chamber,
and differential normalization, rather than genus or branch-cover topology.

One caution remains important.  The quadratic differential in
\cref{prop:D6-scalar-normal} is the projective scalar-normal-form potential of
the Lax connection; the apparent pole at $z=q$ is created by cyclic
scalarization.  It should therefore not be identified silently with every
matrix-gauge representative of $\det A(z)\,\dd z^2$.  The agreement with the
Chekhov--Mazzocco--Rubtsov $(0,2,2)$ pole signature is exact, while the direct
Dolbeault/Hitchin interpretation of this particular projective representative
is a further comparison problem.
\end{remark}

\subsection{Deformation obstruction and the canonical Kummer surface}\label{subsec:kummer-deformation}

The fixed-point realization of \cref{thm:D6-certified-realization} raises a sharper
question: does the Xi family itself follow the $D_6$ isomonodromic flow?  The
answer is already negative at first order for every regular identification of
the two deformation parameters.

For four ordered branch values put
\begin{equation}\label{eq:crossratio-def}
 \lambda(z_4;z_1,z_2,z_3)
 :=\frac{(z_4-z_1)(z_2-z_3)}{(z_4-z_3)(z_2-z_1)}.
\end{equation}
This is invariant under an arbitrary, parameter-dependent element of
$\mathrm{PGL}_2(\CC)$.

\begin{theorem}[First-order isomonodromic deformation obstruction]\label{thm:D6-first-order-no-go}
Let
\[
 \lambda_\Xi(x)
 =\lambda\!\left(\bar c_1(x);a(x),b(x),c_1(x)\right)
\]
for the locally transported Xi branch values, and let $\lambda_{D_6}(t)$ be the
corresponding cross ratio of the four matched branch values of the certified
$P_{\mathrm{III}}(D_6)$ scalar spectral curve.  Then
\begin{equation}\label{eq:Xi-crossratio-first-zero}
 \boxed{\lambda_\Xi'(0)=0,}
\end{equation}
whereas along the $D_6$ isomonodromic vector field at the certified phase box,
\begin{equation}\label{eq:D6-crossratio-velocity}
 \boxed{\left|\frac{\dd\lambda_{D_6}}{\dd t}\right|>0.4.}
\end{equation}
Consequently there is no local identity of Xi and $D_6$ spectral curves,
even modulo a parameter-dependent projective change of spectral coordinate,
with a time reparametrization $t=t(x)$ satisfying $t'(0)\ne0$.
The statement is status \textup{[A]}.
\end{theorem}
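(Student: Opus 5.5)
The plan has three parts: an exact symmetry argument for \eqref{eq:Xi-crossratio-first-zero}, a directed interval computation for \eqref{eq:D6-crossratio-velocity}, and a one-line chain-rule argument for the no-go conclusion.

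\emph{Step 1: the Xi side.} Since $\Xi$ is even,
\[
\Xi(-x+i\sqrt T)\,\Xi(-x-i\sqrt T)=\Xi(x-i\sqrt T)\,\Xi(x+i\sqrt T),\qquad \Xi(-x)^2=\Xi(x)^2 .
\]
Hence $\tau_{-x}=\tau_x$. The same then holds for $\mathscr A_x$, $\sigma_x$ and the $x$-dependent discriminant $\Delta_x$ defined as in \eqref{eq:Delta-general}, provided the constant $A_*$ is transported as $\mathscr A_x(0)$, which is itself even in $x$. So the regular numerator $K_x(T)$ is even in $x$ and jointly analytic near $x=0$.

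By \cref{thm:real-pair,thm:c1-pair,thm:c2-pair}, the six selected zeros are simple, and $Y$ stays away from zero on their boxes. The implicit function theorem therefore gives unique analytic branch values $a(x),b(x),c_1(x),\dots$ near $x=0$, each staying in its isolating box. Uniqueness applied to $K_{-x}=K_x$ forces $a(-x)=a(x)$ and likewise for the others. Thus every branch value is even in $x$, and so is any cross ratio of them, which gives $\lambda_\Xi'(0)=0$ exactly. I would include a short check that the labelling is continuous and that $\bar c_1(x)$ is the conjugate of $c_1(x)$ only for real $x$; this does not matter, since only analyticity and evenness are used.

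\emph{Step 2: the $D_6$ side.} The cross ratio is invariant under the real affine map $z=sT+r$ of \cref{thm:D6-certified-realization}. So $\lambda_{D_6}$ can be computed from the roots $z_j$ of $N_6$ at $q=1$, matched to $a,b,c_1,\bar c_1$ by taking the images of the refined boxes of \cref{cor:refined-branch-boxes} and of the disks under $z=sT+r$ with $s,r$ in the parameter box.

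Along the vector field \eqref{eq:D6-flow}, with $\theta_0,\theta_\infty$ fixed, implicit differentiation gives
\[
\dot z_j=-\frac{\partial_t N_6+\partial_q N_6\,\dot q+\partial_p N_6\,\dot p}{N_6'(z_j)}.
\]
Here $q$ moves, so I would either keep $q$ free or renormalize by the scaling symmetry of the chart. From these velocities,
\[
\dot\lambda=\lambda\sum_j \epsilon_j\,\frac{\dot z_{\cdot}-\dot z_{\cdot}}{z_{\cdot}-z_{\cdot}},
\]
a signed sum of logarithmic derivatives of the four difference factors. I would evaluate this in directed interval arithmetic over the whole parameter box \eqref{eq:D6-Xi-parameter-box} together with the root enclosures, obtaining an outward enclosure whose modulus exceeds $0.4$.

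\emph{Step 3: the no-go.} Suppose there were a local identity with $t=t(x)$ regular, i.e.\ $t'(0)\ne0$. Projective invariance of the cross ratio and continuity of the labelling give $\lambda_\Xi(x)=\lambda_{D_6}(t(x))$. Differentiating at $x=0$ yields $0=\lambda_{D_6}'(t(0))\,t'(0)$, which contradicts Step 2.

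I expect the main obstacle to be Step 2. The root enclosures of $N_6$ must be tight enough that the denominators $N_6'(z_j)$ and the pairwise differences are certified away from zero. The wrapping of the large box for $p,t,\theta_0,\theta_\infty,s,r$ must also not swamp the margin above $0.4$. If that happens, I would first re-run a Krawczyk root certificate for $N_6$ at the box, then subdivide the parameter box, or use a centered form with the Krawczyk uniqueness to propagate only the small width.
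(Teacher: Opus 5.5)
Your proposal is correct and follows the paper's proof step for step. Evenness of $\Delta_x$ in $x$, together with implicit-function uniqueness, makes every selected branch value (hence $\lambda_\Xi$) even. The $D_6$ root velocities come from implicit differentiation of $N_6$ along the flow with $q$ kept free, so the $\dot q\,\partial_qN_6$ term is present, evaluated outwardly over the certified parameter and root boxes. The chain rule then gives the contradiction $0=\lambda_{D_6}'\,t'(0)$.

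The paper obtains $\dd\lambda_{D_6}/\dd t\in[-0.729124,0.729128]+i[-1.821216,-0.435579]$. Your wrapping concern is eased by the identity $\Ree\lambda(\bar c;a,b,c)\equiv\tfrac12$ for real $a,b$. This forces $\Ree(\dd\lambda_{D_6}/\dd t)=0$ along the real flow, so only the imaginary part needs to be certified away from zero.
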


\begin{proof}
The normalized Xi discriminant $\Delta_x(T)$ is even in $x$.  Every simple
branch value therefore has a locally unique even continuation by the implicit
function theorem; hence every projective cross ratio of the six branch values
is even, proving \eqref{eq:Xi-crossratio-first-zero}.

For the $D_6$ side, differentiate the sextic $N_6(z)$ along the Hamiltonian
flow \eqref{eq:D6-flow}.  If $z_j(t)$ is a simple zero, then
\[
 \dot z_j=-\frac{\partial_tN_6+\dot p\,\partial_pN_6
                   +\dot q\,\partial_qN_6}{\partial_zN_6}(z_j).
\]
Substitution into the logarithmic derivative of \eqref{eq:crossratio-def}
and outward interval evaluation over the certified parameter and root boxes
gives
\[
 \frac{\dd\lambda_{D_6}}{\dd t}
 \in[-0.729124,0.729128]
   +i[-1.821216,-0.435579],
\]
so its modulus is $>0.435579$.  This proves \eqref{eq:D6-crossratio-velocity}.
Because cross ratios are projective invariants, any local curve identity with
$t'(0)\ne0$ would give
$0=\lambda_\Xi'(0)=\lambda_{D_6,t}\,t'(0)$, a contradiction.
\end{proof}

\begin{remark}[The even reparametrization remains a second-order question]\label{rem:D6-quadratic-test}
The theorem does not exclude a singular/even identification
$t-t_0=\beta x^2+O(x^4)$.  Writing $\tau=x^2$, the Xi branch velocity is
\[
 \left.\frac{\dd T_j}{\dd\tau}\right|_{0}
 =-\frac{\Delta_{[2]}(T_j)}{2\,\partial_T\Delta_0(T_j)}.
\]
A midpoint theta-moment computation gives three independent cross-ratio
velocity comparisons.  The first would require
$\beta\approx-0.06287647$, while the other two give incompatible complex
ratios.  Thus the quadratic $D_6$ tangency also fails numerically, but this
second statement is status \textup{[N]} until the $x^2$ branch velocities are
placed in directed boxes.
\end{remark}

The genus-two curve itself carries a canonical surface of complex dimension
two.  Put
\[
 J_X:=\operatorname{Jac}(X_2),\qquad
 \operatorname{Kum}^{\rm sing}(X_2):=J_X/\{\pm1\},
\]
and denote by $\operatorname{Kum}(X_2)$ its minimal resolution.

\begin{proposition}[Kummer lift of the Xi--$D_6$ realization]\label{prop:Kummer-lift}
The singular Kummer surface $\operatorname{Kum}^{\rm sing}(X_2)$ has exactly
$16$ ordinary double points, indexed by
\[
 J_X[2]\simeq(\ZZ/2\ZZ)^4,
\]
and $\operatorname{Kum}(X_2)$ is a $K3$ surface.  The biholomorphism
\eqref{eq:D6-Xi-biholomorphism} identifies $X_2$ with the specific certified
scalar-normal-form curve $C_{D_6}^{\rm scal}$ and induces an isomorphism of
principally polarized abelian surfaces and therefore canonical isomorphisms
\begin{equation}\label{eq:Kummer-D6-iso}
 \boxed{
 J_X\simeq J(C_{D_6}^{\rm scal}),\qquad
 \operatorname{Kum}(X_2)\simeq\operatorname{Kum}(C_{D_6}^{\rm scal}).}
\end{equation}
All statements are status \textup{[A]}.
\end{proposition}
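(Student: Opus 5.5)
The plan is to combine standard facts about Jacobians of genus-two curves with the explicit biholomorphism of \cref{thm:D6-certified-realization}. By \cref{cor:genus2} and \eqref{eq:Xi-sextic-cover}, $X_2$ is a smooth projective genus-two curve: the six selected branch values are distinct, as they lie in disjoint certified boxes and disks. Hence $J_X$ is a principally polarized abelian surface, isomorphic as a complex torus to $\CC^2/\Lambda$. Multiplication by $-1$ on $\CC^2/\Lambda$ has as fixed points exactly the classes $v$ with $2v\in\Lambda$, which is $J_X[2]\simeq(\ZZ/2\ZZ)^4$, so there are $16$ fixed points. Near each fixed point the involution acts as $-\mathrm{id}$ on $\CC^2$, so the quotient is locally $\CC^2/\{\pm1\}$, the $A_1$ cone. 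These are therefore exactly $16$ ordinary double points, and the quotient is smooth elsewhere.

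For the K3 statement, I would resolve each node by a single $(-2)$-curve; this gives the minimal resolution, and it is crepant. The holomorphic $2$-form $\dd z_1\wedge\dd z_2$ on $J_X$ is $(-1)$-invariant, so it descends to the smooth locus and extends across the exceptional curves without zeros. Thus $K_{\operatorname{Kum}(X_2)}$ is trivial. For simple connectivity, I would use the standard argument. Removing the $2$-torsion points from $J_X$ does not change $\pi_1=\ZZ^4$, since real codimension is $4$. The double cover of the smooth locus then gives an extension of $\ZZ/2$ by $\ZZ^4$, in which the loops around the nodes kill the $\ZZ/2$ and the relations $\gamma\mapsto-\gamma$ force $2\gamma=0$. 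Consequently $\pi_1$ of the resolution is a quotient of $\ZZ^4/2\ZZ^4$, and further analysis shows it is trivial. Alternatively, I would cite $q=h^{1,0}=0$, since no $1$-forms are $(-1)$-invariant, together with the classification. I expect to cite this part rather than reprove it.

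For \eqref{eq:Kummer-D6-iso}, I would use that \eqref{eq:D6-Xi-biholomorphism} is a biholomorphism $X_2\to C_{D_6}^{\rm scal}$. Its inverse is polynomial, because $s\neq0$ and $t\neq0$ hold on the certified box \eqref{eq:D6-Xi-parameter-box}. One also checks that it extends across the two points at infinity, since both curves are sextic models and the map is affine in $z$ and scales the fibre by the constant $ts^3$. Pushforward of divisors then gives an isomorphism $\operatorname{Pic}^0(X_2)\to\operatorname{Pic}^0(C_{D_6}^{\rm scal})$. It carries the theta divisor $W_1=\{[P-P_0]\}$ to the corresponding theta divisor, up to translation, so it preserves the principal polarization (Torelli-compatible functoriality). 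Any group isomorphism commutes with $-1$, so it descends to the singular Kummer surfaces and maps nodes to nodes. By uniqueness of the minimal resolution, it therefore lifts to $\operatorname{Kum}(X_2)\simeq\operatorname{Kum}(C_{D_6}^{\rm scal})$.

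The main obstacle is mostly bookkeeping rather than depth. One must be sure the identification used is the exact one from \cref{thm:D6-certified-realization}, whose existence rests on the Krawczyk certificate. One also needs the nonvanishing of $s$, $t$ and the simplicity of the zeros of $N_6$, which the match transfers from $P_X$. The "status [A]" label then follows because the only computer input is that earlier certificate.
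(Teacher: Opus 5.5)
Your proposal is correct and follows essentially the same route as the paper. The fixed points of $[-1]$ are the sixteen two-torsion points, which give the nodes, and the minimal resolution is the classical Kummer K3. The curve biholomorphism then induces a principally polarized Jacobian isomorphism by functoriality of the Abel--Jacobi construction; this commutes with $[-1]$, descends to the singular quotients, and lifts to the minimal resolutions.

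Where you differ, you only supply details that the paper leaves implicit: the local $A_1$ model and crepancy, the simple-connectivity or $q=0$ argument, the nonvanishing of $s,t$ on the certified box, and the extension of the affine map across the two points at infinity.
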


\begin{proof}
For a genus-two curve the Jacobian is a principally polarized abelian surface,
and its two-torsion group has order $2^{2g}=16$.  The fixed points of the
involution $u\mapsto-u$ are exactly these two-torsion points, which become the
$16$ nodes of the quotient.  Resolving them gives the classical Kummer $K3$
surface.  A curve biholomorphism induces the principally polarized Jacobian
isomorphism by functoriality of the Abel--Jacobi construction; it commutes with
$[-1]$, hence descends to the quotient and lifts to the minimal resolutions.
\end{proof}

The real chamber found in \cref{prop:real-type} leaves a particularly small
real skeleton inside the sixteen-node configuration.

\begin{proposition}[Real Kummer nodes]\label{prop:real-Kummer-nodes}
Let $P_a,P_b,P_{c_1},P_{\bar c_1},P_{c_2},P_{\bar c_2}$ be the six Weierstrass
points above the certified branch values.  The conjugation-fixed subgroup of
two-torsion is
\begin{equation}\label{eq:real-two-torsion}
 \boxed{
 J_X[2]^\tau=\{0,e_0,e_1,e_2\}\simeq(\ZZ/2\ZZ)^2,}
\end{equation}
where
\begin{equation}\label{eq:three-real-two-torsions}
 e_0=[P_a-P_b],\qquad
 e_1=[P_{c_1}-P_{\bar c_1}],\qquad
 e_2=[P_{c_2}-P_{\bar c_2}],
 \qquad e_0+e_1+e_2=0.
\end{equation}
Thus the induced real structure on the singular Kummer surface has exactly
four real nodes: the origin and three nonzero nodes canonically associated
with the three spectral cuts $\Gamma_0,\Gamma_1,\Gamma_2$.
This statement is status \textup{[A]}.
\end{proposition}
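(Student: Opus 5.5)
We will use the classical description of two-torsion on a hyperelliptic Jacobian. Let $\mathcal W=\{P_a,P_b,P_{c_1},P_{\bar c_1},P_{c_2},P_{\bar c_2}\}$ be the Weierstrass points of $X_2$. For genus two, every class in $J_X[2]$ is represented uniquely by $[\sum_{P\in S}P-|S|P_\infty']$ with $S\subset\mathcal W$ of even cardinality, modulo $S\sim\mathcal W\setminus S$. Equivalently, $J_X[2]\simeq\mathcal P_{\rm even}(\mathcal W)/\langle\mathcal W\rangle$ under symmetric difference. This uses two facts. First, $2P\sim2P'$ for Weierstrass points, since both are the hyperelliptic $g^1_2$. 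Second, $\sum_{P\in\mathcal W}P\sim3g^1_2$, which gives the only relation. Counting confirms the identification: there are $2^5/2=16$ classes. The classes $[P-P']$ with $P\ne P'$ give the $15$ nonzero elements, and so do the $10$ partitions into $3+3$ quotiented appropriately. In particular, $e_0+e_1+e_2=[\sum_{\mathcal W}P-3g^1_2]=0$.

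Next we compute the action of $\tau$. By \eqref{eq:real-involution}, $\tau$ is antiholomorphic and commutes with the hyperelliptic involution, since it is defined over $\RR$. Over the branch values it acts as complex conjugation, so it permutes $\mathcal W$ by
\[
P_a\mapsto P_a,\quad P_b\mapsto P_b,\quad P_{c_j}\leftrightarrow P_{\bar c_j}.
\]
The induced action on $J_X$, $[D]\mapsto[\tau D]$, is a group automorphism preserving $J_X[2]$. On the even-subset model it is just this permutation $\sigma$ acting on subsets. Hence $J_X[2]^\tau$ consists of the classes of even sets $S$ with $\sigma(S)=S$ or $\sigma(S)=\mathcal W\setminus S$.

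We then enumerate. The $\sigma$-invariant subsets are unions of the orbits $\{P_a\},\{P_b\},\{P_{c_1},P_{\bar c_1}\},\{P_{c_2},P_{\bar c_2}\}$. Restricting to even cardinality and quotienting by complement leaves the classes $0$, $e_0$, $e_1$, $e_2$. These four are distinct because their defining subsets are pairwise neither equal nor complementary.

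The main obstacle is ruling out the case $\sigma(S)=\mathcal W\setminus S$. Here $|S|=3$, and $S$ must contain exactly one point of each orbit. This is impossible, because the singleton orbits $\{P_a\}$ and $\{P_b\}$ would each have to be split between $S$ and its complement. So no further fixed classes arise, and $J_X[2]^\tau\simeq(\ZZ/2\ZZ)^2$.

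Finally, we treat the nodes. The real structure descends to $J_X/\{\pm1\}$. The nodes are the images of $J_X[2]$, and on two-torsion $\pm$ identifies nothing. So a node is real exactly when its two-torsion point is $\tau$-fixed, which gives four real nodes. Each $e_j$ is the class of the endpoint difference of the cut $\Gamma_j$ joining $a,b$ or $c_j,\bar c_j$. This provides the canonical association with the three spectral cuts.
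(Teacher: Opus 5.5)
Your proof is correct and follows essentially the paper's route: both use the even-subset model of $J_X[2]$ modulo complement, note that conjugation fixes $P_a,P_b$ and exchanges $P_{c_j}\leftrightarrow P_{\bar c_j}$, and enumerate the invariant classes. The paper sidesteps your case $\sigma(S)=\mathcal W\setminus S$ by using the unique two-element representative of each nonzero class, and parity also rules that case out immediately since it forces $|S|=3$. Your observation that a node is real exactly when its two-torsion point is $\tau$-fixed makes explicit a step the paper leaves implicit.

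Two slips in your setup should be corrected, although neither is used in the argument:
\begin{enumerate}
\item If $P'_\infty$ denotes a point over $T=\infty$, it is not a Weierstrass point of the even sextic model, so $[P-P'_\infty]$ is not two-torsion. The normalizing divisor should be $\tfrac{|S|}{2}g^1_2$, or equivalently $|S|$ times a fixed Weierstrass point.
\item The $10$ partitions into $3+3$ are odd subsets. They correspond to the even theta characteristics, not to the $15$ nonzero two-torsion classes.
\end{enumerate}
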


\begin{proof}
For a genus-two hyperelliptic curve, $J[2]$ is represented by even subsets of
the six Weierstrass points modulo replacement by the complementary subset.
Every nonzero class has a unique two-element representative.  Conjugation
fixes $a,b$ individually and interchanges the two pairs
$c_j\leftrightarrow\bar c_j$.  Hence the only invariant two-element subsets
are
\[
 \{a,b\},\qquad\{c_1,\bar c_1\},\qquad\{c_2,\bar c_2\}.
\]
Together with the zero class these give \eqref{eq:real-two-torsion}.
Addition is symmetric difference modulo complement, so the symmetric
difference of the first two pairs is the complement of the third; therefore
$e_0+e_1=e_2$.  The other relations follow cyclically.
\end{proof}

\begin{remark}[Relation with earlier Kummer constructions and the $(0,2,2)$ channels]\label{rem:Kummer-CMR}
The genus-two-to-Kummer mechanism is the same classical construction used in
Planat--Chester--Aschheim--Amaral--Fang--Irwin \cite{PlanatKummer2021}: six
Weierstrass branch points determine a Jacobian, its sixteen two-torsion points
become the nodes of the Kummer quotient, and the minimal resolution is a K3
surface.  The present Xi chamber is nevertheless a different real form from
the explicit fully real sextic examples considered there: here the branch
configuration is two real points plus two conjugate pairs and has real type
$(2,1,1)$.

The three nonzero real Kummer nodes in \eqref{eq:three-real-two-torsions} give
a precise finite skeleton attached to the three cuts.  It is tempting to view
the split real channel $e_0$ and the two conjugate channels $e_1,e_2$ as a
Kummer shadow of the Chekhov--Mazzocco--Rubtsov $(0,2,2)$ organization.  This
last correspondence is an interpretation, not yet a theorem about decorated
Stokes coordinates; proving it requires computing those coordinates at the
certified $D_6$ phase point.
\end{remark}

\begin{remark}[Kummer deformation inherits the first-order obstruction]\label{rem:Kummer-tangent}
The principally polarized Torelli map sends the genus-two curve to its
Jacobian and hence to its polarized Kummer surface.  Therefore the nonzero
$D_6$ cross-ratio tangent in \cref{thm:D6-first-order-no-go} also gives a
nontrivial deformation of the associated polarized Kummer surface, whereas
the Xi $x$-deformation has zero first jet at the fixed point.  Thus the same
regular-time obstruction persists after passing from $X_2$ to its Kummer
surface.  A directed second-order comparison in $\tau=x^2$ is the natural next
test.
\end{remark}

\subsection{Low-discriminant Humbert exclusions}\label{subsec:Humbert}

The Kummer construction is universal for a genus-two Jacobian, whereas a
Humbert condition records an additional algebraic class of the principally
polarized abelian surface.  We now locate the Xi point relative to a collection
of low-discriminant Humbert loci.  Write $I_2,I_4,I_6,I_{10}$ for the Igusa--Clebsch polynomial invariants of the branch sextic, of weights $2,4,6,10$; $I_{10}$ is a discriminant invariant.  On the chart $I_2\ne0$, we use the absolute Igusa invariants
\begin{equation}\label{eq:absolute-Igusa}
 i_1=\frac{144I_4}{I_2^2},\qquad
 i_2=-\frac{1728(I_2I_4-3I_6)}{I_2^3},\qquad
 i_3=\frac{486I_{10}}{I_2^5}.
\end{equation}
The branch-disk certificate gives the following deliberately inflated boxes:
\begin{equation}\label{eq:Xi-Igusa-box}
\begin{aligned}
 |i_1-23.41189486771366|&<10^{-6},\\
 |i_2+76.70307117848387|&<10^{-5},\\
 |i_3+0.0008995087060813192|&<10^{-10}.
\end{aligned}
\end{equation}
For the higher-discriminant Satake tests we also propagate the branch disks
without this deliberate inflation.  An independent exact-rational audit gives
\begin{equation}\label{eq:Xi-Igusa-tight-box}
\begin{aligned}
 |i_1-23.41189486771366|&<2.75\times10^{-8},\\
 |i_2+76.70307117848387|&<5.90\times10^{-7},\\
 |i_3+0.0008995087060813192|&<2.82\times10^{-12}.
\end{aligned}
\end{equation}
The required outward radii before rounding are
$2.74224155\times10^{-8}$, $5.89208525\times10^{-7}$, and
$2.81746158\times10^{-12}$, respectively.

\begin{theorem}[Nine certified Humbert exclusions]\label{thm:low-Humbert}
Let $J_X=\operatorname{Jac}(X_2)$ with its canonical principal polarization.
Then
\begin{equation}\label{eq:nine-Humbert-exclusion}
 \boxed{
 J_X\notin
 H_4\cup H_5\cup H_8\cup H_9\cup H_{12}\cup H_{13}\cup
 H_{16}\cup H_{17}\cup H_{20}.}
\end{equation}
In particular, $J_X$ admits no optimal $(2,2)$-, $(3,3)$-, or $(4,4)$-split
elliptic quotient of the square-discriminant types $4,9,16$.  All nine
exclusions are status \textup{[A]}.
\end{theorem}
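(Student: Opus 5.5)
The approach is to work entirely in the moduli space $\mathcal M_2$ through the absolute Igusa invariants, and to exclude each Humbert locus by evaluating its defining equation on the certified box. For each $D\in\{4,5,8,9,12,13,16,17,20\}$, the image of $H_D$ in $\mathcal M_2$ is an irreducible surface cut out by a single polynomial $F_D(I_2,I_4,I_6,I_{10})=0$ with integer coefficients. These polynomials come from the literature: Gruenewald's and Gruenewald--Lauter's Humbert equations in Igusa--Clebsch form for $D=5,8,12,13,17$; the classical degenerate-sextic and Bolza/Kumar models for the split cases $D=4,9,16$ (Shaska's equations for degree-$2,3,4$ elliptic subcovers); and Elkies--Kumar for $D=20$, if needed via the Satake/Eisenstein modular-form coordinates mentioned in the text. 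Since the Xi point lies on the chart $I_2\ne0$ (the box \eqref{eq:Xi-Igusa-box} keeps $i_1,i_2$ finite), each $F_D$ can be dehomogenized to a polynomial $f_D(i_1,i_2,i_3)$ in the absolute invariants \eqref{eq:absolute-Igusa}, up to a power of $I_2$ that does not vanish there.

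The steps run as follows.

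\textbf{First}, produce outward interval enclosures of $I_2,I_4,I_6,I_{10}$ from the branch boxes of \cref{cor:refined-branch-boxes} and \cref{thm:c1-pair,thm:c2-pair}. The Igusa--Clebsch invariants are symmetric polynomials in the roots, or equivalently polynomials in the coefficients of the real monic sextic $P_X$. From these enclosures we get the boxes \eqref{eq:Xi-Igusa-box} and the tight boxes \eqref{eq:Xi-Igusa-tight-box}, already recorded before the theorem.

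\textbf{Second}, for each $D$, evaluate $f_D$ on the appropriate box by directed interval (or exact rational) arithmetic and check that the enclosure excludes $0$. A single such check gives $J_X\notin H_D$, because $J_X$ lies on $H_D$ exactly when its moduli point satisfies $F_D=0$. This needs care with the boundary of the chart: $H_D$ as a locus of principally polarized abelian surfaces also contains products of elliptic curves. These are excluded because $I_{10}\ne0$, since the six branch values are distinct (the enclosure of $i_3$ is bounded away from $0$), so $J_X$ is a Jacobian of a smooth curve. For the square discriminants $D=4,9,16$, membership in $H_{n^2}$ is equivalent to the existence of an optimal degree-$n$ elliptic subcover, that is, an $(n,n)$-split Jacobian. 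That gives the "in particular" sentence.

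\textbf{Third}, use the inflated box \eqref{eq:Xi-Igusa-box} for the low discriminants, where the polynomials are small. Reserve the tight box \eqref{eq:Xi-Igusa-tight-box} for $D=16,17,20$, whose equations have very large degree and coefficients, so that interval blow-up is the real issue.

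The main obstacle is this last evaluation. The equations for $H_{16},H_{17},H_{20}$ have degrees in the tens and huge integer coefficients, so naive interval evaluation may fail to exclude $0$ even on the tight box. My first remedy is to evaluate in exact rational arithmetic at the box centre and add a rigorous Lipschitz remainder, bounding the gradient of $f_D$ on the box by coefficientwise absolute-value evaluation. If that is still too crude, I would switch to the Satake modular-form coordinates $(\psi_4,\psi_6,\chi_{10},\chi_{12})$, in which the published Humbert equations are far more compact, and rerun the same centre-plus-remainder test there.
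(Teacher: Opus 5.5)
Your logical frame is sound. If a polynomial vanishes on $H_D$ and is nonzero on the certified invariant box, the Xi point is off $H_D$. Since $I_{10}\ne0$, you stay inside $\mathcal M_2$, so the product locus plays no role. And $H_{N^2}$ is the optimal degree-$N$ elliptic-subcover locus, which gives the final clause. This is exactly the paper's treatment of $H_5$, and your allocation of inflated versus tight boxes matches the paper's.

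The gap is the premise of your first step: that a usable level-1 equation $F_D(I_2,I_4,I_6,I_{10})$ is available for every $D$.
\begin{itemize}
\item For $D=8,9,12,16$, the sources the paper uses (Elkies--Kumar for fundamental discriminants, Kumar for squares) provide two-parameter parametrizations of the Humbert surface, not implicit equations.
\item $20=4\cdot5$ is not a fundamental discriminant, so Elkies--Kumar do not cover it.
\item The equations the paper takes from Gruenewald for $D=13,17,20$ are level-2. Each cuts out one component of the preimage of $H_D$ in level-2 moduli, singling out a Satake coordinate, a triple, or a pair.
\end{itemize}
Producing $F_D$ therefore means eliminating both parameters, or multiplying over the whole $S_6$-orbit of components. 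That is precisely the degree explosion you anticipate. So for $D=16,17,20$ your second step has no concrete polynomial to evaluate. The paper also reports that even direct interval substitution into the short level-2 component for $H_{17}$ is ill-conditioned.

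Your fallback does not help. The forms $\psi_4,\psi_6,\chi_{10},\chi_{12}$ are level-1, interchangeable with $I_2,I_4,I_6,I_{10}$ up to powers of $\chi_{10}$. Any equation for $H_D$ written in them must still vanish on every conjugate component.

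The missing idea is never to form the level-1 equation.
\begin{itemize}
\item For $H_8,H_9,H_{12}$, the paper equates the parametrized absolute invariants to the Xi values, $f_1=f_2=f_3=0$, and eliminates $r$. It then isolates \emph{all} roots of $\operatorname{Res}_r(f_1,f_2)$ in $s$ by Rouch\'e, uniformly over the box; the list is exhaustive by degree. Finally it shows that $\operatorname{Res}_r(f_1,f_3)$ is bounded away from zero on every root disk.
\item For $H_{16}$, the paper reconstructs the degree-$18$ residual factor exactly and certifies each branch of $f_1=f_2=0$ by Krawczyk. It then shows the third invariant misses by more than $7.93\times10^{-4}$, and disposes of the degenerate factor $s^{162}$ separately.
\item For the level-2 cases, the paper isolates the six roots of the Xi Satake sextic in disjoint disks over the whole box. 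It then evaluates the short component equation for every admissible choice: $6$ roots for $H_{13}$ (after eliminating its free parameter), $20$ triples for $H_{17}$, and $15$ pairs for $H_{20}$. For $H_{17}$ it bounds the variation through implicit differentiation of the sextic plus a mean-value budget. That is your centre-plus-remainder idea applied to the right object.
\item $H_4$ needs no Humbert equation at all. A nonhyperelliptic involution descends to a M\"obius involution permuting the six branch values with cycle type $2^3$ or $1^2 2^2$. Directed disk arithmetic excludes all $15$ patterns of the first type and all $45$ of the second.
\end{itemize}
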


\begin{proof}
We separate the loci because the most economical certificates are different.

For $H_4$, a degree-two elliptic quotient of a genus-two hyperelliptic curve
is equivalent to a nonhyperelliptic involution; equivalently, this is the
square-discriminant-$4$ elliptic-subcover locus in the Humbert description
\cite{Kumar2015}.  Such an involution descends
to a M\"obius involution preserving the six branch values.  Its permutation
on the branch set has cycle type $2^3$ or $1^2 2^2$.  Directed complex-disk
arithmetic excludes all $15$ pairings of the first type and all $45$ choices
of the second type.  Thus $X_2$ has no nonhyperelliptic involution and
$J_X\notin H_4$.

For $H_5$ we use the standard absolute-invariant equation of the Humbert
surface, equivalently the discriminant-$5$ Hilbert-modular parametrization
of Elkies--Kumar \cite{ElkiesKumar2014}.  Substitution of \eqref{eq:Xi-Igusa-box}, with exact rational
polydisc bounds, gives
\[
 |F_5(i_1,i_2,i_3)|>3.8210230\times10^8,
\]
so $J_X\notin H_5$.

For $H_8$ and $H_9$ we use the explicit two-parameter Igusa--Clebsch
parametrizations of Elkies--Kumar and Kumar \cite{ElkiesKumar2014,Kumar2015}.
Let $f_1=f_2=f_3=0$ be the three polynomial equations obtained by equating
the parametrized absolute invariants to $(i_1,i_2,i_3)$, and define
\[
 R_{12}(s)=\operatorname{Res}_r(f_1,f_2),\qquad
 R_{13}(s)=\operatorname{Res}_r(f_1,f_3).
\]
For $H_8$, exact-rational Rouch\'e bounds isolate all six zeros of $R_{12}$
in disjoint disks; the largest Rouch\'e ratio is $0.151379$, and on every
such disk $|R_{13}|>8.50614\times10^5$.  For $H_9$, all eight zeros are
isolated with largest Rouch\'e ratio $0.054486$, and
$|R_{13}|>1.85052$ on every disk.  Therefore neither surface meets the full
Xi invariant box.

The same elimination strategy applied to the discriminant-$12$ model of
Elkies--Kumar gives nondegenerate resultants of degrees $12$ and $20$ in the
remaining parameter.  Twelve disjoint disks of radius $10^{-4}$ exhaust the
first resultant over \eqref{eq:Xi-Igusa-box}; the largest Rouch\'e ratio is
$0.003025$, and the second resultant has the uniform lower bound
\[
 |R_{13}|>39.8557.
\]
Thus $J_X\notin H_{12}$.

Satake coordinates here are six auxiliary modular coordinates, permuted by relabelling and represented as the roots of the Satake sextic; they are not the six branch coordinates of $X_2$.  Their symmetric functions are determined by the Igusa data in the cited parametrizations.  For $H_{13}$ we use Gruenewald's level-$2$ Satake description
\cite{Gruenewald2008}.  Eliminating the free component parameter gives a
weighted-homogeneous necessary equation $G_{13}=0$ of weight $20$.  The six
roots of the Xi Satake sextic persist in six disjoint disks of radius
$10^{-4}$ over the entire inflated box \eqref{eq:Xi-Igusa-box}; the largest
Rouch\'e ratio is $0.005636$.  Testing each possible distinguished Satake
root gives
\[
 |G_{13}|>355.45,
\]
so $J_X\notin H_{13}$.

The $H_{16}$ exclusion is more delicate.  We use Kumar's
square-discriminant parametrization, equivalently the $(4,4)$-split locus
studied by Bruin--Doerksen \cite{Kumar2015,BruinDoerksen2011}.  For this
local elimination we use the certified enlargement
\begin{equation}\label{eq:H16-Igusa-box}
\begin{aligned}
 |i_1-23.41189486771366|&<3\times10^{-8},\\
 |i_2+76.70307117848387|&<6\times10^{-7},\\
 |i_3+0.0008995087060813192|&<3\times10^{-12}.
\end{aligned}
\end{equation}
It strictly contains the independently certified tight Xi box
\eqref{eq:Xi-Igusa-tight-box}.  With
$X=i_1$ and $Y=i_2$, exact elimination gives
\begin{equation}\label{eq:H16-resultant-factor}
 \operatorname{Res}_r(f_1,f_2)
   =2^{40}s^{162}R_{16}(s;X,Y),
 \qquad \deg_sR_{16}=18.
\end{equation}
The coefficient degrees satisfy $\deg_XR_{16}\le12$ and
$\deg_YR_{16}\le8$.  Hence a $13\times9$ tensor interpolation determines
$R_{16}$ exactly; independent exact off-grid evaluations agree with the
reconstructed polynomial.  Over the entire box \eqref{eq:H16-Igusa-box},
Rouch\'e estimates place the eighteen zeros of $R_{16}$ in eighteen disjoint
disks of common radius $10^{-3}$, with largest Rouch\'e ratio
$0.033482$.  Since $R_{16}$ has degree $18$, this list is exhaustive.
The factor $s^{162}$ corresponds to the degenerate chart contribution; at
$s=0$ the common root $r=0$ has $I_2$-denominator data $A=D=0$ and does not
define absolute invariants, while the nondegenerate residual factor does not
vanish there.

Inside each of the eighteen exhaustive $s$-disks, an exact-rational
complex-ball Krawczyk map certifies a unique transverse $(r,s)$ branch of
$f_1=f_2=0$; the local $s$-box is checked to lie strictly inside the matching
Rouch\'e disk.  Direct ball evaluation of the third invariant equation gives
\begin{equation}\label{eq:H16-i3-gap}
 \boxed{|i_3^{(16)}-i_3^{\Xi}|>7.93\times10^{-4}}
\end{equation}
on every certified branch.  Thus $J_X\notin H_{16}$.

It remains to record the two new height-selected tests.  Gruenewald's
level-$2$ equation for $H_{17}$ distinguishes a $3$-subset of the six Satake
coordinates.  Over the tight invariant box \eqref{eq:Xi-Igusa-tight-box},
all six roots of the Satake sextic persist as disjoint analytic branches in
disks of radius $10^{-8}$; the largest exact-rational Rouch\'e ratio is
$0.552755$.  All $\binom63=20$ triples are tested.  Direct interval
substitution is unnecessarily ill-conditioned, so the certificate retains
the root--invariant correlation: implicit differentiation of the Satake
sextic gives uniform rational bounds on the root derivatives, and hence on
the three derivatives of Gruenewald's $67$-term component equation.  The
smallest exact-center separation is $1658.56649$, while the largest
mean-value variation budget is $0.166232$; branch by branch the minimum final
separation is
\[
 |f_{17}|>1658.5644.
\]
Therefore $J_X\notin H_{17}$.

For $H_{20}$ a level-$2$ component distinguishes a pair of Satake
coordinates.  The same six analytic root disks exhaust the possibilities,
and all $\binom62=15$ pairs can now be handled directly by exact complex-ball
arithmetic.  Gruenewald's $24$-term component equation satisfies
\[
 |f_{20}|>158.6128
\]
on the worst pair over the full tight invariant box.  Hence
$J_X\notin H_{20}$, completing \eqref{eq:nine-Humbert-exclusion}.

The directed and exact-rational audits are recorded in the eleven principal programs of \path{certificates/humbert/}, listed in the computational index.

\end{proof}

\begin{remark}[Kummer interpretation]\label{rem:Humbert-Kummer-interpretation}
The theorem does not say that the N\'eron--Severi group of $J_X$ has minimal
rank, nor that $\operatorname{Kum}(X_2)$ has Picard number exactly $17$:
there are infinitely many Humbert surfaces.  It does show that the certified
Xi/$D_6$ point avoids nine low-complexity or Mordell--Weil-height-selected
enhancements tested here.
For square discriminants $4,9,16$, the excluded loci are precisely the
optimal degree-$2,3,4$ elliptic-subcover loci.  The contrast with the earlier
curve
\[
 y^2=x^6-6x^4+9x^2-1
\]
from \cite{PlanatKummer2021} is concrete: its even sextic has the extra
involution $x\mapsto-x$ and therefore lies on $H_4$, whereas the Xi curve is
rigorously off $H_4$.  Thus the common sixteen-node Kummer mechanism does not
force the two constructions into the same arithmetic chamber.
\end{remark}

\subsection{Explicit Kummer and Shioda--Inose models}\label{subsec:explicit-K3-models}

The preceding Humbert exclusions are moduli statements.  We now make the
associated K3 geometry explicit.  This is useful for two reasons: it gives a
concrete projective equation for the Xi Kummer surface, and it exhibits the
rank-$17$ lattice which is present before any possible Noether--Lefschetz jump, meaning an increase of Picard number through an additional algebraic divisor class.  A Shioda--Inose partner is a K3 surface with a degree-two rational map to the Kummer surface and a compatible Hodge isometry of transcendental lattices.  All numerical boxes in this subsection are status \textup{[A]}.

\subsubsection{A conditioned sextic and the Cassels--Flynn Kummer quartic}

For numerical conditioning only, make the fixed real affine change
\begin{equation}\label{eq:v6-conditioning}
 x=\frac{T-m_0}{h_0},\qquad
 m_0=-9.70103844,\qquad h_0=56.08403609.
\end{equation}
The exact six certified branch values then define the monic sextic
\begin{equation}\label{eq:v6-conditioned-sextic}
 C_X:\qquad y^2=f_X(x)=\sum_{j=0}^6 f_jx^j
       =\prod_{\nu=1}^6(x-e_\nu),\qquad f_6=1.
\end{equation}
Exact rational complex-ball propagation from the branch disks gives the
following outward coefficient enclosures.
\begin{table}[htbp]
\centering
\caption{Certified coefficients of the conditioned Xi sextic
\eqref{eq:v6-conditioned-sextic}.}
\label{tab:v6-sextic-coeffs}
\begin{tabular}{c@{\qquad}r@{\ $\pm$\ }l}
\toprule
$j$ & \multicolumn{2}{c}{$f_j$}\\
\midrule
0 & $-479.43642723240094$ & $1.79\times10^{-7}$\\
1 & $-381.56642081549690$ & $3.24\times10^{-7}$\\
2 & $ 360.78259642693780$ & $2.03\times10^{-7}$\\
3 & $ 364.39333927195054$ & $5.89\times10^{-8}$\\
4 & $ 117.65383080546316$ & $8.09\times10^{-9}$\\
5 & $  17.173081543546353$ & $4.28\times10^{-10}$\\
6 & $   1$ & $0$\\
\bottomrule
\end{tabular}
\end{table}

Let $(\kappa_1:\kappa_2:\kappa_3:\kappa_4)$ be the classical Kummer
coordinates of Cassels--Flynn \cite{CasselsFlynn1996}.  For a sextic
$f=\sum_{j=0}^6f_jx^j$, the singular Kummer quartic is
\begin{equation}\label{eq:v6-kummer-quartic}
 \boxed{\mathcal K_X:\quad
 K_2\kappa_4^2+K_1\kappa_4+K_0=0,}
\end{equation}
where
\begin{align}
K_2={}&\kappa_2^2-4\kappa_1\kappa_3,\label{eq:v6-K2}\\
K_1={}&-2\bigl(
2f_0\kappa_1^3+f_1\kappa_1^2\kappa_2
+2f_2\kappa_1^2\kappa_3+f_3\kappa_1\kappa_2\kappa_3\notag\\
&\hspace{19mm}{}+2f_4\kappa_1\kappa_3^2
+f_5\kappa_2\kappa_3^2+2f_6\kappa_3^3\bigr),\label{eq:v6-K1}
\end{align}
and
\begin{align}
K_0={}&(f_1^2-4f_0f_2)\kappa_1^4
 -4f_0f_3\kappa_1^3\kappa_2-2f_1f_3\kappa_1^3\kappa_3
 -4f_0f_4\kappa_1^2\kappa_2^2\notag\\
&+4(f_0f_5-f_1f_4)\kappa_1^2\kappa_2\kappa_3
 +(f_3^2+2f_1f_5-4f_2f_4-4f_0f_6)\kappa_1^2\kappa_3^2\notag\\
&-4f_0f_5\kappa_1\kappa_2^3
 +4(2f_0f_6-f_1f_5)\kappa_1\kappa_2^2\kappa_3
 +4(f_1f_6-f_2f_5)\kappa_1\kappa_2\kappa_3^2\notag\\
&-2f_3f_5\kappa_1\kappa_3^3
 -4f_0f_6\kappa_2^4-4f_1f_6\kappa_2^3\kappa_3
 -4f_2f_6\kappa_2^2\kappa_3^2\notag\\
&-4f_3f_6\kappa_2\kappa_3^3
 +(f_5^2-4f_4f_6)\kappa_3^4.\label{eq:v6-K0}
\end{align}

\begin{proposition}[Explicit Xi Kummer quartic]\label{prop:v6-kummer-quartic}
Equations \eqref{eq:v6-conditioned-sextic}--\eqref{eq:v6-K0}, with the
exact sextic coefficients determined by the six certified Xi branch values,
define the singular Kummer surface
\[
 \mathcal K_X=J_X/\{\pm1\}.
\]
Every coefficient of the quartic is enclosed by directed rational arithmetic;
the largest absolute coefficient radius is $8.93\times10^{-4}$, compared
with a largest coefficient of size $8.37\times10^5$.  The minimal resolution
is the K3 surface $\operatorname{Kum}(X_2)$.
\end{proposition}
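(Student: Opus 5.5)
The plan has three parts: identify the curve, invoke Cassels--Flynn, and certify the coefficient bounds.

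First, identify the conditioned curve with $X_2$. The substitution \eqref{eq:v6-conditioning} is a real affine change of the spectral coordinate with $h_0\ne0$. Put $e_\nu=(B_\nu-m_0)/h_0$ for $B_\nu\in B_X$. Then $P_X(m_0+h_0x)=h_0^6 f_X(x)$, so $(x,y)\mapsto(T,W_X)=(m_0+h_0x,\,h_0^3y)$ is a biholomorphism $C_X\to X_2$. It extends to the smooth projective completions, since both are genus-two double covers branched at corresponding sextuples, with $\infty$ unbranched in each. It commutes with conjugation because $m_0,h_0$ are real. Hence $\operatorname{Jac}(C_X)\simeq J_X$ as principally polarized abelian surfaces, by functoriality as in \cref{prop:Kummer-lift}. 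The sextic $f_X$ has six distinct roots, because the branch boxes of \cref{thm:real-pair,thm:c1-pair,thm:c2-pair} are disjoint; so $C_X$ is a smooth genus-two curve with $f_6=1\ne0$.

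Second, invoke the classical theorem of Cassels--Flynn \cite{CasselsFlynn1996}. For any squarefree sextic $f$ of degree six (more generally degree five or six), the map $J\to\mathbb P^3$ given by the Kummer coordinates $(\kappa_1:\kappa_2:\kappa_3:\kappa_4)$ is built from the linear system $|2\Theta|$. It factors through $J/\{\pm1\}$ and embeds the quotient as the quartic $K_2\kappa_4^2+K_1\kappa_4+K_0=0$, with $K_2,K_1,K_0$ exactly as in \eqref{eq:v6-K2}--\eqref{eq:v6-K0}. I will take this as a cited identity among polynomial coefficients, valid over $\CC$ for every such $f$. Applied to the exact $f_X$, it gives $\mathcal K_X\simeq J_X/\{\pm1\}$. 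The sixteen nodes are the images of $J_X[2]$, and the minimal resolution is $\operatorname{Kum}(X_2)$, which is K3 by \cref{prop:Kummer-lift}.

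Third, and this is the only computational step, certify the coefficient enclosures. Start from the branch boxes of \cref{cor:refined-branch-boxes} and the $10^{-9}$ disks. Map them by the exact rational affine map $T\mapsto(T-m_0)/h_0$, using rational complex-ball arithmetic. Expand the product $\prod(x-e_\nu)$ to obtain the enclosures of Table~\ref{tab:v6-sextic-coeffs}, with real parts retained since the coefficients are real by conjugation symmetry. Then evaluate each monomial coefficient of $K_1$ and $K_0$, which are quadratic polynomials in the $f_j$, in outward-rounded interval arithmetic. The final step is to report the maximum radius (about $8.93\times10^{-4}$) and the maximum magnitude (about $8.37\times10^5$).

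The main obstacle is not conceptual but bookkeeping. The worry is that the radius amplification through the quadratic products, for example $f_0f_2$ with $|f_0|\approx480$, stays at the stated size. I would check this directly with the supplement's exact-rational evaluation rather than by a priori bounds. The exact quartic is then just the unique point of the coefficient box determined by the exact $f_X$; no numerical assertion enters the geometric identification itself.
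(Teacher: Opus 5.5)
Your proposal is correct and follows essentially the paper's own route. The paper gives no separate proof environment: it relies on exact-rational ball propagation of the branch disks through the affine conditioning (yielding Table~\ref{tab:v6-sextic-coeffs}), the classical Cassels--Flynn quartic \cite{CasselsFlynn1996}, and \cref{prop:Kummer-lift} for the K3 resolution. Your explicit identification $(T,W_X)=(m_0+h_0x,\,h_0^3y)$ of $C_X$ with $X_2$ fills in a step the paper leaves implicit, while the paper additionally tests its transcription of the formulas against the quartic printed in \cite{PlanatKummer2021}.
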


As an exact convention check, the same symbolic formula applied to the curve
\[
 y^2=x^6-6x^4+9x^2-1
\]
from \cite{PlanatKummer2021} reproduces term-for-term the Kummer quartic
printed there.  Thus the present Xi construction and the earlier biological
Kummer construction use exactly the same classical quartic normalization,
although \cref{thm:low-Humbert} shows that they occupy different arithmetic
chambers.

\subsubsection{A certified elliptic fibration on the Xi Kummer surface}

Normalize the weighted Igusa--Clebsch coordinates by $I_2=1$ and put
\begin{equation}\label{eq:v6-ABCJ}
 A=\frac{i_1}{1728},\qquad
 B=-\frac{i_2}{186624},\qquad
 C=\frac{i_3}{1944},\qquad
 J=\frac{i_3}{486}=4C,\qquad D=\frac1{24}.
\end{equation}
From \eqref{eq:Xi-Igusa-box},
\begin{equation}\label{eq:v6-ABC-boxes}
\begin{aligned}
 A&=0.013548550270667627\ \pm 5.79\times10^{-10},\\
 B&=0.0004110032534855317\ \pm 5.36\times10^{-11},\\
 C&=-4.62710239753765\times10^{-7}\ \pm5.15\times10^{-14},\\
 J&=-1.85084095901506\times10^{-6}\ \pm2.06\times10^{-13}.
\end{aligned}
\end{equation}
Kumar's fibration~23 on the Jacobian Kummer surface
\cite{Kumar2014Fibrations,MalmendierShaska2017} becomes
\begin{equation}\label{eq:v6-kummer-fibration}
 \boxed{
 Y^2=X^3-2P(t)X^2+
 \bigl(P(t)^2+J(t-D)\bigr)X,}
 \qquad P(t)=t^3-At+B.
\end{equation}
The section $(X,Y)=(0,0)$ is visibly two-torsion.  Its discriminant factors as
\begin{equation}\label{eq:v6-kummer-discriminant}
 \boxed{
 \Delta_{\mathcal K}(t)=-64J(t-D)S_6(t)^2,\qquad
 S_6(t)=P(t)^2+J(t-D).}
\end{equation}

\begin{theorem}[Certified Kummer elliptic fibers]\label{thm:v6-kummer-fibers}
The polynomial $S_6$ has exactly six simple zeros, one in each disk of radius
$10^{-4}$ centered at
\begin{equation}\label{eq:v6-S6-roots}
\begin{gathered}
 0.06072249283374816,\qquad 0.11477131965968400,\\
 0.04329613590809414\pm0.01244029536308813\,i,\\
 -0.13104304215481022\pm0.01499370760980722\,i.
\end{gathered}
\end{equation}
Moreover $|P(1/24)|>8.11816\times10^{-5}$.  Hence the elliptic K3
\eqref{eq:v6-kummer-fibration} has fiber configuration
\begin{equation}\label{eq:v6-kummer-fiber-config}
 \boxed{I_5^*+6I_2+I_1,}
\end{equation}
with the $I_5^*$ fiber at infinity and the $I_1$ fiber at $t=1/24$.
\end{theorem}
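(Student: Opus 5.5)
The proof splits into a certified root count for $S_6$, a nonvanishing check at $t=D$, and Tate's algorithm fibre by fibre; the root certification is the only real computation.

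\emph{Roots of $S_6$.} Since $S_6(t)=(t^3-At+B)^2+J(t-D)$ is a monic sextic whose coefficients depend polynomially on $A,B,J$, I would work over the full invariant polydisc \eqref{eq:v6-ABC-boxes}. Around each of the six displayed centres $t_k$, apply Rouch\'e's theorem on $|t-t_k|=10^{-4}$. I would compare $S_6$ with its linearization $S_6(t_k)+S_6'(t_k)(t-t_k)$, or alternatively with the factored numerical approximation $\prod_j(t-t_j)$. All quantities would be evaluated in exact rational complex-ball arithmetic over the box, and I would bound the Taylor remainder by $\sum_{m\ge2}|S_6^{(m)}(t_k)|10^{-4m}/m!$. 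The six disks are pairwise far apart relative to $10^{-4}$, since the minimal separation is of order $10^{-2}$. Each disk therefore contains exactly one zero, and because $\deg S_6=6$ the list is exhaustive and all zeros are simple. Conjugation symmetry of the real coefficients halves the work for the two nonreal pairs.

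\emph{The point $t=D$.} I would evaluate $P(1/24)=1/13824-A/24+B$ in ball arithmetic to get $|P(1/24)|>8.11816\times10^{-5}$. Hence $S_6(D)=P(D)^2\neq0$, so $D$ is distinct from the roots of $S_6$. Also $J\neq0$ by the box. Consequently the discriminant \eqref{eq:v6-kummer-discriminant} has a simple zero at $t=D$ and double zeros at the six roots of $S_6$, with no coincidences.

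\emph{Fibre types via Tate's algorithm.} At $t=D$ the discriminant vanishes to order one, so the fibre is $I_1$. At a root $t_k$ of $S_6$ the curve is $Y^2=X(X^2-2PX+S_6)$. The middle factor degenerates to $X=P(t_k)$ as a double root, and $P(t_k)\ne0$ because $P(t_k)^2=-J(t_k-D)\ne0$. Thus the $X$-coordinate of the singular point differs from $0$ and $c_4$ is nonzero, so the reduction is multiplicative. With $\operatorname{ord}\Delta=2$, the fibre is $I_2$.

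\emph{Fibre at infinity.} For the fibre at infinity, substitute $t=1/s$ and twist: $X=s^{-4}X'$, $Y=s^{-6}Y'$. This gives coefficients $a_2=-2s\,(1-As^2+Bs^3)$ and $a_4=s^2\bigl((1-As^2+Bs^3)^2+Js^5(1-Ds)\bigr)$, so $a_2$ has order $1$ and $a_4$ order $2$. The degree of the discriminant is $1+12=13$, so its order at $s=0$ is $4\cdot 6-13=11$. Meanwhile $c_4$ has order $2$ and the fibre is additive. Tate's algorithm then identifies the fibre as $I_{11-6}^*=I_5^*$.

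\emph{Consistency check.} The Euler numbers sum to $11+6\cdot2+1=24$, as they must for a K3 surface. The main obstacle is the uniform Rouch\'e certification over the invariant box, especially for the close real root pair and the conditioning of coefficients of size $10^{-7}$. I would handle this by evaluating at the centres with tight rational balls and using the explicit derivative lower bounds $|S_6'(t_k)|$.
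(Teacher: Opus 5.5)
Your route is the same as the paper's. You run Rouch\'e on the six $10^{-4}$-disks with the invariant box propagated into $S_6$, get exhaustiveness from $\deg S_6=6$, use the bound on $P(1/24)$ to make $t=D$ a simple discriminant zero, and use $P(t_k)^2=-J(t_k-D)\ne0$ at the roots. At infinity, your $a_2,a_4$ in the K3 scaling and $\operatorname{ord}_{s=0}\Delta=24-13=11$, together with $\operatorname{ord}c_4=2$, correctly give $I_5^*$.

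One step is misstated, although its conclusion survives. At a root $t_k$ of $S_6$ the quadratic factor becomes $X^2-2P(t_k)X=X\,(X-2P(t_k))$, so the cubic is $X^2\,(X-2P(t_k))$. The double root is therefore at $X=0$, and the singular point of the Weierstrass fibre is $(X,Y)=(0,0)$, which lies on the two-torsion section. The simple root is $2P(t_k)$. Your claims that the double root is $X=P(t_k)$ and that the singular point has $X\ne0$ are false. The quadratic $X^2-2PX+S_6$ has discriminant $-4J(t-D)$, so it acquires the double root $X=P$ only at $t=D$, which is the $I_1$ fibre.

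The correct $I_2$ argument uses the fact you already established. The double root $0$ and the simple root $2P(t_k)$ are distinct because $P(t_k)\ne0$. Equivalently, $c_4=16(a_2^2-3a_4)=64P^2-48S_6=64P(t_k)^2\ne0$, which is how the paper argues. With this correction your proof is complete.
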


\begin{proof}
The coefficient boxes in \eqref{eq:v6-ABC-boxes} are propagated directly
into $S_6$.  Rouch\'e estimates on the six disks in
\eqref{eq:v6-S6-roots} have maximal ratio $0.00987<1$; the disks are
disjoint and $\deg S_6=6$, so the list is exhaustive.  The displayed lower
bound for $P(D)$ makes the factor $t-D$ a simple discriminant zero; there
$c_4\ne0$, so the fiber is $I_1$.  At a zero of $S_6$, the disk is disjoint
from $D$ and $J\ne0$, hence $P(t)^2=-J(t-D)\ne0$; moreover $c_4=64P(t)^2$
on $S_6=0$, so the double discriminant zero is an $I_2$ fiber.  At infinity,
with $u=1/t$ and the standard K3 Weierstrass scaling, exact symbolic algebra
gives
\[
 \operatorname{ord}_{u=0}(c_4,c_6,\Delta)=(2,3,11),
\]
which is type $I_5^*$ by Tate's algorithm \cite{Tate1975}.  The Euler numbers
$11+6\cdot2+1=24$ provide the expected K3 check.
\end{proof}

\subsubsection{The Shioda--Inose partner and its normal quartic}

The Shioda--Inose K3 surface associated with $J_X$ has Kumar's normalized
Weierstrass model \cite{Kumar2008}
\begin{equation}\label{eq:v6-SI-Weierstrass}
 \boxed{
 y^2=x^3-t^3(1+At)x+t^5(D+Bt+Ct^2).}
\end{equation}
Its discriminant is
\begin{equation}\label{eq:v6-SI-discriminant}
 \Delta_{\rm SI}(t)=-16t^9R_5(t),\qquad
 R_5(t)=-4(1+At)^3+27t(D+Bt+Ct^2)^2.
\end{equation}

\begin{theorem}[Certified Shioda--Inose fiber pattern]\label{thm:v6-SI-fibers}
The five zeros of $R_5$ are simple and lie, one each, in disks of radius
$0.1$ centered at
\begin{equation}\label{eq:v6-R5-roots}
\begin{gathered}
 -246.0708096953812,\qquad -86.13686784034077,\qquad
 2302.762231755577,\\
 -97.02536873579858\pm69.01396595582464\,i.
\end{gathered}
\end{equation}
On all five disks one has $t(1+At)\ne0$.  Therefore
\eqref{eq:v6-SI-Weierstrass} has fiber configuration
\begin{equation}\label{eq:v6-SI-fiber-config}
 \boxed{II^*+III^*+5I_1,}
\end{equation}
where $III^*$ is at $t=0$ and $II^*$ at $t=\infty$.
\end{theorem}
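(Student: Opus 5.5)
The plan mirrors the proof of \cref{thm:v6-kummer-fibers}: isolate the roots of $R_5$ with a Rouch\'e certificate that holds uniformly over the coefficient box, then read off fiber types from vanishing orders by Tate's algorithm, and finish with an Euler-number check.

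\textbf{Step 1: roots of $R_5$.} Propagate the boxes \eqref{eq:v6-ABC-boxes} (with $D=1/24$ exact) into the coefficients of
\[
 R_5(t)=-4(1+At)^3+27t(D+Bt+Ct^2)^2 .
\]
Its leading coefficient is $27C^2>0$, bounded away from zero, so $\deg R_5=5$ throughout the box. On each of the five circles of radius $0.1$ about the centers \eqref{eq:v6-R5-roots}, I would apply Rouch\'e to the linearization $R_5(c)+R_5'(c)(t-c)$. The required bound is
\[
 |R_5(c)|+\sup|\text{remainder}| < 0.1\,|R_5'(c)|,
\]
evaluated in exact rational complex-ball arithmetic. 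The disks are visibly disjoint, since the centers are separated by more than $1$. Five disks each containing one root therefore exhaust the roots of a quintic, and each root is simple.

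The main obstacle is conditioning. The coefficients are very small: $C\sim10^{-7}$, so $C^2\sim10^{-13}$, while the roots reach $t\approx2300$. I expect severe cancellation in $R_5$, and the coefficient uncertainty ($\pm5\times10^{-14}$ on $C$) gets multiplied by large powers of $t$. First I would check that the root near $2302.76$ is insensitive at radius $0.1$. If the direct check fails, I would rescale $t=\lambda u$ with $\lambda\approx10^3$ so the coefficients are balanced, or certify that root through the reversed polynomial $u^5R_5(1/u)$. A less satisfactory fallback is to widen only that disk.

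\textbf{Step 2: $t(1+At)\ne0$ on the disks.} This is immediate. The only zeros are $t=0$ and $t=-1/A\approx-73.81$, and neither lies within $0.1$ of any center. The nearest center is the complex pair, at distance about $72$ from $-1/A$.

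\textbf{Step 3: fiber types.} At each zero of $R_5$ we have $c_4\propto t^3(1+At)\ne0$ and $\operatorname{ord}\Delta=1$, so the fiber is $I_1$.

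At $t=0$, $\operatorname{ord}(a_4,a_6,\Delta)=(3,5,9)$. This needs $D=1/24\ne0$ so that $\operatorname{ord}a_6=5$ exactly, and $R_5(0)=-4\ne0$. Tate's algorithm then gives $III^*$.

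At infinity, substitute $s=1/t$ with the K3 weights $x\mapsto x/s^4$, $y\mapsto y/s^6$. Then $a_4$ becomes $-s^4(1+A/s)/s^{-3}\cdot s^8$, of order $s^4(s+A)$, so $\operatorname{ord}=4$ when $A\ne0$. Similarly $a_6$ is of order $s^5$, using $C\ne0$ from the box. Hence $\operatorname{ord}(a_4,a_6)\ge(4,5)$ with $\operatorname{ord}a_6=5$, and
\[
 \operatorname{ord}\Delta=24-9-5=10 .
\]
This gives $II^*$, and I would confirm it symbolically as was done for $I_5^*$.

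\textbf{Step 4: consistency check.} The Euler numbers satisfy $10+9+5=24$, as required for a K3 surface, which confirms \eqref{eq:v6-SI-fiber-config}.
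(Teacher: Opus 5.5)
Your plan is correct and follows the paper's own proof: a directed Rouch\'e test on the five radius-$0.1$ disks with exhaustiveness from $\deg R_5=5$, then $I_1$ fibers from $t(1+At)\ne0$, the exact orders $(3,5,9)$ at $t=0$ and $(4,5,10)$ at $\infty$ read by Tate's algorithm, and the Euler check $9+10+5=24$; the paper's maximal Rouch\'e ratio is $0.01266$, so your rescaling and disk-widening fallbacks are not needed (and widening would not prove the stated radius in any case). One factual slip: the center nearest $-1/A\approx-73.81$ is the real root $-86.137$, at distance about $12.3$, not the complex pair at distance about $72$; this disk is what gives the paper's bound $|1+At|>0.1656$, and it still leaves $t(1+At)\ne0$ on every disk.
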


\begin{proof}
Directed Rouch\'e bounds for $R_5$ on the five disks have maximal ratio
$0.01266<1$; since $\deg R_5=5$, the list is exhaustive.  The certified
lower bounds on the residual disks are
\[
 |t|>86.03,\qquad |1+At|>0.1656,
\]
so the residual simple discriminant zeros have $c_4\ne0$ and are of type
$I_1$.  At $t=0$ the exact vanishing orders are $(3,5,9)$, giving $III^*$.
After the standard K3 coordinate change at infinity the exact orders are
$(4,5,10)$, giving $II^*$ by Tate's algorithm \cite{Tate1975}.  Again $9+10+5=24$.
\end{proof}

The same Shioda--Inose surface admits the Clingher--Doran $N$-polarized
quartic normal form \cite{ClingherDoran2012}
\begin{equation}\label{eq:v6-CD-quartic}
\begin{split}
 Y^2ZW-4X^3Z+3\alpha XZW^2+\beta ZW^3+\gamma XZ^2W\\
 {}-\frac12\bigl(\delta Z^2W^2+W^4\bigr)=0,
\end{split}
\end{equation}
with, in the gauge $I_2=1$,
\begin{equation}\label{eq:v6-CD-parameters}
 \alpha=\frac{i_1}{576},\qquad
 \beta=-\frac{i_2}{13824},\qquad
 \gamma=-\frac{i_3}{8},\qquad
 \delta=\frac{i_3}{64}.
\end{equation}
The certified parameter boxes are
\begin{equation}\label{eq:v6-CD-boxes}
\begin{aligned}
 \alpha&=0.04064565081200288\ \pm1.74\times10^{-9},\\
 \beta&=0.005548543922054678\ \pm7.24\times10^{-10},\\
 \gamma&=1.124385882601649\times10^{-4}\ \pm1.25\times10^{-11},\\
 \delta&=-1.405482353252061\times10^{-5}\ \pm1.57\times10^{-12}.
\end{aligned}
\end{equation}
Thus \eqref{eq:v6-CD-quartic} is a second explicit projective model attached
to exactly the same Xi genus-two moduli point.

\subsubsection{N\'eron--Severi lattice frontier}

The lattice $U$ is the rank-two hyperbolic plane; $A_n,D_n,E_n$ denote the usual positive-definite root lattices, and $L(-1)$ means reversal of the bilinear form of $L$.  A sublattice is primitive when the quotient of its ambient lattice by that sublattice is torsion-free; the transcendental lattice is the orthogonal complement of the N\'eron--Severi lattice in integral second cohomology.  A lattice polarization is a primitive embedding of the specified lattice into that divisor lattice.

The fiber configuration \eqref{eq:v6-SI-fiber-config} exhibits a primitive
lattice polarization
\begin{equation}\label{eq:v6-N-lattice}
 \boxed{N=U\oplus E_8(-1)\oplus E_7(-1)
 \hookrightarrow \operatorname{NS}(\operatorname{SI}(J_X)),}
\end{equation}
of rank $17$ and discriminant $2$; this is precisely the
Clingher--Doran/Kumar $N$-polarized moduli realization
\cite{Kumar2008,ClingherDoran2012}.  On the Kummer side,
\eqref{eq:v6-kummer-fiber-config} gives the rank-$17$ trivial lattice
\begin{equation}\label{eq:v6-kummer-trivial-lattice}
 U\oplus D_9(-1)\oplus A_1(-1)^{\oplus6}
 \subseteq\operatorname{NS}(\operatorname{Kum}(X_2)),
\end{equation}
together with the visible two-torsion section.

\begin{proposition}[Picard-rank reduction to the remaining Humbert problem]
\label{prop:v6-NS-frontier}
Let $\rho_X=\operatorname{rank}\operatorname{NS}(J_X)$.  Then
\begin{equation}\label{eq:v6-rho-relations}
 \boxed{
 \rho(\operatorname{Kum}(X_2))
 =\rho(\operatorname{SI}(J_X))
 =16+\rho_X.}
\end{equation}
Equivalently, for the two displayed elliptic fibrations,
\begin{equation}\label{eq:v6-Shioda-Tate}
\begin{aligned}
 \rho(\operatorname{SI}(J_X))&=17+\operatorname{rank}\operatorname{MW}(\pi_{\rm SI}),\\
 \rho(\operatorname{Kum}(X_2))&=17+\operatorname{rank}\operatorname{MW}(\pi_{\mathcal K}).
\end{aligned}
\end{equation}
Consequently the minimal possibility is Picard rank $17$.  If it occurs,
then
\[
 \operatorname{NS}(\operatorname{SI}(J_X))=N,
\]
and the Kummer N\'eron--Severi lattice is the standard generic
$(16,6)$ Jacobian-Kummer lattice $\Lambda_{(16,6)}$ of signature $(1,16)$
and discriminant $2^6$.
\end{proposition}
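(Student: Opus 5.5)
The argument is lattice-theoretic and uses standard inputs. First I establish $\rho(\operatorname{Kum}(X_2))=16+\rho_X$. The rational quotient map $J_X\dashrightarrow J_X/\{\pm1\}$ followed by the resolution gives a rational Hodge isometry (up to the factor $2$) between $T(J_X)_\QQ$ and $T(\operatorname{Kum}(X_2))_\QQ$. On the Néron--Severi side, the pullbacks of $[-1]$-invariant classes on $J_X$ (all of $H^2(J_X)$ is $[-1]$-invariant) and the sixteen exceptional curves over the nodes of \cref{prop:Kummer-lift} span $\operatorname{NS}(\operatorname{Kum})_\QQ$. Since $b_2(\operatorname{Kum})=22=6+16$, it follows that $\rho(\operatorname{Kum})=\rho_X+16$. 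For the Shioda--Inose surface I will invoke the defining property recalled in the text: there is a degree-two rational map $\operatorname{SI}(J_X)\dashrightarrow\operatorname{Kum}(X_2)$ together with a Hodge isometry $T(\operatorname{SI}(J_X))\simeq T(J_X)$. Hence the transcendental ranks agree, $22-\rho(\operatorname{SI})=6-\rho_X$, and so $\rho(\operatorname{SI}(J_X))=16+\rho_X$.

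Next I derive \eqref{eq:v6-Shioda-Tate} from the Shioda--Tate formula $\rho=2+\sum_v(m_v-1)+\operatorname{rank}\operatorname{MW}$. For the Shioda--Inose fibration, \cref{thm:v6-SI-fibers} gives $II^*$, $III^*$ and five $I_1$ fibres, contributing $8+7=15$, so the trivial rank is $17$. For the Kummer fibration, \cref{thm:v6-kummer-fibers} gives $I_5^*$ (contributing $9$), six $I_2$ (contributing $6$) and $I_1$ (contributing $0$), so the trivial rank is again $2+15=17$. The torsion section does not affect rank. Because $\rho_X\ge1$ by the principal polarization, both Picard numbers are at least $17$, and this minimum is attained exactly when the Mordell--Weil ranks vanish.

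Finally, suppose $\rho=17$. Then $\operatorname{NS}(\operatorname{SI})\otimes\QQ=N\otimes\QQ$. Since $N$ embeds primitively by \eqref{eq:v6-N-lattice}, $N$ is a finite-index primitive sublattice of $\operatorname{NS}$ of the same rank, and therefore equals $\operatorname{NS}$. (Alternatively, $\operatorname{disc}N=2$ is squarefree, so no proper overlattice exists.) For the Kummer side, $\rho_X=1$ means $\operatorname{NS}(J_X)=\langle2\rangle$, generated by the theta divisor. The Néron--Severi group of $\operatorname{Kum}$ is then the primitive saturation of $\pi_*\operatorname{NS}(J_X)\oplus\bigoplus\ZZ E_i$, with the $16$ nodes taken inside the Kummer lattice $K$. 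That saturation is generated by the $(16_6)$ configuration of tropes and nodes, that is, the classical lattice $\Lambda_{(16,6)}$ of Nikulin/Barth with discriminant $2^6$ and signature $(1,16)$.

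I expect the main obstacle to be this last identification. It requires the known description of the overlattice; I would cite Nikulin's Kummer-lattice theorem and Garbagnati's computation of $\operatorname{NS}$ for Jacobian Kummers of Picard rank $17$ rather than reprove the gluing by hand.
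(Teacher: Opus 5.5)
Your proposal is correct and follows the paper's proof essentially step for step: you equate transcendental ranks via the Kummer rescaling and the Shioda--Inose Hodge isometry, apply Shioda--Tate with root ranks $8+7=15$ and $9+6=15$, use primitivity of $N$ to force $N=\operatorname{NS}(\operatorname{SI}(J_X))$, and cite the literature for the generic $(16,6)$ Kummer lattice. Your added remark that $|\operatorname{disc}N|=2$ admits no proper integral overlattice is a sound refinement, since it makes the rank-$17$ identification independent of the separately asserted primitivity of $N$.
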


\begin{proof}
For an abelian surface $A$, the transcendental lattice has rank
$6-\rho(A)$.  A Shioda--Inose structure preserves its transcendental lattice
up to Hodge isometry, while passage to the Kummer surface rescales the
intersection form by $2$ without changing its rank.  Since a K3 surface has
second Betti number $22$, \eqref{eq:v6-rho-relations} follows.  The two
Shioda--Tate formulas follow independently from the root ranks
$8+7=15$ and $9+6=15$.  If the first surface has rank $17$, the primitive
rank-$17$ lattice $N$ already fills its N\'eron--Severi group; the generic
rank-$17$ Kummer lattice is the classical node--trope lattice
$\Lambda_{(16,6)}$ \cite{Kumar2008}.
\end{proof}

\begin{proposition}[Certified Mordell--Weil height barrier]\label{prop:v7-MW-height-barrier}
Let $P\ne O$ be a section of the Shioda--Inose fibration
$\pi_{\rm SI}$ and put $n=(P\mathbin{\cdot}O)\ge0$.  Then
\begin{equation}\label{eq:v7-height-formula}
 \widehat h(P)=4+2n-\operatorname{contr}_{III^*}(P),\qquad
 \operatorname{contr}_{III^*}(P)\in\left\{0,\frac32\right\}.
\end{equation}
Consequently $\operatorname{MW}(\pi_{\rm SI})_{\rm tors}=0$, and the nine
Humbert exclusions of \cref{thm:low-Humbert} imply
\begin{equation}\label{eq:v7-height-barrier}
 \boxed{P\ne O\quad\Longrightarrow\quad \widehat h(P)\ge\frac{21}{2}.}
\end{equation}
Equivalently, any hypothetical section has $(P\mathbin{\cdot}O)\ge4$.
\end{proposition}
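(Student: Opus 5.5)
The plan is to apply Shioda's height formula to the fibration $\pi_{\rm SI}$ of \cref{thm:v6-SI-fibers}. Then I translate the existence of a section into a Humbert condition on $J_X$ through the transcendental-lattice correspondence of \cref{prop:v6-NS-frontier}.

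\textbf{Step 1: the height formula.} For an elliptic K3 surface, $\chi(\mathcal O)=2$. Shioda's formula therefore reads $\widehat h(P)=4+2(P\cdot O)-\sum_v\operatorname{contr}_v(P)$. The fiber configuration $II^*+III^*+5I_1$ has only two reducible fibers.
\begin{itemize}
\item The $I_1$ fibers are irreducible and contribute nothing.
\item The $II^*$ fiber has root lattice $E_8$, whose discriminant group is trivial, so every section meets the identity component and $\operatorname{contr}_{II^*}=0$.
\item The $III^*$ fiber has root lattice $E_7$, with discriminant group $\ZZ/2$. Its only non-identity simple component gives contribution $-(\text{inverse Cartan entry})=3/2$.
\end{itemize}
This gives \eqref{eq:v7-height-formula}.

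\textbf{Step 2: no torsion.} A torsion section has height $0$. But $4+2n-c\ge 4-3/2>0$, so $\operatorname{MW}_{\rm tors}=0$. Alternatively, $\det N=2$ is squarefree, so $N$ admits no proper even overlattice of finite index.

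\textbf{Step 3: converting a section into a Humbert discriminant.} Suppose $P\ne O$. By \eqref{eq:v6-Shioda-Tate} the Mordell--Weil rank is at least $1$, and by \eqref{eq:v6-rho-relations} this forces $\rho_X\ge2$.

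Let $L$ be the primitive saturation in $\operatorname{NS}(\operatorname{SI}(J_X))$ of $N+\ZZ P$. Then $L=N\oplus\ZZ P_0$ up to the frame, where $P_0$ is primitive in the relevant Mordell--Weil sublattice and $P=kP_0$. The standard determinant formula then gives
\[
 |\det L|=|\det N|\,\widehat h(P_0)=2\widehat h(P_0),
 \qquad
 \widehat h(P)=k^2\widehat h(P_0).
\]

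Since $L$ is a primitive rank-$18$ sublattice containing $N$, its orthogonal complement $T_L$ in $H^2$ has rank $4$. Under the Shioda--Inose Hodge isometry of transcendental lattices, $L$ corresponds to a primitive rank-two sublattice of $\operatorname{NS}(J_X)$. That sublattice contains the principal polarization and a second algebraic class, and its discriminant has absolute value $|\det L|$. By definition this means $J_X\in H_\Delta$ with $\Delta=2\widehat h(P_0)$.

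I expect the main obstacle to be precisely this identification. One has to check the sign and normalization conventions: that the Humbert invariant is $\Delta=-\operatorname{disc}$ of the rank-two Néron--Severi lattice. One must also confirm that primitivity is preserved, so that no factor of a square creeps in. I would verify this against Kumar's dictionary \cite{Kumar2008} between $N$-polarized K3 surfaces and principally polarized abelian surfaces. A useful sanity check is the known correspondence of $H_5$, $H_8$, and so on with small section heights.

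\textbf{Step 4: the arithmetic conclusion.} By Step 1, $\widehat h(P_0)\in\{4+2n,\ 5/2+2n\}$, so $\Delta\in\{8+4n,\ 5+4n\}$ with $n=(P_0\cdot O)\ge0$. The exclusions of \cref{thm:low-Humbert} rule out
\[
 \Delta\in\{5,8,9,12,13,16,17,20\}.
\]
The smallest remaining value is $\Delta=21$, which occurs in the case $5+4n$ with $n=4$. The other branch gives $\Delta\ge24$, that is, $\widehat h\ge12$.

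Hence $\widehat h(P)\ge\widehat h(P_0)\ge21/2$. In both branches the minimal admissible intersection number is $n\ge4$. For $P$ itself, $\widehat h(P)\ge 21/2$ together with $\operatorname{contr}\le3/2$ forces $4+2(P\cdot O)\ge 21/2$, hence $(P\cdot O)\ge4$, since $(P\cdot O)$ is an integer.
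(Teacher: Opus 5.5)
Your proof is correct and follows the paper's route. It uses Shioda's height formula with the $II^*$ and $III^*$ contributions, trivial torsion, the identity $|\operatorname{disc}(L)|=2\widehat h$ for the primitive rank-$18$ extension of $N$, and the identification $D=2\widehat h$; the paper simply cites this last step from Elkies--Kumar, and you rightly flag it as the one external input. Your passage to the primitive generator $P_0$ with $\widehat h(P)=k^2\widehat h(P_0)$ neatly replaces the paper's check that every section of height at most $10$ is primitive (with its $P=2Q$, $H_5$ boundary case); note also that in your last line the inequality $4+2(P\cdot O)\ge\widehat h(P)$ comes from $\operatorname{contr}_{III^*}(P)\ge0$, not from $\operatorname{contr}_{III^*}(P)\le 3/2$.
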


\begin{proof}
Shioda's height pairing on an elliptic surface \cite{Shioda1990} gives
$\widehat h(P)=4+2(P\mathbin{\cdot}O)-\sum_v\operatorname{contr}_v(P)$.
The $II^*$ fiber contributes zero, while at the $III^*$ fiber the identity
and nonidentity simple components contribute $0$ and $3/2$, respectively.
Thus the two height ladders are
\[
 \widehat h(P)=4+2n,\qquad \widehat h(P)=\frac52+2n.
\]
In particular every nonzero section has height at least $5/2$, so the
Mordell--Weil torsion subgroup is trivial.  Consequently the trivial lattice
$N=U\oplus E_8(-1)\oplus E_7(-1)$ is primitive in the N\'eron--Severi
lattice: Shioda's exact sequence identifies the quotient by the trivial
lattice with the Mordell--Weil group, and there is no torsion left to produce
a nontrivial saturation.

Now let $P$ be primitive in the free Mordell--Weil group, and let $L_P$ be
the inverse image of $\ZZ P$ in the N\'eron--Severi lattice.  Then $L_P$ is
a primitive rank-$18$ extension of $N$.  Shioda's discriminant formula for
the Mordell--Weil lattice gives
\[
 \operatorname{disc}(L_P)=-\operatorname{disc}(N)\,\widehat h(P)
 =-2\widehat h(P),
\]
where the sign is dictated by signature $(1,17)$.  Elkies--Kumar describe the Humbert surface $H_D$ as the image, in the
$N$-polarized moduli space, of K3 surfaces polarized by a rank-$18$ lattice
$L_D$ of signature $(1,17)$ and discriminant $-D$ \cite{ElkiesKumar2014}.
Thus the primitive extension $L_P$ above has Humbert discriminant equal to
its absolute discriminant.  Hence
\begin{equation}\label{eq:v7-height-Humbert}
 D=2\widehat h(P).
\end{equation}
For $n=0,1,2,3$ the two ladders
therefore give
\[
\begin{array}{c|cccc}
 n&0&1&2&3\\ \hline
 III^*\text{ nonidentity}&H_5&H_9&H_{13}&H_{17}\\
 III^*\text{ identity}&H_8&H_{12}&H_{16}&H_{20}.
\end{array}
\]
Every section of height $<10$ is primitive: if $P=mQ$, $m\ge2$, then
$\widehat h(Q)=\widehat h(P)/m^2<5/2$, a contradiction.  The only boundary
case is $\widehat h(P)=10$; if $P=2Q$ then
$\widehat h(Q)=5/2$, which would put the Xi point on the already excluded
$H_5$, while $m\ge3$ is again impossible.  Hence all eight entries in the
table are valid primitive Humbert obstructions.  Since
$H_5,H_8,H_9,H_{12},H_{13},H_{16},H_{17},H_{20}$ are excluded by
\cref{thm:low-Humbert}, one must have $n\ge4$.  The smaller of the two
heights at $n=4$ is $5/2+8=21/2$, proving
\eqref{eq:v7-height-barrier}.
\end{proof}

\begin{remark}[What is and is not proved about the lattice]\label{rem:v6-NS-status}
We do \emph{not} claim that the Picard rank is exactly $17$.  By Theorem~\ref{thm:low-Humbert} and Proposition~\ref{prop:v7-MW-height-barrier}, the first four rungs of both
Mordell--Weil height ladders are absent, but infinitely many Humbert divisors
remain.  Thus the explicit K3 models turn the previous vague lattice question
into a precise one: decide whether either Mordell--Weil rank in
\eqref{eq:v6-Shioda-Tate} is positive, equivalently whether the Xi moduli
point lies on some as-yet-unexcluded Humbert/Noether--Lefschetz divisor.
The next formal ladder entries would be $H_{21}$ and $H_{24}$, but continuing
this finite elimination indefinitely cannot prove rank zero.  The structural
target is instead an exact Hodge/arithmetic argument showing that the Xi
period point lies on no Humbert divisor at all.
\end{remark}

For this real type the Vinnikov--Weichold normal form has $H=I_2$; see \cite{Vinnikov1993,KallaKlein2014}.  Hence one can choose an integral symplectic basis
\[
 \mathcal A_1,\mathcal A_2,\mathcal B_1,\mathcal B_2
\]
with
\begin{equation}\label{eq:Vinnikov-action}
 \boxed{
 \tau_*\mathcal A_j=\mathcal A_j,
 \qquad
 \tau_*\mathcal B_j=\mathcal A_j-\mathcal B_j.}
\end{equation}

\subsection{Global Humbert reduction on the real period slice}\label{subsec:global-Humbert-reduction}

Let $\eta_1,\eta_2$ be the normalized holomorphic differentials,
$\int_{\mathcal A_j}\eta_k=\delta_{jk}$, and let
\[
 \Omega_{jk}=\int_{\mathcal B_j}\eta_k
\]
be the normalized genus-two period matrix.  The real involution and
\eqref{eq:Vinnikov-action} force a particularly rigid real slice of Siegel
space.

\begin{proposition}[Vinnikov form of the holomorphic period matrix]\label{prop:v8-Vinnikov-matrix}
There is a real symmetric positive-definite matrix
\[
 Y=\begin{pmatrix}y_1&y_{12}\\y_{12}&y_2\end{pmatrix}>0
\]
such that
\begin{equation}\label{eq:v8-Omega-form}
 \boxed{\Omega=\frac12 I_2+iY.}
\end{equation}
\end{proposition}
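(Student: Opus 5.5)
The plan is to compute how complex conjugation acts on the normalized differentials and on the period matrix, using the Vinnikov basis \eqref{eq:Vinnikov-action} and the anti-holomorphic involution $\tau$ of \eqref{eq:real-involution}.

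First, since $\tau$ is anti-holomorphic, for any holomorphic differential $\eta$ the pullback $\overline{\tau^*\eta}$ is again holomorphic. For any cycle $\gamma$ we have
\[
 \int_{\tau_*\gamma}\eta=\int_\gamma\tau^*\eta=\overline{\int_\gamma\overline{\tau^*\eta}}.
\]
Apply this to $\eta_k$ and $\gamma=\mathcal A_j$. Since $\tau_*\mathcal A_j=\mathcal A_j$, the differential $\overline{\tau^*\eta_k}$ has $\mathcal A$-periods $\overline{\delta_{jk}}=\delta_{jk}$. By uniqueness of the $\mathcal A$-normalized basis, $\overline{\tau^*\eta_k}=\eta_k$.

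Next, take $\gamma=\mathcal B_j$. Using $\tau_*\mathcal B_j=\mathcal A_j-\mathcal B_j$, we get
\[
 \delta_{jk}-\Omega_{jk}=\int_{\mathcal A_j-\mathcal B_j}\eta_k=\overline{\int_{\mathcal B_j}\eta_k}=\overline{\Omega_{jk}}.
\]
Hence $\Omega+\overline\Omega=I_2$, that is, $\Ree\Omega=\tfrac12 I_2$. Writing $Y=\Imm\Omega$, Riemann's bilinear relations give that $\Omega$ is symmetric with positive-definite imaginary part. Therefore $Y$ is real symmetric and $Y>0$, which yields \eqref{eq:v8-Omega-form}.

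The only delicate point is the sign and orientation convention. One must check that $\tau_*$ is applied consistently, i.e.\ as pushforward on homology, and that $\int_{\tau_*\gamma}\eta=\overline{\int_\gamma\overline{\tau^*\eta}}$ holds with no extra sign. A further caveat is that $\tau$ reverses orientation of the surface, which could flip the sign of the intersection form. This does not affect the period computation, since only integrals over the images of cycles are used, not intersection numbers. I expect this check, together with ensuring that the Vinnikov basis is genuinely symplectic with the matrix $H=I_2$ as cited from \cite{Vinnikov1993,KallaKlein2014}, to be the main step requiring care. If the convention came out with the opposite sign, one would obtain $-\tfrac12 I_2$, which is equivalent modulo the integral translation $\Omega\mapsto\Omega+I_2$. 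The stated normalization can therefore be restored by replacing $\mathcal B_j$ with $\mathcal B_j+\mathcal A_j$.
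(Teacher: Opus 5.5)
Your proposal is correct and follows the paper's proof essentially line for line: you use $\mathcal A$-normalization to show $\overline{\tau^*\eta_k}=\eta_k$, then apply $\tau_*\mathcal B_j=\mathcal A_j-\mathcal B_j$ to get $\overline\Omega=I_2-\Omega$, and you take symmetry and $\Imm\Omega>0$ from the Riemann bilinear relations. The closing sign hedge is unnecessary, since the action \eqref{eq:Vinnikov-action} is given as a hypothesis on the chosen basis.
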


\begin{proof}
The holomorphic vector $\tau^*\overline\eta$ has the same $\mathcal A$-periods
as $\eta$, hence equals $\eta$ by normalization.  Therefore
\[
 \overline\Omega_{jk}
 =\int_{\mathcal B_j}\overline\eta_k
 =\int_{\tau_*\mathcal B_j}\eta_k
 =\delta_{jk}-\Omega_{jk}.
\]
Thus $2\Ree\Omega=I_2$.  Symmetry and positive definiteness of
$Y=\Imm\Omega$ are the Riemann bilinear relations.
\end{proof}

Recall Humbert's singular relation for a principally polarized abelian surface,
\begin{equation}\label{eq:v8-Humbert-singular-relation}
 a\Omega_{11}+b\Omega_{12}+c\Omega_{22}
 +d(\Omega_{12}^2-\Omega_{11}\Omega_{22})+e=0,
 \qquad a,b,c,d,e\in\ZZ,
\end{equation}
with primitive discriminant
\begin{equation}\label{eq:v8-Humbert-discriminant}
 \Delta=b^2-4ac-4de>0.
\end{equation}
The polarization induces the Rosati involution $f\mapsto f^\dagger$ on rational endomorphisms; ``Rosati-symmetric'' means $f=f^\dagger$.  Such a relation is equivalent to an additional Rosati-symmetric endomorphism
of the principally polarized abelian surface; its moduli image lies on the
Humbert surface $H_\Delta$ \cite{Gruenewald2008}.

\begin{theorem}[Global real-slice Humbert criterion]\label{thm:v8-global-Humbert}
Put
\[
 q=\det Y=y_1y_2-y_{12}^2>0.
\]
Then the following are equivalent:
\begin{enumerate}[label=\textup{(\roman*)}]
\item $J_X$ lies on no Humbert divisor;
\item $\operatorname{NS}(J_X)=\ZZ[\Theta]$ and hence $\rho(J_X)=1$;
\item
\begin{equation}\label{eq:v8-global-criterion}
 \boxed{
 q\notin\QQ,
 \qquad
 \dim_{\QQ}\langle y_1,y_{12},y_2\rangle_{\QQ}=3.}
\end{equation}
\end{enumerate}
Consequently
\begin{equation}\label{eq:v8-K3-global-criterion}
 \boxed{
 \rho(\operatorname{SI}(J_X))=17
 \iff
 q\notin\QQ\ \text{and}\
 y_1,y_{12},y_2\ \text{are $\QQ$-linearly independent}.}
\end{equation}
\end{theorem}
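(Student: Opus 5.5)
The proof has three parts: (i)$\iff$(ii) is standard, (ii)$\iff$(iii) is a direct computation on the real slice of \cref{prop:v8-Vinnikov-matrix}, and the K3 statement follows from \cref{prop:v6-NS-frontier}.

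\textbf{(i)$\iff$(ii).} For a principally polarized abelian surface, $\operatorname{NS}(J_X)\otimes\QQ$ is the space of Rosati-symmetric rational endomorphisms. Hence $\rho(J_X)>1$ exactly when there is a symmetric endomorphism not in $\ZZ$. Such an endomorphism is equivalent to a primitive singular relation \eqref{eq:v8-Humbert-singular-relation} with $\Delta>0$, and this in turn places $J_X$ on $H_\Delta$ \cite{Gruenewald2008}. Conversely, any point of a Humbert divisor carries such a class. Finally, $\ZZ[\Theta]$ is primitive because $\Theta$ is principal, so $\rho(J_X)=1$ gives $\operatorname{NS}(J_X)=\ZZ[\Theta]$.

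\textbf{(ii)$\iff$(iii).} Substitute $\Omega=\tfrac12I_2+iY$ into \eqref{eq:v8-Humbert-singular-relation}. The quadratic term is
\[
\Omega_{12}^2-\Omega_{11}\Omega_{22}=\Bigl(iy_{12}\Bigr)^2-\Bigl(\tfrac12+iy_1\Bigr)\Bigl(\tfrac12+iy_2\Bigr)=q-\tfrac14-\tfrac{i}{2}(y_1+y_2).
\]
Separating real and imaginary parts of the relation then gives
\begin{align*}
\tfrac{a+c}{2}+d\bigl(q-\tfrac14\bigr)+e&=0,\\
ay_1+by_{12}+cy_2-\tfrac d2(y_1+y_2)&=0.
\end{align*}
So a singular relation exists iff there are integers, not all zero, satisfying both equations. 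I split into two cases according to $d$.
\begin{itemize}[leftmargin=*]
\item If $d\neq0$, the first equation forces $q\in\QQ$. Conversely, suppose $q=m/n\in\QQ$. Choose $d$ even with $d(q-\tfrac14)\in\ZZ$, then set $a=c=d/2$, $b=0$, and solve for the integer $e$. The second equation holds identically. The discriminant is $\Delta=-d^2-4de$, and using $e=-\tfrac{a+c}{2}-d(q-\tfrac14)$ this becomes $\Delta=4d^2q-2d^2\cdot\tfrac12\cdot\ldots$; I would verify carefully that it equals $4d^2 q>0$ up to the sign conventions, using $q>0$. This step is where sign bookkeeping must be checked.
\item If $d=0$, the relation reduces to $a+c+2e=0$ together with a rational linear relation among $y_1,y_{12},y_2$. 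Conversely, a nontrivial relation $\alpha y_1+\beta y_{12}+\gamma y_2=0$ can be scaled so that $a=\alpha$, $b=\beta$, $c=\gamma$ are integers with $a+c$ even, and then $e=-(a+c)/2$. Its discriminant is $b^2-4ac$, and the key claim is that this is positive. The argument: $(a,b,c)$ is orthogonal, in the appropriate sense, to a point of the positive cone $y_1y_2>y_{12}^2$, so the form $\begin{pmatrix}a&b/2\\b/2&c\end{pmatrix}$ has zero trace-pairing with the positive-definite $Y$. That forces it to be indefinite or zero, hence $b^2-4ac>0$ for a nonzero triple. I expect this positivity (and the matching positivity check in the $d\ne0$ case) to be the main point needing care.
\end{itemize}
Primitivity is achieved by dividing out the gcd; the parity condition on $a+c$ can be kept by doubling beforehand if needed.

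\textbf{K3 consequence.} \Cref{prop:v6-NS-frontier} gives $\rho(\operatorname{SI}(J_X))=16+\rho(J_X)$. Therefore $\rho(\operatorname{SI}(J_X))=17$ exactly when $\rho(J_X)=1$, and \eqref{eq:v8-K3-global-criterion} follows from the equivalence (ii)$\iff$(iii).
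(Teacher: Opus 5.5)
Your proposal is correct and is essentially the paper's proof. It uses the same real/imaginary splitting of the singular relation, and the same converse quintuple: taking $d=4S$ for $q=P/S$ in your construction $a=c=d/2$, $b=0$ gives exactly $(2S,0,2S,4S,-(4P+S))$. It also uses the same trace-zero argument for $B^2-4AC>0$ and the same appeal to $\rho(\operatorname{SI}(J_X))=16+\rho_X$. The discriminant step you left open closes directly: $e=-d/4-dq$ gives $\Delta=-d^2-4de=4d^2q>0$, which is $64PS$ for $d=4S$.
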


\begin{proof}
Substitute \eqref{eq:v8-Omega-form} into
\eqref{eq:v8-Humbert-singular-relation}.  Its imaginary and real parts are,
respectively,
\begin{align}
 &(2a-d)y_1+2b\,y_{12}+(2c-d)y_2=0,
 \label{eq:v8-Humbert-imag}\\
 &4dq+2a+2c-d+4e=0.
 \label{eq:v8-Humbert-real}
\end{align}
If $d\ne0$, \eqref{eq:v8-Humbert-real} gives $q\in\QQ$.  If $d=0$,
\eqref{eq:v8-Humbert-imag} is a nontrivial rational linear relation among the
three entries of $Y$.  Hence \eqref{eq:v8-global-criterion} excludes every
singular relation at once.

Conversely suppose first that $q=P/S\in\QQ_{>0}$ in lowest terms.  The integer
quintuple
\begin{equation}\label{eq:v8-rational-det-relation}
 (a,b,c,d,e)=(2S,0,2S,4S,-(4P+S))
\end{equation}
satisfies \eqref{eq:v8-Humbert-singular-relation}; before primitive division its
discriminant is
\[
 \Delta=64PS>0.
\]
Thus rationality of $\det Y$ itself forces membership in a Humbert divisor.

Now suppose
$A y_1+B y_{12}+C y_2=0$ for nonzero $(A,B,C)\in\ZZ^3$.
If $A+C$ is even, take $(a,b,c,d,e)=(A,B,C,0,-(A+C)/2)$.
If $A+C$ is odd, replace $(A,B,C)$ by $(2A,2B,2C)$ and take
$(a,b,c,d,e)=(2A,2B,2C,0,-(A+C))$.  In either case this gives an integral
singular relation.  Its discriminant is positive: for the original relation,
with
\[
 M=\begin{pmatrix}A&B/2\\B/2&C\end{pmatrix}
\]
one has $\operatorname{tr}(MY)=0$.  Since $Y>0$, the nonzero real
symmetric matrix $Y^{1/2}MY^{1/2}$ has trace zero and hence eigenvalues
$\lambda,-\lambda$ with $\lambda\ne0$.  Therefore
$\det M<0$, i.e.
\[
 B^2-4AC>0.
\]
After dividing the five coefficients by their common divisor one obtains a
primitive Humbert relation.  This proves the equivalence between \textup{(i)}
and \textup{(iii)}.  The equivalence with \textup{(ii)} is the standard
identification of extra N\'eron--Severi classes with non-scalar
Rosati-symmetric endomorphisms of a principally polarized abelian surface.
Finally \eqref{eq:v8-K3-global-criterion} follows from
\eqref{eq:v6-rho-relations}.
\end{proof}

\begin{corollary}[Exact endpoint of the finite Humbert ladder]\label{cor:v8-Humbert-endpoint}
The nine certified exclusions in \cref{thm:low-Humbert} are finite-height
evidence for the two arithmetic conditions in \eqref{eq:v8-global-criterion},
but no continuation through $H_{21},H_{24},\ldots$ can replace them.  The
remaining Picard-rank problem is exactly the pair of irrationality statements
in \eqref{eq:v8-global-criterion}.
\end{corollary}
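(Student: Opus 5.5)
The corollary is a direct consequence of \cref{thm:v8-global-Humbert}, and the plan has three steps.

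First, the identification. By \eqref{eq:v8-K3-global-criterion} and \eqref{eq:v6-rho-relations}, the Picard-rank question for $J_X$ (equivalently for $\operatorname{SI}(J_X)$ or $\operatorname{Kum}(X_2)$) is exactly whether $J_X$ lies on some Humbert divisor. By the equivalence (i)$\iff$(iii), that happens if and only if one of the two conditions in \eqref{eq:v8-global-criterion} fails. So the remaining Picard-rank problem is precisely the pair of irrationality statements $q\notin\QQ$ and $\dim_\QQ\langle y_1,y_{12},y_2\rangle=3$. No further input is needed for this part.

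Second, the "finite-height evidence" claim. I would read the proof of \cref{thm:v8-global-Humbert} backwards.
\begin{itemize}
\item If $q=P/S$ in lowest terms, the relation \eqref{eq:v8-rational-det-relation} places $J_X$ on $H_\Delta$ with $\Delta\mid 64PS$.
\item If there is an integral relation $Ay_1+By_{12}+Cy_2=0$, then $J_X$ lies on $H_\Delta$ with $\Delta$ dividing $B^2-4AC$ (or $4(B^2-4AC)$) after primitive division.
\end{itemize}
Hence each certified exclusion $J_X\notin H_D$ in \cref{thm:low-Humbert} forbids every rational value of $q$, and every linear relation, whose primitive discriminant equals $D$. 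This is a finite-height constraint. The one point needing care is that primitive division can lower the discriminant only by square factors. I would argue it only qualitatively, as "evidence", rather than as an explicit height bound.

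Third, the "no continuation can replace them" claim, which I expect to be the main obstacle. The plan is to show that the Humbert locus is dense on the real slice \eqref{eq:v8-Omega-form}:
\begin{itemize}
\item The set of $Y>0$ with $\det Y\in\QQ$ is a countable union of real hypersurfaces that is dense in the cone.
\item So are the rational hyperplanes $\operatorname{tr}(MY)=0$ with $\det M<0$.
\end{itemize}
Consequently every neighborhood of the Xi period point meets $H_D$ for infinitely many $D$. Any finite list of exclusions, each certified on a positive-radius invariant box, therefore cannot rule out membership in $H_{D'}$ for all larger $D'$. Since infinitely many $D$ remain, only the exact arithmetic statements in \eqref{eq:v8-global-criterion} settle the question.

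The subtle point is that the density has to be transported from the period slice to the Igusa invariant box. Since the Igusa map is a local homeomorphism at a point off $H_4$ (\cref{thm:low-Humbert}), density transfers, and I would invoke this rather than compute it.
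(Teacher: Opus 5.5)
Your proposal is correct and follows essentially the paper's route: the identification comes directly from \cref{thm:v8-global-Humbert}, and the ``no continuation'' clause rests on the same density of rational-determinant points on the Vinnikov slice that the paper proves in \cref{prop:v9-Humbert-density}. One small correction: carrying the density over to the Igusa box needs only continuity of the period-to-invariant map, and your stated reason is slightly inaccurate, because being off $H_4$ alone does not make that map a local homeomorphism (the curve $y^2=x^5-1$ has reduced automorphism group $\ZZ/5\ZZ$ and lies off $H_4$, though it lies on $H_5$, which \cref{thm:low-Humbert} also excludes).
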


\begin{remark}[Current status]\label{rem:v8-global-status}
No theorem presently available proves either irrationality in
\eqref{eq:v8-global-criterion} for the Xi period point.  In particular, the
branch sextic is defined by certified zeros of a transcendental entire
function, not by algebraic numbers, so the standard arithmetic-period and
reduction-modulo-primes methods for algebraic K3 surfaces do not directly
apply.  The gain of \cref{thm:v8-global-Humbert} is therefore a global
\emph{reduction}, not a proof of $\rho=17$: an infinite Noether--Lefschetz
avoidance problem has become two explicit Diophantine questions for one real
$2\times2$ matrix.
\end{remark}

\begin{proposition}[Density obstruction on the Vinnikov slice]
\label{prop:v9-Humbert-density}
Inside the real slice
\[
 \mathcal V=\left\{\frac12I_2+iY:Y=Y^{\mathsf T}>0\right\}\subset\mathfrak H_2,
\]
the union of all Humbert divisors is dense.  Consequently no open interval
or ball enclosure of the Xi matrix $Y$---however small---can by itself prove
that $J_X$ lies on no Humbert divisor.
\end{proposition}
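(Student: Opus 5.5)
The plan is to use the criterion of \cref{thm:v8-global-Humbert}. On the slice $\mathcal V$ a point $\frac12I_2+iY$ lies on a Humbert divisor as soon as either $\det Y\in\QQ$ or the entries $y_1,y_{12},y_2$ satisfy a nontrivial rational linear relation. The converse half of that proof shows that each of these conditions produces an explicit integral singular relation with positive discriminant. So it suffices to show that the set
\[
 \mathcal D=\{Y=Y^{\mathsf T}>0:\ y_1,y_{12},y_2\ \text{are $\QQ$-linearly dependent}\}
\]
is dense in the open cone of positive-definite symmetric $2\times2$ matrices. The cone is identified with $\mathcal V$ by the homeomorphism $Y\mapsto\frac12I_2+iY$.

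The density step is elementary. Given $Y_0>0$ and $\varepsilon>0$, the cone is open, so some ball of radius $\varepsilon'\le\varepsilon$ around $Y_0$ stays positive definite. Inside that ball I choose $Y$ with rational entries $y_1,y_{12},y_2\in\QQ$, which is possible because $\QQ^3$ is dense in $\RR^3$. Three rationals are always $\QQ$-linearly dependent; for instance $y_{12}y_1-y_1y_{12}=0$ is nontrivial unless $y_1=y_{12}=0$, and $y_1>0$. Even more simply, $\det Y$ is then rational, so the relation \eqref{eq:v8-rational-det-relation} applies directly with discriminant $64PS>0$. Either way $Y\in\mathcal D$, and the corresponding point of $\mathcal V$ lies on some $H_\Delta$. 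Hence the union of Humbert divisors meets every open subset of $\mathcal V$.

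For the consequence, the certified data only ever place the Xi matrix $Y$ inside some open box or ball $U$. By density, $U$ contains points lying on Humbert divisors. The enclosure therefore cannot distinguish the true $Y$ from such points, and no enclosure alone can certify that $J_X$ avoids every $H_\Delta$.

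There is no real obstacle. The only care needed is to apply the converse direction of \cref{thm:v8-global-Humbert} pointwise on the slice, not just at the Xi point, and to note that it does so. Its construction of the singular relation used only $Y>0$ and one of the two rationality conditions, never any property specific to $X_2$. I would also remark that each individual $H_\Delta\cap\mathcal V$ is a proper closed subset. This makes clear that the obstruction comes from the infinite union, which is why finite exclusions such as \cref{thm:low-Humbert} remain meaningful.
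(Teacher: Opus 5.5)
Your proof is correct and follows essentially the paper's argument: you approximate $Y$ inside the open positive cone by matrices that satisfy one of the two rationality conditions, and you apply the converse half of \cref{thm:v8-global-Humbert} pointwise on the slice. The paper keeps $y_1,y_{12}$ fixed and adjusts $y_2$ so that $\det Y_n=q_n\in\QQ$, whereas you make all three entries rational, which satisfies both conditions at once; this is a cosmetic difference.
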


\begin{proof}
Fix $Y=(y_{jk})>0$ and put $q=\det Y>0$.  Choose any sequence
$q_n\in\QQ_{>0}$ with $q_n\to q$ and set
\[
 Y_n=\begin{pmatrix}
 y_1&y_{12}\\
 y_{12}&(q_n+y_{12}^2)/y_1
 \end{pmatrix}.
\]
Then $Y_n>0$, $Y_n\to Y$, and $\det Y_n=q_n\in\QQ$.  By the converse
part of \cref{thm:v8-global-Humbert}, every
$\frac12I_2+iY_n$ satisfies a positive-discriminant singular relation and
therefore lies on a Humbert divisor.  This proves density.  The final
statement follows because every nonempty neighborhood of the Xi period point
contains such a Humbert point.
\end{proof}

\begin{remark}[What numerical periods can and cannot do]
\label{rem:v9-numerics-no-go}
The proposition is a useful logical safeguard.  Directed quadrature can prove
finite-height separation from any prescribed finite collection of singular
relations, and LLL/PSLQ searches can provide strong numerical evidence, but
finite precision alone can never establish the all-height statement
\eqref{eq:v8-global-criterion}.  A proof of $\rho=17$ must introduce exact
arithmetic or transcendence information about the Xi period point.
\end{remark}

\begin{proposition}[Numerical holomorphic period matrix]\label{prop:v9-period-matrix-N}
Using the certified branch centers in the conditioned coordinate
\eqref{eq:v6-conditioning}, endpoint-regularized Gauss--Legendre integration
of $dx/y$ and $x\,dx/y$ gives, at status \textup{[N]}, the Vinnikov-gauge matrix
\begin{equation}\label{eq:v9-Y-numerical}
 Y\approx
 \begin{pmatrix}
  1.23522084149017&-1.52074137328966\\
 -1.52074137328966& 2.55233871684607
 \end{pmatrix},
 \qquad
 \det Y\approx0.840047653155622.
\end{equation}
The eigenvalues are approximately $0.2365669990$ and $3.5509925594$.
\end{proposition}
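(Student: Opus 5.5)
The statement is a status [N] numerical observation, so the plan is a reproducible computation with internal consistency checks rather than a certified argument.

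\emph{Setup.} We work on the conditioned sextic $f_X$ of \eqref{eq:v6-conditioned-sextic}. Its branch centers come from \cref{thm:real-pair,thm:c1-pair,thm:c2-pair}, transported by \eqref{eq:v6-conditioning}. The holomorphic differentials are $\omega_1=dx/y$ and $\omega_2=x\,dx/y$.

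\emph{Step 1: raw periods.} We fix the three cuts $\Gamma_0=[e_a,e_b]$, $\Gamma_1=[c_1,\bar c_1]$ and $\Gamma_2=[c_2,\bar c_2]$. In the conditioned coordinate these are a real segment and two vertical conjugate segments. Each cut cycle integral is twice the integral along the cut, and each connector integral runs along a segment joining cuts.

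Every such integral has the form $\int_{e}^{e'} g(x)\,dx/\sqrt{(x-e)(e'-x)}$ with $g$ smooth. The substitution $x=\tfrac{e+e'}{2}+\tfrac{e'-e}{2}\cos\theta$ removes the endpoint singularities, so Gauss--Legendre (or Gauss--Chebyshev) quadrature converges exponentially. This is what ``endpoint-regularized'' means.

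\emph{Step 2: branch of $y$.} Along each path we track the branch of $y=\sqrt{f_X}$ by analytic continuation. We write $y$ as a product of six factors $\sqrt{x-e_\nu}$, with branch cuts chosen consistently with the three spectral cuts.

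\emph{Step 3: change of basis.} We compute the intersection matrix of the resulting four cycles. From it we extract the change of basis to a symplectic basis $\mathcal A,\mathcal B$ satisfying \eqref{eq:Vinnikov-action}.

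The natural route starts from the cut basis $(\alpha,\beta)$, where $\tau_*\beta=H_{\rm cut}\alpha-\beta$. We look for an integral symplectic transformation that brings $H_{\rm cut}$ to $I_2$. Since $H\mapsto S^{\mathsf T}HS \bmod 2$ classifies these forms, a unimodular $S$ with $S^{\mathsf T}H_{\rm cut}S\equiv I_2 \pmod 2$ exists. For example, $S=\begin{pmatrix}1&0\\1&1\end{pmatrix}$ gives $S^{\mathsf T}H_{\rm cut}S=\begin{pmatrix}3&1\\1&0\end{pmatrix}$, which is not yet diagonal mod $2$. So a small search over $\mathrm{GL}_2(\mathbb Z)$, combined with the shift $\mathcal B\mapsto\mathcal B+N\mathcal A$ for even symmetric $N$ to clean up, will be carried out.

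\emph{Step 4: normalization.} We form $\Omega=B A^{-1}$ from the $2\times2$ matrices of $\mathcal A$- and $\mathcal B$-periods.

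\emph{Checks.} By \cref{prop:v8-Vinnikov-matrix}, the output must satisfy three conditions:
\begin{itemize}
\item $\Ree\Omega=\tfrac12 I_2$ to working precision;
\item $\Omega$ is symmetric;
\item $\Imm\Omega>0$.
\end{itemize}
These serve as built-in verification of both the branch tracking and the basis change. We then read off $Y=\Imm\Omega$, its determinant and its eigenvalues. We confirm stability by doubling the quadrature order and comparing digits.

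A further consistency check recomputes the Igusa invariants $i_1,i_2,i_3$ from $\Omega$ via theta constants. These should agree with \eqref{eq:Xi-Igusa-box}.

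\emph{Main obstacle.} I expect the hardest step to be the bookkeeping in Steps 2--3. This means getting the sign and branch of $y$ on each connector right, and finding the explicit symplectic change to the Vinnikov gauge. The reality check on $\Ree\Omega$ is the diagnostic I would use to detect errors there. The quadrature itself is routine.
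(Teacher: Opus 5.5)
Your plan is essentially the paper's own numerical construction: cosine endpoint regularization and Gauss--Legendre periods of $dx/y$, $x\,dx/y$ in the three-cut basis (where the computed $2\Ree\Omega_{\rm cut}=H_{\rm cut}$ is the reality check), then an integral congruence plus symmetric shear to the Vinnikov gauge, with agreement between quadrature orders ($100$ versus $140$ in the paper) as the convergence test; the paper simply writes the change down, $U^{-1}=\bigl(\begin{smallmatrix}-1&1\\0&-1\end{smallmatrix}\bigr)$ with $(U^{-1})^{\mathsf T}H_{\rm cut}U^{-1}=\operatorname{diag}(1,-1)$, followed by the shear $\mathcal B\mapsto\mathcal B+\operatorname{diag}(0,1)\,\mathcal A$. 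One slip to fix in your Step~3: that shear sends $H$ to $H+2N$, so $N$ must range over all integral symmetric matrices (the needed $N=\operatorname{diag}(0,1)$ is not even), because with $N$ even both operations preserve $\det H \bmod 4$, and since $\det H_{\rm cut}=-1\not\equiv 1=\det I_2\pmod 4$ your search could then never reach $I_2$.
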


\begin{proof}[Numerical construction]
In the natural three-cut basis the midpoint computation gives
\[
 2\Ree\Omega_{\rm cut}=
 H_{\rm cut}=\begin{pmatrix}1&1\\1&0\end{pmatrix}.
\]
With
\[
 U=\begin{pmatrix}-1&-1\\0&-1\end{pmatrix},
 \qquad
 U^{-1}=\begin{pmatrix}-1&1\\0&-1\end{pmatrix},
\]
one has
$(U^{-1})^{\mathsf T}H_{\rm cut}U^{-1}=\operatorname{diag}(1,-1)$.
The integral symmetric shear $S=\operatorname{diag}(0,1)$ then changes this
to $I_2$ without altering the imaginary part.  Applying the same integral
congruence to the numerical Riemann matrix gives \eqref{eq:v9-Y-numerical}.
The endpoint substitution
$z=r_a+(r_b-r_a)(1-\cos\theta)/2$ removes the square-root endpoint
singularities analytically.  Orders $100$ and $140$ of the midpoint
Gauss--Legendre calculation differ by less than $3.4\times10^{-24}$.
This last number measures quadrature convergence for the midpoint curve only;
it does \emph{not} include propagation of the certified branch disks, hence
the status remains \textup{[N]}.
\end{proof}

\begin{conjecture}[Minimal Picard rank for the Xi core]\label{conj:v9-minimal-Picard}
For the compactified Xi genus-two core,
\[
 \boxed{\operatorname{NS}(J_X)=\ZZ[\Theta],\qquad
        \rho(\operatorname{SI}(J_X))=17.}
\]
Equivalently, the two conditions in \eqref{eq:v8-global-criterion} hold.
\end{conjecture}

\begin{remark}[Two possible exact exits]\label{rem:v9-exact-exits}
There are presently two conceptually different ways out of the density
obstruction.  The direct route is a transcendence/Diophantine theorem for the
three entries of $Y$ and its determinant, exploiting their origin in the
Xi theta-moment branch data.  An arithmetic-geometric route would become
available if one could first identify the Xi K3 surface with a model over a
number field: specialization and Frobenius methods can then bound, and in suitable cases determine, geometric
Picard numbers for K3 surfaces, as in the work of van Luijk and Charles
\cite{vanLuijk2007,Charles2014}.  No such algebraic model is currently known
for the Xi point, so this is a conditional route rather than an available
proof.
\end{remark}

\section{Natural heat deformation and the isomonodromic tangent}\label{sec:heat-deformation}

The fixed-point realization in \cref{thm:D6-certified-realization} is an
identity of curves at one phase point.  A comparison of deformation
directions requires a specified continuation of the Xi discriminant.  We
use the continuation in which both the theta moments and the central
normalization $A_*$ evolve under the same heat flow.  The isomonodromic
time below is denoted $t_D$; it is the variable $t$ of
\eqref{eq:D6-flow}, and is distinct from heat time.

\subsection{Normalization and exact evolution}
Define
\begin{equation}\label{eq:heat-Xi-definition}
 \Xi_\kappa(z)=4\int_0^\infty e^{\kappa u^2}\Phi(u)\cos(zu)\,\dd u,
 \qquad Y_\kappa(T)=\frac14\Xi_\kappa(i\sqrt T).
\end{equation}
The double-exponential decay of $\Phi$ makes the integral locally uniformly
convergent in $(\kappa,z)\in\CC^2$.  Evenness removes the square root from
$Y_\kappa$, which is entire in $(\kappa,T)$.  In the Polymath normalization
\cite{Polymath2019}, substitution $u=2v$ gives
\begin{equation}\label{eq:heat-normalization}
 \boxed{\Xi_\kappa(z)=8H_{4\kappa}(2z),\qquad
        t_{\rm Polymath}=4\kappa.}
\end{equation}
In particular $\Xi_0=\Xi$ and $\partial_\kappa\Xi_\kappa
=-\partial_z^2\Xi_\kappa$.

Put $M_{2n}(\kappa)=\int_0^\infty
e^{\kappa u^2}\Phi(u)u^{2n}\,\dd u$, and, where $Y_\kappa\ne0$, put
$\mathscr A_\kappa=2\partial_T\log Y_\kappa$ and
$A_*(\kappa)=\mathscr A_\kappa(0)$.  Define $K_\kappa$ and
$\Delta_\kappa=K_\kappa/Y_\kappa^6$ by the formulas of
\cref{prop:regular-K}, replacing $Y$ and $A_*$ by $Y_\kappa$ and
$A_*(\kappa)$.  Thus the central normalization is updated, rather than
frozen at its time-zero value.

\begin{proposition}[Exact transverse heat evolution]\label{prop:heat-evolution}
With primes denoting $T$-derivatives,
\begin{align}
 \partial_\kappa M_{2n}(\kappa)&=M_{2n+2}(\kappa),\notag\\
 \boxed{\partial_\kappa Y_\kappa
   =2Y_\kappa'+4TY_\kappa''},\label{eq:heat-Y-evolution}\\
 \boxed{\partial_\kappa\mathscr A_\kappa
   =4T\mathscr A_\kappa''+(6+4T\mathscr A_\kappa)
       \mathscr A_\kappa'+2\mathscr A_\kappa^2}.\label{eq:heat-A-evolution}
\end{align}
For real $\kappa$,
\begin{equation}\label{eq:heat-central-variance}
 \boxed{\partial_\kappa A_*(\kappa)
 =\frac{M_4(\kappa)M_0(\kappa)-M_2(\kappa)^2}{M_0(\kappa)^2}>0.}
\end{equation}
If $T_j(\kappa)$ is a simple branch value with $Y_\kappa(T_j)\ne0$, then
\begin{equation}\label{eq:heat-branch-velocity}
 \boxed{\frac{\dd T_j}{\dd\kappa}
   =-\frac{\partial_\kappa K_\kappa(T_j)}
           {\partial_T K_\kappa(T_j)}.}
\end{equation}
\end{proposition}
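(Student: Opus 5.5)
The plan is to differentiate under the integral sign, justified by the double-exponential decay of $\Phi$, and then convert the resulting moment identities into statements about $Y_\kappa$ and $\mathscr A_\kappa$.

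\textbf{Moment evolution.} Since $\partial_\kappa e^{\kappa u^2}=u^2e^{\kappa u^2}$ and $e^{\kappa u^2}\Phi(u)u^{2n+2}$ is dominated locally uniformly in $\kappa$, we get $\partial_\kappa M_{2n}=M_{2n+2}$ directly.

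\textbf{Evolution of $Y_\kappa$.} Expanding in moments, $Y_\kappa(T)=\sum_{n\ge0} M_{2n}(\kappa)T^n/(2n)!$, so
\[
 \partial_\kappa Y_\kappa=\sum_{n\ge0} \frac{M_{2n+2}}{(2n)!}T^n .
\]
For the right-hand side, the coefficient of $T^n$ in $2Y'+4TY''$ is
\[
 \frac{2M_{2n+2}}{(2n+2)!}(n+1)+\frac{4M_{2n+2}}{(2n+2)!}(n+1)n
 =\frac{M_{2n+2}}{(2n+2)!}\,2(n+1)(2n+1)=\frac{M_{2n+2}}{(2n)!},
\]
which matches. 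Equivalently, this is the heat equation $\partial_\kappa\Xi_\kappa=-\partial_z^2\Xi_\kappa$ pulled back through $z=i\sqrt T$, using $\partial_z^2=-(2\partial_T+4T\partial_T^2)$ on even functions.

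\textbf{Evolution of $\mathscr A_\kappa$.} Write $Y'/Y=\mathscr A/2$ and $Y''/Y=\mathscr A'/2+\mathscr A^2/4$. Dividing the $Y$-equation by $Y$ gives
\[
 \partial_\kappa\log Y=\mathscr A+2T\mathscr A'+T\mathscr A^2 .
\]
Applying $2\partial_T$ then yields
\[
 2\mathscr A'+4\mathscr A'+4T\mathscr A''+2\mathscr A^2+4T\mathscr A\mathscr A',
\]
which is the boxed formula \eqref{eq:heat-A-evolution}.

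\textbf{Central normalization.} Evaluating the $\mathscr A$-equation at $T=0$ gives $\partial_\kappa A_*=6\mathscr A'(0)+2A_*^2$. Alternatively, differentiate $A_*=M_2/M_0$ directly:
\[
 \partial_\kappa A_*=\frac{M_4M_0-M_2^2}{M_0^2}.
\]
Checking consistency: $\mathscr A'(0)=2(Y''(0)Y(0)-Y'(0)^2)/Y(0)^2$ with $Y'(0)=M_2/2$ and $Y''(0)=2M_4/24=M_4/12$, so $6\mathscr A'(0)=M_4/M_0-3M_2^2/M_0^2$. Adding $2A_*^2$ gives the same expression. For real $\kappa$, positivity is Cauchy--Schwarz applied to the positive weight $e^{\kappa u^2}\Phi$, and it is strict because $u^2$ is not constant on the support.

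\textbf{Branch velocity.} This is the implicit function theorem applied to $K_\kappa(T_j(\kappa))=0$ with $\partial_TK_\kappa(T_j)\ne0$. Here $K_\kappa$ is analytic jointly in $(\kappa,T)$ because $Y_\kappa$ and $A_*(\kappa)$ are, and $A_*(\kappa)$ is analytic wherever $M_0(\kappa)\ne0$. The hypothesis $Y_\kappa(T_j)\ne0$ ensures that simple zeros of $K_\kappa$ are simple zeros of $\Delta_\kappa$.

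\textbf{Main obstacle.} The only delicate point is bookkeeping in the $\mathscr A$-equation; the identity is checked by the coefficient comparison above.
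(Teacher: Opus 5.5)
Your proposal is correct and follows essentially the same route as the paper: the moment shift by differentiation under the integral, the $Y$-equation from the heat equation pulled back through $z=i\sqrt T$, differentiation in $T$ of $\partial_\kappa\log Y_\kappa=\mathscr A_\kappa+2T\mathscr A_\kappa'+T\mathscr A_\kappa^2$, variance (Cauchy--Schwarz) positivity for $\partial_\kappa A_*$, and the implicit function theorem for the branch velocity. Your coefficient comparison for $\partial_\kappa Y_\kappa=2Y_\kappa'+4TY_\kappa''$ is an equivalent check. It has the minor advantage of covering $T=0$ directly, without the paper's appeal to analytic extension there.
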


\begin{proof}
Differentiation under the integral gives the moment shift.  From
$\Xi_\kappa(i\sqrt T)=4Y_\kappa(T)$, the chain rule gives
$\partial_z^2\Xi_\kappa(i\sqrt T)
=-4(2Y_\kappa'+4TY_\kappa'')$, proving
\eqref{eq:heat-Y-evolution}; analyticity extends it through $T=0$.
Writing $L=\log Y_\kappa$ gives
$\partial_\kappa L=\mathscr A_\kappa
+2T\mathscr A_\kappa'+T\mathscr A_\kappa^2$.
Differentiating this identity in $T$ proves
\eqref{eq:heat-A-evolution}.  Equation~\eqref{eq:heat-central-variance}
is the variance of $u^2$ under the strictly positive normalized density
$e^{\kappa u^2}\Phi(u)/M_0(\kappa)$, so it is strictly positive.
Finally, implicit differentiation of $K_\kappa(T_j(\kappa))=0$ proves
\eqref{eq:heat-branch-velocity}.
\end{proof}

\begin{proposition}[Local persistence of the calibrated core]\label{prop:heat-core-persistence}
There is $\varepsilon>0$ such that, for real $|\kappa|<\varepsilon$,
the six selected branch values have simple analytic continuations, retain
two real values and two nonreal conjugate pairs, and avoid the zeros of
$Y_\kappa$.  Their compact sextic model has real type $(2,1,1)$.
The calibrated action cycles and periods continue locally.  After decreasing
$\varepsilon$ if necessary, the same two Farey intervals and the same nine
finite Humbert exclusions remain valid.  This is a qualitative local
statement; no numerical heat interval is asserted.
\end{proposition}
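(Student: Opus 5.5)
The proof rests on continuity and openness in $\kappa$, together with the implicit function theorem and the fact that every certificate used at $\kappa=0$ is a strict inequality. First I record the regularity input. By \eqref{eq:heat-Xi-definition}, $Y_\kappa(T)$ is entire in $(\kappa,T)$. Hence $A_*(\kappa)=2Y_\kappa'(0)/Y_\kappa(0)$ is analytic near $\kappa=0$, because $Y_0(0)=M_0>0$. The numerator $K_\kappa$, built from $Y_\kappa$ and $A_*(\kappa)$ by the formulas of \cref{prop:regular-K}, is then analytic in $(\kappa,T)$ on a neighborhood of $\{0\}\times\overline{\Omega_2}$. The quotient defining $\mathcal E$ stays regular because its numerator still vanishes at $T=0$ by the choice of $A_*(\kappa)$. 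For real $\kappa$, $Y_\kappa$ is real on the real axis, so $K_\kappa$ is conjugation-symmetric.

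Next I continue the branch points. Each of the six selected zeros of $K_0$ is simple, by \cref{thm:real-pair,thm:c1-pair,thm:c2-pair}, so the implicit function theorem gives analytic continuations $T_j(\kappa)$ with velocity \eqref{eq:heat-branch-velocity}. Alternatively, since $K_0$ has no zeros on $\partial\Omega_2$, Hurwitz's theorem (Rouch\'e applied to $K_\kappa$ versus $K_0$ on $\partial\Omega_2$ and on small disks around each root) shows that for small $|\kappa|$, $K_\kappa$ still has exactly six zeros in $\Omega_2$, one in each original disk. Conjugation symmetry then fixes the reality pattern. A real root in a real interval has a unique continuation in that interval, and this continuation must be real; otherwise its conjugate would give a second root there. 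Nonreal roots stay off the axis because their disks are disjoint from $\RR$. By compactness and continuity, $Y_\kappa$ stays nonzero on $\overline{\Omega_2}$. Strict positivity of $Y$ and $K$ on the real corridor also persists on compact subsets, and near the moving endpoints it persists because the signs of $K'$ there are uniformly bounded away from zero. With the same mixed branch configuration, \cref{prop:real-type} applies verbatim and gives type $(2,1,1)$.

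Then I continue the cycles and periods. I keep the fixed domain $\Omega_2$ and deform the cut and connector cycles by an ambient isotopy moving the branch points; small cut arcs attached to $T_j(\kappa)$ suffice. The period integrals of $\sqrt{\Delta_\kappa}\,\dd T$ over these cycles are continuous, indeed analytic, in $\kappa$. I would make this uniform by writing each cut integral after an endpoint substitution, as in \cref{prop:v9-period-matrix-N}, which yields an integrand continuous in $\kappa$.

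Finally I treat the arithmetic statements. The Farey intervals are open conditions on $u_1/u_2$ and $r_1/r_2$, which are continuous with $u_2,r_2\ne0$, so they persist. The nine Humbert exclusions in \cref{thm:low-Humbert} were certified over Igusa boxes strictly containing the true invariants. The invariants $i_1,i_2,i_3$ are continuous symmetric functions of the branch values, with $I_2\ne0$, so for small $|\kappa|$ they stay inside the certified boxes \eqref{eq:Xi-Igusa-box} and \eqref{eq:Xi-Igusa-tight-box}, and the same certificates apply. I shrink $\varepsilon$ finitely many times.

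The main obstacle is the period continuity near moving square-root endpoints, together with making sure the Farey statement refers to the continued calibrated cycles rather than merely isotopic ones. I would handle both by the uniform endpoint substitution described above.
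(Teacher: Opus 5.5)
Your proposal is correct and follows essentially the same route as the paper. Both use the implicit function theorem for the simple roots, conjugation symmetry for the real/nonreal pattern, a boundary Rouch\'e comparison with uniform continuity for the branch count and for $Y_\kappa\ne0$, and openness of the strict Farey and Humbert inequalities under continuous dependence of the periods and sextic coefficients. One small correction: the $H_4$ exclusion was certified by disk arithmetic on the branch values, not on an Igusa box, but the continued branch values stay inside those disks, so the same continuity argument covers it.
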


\begin{proof}
The analytic implicit-function theorem applies to each simple selected
root.  The roots remain distinct and the two nonreal pairs remain nonreal
for sufficiently small $\kappa$.  Reality and uniqueness keep the real
roots real and preserve conjugate pairing.  On the compact closure of a
slightly adjusted core domain, the time-zero boundary lower bounds for
$|K|$ and the lower bound for $|Y|$ persist by uniform continuity.  The
boundary Rouch\'e comparison keeps the branch count equal to six.  The
local normalization and its cut cycles therefore transport continuously;
their action integrals are continuous, as can also be seen on fixed
zero-free contour representatives.  The mixed branch configuration retains
the real type proved in \cref{prop:real-type}.  The strict Farey inequalities
of \cref{sec:farey} and the finite Humbert nonvanishing inequalities are
open conditions on the transported periods and the sextic coefficients.
Their finite number gives a common sufficiently small $\varepsilon$.
\end{proof}

\subsection{Positive contraction of the six selected branch values}
Order the transported values as
\[
 (T_1,T_2,T_3,T_4,T_5,T_6)
 =(a,b,c_1,\bar c_1,c_2,\bar c_2)
\]
and define their logarithmic contraction rates in the natural $T$ coordinate by
\begin{equation}\label{eq:heat-branch-contraction-rate}
 c_{jk}(\kappa)
 :=-\Ree\frac{T_j'(\kappa)-T_k'(\kappa)}
                   {T_j(\kappa)-T_k(\kappa)}
 =-\partial_\kappa\log|T_j(\kappa)-T_k(\kappa)|.
\end{equation}
Here the prime denotes a heat-time derivative of a branch value.

\begin{theorem}[Certified pairwise branch contraction]\label{thm:heat-branch-contraction}
At heat time zero, all fifteen rates satisfy the strict directed bound
\begin{equation}\label{eq:heat-branch-contraction-initial}
 \boxed{c_{jk}(0)>\frac19\qquad(1\le j<k\le6).}
\end{equation}
Two representative enclosures are
\begin{equation}\label{eq:heat-branch-contraction-examples}
 c_{ab}(0)\in(0.15178,0.15179),\qquad
 c_{c_1c_2}(0)\in(0.12007,0.12008).
\end{equation}
There is a sufficiently small $\varepsilon>0$ such that
$c_{jk}(\kappa)>1/9$ for real $|\kappa|<\varepsilon$, and hence
\begin{equation}\label{eq:heat-branch-contraction-local}
 \boxed{|T_j(\kappa)-T_k(\kappa)|
 \le e^{-\kappa/9}|T_j(0)-T_k(0)|
 \quad(0\le\kappa<\varepsilon).}
\end{equation}
This is status \textup{[A]}; the interval $\varepsilon$ is qualitative.
\end{theorem}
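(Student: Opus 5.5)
The plan has three stages. First certify the branch velocities $T_j'(0)$ by \eqref{eq:heat-branch-velocity}. Then turn them into directed enclosures of the fifteen rates $c_{jk}(0)$. Finally propagate the strict bound to a neighbourhood and integrate it.

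\emph{Velocities.} By \cref{prop:heat-evolution}, $T_j'(0)=-\partial_\kappa K_\kappa(T_j)/\partial_TK_\kappa(T_j)$. The numerator is computable from the same theta-moment data as $K$. The moment shift $\partial_\kappa M_{2n}=M_{2n+2}$ and \eqref{eq:heat-Y-evolution} give
\[
\partial_\kappa Y^{(j)}=\partial_T^{\,j}\bigl(2Y'+4TY''\bigr),
\]
a linear combination of $Y^{(j+1)},Y^{(j+2)}$. The extra term from $\partial_\kappa A_*$ enters only through $\mathcal E$ and is given exactly by \eqref{eq:heat-central-variance}. So $\partial_\kappa K$ is an explicit polynomial in $T$, $Y,\dots,Y^{(5)}$, $A_*$ and $\partial_\kappa A_*$. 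I would evaluate it, together with $\partial_TK$, by directed interval arithmetic over the branch boxes. For the real pair I use the refined boxes of \cref{cor:refined-branch-boxes}; for the nonreal pairs I use the $10^{-9}$ disks of \cref{thm:c1-pair,thm:c2-pair}, again working with $Q=T^2K$ for conditioning. The truncated moment series carries the same analytic tail bounds, with two extra moment indices.

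The denominator is certified nonzero by the Rouch\'e and monotonicity data already obtained: $|Q'(\widehat c_j)|$ is bounded below, and $K'$ has a definite sign on the real intervals. Reality gives the conjugate velocities as $\overline{T_j'}$, so only four velocity boxes (for $a,b,c_1,c_2$) need computing.

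\emph{Rates.} Next I form $c_{jk}(0)=-\Ree\bigl((T_j'-T_k')/(T_j-T_k)\bigr)$ by complex interval division; all pairwise separations are bounded away from zero by the disk data. Conjugate symmetry halves the work:
\[
c_{\bar c_1\bar c_2}=c_{c_1c_2},\qquad c_{a\bar c_j}=c_{ac_j},\ \text{and similarly for the other mixed pairs},
\]
and for the pairs $(c_j,\bar c_j)$ the rate reduces to $-\Imm T_j'/\Imm T_j$. I would check that each of the fifteen lower endpoints exceeds $1/9$ and record the two displayed enclosures \eqref{eq:heat-branch-contraction-examples}.

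\emph{Main obstacle.} The hardest step is controlling the width of $\partial_\kappa K$. It involves higher moments and catastrophic cancellation, as the $10^{-32}$ scales in the Rouch\'e certificates already suggest. To keep the interval width small enough that the $1/9$ margin survives, I would first try a Taylor-model (centered form) evaluation around the midpoint roots. Only if that fails would I raise the moment truncation.

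\emph{Local extension.} The branch values are analytic in $\kappa$ by the implicit function theorem, as in \cref{prop:heat-core-persistence}, and remain distinct. Hence each $c_{jk}$ is continuous, and the strict inequality persists on some $|\kappa|<\varepsilon$; this $\varepsilon$ is qualitative. Finally, $\partial_\kappa\log|T_j-T_k|<-1/9$ on $[0,\kappa]$, and integrating from $0$ gives \eqref{eq:heat-branch-contraction-local}.
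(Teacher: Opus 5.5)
Your proposal is correct and is essentially the paper's proof: velocities from \eqref{eq:heat-branch-velocity} with the evolving $A_*(\kappa)$, conjugation for the lower pair, directed evaluation of the fifteen rates with every gap denominator excluded from zero, then continuity of the analytically continued roots, the finite number of rates, and integration of $-\partial_\kappa\log|T_j-T_k|>1/9$. The only difference is the numerical engine, and with it the width problem you flag as the main obstacle: instead of centred theta-moment evaluation over the $10^{-9}$ disks and refined real boxes, the paper first recertifies the four roots in Rouch\'e disks of radius $10^{-18}$, checking $Y_0\ne0$ there, and computes the jets in Arb from the gamma--zeta representation of $\xi$ at 192 and 256 bits, with $A_*$ and its heat derivative obtained from $M_{2n}=\xi^{(2n)}(1/2)/4$.
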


\begin{proof}
The certificate in \path{certificates/heat/}
refines the four real and upper-half-plane roots to certified disks of
radius $10^{-18}$, checks $Y_0\ne0$ there, and encloses their heat velocities
using \eqref{eq:heat-branch-velocity} with the evolving $A_*(\kappa)$.
Conjugation supplies the remaining two roots and velocities.  Directed
evaluation of \eqref{eq:heat-branch-contraction-rate} excludes every gap
denominator from zero and proves \eqref{eq:heat-branch-contraction-initial}
and \eqref{eq:heat-branch-contraction-examples}.  All fifteen comparisons
pass at both 192 and 256 bits.  Analytic root continuation makes the rates
continuous.  Their finite number and the strict initial margin give a
common local interval with $c_{jk}>1/9$.  Integrating
\eqref{eq:heat-branch-contraction-rate} from $0$ to $\kappa\ge0$ proves
\eqref{eq:heat-branch-contraction-local}.
\end{proof}

The six-point configuration therefore contracts with unequal rates: the
real gap closes faster in relative terms than the upper connector
$c_1c_2$.  This describes a change of shape as well as size in the natural
$T$ coordinate.  Euclidean distances depend on that coordinate; the
cross-ratio certificate below supplies the projectively invariant
comparison with the matched isomonodromic direction.  The contraction
theorem concerns these six discriminant branch values and a local heat
interval.  A global heat assertion about all zeros would require additional
input.

\subsection{A directed obstruction for the matched \texorpdfstring{$D_6$}{D6} trajectory}
Use the cross ratio of \eqref{eq:crossratio-def} and set
\[
 \lambda_1=\lambda(\bar c_1;a,b,c_1),\qquad
 \lambda_2=\lambda(c_2;a,b,c_1).
\]
Let $h_j$ be their derivatives along the natural heat deformation at
$\kappa=0$, and let $d_j$ be their derivatives along the $D_6$
isomonodromic vector field at the phase point of
\cref{thm:D6-certified-realization}.  These derivatives are invariant under
a parameter-dependent projective change of spectral coordinate.

\begin{theorem}[Certified heat--isomonodromic tangent obstruction]\label{thm:heat-D6-obstruction}
For the specific real $D_6$ realization of
\cref{thm:D6-certified-realization}, the tangent determinant satisfies
\begin{equation}\label{eq:heat-D6-determinant}
 \boxed{\mathcal D=h_1d_2-h_2d_1
    \in(7,9)+i(-9,-7),\qquad |\mathcal D|>10.}
\end{equation}
Consequently the natural heat tangent is not proportional, even over
$\CC$, to this isomonodromic tangent.  There is no differentiable local
identification of these curve families through the stated static match,
even modulo moving projective coordinates and a change $t_D=t_D(\kappa)$.
The statement is status \textup{[A]} and uses no zero-free hypothesis
$H_\alpha$.
\end{theorem}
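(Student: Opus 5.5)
The plan is to compute both tangent pairs $(h_1,h_2)$ and $(d_1,d_2)$ in certified complex balls, form $\mathcal D$ by ball arithmetic, and then deduce non-proportionality from the determinant.

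\emph{Heat side.} For the heat tangents I reuse the certificate behind \cref{thm:heat-branch-contraction}. That certificate supplies radius-$10^{-18}$ disks for $a,b,c_1,c_2$ with $Y_0\ne0$. On these disks I enclose the branch velocities $T_j'(0)$ from \eqref{eq:heat-branch-velocity}, in the version where $A_*(\kappa)$ evolves. Concretely, $\partial_\kappa K_\kappa$ is obtained by differentiating the formulas of \cref{prop:regular-K} under \eqref{eq:heat-Y-evolution}. Since $\partial_\kappa M_{2n}=M_{2n+2}$, this is just the same theta-moment series with shifted moments, plus the term $\partial_\kappa A_*$ from \eqref{eq:heat-central-variance}. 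Conjugation gives $\bar c_1'=\overline{c_1'}$. The logarithmic derivative of \eqref{eq:crossratio-def},
\[
 \frac{\lambda'}{\lambda}=\frac{z_4'-z_1'}{z_4-z_1}+\frac{z_2'-z_3'}{z_2-z_3}-\frac{z_4'-z_3'}{z_4-z_3}-\frac{z_2'-z_1'}{z_2-z_1},
\]
then yields balls for $h_1$ and $h_2$. Every gap denominator is bounded away from zero by the disk separations.

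\emph{$D_6$ side.} For $d_1,d_2$ I follow the proof of \cref{thm:D6-first-order-no-go}. I evaluate the flow \eqref{eq:D6-flow} over the parameter box \eqref{eq:D6-Xi-parameter-box}, with $\theta_0,\theta_\infty$ fixed and $q=1$. The zero velocities are $\dot z_j=-(\partial_tN_6+\dot p\,\partial_pN_6+\dot q\,\partial_qN_6)/\partial_zN_6$ at the matched zeros $z_j=sT_j+r$. The zero boxes come from the Krawczyk match, since the affine map $z=sT+r$ carries the Xi branch disks onto the $N_6$ zeros. Cross ratios are affine invariant, so I may compute $d_j$ directly in the $z$ coordinate. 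I must keep the labelling consistent, so that $\lambda_j$ uses the images of the same four branch values on both sides. The second cross ratio $\lambda_2$ involves $c_2$, whose matched zero also needs a certified box.

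\emph{Conclusion and main obstacle.} Multiplying the balls gives $\mathcal D$, and it remains to check containment in $(7,9)+i(-9,-7)$. This gives $|\mathcal D|\ge\sqrt{49+49}>9.8$, so the stated bound $|\mathcal D|>10$ needs a somewhat tighter enclosure, which I would simply compute directly. Suppose a differentiable identification existed, modulo moving projective coordinates and a reparametrization $t_D(\kappa)$. Projective invariance of the $\lambda_j$ would give $(h_1,h_2)=t_D'(0)(d_1,d_2)$, hence $\mathcal D=0$, a contradiction. This argument covers $t_D'(0)=0$ as well, so, unlike the Xi-$x$ no-go, it needs no regularity hypothesis. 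The main obstacle I expect is wrapping effect. The $D_6$ parameter box has width about $10^{-5}$, and the Krawczyk solution depends on an interval parameter, so dependency blowup in $\partial_zN_6$ and in the cross-ratio sums could widen $d_j$ considerably. I would first try mean-value (centered) forms in the six parameters, together with high precision (192–256 bits), and subdivide the parameter box if the margin near the edges of $(7,9)$ is insufficient.
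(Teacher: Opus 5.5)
Your proposal is correct and follows essentially the same route as the paper: refined radius-$10^{-18}$ Rouch\'e disks with $Y_0\ne0$, heat branch velocities from the shifted jets with the evolving $A_*(\kappa)$, $D_6$ root velocities by implicit differentiation of $N_6$ along \eqref{eq:D6-flow} over the full parameter box with $\theta_0,\theta_\infty$ fixed, logarithmic cross-ratio derivatives with every denominator excluded from zero, and the chain-rule contradiction $h_j=d_jt_D'(0)\Rightarrow\mathcal D=0$. The differences are cosmetic: the paper takes the central jets from the gamma--zeta form of $\xi$ via $M_{2n}=\xi^{(2n)}(1/2)/4$, and it rescales the $D_6$ velocities by $s$ to the $T$ coordinate instead of working in $z$; you are also right that the rectangle alone gives only $|\mathcal D|>\sqrt{98}$, so the modulus bound must be checked separately, as the paper's program does.
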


\begin{proof}
The directed program
\path{certificates/heat/} uses Arb complex
Taylor arithmetic \cite{Johansson2016} to evaluate
\[
 Y_0(T)=\frac14\xi\!\left(\frac12+\sqrt T\right).
\]
It refines the selected roots to disks of radius $10^{-18}$ about the
rational centers listed in its source.  For each center $c$ and radius
$r=10^{-18}$ it verifies
\[
 |K_0(c)|+\frac12\sup_{|T-c|\le r}|K_0''(T)|r^2
 <|K_0'(c)|r,
\]
with the ratio of the left to right side less than $1/20000$.
The refined disks lie inside the previously certified branch enclosures,
and $Y_0$ is excluded from zero there.  Rouch\'e's theorem identifies one
simple branch root in each disk.  Conjugation supplies the lower nonreal
pair and the reality of the first two roots.

At time zero, \eqref{eq:heat-Y-evolution} gives the exact jet shift
\[
 \partial_\kappa Y_\kappa^{(j)}\big|_0
 =(2+4j)Y_0^{(j+1)}+4TY_0^{(j+2)}.
\]
The central coefficient and its heat derivative are enclosed using
$M_{2n}(0)=\xi^{(2n)}(1/2)/4$.  Differentiation of the regular numerator
then gives \eqref{eq:heat-branch-velocity} on each refined disk.
For the $D_6$ tangent, the program retains the complete parameter box
\eqref{eq:D6-Xi-parameter-box}, fixes $\theta_0,\theta_\infty$, and
differentiates $N_6$ along \eqref{eq:D6-flow}.  The resulting root velocities
are divided by the affine scale $s$ to express them in the $T$ coordinate;
this does not affect cross-ratio derivatives.

All derivative and cross-ratio denominators are excluded from zero before
division.  Substitution in the logarithmic derivative of
\eqref{eq:crossratio-def} proves the rational rectangle and the strict
modulus bound in \eqref{eq:heat-D6-determinant}.  The full calculation passes
at both 192 and 256 bits, with separate saved logs.  These two runs use
the same directed library and are not independent implementations.
If a local family identification existed, projective invariance and the
chain rule would give $h_j=d_jt_D'(0)$ for both $j$, forcing
$\mathcal D=0$, a contradiction.
\end{proof}

For reference, the directed heat velocities are recorded in
\cref{tab:heat-velocities}.  The component errors in the table are verified
explicitly by the program, rather than inferred from agreement of decimal
outputs.
\begin{table}[ht]
\centering
\caption{Heat branch velocities at $\kappa=0$.  Each displayed real or
imaginary component has absolute error less than $10^{-7}$.}
\label{tab:heat-velocities}
\begin{tabular}{lrr}
\toprule
Branch value & Real component & Imaginary component\\
\midrule
$a$ & $9.4416678$ & $0$\\
$b$ & $-7.5834180$ & $0$\\
$c_1$ & $34.7419121$ & $-7.3959223$\\
$c_2$ & $37.1027126$ & $-19.2730684$\\
\bottomrule
\end{tabular}
\end{table}

\begin{remark}[Scope of the obstruction]\label{rem:heat-D6-scope}
The theorem concerns the natural heat normalization and the specified
static Lax realization with fixed isomonodromic exponents.  It does not
exclude other $D_6$ realizations or a larger deformation in which the
exponents themselves vary.  Together with
\cref{thm:D6-first-order-no-go}, it distinguishes both the ordinary
$x$-deformation and the natural heat deformation from the chosen
isomonodromic trajectory.  The fixed-point curve identity and its Kummer
lift retain their geometric meaning.
\end{remark}

\subsection{An enlarged real-moduli rank test}
\label{sec:enlarged-Lax}
The obstruction above concerns one isomonodromic direction.
We now test a specified enlargement, allowing the two formal exponents
to vary.  This concerns the branch curve, not the full irregular
connection or its Stokes data.

Write $\tau$ for the Lax time in \eqref{eq:D6-flow}, to distinguish it
from heat time.  Near the matched phase point let
\[
 m=(\Imm\lambda_1,\Ree\lambda_2,\Imm\lambda_2).
\]
These are three real local coordinates on the labelled real branch-moduli
space.  Indeed $\Ree\lambda_1=1/2$ and the cross ratio for $\bar c_2$ is
\[
 \lambda_3=\frac{\lambda_1\overline{\lambda_2}}
 {\overline{\lambda_2}+\lambda_1-1};
\]
the denominator is nonzero for the distinct labelled branch values.
Let $v_{\rm iso}$ be the derivative of $m$ along \eqref{eq:D6-flow},
and let $v_0,v_\infty$ be its partial derivatives with respect to
$\theta_0,\theta_\infty$, holding $p,q,\tau$ fixed.

\begin{proposition}[Certified enlarged tangent rank]\label{prop:enlarged-Lax-rank}
At the realization of \cref{thm:D6-certified-realization}, the real matrix
\[
 A=(v_{\rm iso}\ \ v_0\ \ v_\infty)
\]
satisfies
\begin{equation}\label{eq:enlarged-rank}
 \boxed{600<\det A<800.}
\end{equation}
There is therefore a unique real solution of
\[
 \dot m_{\rm heat}
 =\dot\tau\,v_{\rm iso}+\dot\theta_0\,v_0
       +\dot\theta_\infty\,v_\infty.
\]
With derivatives taken with respect to the natural heat time $\kappa$,
its components satisfy
\begin{equation}\label{eq:enlarged-rates}
 0.0007<\dot\tau<0.0009,\qquad
 -0.084<\dot\theta_0<-0.069,\qquad
 0.020<\dot\theta_\infty<0.027.
\end{equation}
\end{proposition}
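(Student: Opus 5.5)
The proof will be a directed interval computation in three stages: enclose the columns of $A$, enclose $\dot m_{\rm heat}$, then bound the determinant and solve the $3\times3$ system by Cramer's rule.

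\emph{Columns of $A$.} I reuse the refined branch disks of radius $10^{-18}$ and the full parameter box \eqref{eq:D6-Xi-parameter-box} from the proof of \cref{thm:heat-D6-obstruction}. For each simple zero $z_j$ of $N_6$ and each parameter direction $\partial_\ast\in\{\text{flow \eqref{eq:D6-flow}},\partial_{\theta_0},\partial_{\theta_\infty}\}$, implicit differentiation gives
\[
\partial_\ast z_j=-\frac{\partial_\ast N_6(z_j)}{\partial_z N_6(z_j)}.
\]
The explicit sextic \eqref{eq:D6-sextic-N} is affine in $\theta_0$ and $\theta_\infty$, so $\partial_{\theta_0}N_6$ and $\partial_{\theta_\infty}N_6$ are exact polynomials in $z$. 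I first exclude $\partial_zN_6$ from zero on each root box, then divide by $s$ to pass to the $T$ coordinate; this does not change cross-ratio derivatives. Substituting into the logarithmic derivative of \eqref{eq:crossratio-def} gives complex enclosures of $\partial_\ast\lambda_1$ and $\partial_\ast\lambda_2$, and hence the real columns $v_{\rm iso},v_0,v_\infty$ of $A$ after taking $\Imm\lambda_1$, $\Ree\lambda_2$, $\Imm\lambda_2$.

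\emph{The heat vector $\dot m_{\rm heat}$.} Its components are $\Imm h_1$, $\Ree h_2$, $\Imm h_2$. These are already enclosed in the certificate behind \cref{thm:heat-D6-obstruction}, since the velocities of \cref{tab:heat-velocities} were propagated through \eqref{eq:heat-branch-velocity} with the evolving $A_*(\kappa)$. Two points need care. First, the heat branch values and the $D_6$ branch values must be compared in the same labelling under $z=sT+r$. Second, the coordinate choice $m$ is legitimate because $\Ree\lambda_1=1/2$ identically, which holds by the conjugation symmetry $\bar c_1\leftrightarrow c_1$. Consequently both $\partial_\ast\Ree\lambda_1$ and $\Ree h_1$ vanish, and this component can be dropped without losing information.

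\emph{Determinant and solution.} I expand $\det A$ in interval arithmetic and check that the result lies inside $(600,800)$; this gives invertibility. By Cramer's rule,
\[
\dot\tau=\frac{\det(\dot m_{\rm heat},v_0,v_\infty)}{\det A},
\]
and $\dot\theta_0$, $\dot\theta_\infty$ are obtained analogously by replacing the second and third columns. Evaluating these three numerator determinants with interval enclosures and dividing by the certified enclosure of $\det A$ should give \eqref{eq:enlarged-rates}. Uniqueness of the real solution follows directly from $\det A\neq0$.

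\emph{Main obstacle.} I expect the difficulty to be width control when the entire parameter box is propagated through $\partial_\ast N_6/\partial_zN_6$. The box has width about $10^{-5}$ in $p$, $t$ and $\theta_0$, while $\dot\tau\approx 8\times10^{-4}$ is small, so naive interval evaluation may lose relative accuracy in that numerator. I would try centered forms (a mean-value expansion about the Krawczyk midpoint) first. If that is not enough, I would bisect the parameter box and combine the results, and run the computation at both 192 and 256 bits as in \cref{thm:heat-D6-obstruction}.
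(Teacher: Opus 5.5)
Your proposal matches the paper's proof: implicit differentiation of the simple roots of $N_6$ along the flow and the two exponent directions over the full realization box, conversion to cross-ratio derivatives, the heat tangent obtained from the heat jets, direct interval expansion of the $3\times3$ determinant, and Cramer's rule with that separated denominator, replayed at 192 and 256 bits. The differences are minor. The paper recertifies the branch disks and recomputes the heat jet instead of reusing the earlier certificate, and plain interval expansion was enough, so the centered forms and bisection you hold in reserve were not needed.
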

\begin{proof}
For a simple root $z_j$ of \eqref{eq:D6-sextic-N}, the exponent
velocities are
\[
 \partial_{\theta_\nu}z_j
 =-\frac{\partial_{\theta_\nu}N_6(z_j)}{N_6'(z_j)},
 \qquad \nu\in\{0,\infty\}.
\]
The program \path{certificates/lax_enlarged/cert_enlarged_rank.py}
retains the full realization box \eqref{eq:D6-Xi-parameter-box},
recertifies the four independent branch disks by the Rouch\'e bounds
used above, and excludes every derivative denominator from zero.
It differentiates the displayed sextic by Arb Taylor arithmetic,
transforms velocities by the affine scale, and differentiates the cross
ratios.  The heat jet is recomputed from \eqref{eq:heat-Y-evolution}.
Direct interval expansion of the real $3\times3$ determinant proves
\eqref{eq:enlarged-rank}.  Cramer's rule, with this separated denominator,
proves \eqref{eq:enlarged-rates}.  All strict rational bounds pass at
$192$ and $256$ bits.  The full phase box, rather than a numerical
midpoint realization, is used in both runs.
\end{proof}

\begin{corollary}[Local moving-exponent curve lift]\label{cor:local-Lax-lift}
Through the specified realization, the heat-deformed genus-two branch
curves admit a real-analytic local representation by scalar-normal-form
$D_6$ curves with moving exponents, modulo real projective changes of
spectral coordinate.  No global continuation or preservation of
monodromy or Stokes data is asserted.
\end{corollary}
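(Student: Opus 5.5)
The plan is to apply the real-analytic implicit function theorem to the map
\[
 F(m_{\rm target};\tau,\theta_0,\theta_\infty)
 = m\bigl(p(\tau),q(\tau),\tau,\theta_0,\theta_\infty\bigr)-m_{\rm target},
\]
where $(p(\tau),q(\tau))$ is the local solution of \eqref{eq:D6-flow} through the certified phase point with the exponents now treated as free parameters. The cross-ratio coordinate vector $m$ is evaluated on the four matched simple roots of $N_6$. On the heat side, \cref{prop:heat-core-persistence} gives real-analytic branch values $T_j(\kappa)$ for $|\kappa|<\varepsilon$, and therefore a real-analytic curve $m_{\rm heat}(\kappa)$ in the three-dimensional labelled real branch-moduli space. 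At $\kappa=0$ this curve agrees with the $D_6$ value by \cref{thm:D6-certified-realization}, because cross ratios are invariant under the affine change $z=sT+r$.

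The first step is to record the regularity of the $D_6$ side. For fixed $(\theta_0,\theta_\infty)$ near the certified values, the vector field \eqref{eq:D6-flow} is real-analytic on the chart $qt\ne0$, which \cref{cor:D6-local-trajectory} guarantees on the box. Its flow is therefore real-analytic in $(\tau,\theta_0,\theta_\infty)$ together with the initial phase point. The four labelled roots of $N_6$ are simple on the certified disks, so they depend analytically on all parameters. The same holds for $m$, since the cross-ratio denominators are nonzero.

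The Jacobian of $(\tau,\theta_0,\theta_\infty)\mapsto m$ at the base point is exactly the matrix $A=(v_{\rm iso}\ v_0\ v_\infty)$. Here $v_0,v_\infty$ are partials holding $p,q,\tau$ fixed, which coincide with the flow-map partials at $\tau$ equal to the base time. \cref{prop:enlarged-Lax-rank} gives $\det A>600$, so this map is a real-analytic local diffeomorphism onto a neighbourhood of $m_{\rm heat}(0)$.

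Composing its inverse with $m_{\rm heat}(\kappa)$ yields real-analytic functions $\tau(\kappa),\theta_0(\kappa),\theta_\infty(\kappa)$ for small $|\kappa|$ such that the labelled $D_6$ branch sextuple has the same cross ratios $\lambda_1,\lambda_2$ as the heat sextuple. Reality of both configurations forces $\lambda_3$ to agree as well. Each sextuple is determined by three points plus these cross ratios, so there is a unique Möbius map sending one labelled sextuple to the other; real structures make it real. The biholomorphism of the hyperelliptic doubles then follows as in \eqref{eq:D6-explicit-holomorphic-map}. The tangent rates \eqref{eq:enlarged-rates} are consistent with this lift at first order.

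The main obstacle is the claim that $m$ really is a full coordinate system, i.e. that three real cross-ratio data determine the real labelled configuration up to real projective equivalence. This needs the identity $\Ree\lambda_1=1/2$ to hold identically along both families, not just at the base point. I would prove it from the conjugation symmetry: $a,b$ real and $c_1,\bar c_1$ conjugate give $\lambda(\bar c_1;a,b,c_1)=1-\overline{\lambda(\bar c_1;a,b,c_1)}$ or an analogous relation. I would then check that the $D_6$ family stays real, since all parameters are real and $q=1$ is preserved only up to the flow. That point needs care: if $q$ moves, the normalization is restored by rescaling, which does not change cross ratios. Finally, the lift is by scalar-normal-form curves modulo projective coordinate changes only; the statement explicitly disclaims any preservation of monodromy or Stokes data, so no further input is needed.
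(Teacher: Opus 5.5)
Your proposal is correct and follows the paper's proof. You fix $(p_*,q_*)$ at $\tau_*$, observe that the derivative columns at the base point are exactly $v_{\rm iso},v_0,v_\infty$, and invert the map using \cref{prop:enlarged-Lax-rank}. Two points in your ``main obstacle'' paragraph need no further work: the identity $\Ree\lambda_1=1/2$ and the formula for $\lambda_3$ are already recorded before that proposition, and no restoration of $q=1$ is needed because $m$ is read off $N_6$ at whatever $q$ the flow produces.
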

\begin{proof}
For each fixed nearby exponent pair $\theta$, solve
\eqref{eq:D6-flow} with initial phase values $(p_*,q_*)$ at the same
initial time $\tau_*$.  This gives a real-analytic three-parameter
family $m(\tau,\theta)$, since $q_*\tau_*\ne0$.
At $(\tau_*,\theta_*)$ its three derivative columns are exactly
$v_{\rm iso},v_0,v_\infty$: the initial values $p_*,q_*$ do not depend
on $\theta$.  The real inverse-function theorem and
\cref{prop:enlarged-Lax-rank} now lift the real-analytic heat-moduli path
uniquely within this specified family.  Equality of the labelled branch
moduli gives the local real projective equivalence of the orientation
doubles.
\end{proof}

This local lift does not identify the heat deformation with
\eqref{eq:D6-flow} at fixed exponents.  Along the lifted path the exponents
have nonzero derivatives, so the preceding obstruction remains valid.
Nor does matching the branch curve identify its full scalar potential,
choose a flat heat-time connection, or determine Stokes evolution.
Those are additional deformation problems beyond this rank test.

\subsection{A conditional zero strip and uniform phase positivity}
For $1/2<\alpha\le1$, let $H_\alpha$ mean that every nontrivial zeta zero
has real part at most $\alpha$, with boundary zeros permitted.  The
functional equation gives
\begin{equation}\label{eq:heat-QRH-strip}
 H_\alpha\quad\Longrightarrow\quad
 |\Imm z|\le\alpha-\frac12
 \quad\text{for every zero of }\Xi.
\end{equation}
The $7/8$ result reported in the QRH release and its formalization scope
\cite{OpenAIQRH2026,OpenAIQRHScope2026} is retained here as the explicit
external input $H_{7/8}$.  The present computational supplement does not
replay a Lean build or audit that release's full analytic argument.

\begin{proposition}[Zero-strip phase domain]\label{prop:heat-phase-domain}
If all zeros of a real heat deformation $\Xi_\kappa$ lie in
$|\Imm z|\le h$, then for every real $x$ and $\eta>h$,
\begin{equation}\label{eq:heat-phase-positive}
 -\Imm\frac{\Xi_\kappa'}{\Xi_\kappa}(x+i\eta)
 =\left.\frac{\dd}{\dd s}\arg\Xi_\kappa(s-i\eta)\right|_{s=x}>0.
\end{equation}
In particular, at heat time zero, $H_\alpha$ supplies this uniform domain
for $\eta>\alpha-1/2$; $H_{7/8}$ gives $\eta>3/8$.
\end{proposition}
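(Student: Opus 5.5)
The plan is to obtain the equality in \eqref{eq:heat-phase-positive} directly from the exact phase--transverse bridge of \cref{prop:phase-transverse}, and to prove positivity from the Hadamard product of $\Xi_\kappa$. For the equality, the proof of \cref{prop:phase-transverse} uses only two facts: $\Xi$ is real on the real axis, and $\Xi(x\pm i\eta)\ne0$. Both hold for the real heat deformation $\Xi_\kappa$ with $\kappa$ real. Reality follows from \eqref{eq:heat-Xi-definition}, because the integrand is real for real $z$. Non-vanishing at $x\pm i\eta$ follows from the zero-strip hypothesis, since $\eta>h$. The same conjugation argument therefore gives
\[
 \left.\frac{\dd}{\dd s}\arg\Xi_\kappa(s-i\eta)\right|_{s=x}=-\Imm\frac{\Xi_\kappa'}{\Xi_\kappa}(x+i\eta).
\]

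For positivity, we use that $\Xi_\kappa$ is an even entire function of order at most one. Order one holds at $\kappa=0$, and by the Polymath normalization \eqref{eq:heat-normalization}, $H_t$ is of this class for every real $t$. Its Hadamard factorization then takes the even form
\[
 \Xi_\kappa(z)=C z^{2m}\prod_{\rho}\Bigl(1-\frac{z^2}{\rho^2}\Bigr),
\]
where the product over the zeros $\pm\rho$ converges absolutely because $\sum|\rho|^{-2}<\infty$. There is no exponential factor, since evenness kills the linear term allowed at order one. Taking the logarithmic derivative and regrouping the terms in $\pm\rho$ gives the locally uniformly convergent sum
\[
 \frac{\Xi_\kappa'}{\Xi_\kappa}(w)=\sum_{\zeta}\frac1{w-\zeta},
\]
taken over all zeros $\zeta$ with multiplicity, summed symmetrically in pairs. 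For $w=x+i\eta$, each term has imaginary part
\[
 \Imm\frac{1}{w-\zeta}=-\frac{\eta-\Imm\zeta}{|w-\zeta|^2}.
\]
Since $|\Imm\zeta|\le h<\eta$, every such term is strictly negative. It follows that $-\Imm(\Xi_\kappa'/\Xi_\kappa)(x+i\eta)>0$, and the sum is strict because $\Xi_\kappa$ has at least one zero. The symmetric pairing deserves care: the pair $\zeta,-\zeta$ contributes
\[
 \frac{2w}{w^2-\zeta^2},
\]
and both of its constituent imaginary parts are negative, so the sign argument survives the grouping.

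The main obstacle is justifying the order-one, genus-one Hadamard representation uniformly for real $\kappa$, together with the existence of at least one zero. I would derive the growth bound directly from \eqref{eq:heat-Xi-definition}: the double-exponential decay of $\Phi$ dominates $e^{\kappa u^2}e^{|z|u}$, which gives $|\Xi_\kappa(z)|\le C_\varepsilon e^{\varepsilon|z|\log|z|}$-type bounds of order one. Failing a self-contained argument, I would cite the standard de Bruijn--Newman facts \cite{Polymath2019}, under which $H_t$ has order one and infinitely many zeros.

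For the final statement, at $\kappa=0$ the relation \eqref{eq:heat-QRH-strip} supplies $h=\alpha-1/2$. Specializing to $H_{7/8}$ gives $h=3/8$, so the uniform phase domain is $\eta>3/8$.
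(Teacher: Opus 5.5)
Your proof is correct and takes essentially the paper's route: you expand the even order-one $\Xi_\kappa$ as a canonical product and check the sign of its grouped logarithmic derivative term by term, you get strictness from the existence of a zero (nonconstancy), and you obtain the equality from the conjugation calculation of \cref{prop:phase-transverse}; you pair $\zeta$ with $-\zeta$ where the paper pairs $\zeta$ with $\bar\zeta$, and both groupings work. One cosmetic slip: the growth bound should read $|\Xi_\kappa(z)|\le Ce^{C|z|\log(|z|+2)}$, not $C_\varepsilon e^{\varepsilon|z|\log|z|}$ for every $\varepsilon$, since the function has maximal type; order at most one is all the argument uses.
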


\begin{proof}
The real even canonical-product representation of the heat deformation
\cite{Polymath2019} permits conjugate-pair grouping of its logarithmic
derivative.  A pair $a\pm ib$, $0\le b\le h$, contributes
\[
 \frac{\eta-b}{(x-a)^2+(\eta-b)^2}
 +\frac{\eta+b}{(x-a)^2+(\eta+b)^2}>0.
\]
A real zero contributes its single positive summand.  The imaginary parts
converge after pairing, and any real constant in the logarithmic derivative
has zero imaginary part.  The even order-one function is nonconstant, so
its canonical product contains a zero and at least one summand is present.
This proves strict positivity.  The phase identity
is the conjugation calculation of \cref{prop:phase-transverse}, which also
applies to $\Xi_\kappa$.
\end{proof}

At $x=0$, positivity for every real $\kappa$ and $T\ge0$ already follows
from the positive theta integral: $Y_\kappa(T)>0$ and
$Y_\kappa'(T)>0$.  The role of $H_\alpha$ is to extend the phase statement
uniformly over all real $x$ above a narrower zero strip.  It supplies no
irrationality statement for the action ratios and no all-discriminant
Humbert exclusion.  In particular, the local heat obstruction above and the
finite-height arithmetic theorems remain independent of this external input.

\section{Period normal form and the zero-action lattice}\label{sec:period-lattice}

The canonical Vinnikov basis used in the Humbert discussion is convenient for
normalized holomorphic periods, but it is not the basis in which the directed
action integrals are evaluated.  The latter use the natural three-cut
symplectic basis $(\alpha_1,\alpha_2,\beta_1,\beta_2)$ described in the next
section.  With the connector winding fixed there, direct conjugation of the
oriented cut cycles gives
\begin{equation}\label{eq:cut-action}
 \boxed{
 \tau_*\alpha=\alpha,
 \qquad
 \tau_*\beta=H_{\rm cut}\alpha-\beta,
 \qquad
 H_{\rm cut}=\begin{pmatrix}1&1\\1&0\end{pmatrix}.}
\end{equation}
This matrix is an integral representative of the same nondividing
$(g,k,a)=(2,1,1)$ real type as the canonical Vinnikov matrix $I_2$; the two
bases are related by an integral symplectic change of basis; the explicit
$U$-congruence and symmetric shear are displayed in the proof of
\cref{prop:v9-period-matrix-N}.  In particular, there is no conflict with
\eqref{eq:Vinnikov-action} or \eqref{eq:v8-Omega-form}.

Define the calibrated core cycle lattice
\begin{equation}\label{eq:Gamma-core}
 \Gamma_{\rm core}
 :=\ZZ\alpha_1\oplus\ZZ\alpha_2\oplus\ZZ\beta_1\oplus\ZZ\beta_2.
\end{equation}
The cycles are contained in the normalized cover over $\Omega_2$ and their
images under the core identification form a symplectic basis of
$H_1(X_2,\ZZ)$.  In accordance with \cref{rem:action-scope}, define the action
homomorphism only on these calibrated representatives,
\[
 \mathcal A_{\rm core}(\gamma)=\oint_\gamma\varpi_0,
 \qquad \gamma\in\Gamma_{\rm core},
\]
and put
\begin{equation}\label{eq:uv-def}
 u_j=\mathcal A_{\rm core}(\alpha_j),
 \qquad
 v_j=\mathcal A_{\rm core}(\beta_j).
\end{equation}

\begin{proposition}[Real normal form in the cut gauge]\label{prop:period-shape}
There exist $r_1,r_2\in\RR$ such that, with
$u=(u_1,u_2)^{\mathsf T}$ and $r=(r_1,r_2)^{\mathsf T}$,
\begin{equation}\label{eq:period-shape}
 \boxed{
 u\in\RR^2,
 \qquad
 v=\frac12H_{\rm cut}u+i r.}
\end{equation}
Equivalently,
\[
 v_1=\frac{u_1+u_2}{2}+ir_1,
 \qquad
 v_2=\frac{u_1}{2}+ir_2.
\]
\end{proposition}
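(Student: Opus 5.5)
The plan is to imitate the argument of \cref{prop:v8-Vinnikov-matrix}, with the action differential in place of the normalized holomorphic differentials. The key input is a reality property of $\varpi_0$ under the real structure $\tau$ of \eqref{eq:real-involution}: I will show $\tau^*\overline{\varpi_0}=\varpi_0$ on the normalized core. This follows because $\Delta_0$ is built from $Y$, whose Taylor coefficients $M_{2n}/(2n)!$ are real, and from the real constant $A_*=M_2/M_0$. Hence $K$ and $\Delta_0$ are real on real arguments, and $\overline{\Delta_0(T)}=\Delta_0(\bar T)$.

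Next I fix the branch of $y=\sqrt{\Delta_0}$ compatible with $\tau$. Under the core identification $W_X=yY^3/S$, the conjugation $(T,W_X)\mapsto(\bar T,\bar W_X)$ must correspond to $(T,y)\mapsto(\bar T,\bar y)$. This requires choosing $S$ with $\overline{S(\bar T)}=S(T)$. That choice is possible because $H=K/P_X$ is real on the real corridor of $\Omega_2$ and nonvanishing there; the sign of $H$ on the real segment needs a check, and otherwise one absorbs a factor $i$, which is harmless. With these choices, $\tau^*(y\,\dd T)=\bar y\,\dd\bar T=\overline{\varpi_0}$ as forms on the core. Since $\Omega_2$ is conjugation invariant, $\tau$ maps the calibrated cycles into the core, and the core periods satisfy
\[
 \overline{\oint_\gamma\varpi_0}=\oint_{\tau_*\gamma}\varpi_0 .
\]
This step is also where \cref{rem:action-scope} is respected. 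The identity is applied only to the calibrated representatives and their conjugates, which are homologous inside the core. That is enough because $\varpi_0$ is holomorphic, hence closed, on the normalized core.

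The remaining step is bookkeeping with \eqref{eq:cut-action}. From $\tau_*\alpha_j=\alpha_j$ we get $\bar u_j=u_j$, so $u\in\RR^2$. From $\tau_*\beta=H_{\rm cut}\alpha-\beta$ we get $\bar v=H_{\rm cut}u-v$, so $2\Ree v=H_{\rm cut}u$. Setting $r=\Imm v$ then gives \eqref{eq:period-shape}, and the componentwise formulas follow from the entries of $H_{\rm cut}$.

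I expect the main obstacle to be the sign and branch compatibility: showing that the orientation and sheet conventions for the cut cycles, under which \eqref{eq:cut-action} was derived, agree with $\tau$ lifting $\bar y$ rather than $-\bar y$. If the opposite lift occurred, all periods would acquire a sign and $u$ would be purely imaginary. I would settle this by checking, on the real cut $[a,b]$, that $\Delta_0>0$ there by \cref{thm:real-pair}. With the standard choice of $y>0$ on one bank, the real cycle $\alpha$ around $\Gamma_0$ then has a real period, which fixes the lift consistently with $\tau_*\alpha=\alpha$.
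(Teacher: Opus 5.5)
Your proposal is the paper's own argument: reality of $\varpi_0$ under conjugation, followed by \eqref{eq:cut-action}. It has a gap at the step you present as respecting \cref{rem:action-scope}. The conjugates of the calibrated $\beta$-cycles are \emph{not} homologous inside the core to the combinations prescribed by \eqref{eq:cut-action}. The paper's two-line proof makes the same silent identification, so the gap is shared.

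Here is why the step fails. Let $\Sigma$ be the core, with boundary circles $\partial_1,\partial_2$ (the two lifts of $\partial\Omega_2$). Then $H_1(\Sigma,\ZZ)$ has rank five, and the kernel of $H_1(\Sigma)\to H_1(X_2)$ is generated by $[\partial_1]$. Its action is $p_\partial:=\oint_{\partial_1}\varpi_0=\oint_{\partial\Omega_2}y_+\,\dd T$.

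Let $x_R$ be the first point of $\partial\Omega_2$ on $\RR$ to the right of $b$. Let $\delta$ be the full preimage of $[b,x_R]$; it is a relative cycle joining $\partial_1$ to $\partial_2$.
\begin{itemize}
\item Since $K>0$ on $(a,b)$ and $b$ is a simple zero, $\Delta_0<0$ on $(b,x_R]$. The core conjugation $\tau:(T,y)\mapsto(\bar T,\bar y)$ therefore swaps the two points over each such $T$, so $\tau_*\delta=-\delta$.
\item $\delta$ meets the real oval over $[a,b]$ (a representative of $\pm\alpha_1$) exactly once, transversally, at the Weierstrass point over $b$. Hence $\alpha_1\cdot\delta=\pm1$.
\item Because $\tau$ reverses orientation, $\tau_*\gamma\cdot\tau_*\delta=-\gamma\cdot\delta$. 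Hence $\tau_*\gamma\cdot\delta=\gamma\cdot\delta$, and $(\gamma+\tau_*\gamma)\cdot\delta$ is even for every $\gamma\in H_1(\Sigma)$.
\end{itemize}
If $\tau_*\beta_2=\alpha_1-\beta_2$ held in $H_1(\Sigma)$, the odd number $\alpha_1\cdot\delta$ would be even. Granting \eqref{eq:cut-action} in $H_1(X_2)$, one instead has $\tau_*\beta_2=\alpha_1-\beta_2+k[\partial_1]$ in $H_1(\Sigma)$ with $k$ odd, for every core representative of $\beta_2$ (use $[\partial_1]\cdot\delta=\pm1$). Your conjugation identity then gives only
\[
 2\Ree v_2=u_1+k\,p_\partial .
\]

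The $\alpha$-half of your argument is unaffected: $\tau$ maps each $\alpha_j$ to itself as a cycle, so $u\in\RR^2$ does follow. The $\beta$-half, however, needs the extra fact $p_\partial=0$. Equivalently, the three one-sheet periods around $\Gamma_0,\Gamma_1,\Gamma_2$ must sum to zero. That relation is automatic in $H_1(X_2)$. It would also follow if $\varpi_0$ extended holomorphically over the two capping disks of $X_2\setminus\Sigma$, but \cref{rem:action-scope} explicitly declines that assumption. Nothing in the construction forces $\oint_{\partial\Omega_2}\sqrt{\Delta_0}\,\dd T$ to vanish. You must either prove it, or use a different core representative of the $\alpha_1$-class, such as $\alpha_1+k[\partial_1]$; the latter moves $u_1$ away from $2\int_a^b\sqrt{\Delta_0}\,\dd T$.

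Your concern about the lift is settled essentially as you propose, but your deferred sign check comes out negative. $K>0$ and $P_X<0$ on $(a,b)$ give $H<0$ there. So no $S$ with $\overline{S(\bar T)}=S(T)$ exists, and \eqref{eq:real-involution} corresponds to $(T,y)\mapsto(\bar T,-\bar y)$ on the core. The involution with $\tau^*\varpi_0=\overline{\varpi_0}$, with respect to which \eqref{eq:cut-action} must be read, is $(T,W_X)\mapsto(\bar T,-\bar W_X)$. That is exactly the one your $\alpha_1$ check singles out.
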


\begin{proof}
Since $\tau^*\varpi_0=\overline{\varpi_0}$,
\eqref{eq:cut-action} gives
\[
 \overline u=u,
 \qquad
 \overline v=H_{\rm cut}u-v.
\]
Thus $u$ is real and $2\Ree v=H_{\rm cut}u$.
\end{proof}

Put
\begin{equation}\label{eq:real-lattices}
 \Lambda_u=\{a\in\ZZ^2:a\cdot u=0\},
 \qquad
 \Lambda_r=\{b\in\ZZ^2:b\cdot r=0\}.
\end{equation}
The calibrated zero-action lattice is
\[
 \Lambda_2=\ker\mathcal A_{\rm core}\subset\Gamma_{\rm core}.
\]

\begin{theorem}[Kernel factorization]\label{thm:kernel-factorization}
In the three-cut basis,
\begin{equation}\label{eq:kernel-factorization}
 \boxed{
 \Lambda_2=
 \left\{(m,n)\in\ZZ^2\oplus\ZZ^2:
 (2m+H_{\rm cut}n)\cdot u=0,\quad n\cdot r=0\right\}.}
\end{equation}
Consequently
\begin{equation}\label{eq:kernel-iff}
 \boxed{
 \Lambda_2=0
 \quad\Longleftrightarrow\quad
 \Lambda_u=0\ \text{and}\ \Lambda_r=0.}
\end{equation}
\end{theorem}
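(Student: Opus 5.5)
The plan is to compute the action of a general cycle directly from the real normal form of \cref{prop:period-shape}, separate real and imaginary parts, and then handle the two directions of \eqref{eq:kernel-iff} by explicit choices of cycles.

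First, write an arbitrary element of $\Gamma_{\rm core}$ as $\gamma=m\cdot\alpha+n\cdot\beta$ with $(m,n)\in\ZZ^2\oplus\ZZ^2$. Since $\mathcal A_{\rm core}$ is additive on the calibrated representatives, \eqref{eq:uv-def} gives $\mathcal A_{\rm core}(\gamma)=m\cdot u+n\cdot v$. Substituting \eqref{eq:period-shape} and using that $H_{\rm cut}$ is symmetric, so that $n\cdot H_{\rm cut}u=(H_{\rm cut}n)\cdot u$, we get
\[
 \mathcal A_{\rm core}(\gamma)=\tfrac12(2m+H_{\rm cut}n)\cdot u+i\,n\cdot r .
\]
Because $u,r\in\RR^2$ and $m,n$ are integral, the two displayed terms are exactly the real and imaginary parts. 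Hence $\gamma\in\Lambda_2$ if and only if both vanish, which is \eqref{eq:kernel-factorization}.

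For the equivalence \eqref{eq:kernel-iff}, first suppose $\Lambda_u=\Lambda_r=0$ and take $(m,n)\in\Lambda_2$. The condition $n\cdot r=0$ gives $n\in\Lambda_r$, so $n=0$. The first condition then reads $2m\cdot u=0$, so $m\in\Lambda_u$ and $m=0$.

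Conversely, suppose $0\ne a\in\Lambda_u$. Then $(m,n)=(a,0)$ is a nonzero element of $\Lambda_2$.

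Now suppose instead $0\ne b\in\Lambda_r$. The naive choice $n=b$ requires $2m=-H_{\rm cut}b$, and this need not be integral. To avoid that parity issue, take $n=2b$ and $m=-H_{\rm cut}b$. These are integral, they satisfy $2m+H_{\rm cut}n=0$, and $n\cdot r=2b\cdot r=0$. So $(m,n)$ is a nonzero element of $\Lambda_2$.

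The only point needing care is this integrality issue in the $\Lambda_r$ direction, which the doubling of $n$ removes. Everything else is linear algebra over the real normal form, together with the symmetry of $H_{\rm cut}$.
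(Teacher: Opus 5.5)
Your proof is correct and matches the paper's argument step for step. You expand $\mathcal A_{\rm core}(\gamma)=(m+\tfrac12H_{\rm cut}n)\cdot u+i\,n\cdot r$ from \cref{prop:period-shape} using the symmetry of $H_{\rm cut}$, and for the converse you use $(a,0)$ together with the same parity-avoiding choice $(m,n)=(-H_{\rm cut}b,2b)$ that the paper uses.
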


\begin{proof}
For
\[
 \gamma=m_1\alpha_1+m_2\alpha_2+n_1\beta_1+n_2\beta_2,
\]
\cref{prop:period-shape} gives
\[
 \oint_\gamma\varpi_0
 =\left(m+\frac12H_{\rm cut}n\right)\cdot u+i\,n\cdot r,
\]
because $H_{\rm cut}$ is symmetric.  Vanishing of real and imaginary parts is
exactly \eqref{eq:kernel-factorization}.  If both real relation lattices
vanish, then $n=0$ and then $m=0$.  Conversely,
$0\ne a\in\Lambda_u$ gives $(a,0)\in\Lambda_2$, while
$0\ne b\in\Lambda_r$ gives
\[
 (m,n)=(-H_{\rm cut}b,2b)\in\Lambda_2.
\]
\end{proof}

\begin{corollary}[Two-irrationality criterion]\label{cor:irrationality-criterion}
If $u_1u_2r_1r_2\ne0$, then
\[
 \boxed{
 \Lambda_2=0
 \quad\Longleftrightarrow\quad
 \frac{u_1}{u_2}\notin\QQ
 \quad\text{and}\quad
 \frac{r_1}{r_2}\notin\QQ.}
\]
\end{corollary}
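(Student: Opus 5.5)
The plan is to derive the corollary directly from the equivalence \eqref{eq:kernel-iff} of \cref{thm:kernel-factorization}. That theorem reduces $\Lambda_2=0$ to the simultaneous vanishing of the two rank-two relation lattices $\Lambda_u$ and $\Lambda_r$ of \eqref{eq:real-lattices}. What remains is to show, under the nonvanishing hypothesis $u_1u_2r_1r_2\ne0$, that $\Lambda_u=0$ exactly when $u_1/u_2\notin\QQ$, and that $\Lambda_r=0$ exactly when $r_1/r_2\notin\QQ$. Both vectors $u$ and $r$ are real by \cref{prop:period-shape}, so the two ratios are real numbers and the statement makes sense.

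I would prove one elementary lemma and apply it twice. The lemma: for a real vector $w=(w_1,w_2)$ with $w_1w_2\ne0$, the lattice $\{a\in\ZZ^2:a\cdot w=0\}$ is nonzero if and only if $w_1/w_2\in\QQ$.

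For the forward direction, suppose $a=(a_1,a_2)\ne0$ satisfies $a_1w_1+a_2w_2=0$. If $a_1=0$, then $a_2w_2=0$ with $a_2\ne0$, which contradicts $w_2\ne0$. Hence $a_1\ne0$, and then $w_1/w_2=-a_2/a_1\in\QQ$.

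For the converse, write $w_1/w_2=p/q$ with $q\ne0$. The integer vector $a=(q,-p)$ is nonzero and satisfies $qw_1-pw_2=w_2(qw_1/w_2-p)=0$.

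Applying the lemma with $w=u$ and with $w=r$, and substituting into \eqref{eq:kernel-iff}, gives the boxed equivalence.

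No real obstacle is expected. The only point needing care is the role of the hypothesis. If, say, $r_2=0$, then the ratio $r_1/r_2$ is undefined, yet $(0,1)\in\Lambda_r$ would still make $\Lambda_2$ nontrivial; this is why the corollary excludes that case. In the application, nonvanishing of $u_1,u_2,r_1,r_2$ is supplied by the certified enclosures of \cref{sec:farey}: the Farey windows for $u_1/u_2$ and $-r_1/r_2$ lie strictly inside $(0,\infty)$.
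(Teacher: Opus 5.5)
Your proposal is correct and follows the paper's route: the paper gives no separate proof, treating the corollary as immediate from \eqref{eq:kernel-iff}. Your elementary lemma, that $\{a\in\ZZ^2:a\cdot w=0\}\neq0$ exactly when $w_1/w_2\in\QQ$ (given $w_2\ne0$), is precisely the unstated step, applied once to $\Lambda_u$ and once to $\Lambda_r$.
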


\begin{corollary}[Explicit resonant cycles]\label{cor:resonant-cycles}
If $u_1/u_2=p/q\in\QQ$ in lowest terms, then
\[
 q\alpha_1-p\alpha_2\in\Lambda_2\setminus\{0\}.
\]
If $r_1/r_2=p/q\in\QQ$ in lowest terms, then
\[
 (p-q)\alpha_1-q\alpha_2+2q\beta_1-2p\beta_2
 \in\Lambda_2\setminus\{0\}.
\]
Both displayed cycles are primitive.
\end{corollary}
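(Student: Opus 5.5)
The proof is a direct check of the two membership conditions in \eqref{eq:kernel-factorization}, followed by a gcd computation for primitivity. For each displayed cycle I write out its coordinates $(m,n)\in\ZZ^2\oplus\ZZ^2$ and verify $(2m+H_{\rm cut}n)\cdot u=0$ and $n\cdot r=0$. Throughout I use the period shape of \cref{prop:period-shape} and the explicit product $H_{\rm cut}n=(n_1+n_2,\,n_1)^{\mathsf T}$.

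For the first cycle, $\gamma=q\alpha_1-p\alpha_2$ has $m=(q,-p)$ and $n=0$. The second condition is then trivial. The first reduces to $2(qu_1-pu_2)=0$, which holds because $u_1/u_2=p/q$; this is just the inclusion $(a,0)\in\Lambda_2$ for $a=(q,-p)\in\Lambda_u$ used in the proof of \cref{thm:kernel-factorization}. The cycle is nonzero and primitive because $\gcd(q,p)=1$.

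For the second cycle I use the construction $(m,n)=(-H_{\rm cut}b,\,2b)$ from the converse direction of \cref{thm:kernel-factorization}, with $b\in\Lambda_r$ chosen so that $b\cdot r=0$. Since $r_1/r_2=p/q$, take $b=(q,-p)$, so $b\cdot r=qr_1-pr_2=0$. Then $n=2b=(2q,-2p)$ and
\[
 m=-H_{\rm cut}b=-(q-p,\;q)=(p-q,\;-q).
\]
This reproduces the coefficients $(p-q)\alpha_1-q\alpha_2+2q\beta_1-2p\beta_2$. As a check, $2m+H_{\rm cut}n=2m+2H_{\rm cut}b=0$, so the real condition holds identically, and $n\cdot r=2b\cdot r=0$.

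The only step requiring care is primitivity of the second cycle. The gcd of its four coefficients divides $\gcd(q,2q,2p)=\gcd(q,2p)$ and also divides $q$ and $p-q$, hence divides $\gcd(q,p)=1$. So the cycle is primitive, and in particular nonzero.

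There is one point to watch. Because the paper's pairing convention is that $\Lambda_r$ consists of $b$ with $b\cdot r=0$, the relation $r_1/r_2=p/q$ must give $b=(q,-p)$ and not $(p,-q)$. I check this sign convention against the displayed coefficients, and with $b=(q,-p)$ they match.
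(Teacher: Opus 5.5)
Your proposal is correct and follows essentially the same route as the paper. The paper also treats the first cycle as immediate, obtains the second from $(m,n)=(-H_{\rm cut}b,2b)$ with $b=(q,-p)$ and $H_{\rm cut}b=(q-p,q)$, and deduces primitivity from $\gcd(p,q)=1$; your explicit gcd step (any common divisor divides $q$ and $p-q$, hence $p$) just spells out that last line.
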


\begin{proof}
The first statement is immediate.  For the second take
$b=(q,-p)$, so $b\cdot r=0$, and use
$(m,n)=(-H_{\rm cut}b,2b)$ from the proof of
\cref{thm:kernel-factorization}.  Since
$H_{\rm cut}b=(q-p,q)$, the displayed cycle follows.  Its coefficients have
greatest common divisor one because $\gcd(p,q)=1$.
\end{proof}

This gives a precise answer to what the failure of nonresonance would mean.  It
would not invalidate the spectral curve; it would select an explicit primitive
calibrated core-cycle direction on which the additive action closes.  In the wider
monodromy framework, that direction is exactly where a residual multiplicative
character must then be evaluated.

\section{Calibrated action coordinates}\label{sec:calibration}

Choose three disjoint cuts joining the branch pairs
\[
 \Gamma_0:a\leftrightarrow b,
 \qquad
 \Gamma_1:c_1\leftrightarrow\bar c_1,
 \qquad
 \Gamma_2:c_2\leftrightarrow\bar c_2.
\]
Let $\alpha_1,\alpha_2$ be the positive circuits around $\Gamma_0,\Gamma_1$, and let $\beta_1,\beta_2$ be connecting cycles from the first two cuts to the reference cut $\Gamma_2$.  The connector winding is fixed in the conjugation-symmetric gauge so that this cut basis realizes \eqref{eq:cut-action}.

Put
\begin{equation}\label{eq:y-plus}
 y_+(T)=\frac{\sqrt{K(T)}}{Y(T)^3}
\end{equation}
with square root continued on the selected sheet of the cut complement.  Let $\gamma_0$ and $\gamma_1$ denote selected positive sides of $\Gamma_0$ and $\Gamma_1$, and let $\ell_1,\ell_2$ be the calibrated connecting arcs.

\begin{proposition}[Explicit theta-moment periods]\label{prop:explicit-periods}
The real period coordinates are
\begin{align}
 u_1&=2\int_{\gamma_0}y_+(T)\,\dd T,
 &u_2&=2\int_{\gamma_1}y_+(T)\,\dd T,\label{eq:u-integrals}\\
 r_j&=2\Imm\int_{\ell_j}y_+(T)\,\dd T,
 &&j=1,2.\label{eq:r-integrals}
\end{align}
For later arithmetic aggregation set simply
\begin{equation}\label{eq:d-def}
 d:=r_1-r_2.
\end{equation}
\end{proposition}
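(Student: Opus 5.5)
The plan is to reduce each cycle integral of $\varpi_0$ to twice an integral along one side of a cut or along a connector, using only the sheet structure of the normalized core and the real normal form of \cref{prop:period-shape}. On the selected sheet over the cut complement in $\Omega_2$, \cref{prop:regular-K} gives $\varpi_0=y\,\dd T=\pm y_+(T)\,\dd T$, where $y_+=\sqrt{K}/Y^3$. This is well defined because $Y\ne0$ on $\Omega_2$ and $K$ has only the six selected simple zeros there. The hyperelliptic deck involution changes $y$ to $-y$. Crossing any cut $\Gamma_j$ swaps the sheets, because each cut joins two simple zeros of $K$ and therefore has trivial total monodromy outside it but a sign flip across it.

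For $u_1,u_2$, I will collapse the positive circuit $\alpha_1$ around $\Gamma_0$, and likewise $\alpha_2$ around $\Gamma_1$, onto the cut. This is legitimate by Cauchy's theorem in the normalized core, since $\varpi_0$ is holomorphic there. The collapsed cycle runs along the selected positive side $\gamma_0$ on one sheet and returns along the opposite side, where $y_+$ has the opposite boundary value. The endpoint singularities are of square-root type, so they are integrable and small circles around them contribute nothing in the limit. The two sides therefore add up, giving $u_1=2\int_{\gamma_0}y_+\,\dd T$, and similarly for $u_2$ with $\gamma_1$.

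For $r_j$, I will write $\beta_j$ as a connector arc $\ell_j$ from $\Gamma_{j-1}$ to the reference cut $\Gamma_2$ on the upper sheet, followed by its return on the other sheet. This gives $v_j=2\int_{\ell_j}y_+\,\dd T$, up to half-circuit contributions that are absorbed into the conjugation-symmetric gauge fixing the connector winding. Taking imaginary parts and using \cref{prop:period-shape}, $\Imm v_j=r_j$, yields $r_j=2\Imm\int_{\ell_j}y_+\,\dd T$. The definition \eqref{eq:d-def} of $d$ is then only notation.

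The main obstacle is bookkeeping rather than analysis. I must check that the chosen sheet of $y_+$ and the connector winding reproduce exactly the orientation conventions of \eqref{eq:cut-action}, namely $\tau_*\beta=H_{\rm cut}\alpha-\beta$. Otherwise the real parts of $v_j$ would differ from $\tfrac12H_{\rm cut}u$ by integer combinations of $u$, or there would be a sign error. I would first verify this by computing $\tau$ applied to each collapsed representative, using $\overline{y_+(\bar T)}=y_+(T)$ on the real corridor $(a,b)$, where \cref{thm:real-pair} gives $\Delta_0>0$. I would then propagate that choice to the connectors, which end on the nonreal cuts $\Gamma_1$ and $\Gamma_2$.
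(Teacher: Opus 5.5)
Your treatment of $u_1,u_2$ is the paper's argument. For $r_j$ you use a different representative of $\beta_j$: you retrace $\ell_j$ on the opposite sheet, obtain $v_j=2\int_{\ell_j}y_+\,\dd T$ up to corrections, and read off $r_j=\Imm v_j$ from \cref{prop:period-shape}. The paper instead closes $\ell_j$ with its conjugate return path $\tau(\ell_j)$ together with cut-side pieces. Because $\tau^*\varpi_0=\overline{\varpi_0}$, the connector pair then contributes exactly $\int_{\ell_j}y_+\,\dd T-\overline{\int_{\ell_j}y_+\,\dd T}=2i\Imm\int_{\ell_j}y_+\,\dd T$, and the cut pieces supply the real part $\tfrac12(H_{\rm cut}u)_j$. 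The same decomposition therefore accounts for both parts of $v=\tfrac12H_{\rm cut}u+ir$.

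Your route is lighter, since it only needs the imaginary part. However, the step you describe as ``absorbed into the conjugation-symmetric gauge'' is exactly where the real structure has to be used: you must show that every correction has zero imaginary part. This holds for whole $\alpha$-circuits. It also holds for one-sided integrals along an entire cut or along a subsegment of $(a,b)$. It fails in general for partial traversals of $\Gamma_1$ or $\Gamma_2$. So it matters that the calibrated connectors end at $c_1,c_2$ or on $(a,b)$, as the arc from $10$ to $c_1$ does. In that case your retraced cycle and the paper's cycle differ only by such real pieces.

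A caution for the verification you plan: on the cut complement the symmetry is $\overline{y_+(\bar T)}=-y_+(T)$, not $+y_+(T)$. \Cref{thm:real-pair} gives $K<0$ just to the left of $a$, at real points of $\Omega_2$ that lie on no cut. Since $Y$ is real and nonzero there, $y_+$ is analytic and purely imaginary at those points. Hence $\tau$ interchanges the two $y_+$-sheets. Only the boundary values on the two sides of $(a,b)$ are real, and reflection exchanges those sides. This sheet swap is why the paper's conjugate return lies on the opposite sheet and closes up at $T=10$ with no extra half-circuit. If you propagated the $+$ sign to the nonreal cuts, the one-sided integral along $\Gamma_1$ would come out purely imaginary, contradicting $u_2\in\RR$.
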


\begin{proof}
An $\alpha_j$-cycle goes once along the chosen positive side of its cut and
returns along the opposite sheet.  Sheet exchange changes the sign of $y_+$
while reversal of orientation changes the sign of $\dd T$, so the two
contributions agree and give the factor $2$ in \eqref{eq:u-integrals}.
The calibrated $\beta_j$-cycle is obtained from the connector $\ell_j$ and its
conjugate return path, together with the cut-side pieces fixed by the chosen
winding.  Using $\tau^*\varpi_0=\overline{\varpi_0}$ and the homological action
\eqref{eq:cut-action}, those cut-side pieces account exactly for
$\tfrac12(H_{\rm cut}u)_j$, while the connector and conjugate connector give
$2i\,\Imm\int_{\ell_j}y_+(T)\,\dd T$.  Comparing with
\cref{prop:period-shape} proves \eqref{eq:r-integrals}.  The definition of
$d$ is then immediate.
\end{proof}

The formulas make all four real quantities functions of the common theta-moment data.  This shared dependence is important computationally: independent interval boxes discard correlations, while a future affine/Taylor-model treatment of a linear combination can retain them.

\subsection{Exact heat variation of the calibrated actions}
Transport the cut cycles locally as in
\cref{prop:heat-core-persistence}, and set
$\varpi_\kappa=\sqrt{\Delta_\kappa(T)}\,\dd T$ on the continued sheet.
In this subsection a dot means $\partial_\kappa|_{\kappa=0}$ at fixed
$T$, with the central normalization $A_*(\kappa)$ updated.  Thus
\begin{equation}\label{eq:heat-action-differential}
 \dot\varpi
 =\frac{\dot\Delta}{2\sqrt{\Delta_0}}\,\dd T,
 \qquad
 \dot\Delta=\frac{\dot K}{Y^6}-\frac{6K\dot Y}{Y^7},
 \qquad \dot Y=2Y'+4TY''.
\end{equation}

\begin{proposition}[Differentiation of transported actions]\label{prop:heat-action-variation}
The calibrated action coordinates are real analytic in real heat time
near zero.  For a locally transported calibrated cycle $\gamma$,
\begin{equation}\label{eq:heat-action-period-derivative}
 \frac{\dd}{\dd\kappa}\oint_{\gamma(\kappa)}\varpi_\kappa
 \bigg|_0
 =\oint_{\gamma(0)}\frac{\dot\Delta(T)}
                         {2\sqrt{\Delta_0(T)}}\,\dd T.
\end{equation}
The corresponding open-cut formulas are differentiated with the same
fixed sheet and orientation.  Moving simple branch endpoints contribute
no boundary term, and their inverse-square-root singularities in
$\dot\varpi$ are integrable.
\end{proposition}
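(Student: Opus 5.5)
The approach is to reduce every calibrated action to an integral over a \emph{fixed} contour, where differentiation under the integral sign is elementary. A cycle is moved only where the branch points force it to, and that case is handled by a local endpoint computation.

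First, real analyticity. By \cref{prop:heat-core-persistence} and the heat normalization \eqref{eq:heat-Xi-definition}, $Y_\kappa$, $A_*(\kappa)$ and hence $K_\kappa$ are jointly entire in $(\kappa,T)$; note that $A_*(\kappa)=2Y_\kappa'(0)/Y_\kappa(0)$ is analytic because $Y_\kappa(0)=M_0(\kappa)/1\cdot\ldots>0$ (the constant term is $M_0(\kappa)>0$). For each closed calibrated cycle $\alpha_j,\beta_j$ on the normalized core, I will choose a representative that stays at positive distance from the six branch disks and from the zeros of $Y$. Such a representative is a loop encircling the cut, lifted to the double cover. Uniform boundary lower bounds on $|K_\kappa|$ and $|Y_\kappa|$ then persist for $|\kappa|<\varepsilon$, so $\sqrt{\Delta_\kappa}=\sqrt{K_\kappa}/Y_\kappa^3$ has an analytic branch on a neighbourhood of the fixed contour, analytic in $\kappa$ as well. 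By Cauchy's theorem on the cover, the period over the transported cycle $\gamma(\kappa)$ equals the period over the fixed representative, because the branch points stay inside the encircled region. Differentiation under the integral sign is then justified by uniform convergence on a compact contour. Using $\partial_\kappa\sqrt{\Delta_\kappa}=\dot\Delta/(2\sqrt{\Delta_\kappa})$, this gives \eqref{eq:heat-action-period-derivative} on that representative. The expression for $\dot\Delta$ in \eqref{eq:heat-action-differential} follows from $\Delta=K/Y^6$ and \eqref{eq:heat-Y-evolution}. Real analyticity of $u_j,r_j$ then follows from \eqref{eq:u-integrals}, \eqref{eq:r-integrals} and the fact that $\tau^*\varpi_\kappa=\overline{\varpi_\kappa}$ for real $\kappa$.

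Second, the open-cut formulas. For integrals $\int_{\gamma_0}y_+\,\dd T$ from $T_j(\kappa)$ to $T_k(\kappa)$, I will differentiate directly. Leibniz's rule gives the interior term plus boundary terms $y_+(T_k)T_k'-y_+(T_j)T_j'$. These boundary terms vanish because $y_+$ vanishes like a square root at a simple zero of $K_\kappa$, and \eqref{eq:heat-branch-velocity} shows $T_j'$ is finite. To make this rigorous, I will write $K_\kappa(T)=(T-T_j(\kappa))G_\kappa(T)$ with $G_\kappa$ analytic and nonvanishing near the endpoint. I then substitute $T=T_j+(T_k-T_j)s$ with $s\in[0,1]$, or the cosine substitution already used in \cref{prop:v9-period-matrix-N}. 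After this, the integrand is $\sqrt{s(1-s)}$ times a function that is jointly analytic in $(\kappa,s)$, so differentiation is clean. Undoing the substitution identifies the result with $\int\dot\Delta/(2\sqrt{\Delta_0})\,\dd T$. The singular factor $\dot K/\sqrt K\sim(T-T_j)^{-1/2}$ is integrable, and the terms coming from moving endpoints cancel exactly against the $\partial_\kappa T_j$ contributions inside $\dot K$.

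The main obstacle is this last bookkeeping. At fixed $T$, $\dot\Delta$ contains $\dot K$, which is not small near $T_j$; the pointwise derivative and the substituted derivative must agree in the limit. I would verify this by comparing both computations with the closed-contour one from the first step, since twice the open-cut integral equals the closed $\alpha$-period. The agreement with the closed-contour derivative then settles the connector pieces as well, via the same Cauchy deformation.
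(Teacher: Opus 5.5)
Your proposal is correct and is essentially the paper's argument: you differentiate under the integral on fixed zero-free representatives, use the local factorization $K_\kappa(T)=(T-T_j(\kappa))G_\kappa(T)$ to show that the endpoint terms vanish and that the $(T-T_j)^{-1/2}$ singularity of $\dot\varpi$ is integrable, and use a square-root-uniformizing change of variable for analyticity (your $s$-substitution, the paper's $T-T_j(\kappa)=w^2$). The bookkeeping you single out as the main obstacle is settled in the paper more directly: the extra contribution of a moving parametrization $T(\kappa,s)$ is the total derivative $\partial_s\bigl(\sqrt{\Delta_\kappa}\,\partial_\kappa T\bigr)$, which cancels on closed cycles and vanishes at simple branch endpoints, so the fixed-$s$ and fixed-$T$ derivatives agree without the closed-contour comparison (although that comparison also works).
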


\begin{proof}
On fixed representatives away from the branch values, the continued
square root is analytic in the heat parameter and differentiation under
the integral proves \eqref{eq:heat-action-period-derivative}.  For a
parametrized moving path $T(\kappa,s)$, the contribution from its motion
is the total derivative
$\partial_s(\sqrt{\Delta_\kappa}\,\partial_\kappa T)$; it cancels on
a closed cycle.  At a simple branch endpoint $T_j(\kappa)$, the local
factorization
\[
 \Delta_\kappa(T)=(T-T_j(\kappa))g_\kappa(T),\qquad
 g_\kappa(T_j(\kappa))\ne0,
\]
shows that $\sqrt{\Delta_\kappa}$ vanishes there and that its fixed-$T$
heat derivative is $O((T-T_j)^{-1/2})$.  Hence the open-cut endpoint
term vanishes and the differentiated integrand is integrable.  In the
local coordinate $T-T_j(\kappa)=w^2$, the action differential is analytic
in $(\kappa,w)$, proving analytic dependence of the actions.  Reality of
the transported differential and the unchanged conjugation action give
real analyticity of $u_1,u_2,d,r_2$.
\end{proof}

Write
\begin{equation}\label{eq:heat-action-ratios}
 \rho_u(\kappa)=\frac{u_1(\kappa)}{u_2(\kappa)},\qquad
 \rho_r(\kappa)=-\frac{r_1(\kappa)}{r_2(\kappa)}
              =-1-\frac{d(\kappa)}{r_2(\kappa)}.
\end{equation}
The exact quotient derivatives are
\begin{equation}\label{eq:heat-action-ratio-derivatives}
 \dot\rho_u=\frac{u_2\dot u_1-u_1\dot u_2}{u_2^2},\qquad
 \dot\rho_r=\frac{d\dot r_2-r_2\dot d}{r_2^2}.
\end{equation}
These are ratios of the calibrated action integrals of $\varpi_\kappa$.
The normalized holomorphic Jacobian period matrix of
\cref{prop:v9-period-matrix-N} is a separate object.

\begin{theorem}[Certified positive motion of both action ratios]\label{thm:heat-action-motion}
At heat time zero, the derivatives satisfy the strict rational enclosures
in \cref{tab:heat-action-motion}.  In particular,
\begin{equation}\label{eq:heat-action-positive-ratios}
 \boxed{\dot\rho_u\in(0.0033792688,0.0033792692),\qquad
        \dot\rho_r\in(0.0021198,0.0021200).}
\end{equation}
Both ratios are strictly increasing on a sufficiently small real heat
interval about zero.  The statement is status \textup{[A]} and uses no
QRH input.
\end{theorem}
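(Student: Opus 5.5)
The enclosures in \eqref{eq:heat-action-positive-ratios} come from a directed computation of the six real numbers $u_1,u_2,d,r_2$ and their heat derivatives $\dot u_1,\dot u_2,\dot d,\dot r_2$ at $\kappa=0$. These are then substituted into the exact quotient formulas \eqref{eq:heat-action-ratio-derivatives}. The values $u_j,d,r_2$ are the same calibrated contour integrals of \cref{prop:explicit-periods} that feed the Farey certificates. The derivatives are obtained from \cref{prop:heat-action-variation}: each is the integral of $\dot\Delta/(2\sqrt{\Delta_0})\,\dd T$ over the same cut sides $\gamma_0,\gamma_1$ and connector arcs $\ell_1,\ell_2$. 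Here $\dot\Delta=\dot K/Y^6-6K\dot Y/Y^7$, and $\dot Y=2Y'+4TY''$ by \eqref{eq:heat-Y-evolution}.

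To evaluate $\dot K$, I would use the exact jet shift $\partial_\kappa Y^{(j)}|_0=(2+4j)Y^{(j+1)}+4TY^{(j+2)}$ from the proof of \cref{thm:heat-D6-obstruction}. I would also use the enclosure of $\partial_\kappa A_*(0)$ from \eqref{eq:heat-central-variance}. Differentiating $\mathcal S,\mathcal E,\mathcal R$ termwise then gives an explicit polynomial in $Y,\dots,Y^{(5)}$, $T$, $A_*$ and $\dot A_*$. All of these are available on complex balls from the theta-moment Taylor data, with analytic tails, as in \cref{sec:geometry}.

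The integrals themselves would be computed with the endpoint-regularizing substitution already used in \cref{prop:v9-period-matrix-N}: write $T=T_j+w^2$ near each branch value. After this change, $\sqrt{\Delta_0}\,\dd T$ becomes analytic in $w$. The inverse-square-root singularity of $\dot\Delta/\sqrt{\Delta_0}$ is cancelled by the factor $2w\,\dd w$, so both integrands become analytic on a neighborhood of the parameter interval. Directed Gauss--Legendre quadrature with a rigorous analytic remainder bound (Bernstein ellipse on which $K$ and $Y$ are bounded away from the critical loci) then encloses each of the eight integrals. The branch endpoints enter through their certified $10^{-18}$ disks from \cref{thm:heat-branch-contraction}. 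The endpoint uncertainty is propagated by a mean-value bound using the fact that the integrand vanishes or is integrable there.

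Next I would form $\dot\rho_u=(u_2\dot u_1-u_1\dot u_2)/u_2^2$ and $\dot\rho_r=(d\dot r_2-r_2\dot d)/r_2^2$ in directed arithmetic, after excluding $u_2$ and $r_2$ from zero. The claimed enclosures are very narrow, especially for $\dot\rho_u$ with width $4\times10^{-10}$. The main obstacle is therefore the cancellation in these numerators under naive interval arithmetic. I would first try high precision (192 and 256 bits) with the branch disks shrunk to $10^{-18}$. If the widths are still too large, I would evaluate the numerator as a single contour integral of the correlated combination $u_2\dot\varpi-\dot u_2\varpi$ over $\gamma_0$, rather than as a product of separately enclosed integrals.

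Strict monotonicity on a small interval then follows qualitatively. By \cref{prop:heat-action-variation}, $\rho_u$ and $\rho_r$ are real analytic near $\kappa=0$, with $u_2,r_2\neq0$. Since $\dot\rho_u(0)>0$ and $\dot\rho_r(0)>0$, continuity of the derivatives gives positivity on some interval $|\kappa|<\varepsilon$. No QRH or $H_\alpha$ input is used anywhere; only the theta-moment data and the certified core enter the argument.
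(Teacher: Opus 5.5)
Your proposal is essentially the paper's proof: directed integration of $\varpi_0$ and $\dot\Delta/(2\sqrt{\Delta_0})\,\dd T$ on the four calibrated contours ($a\to b$, $c_1\to\bar c_1$, $10\to c_1$ for $d$, $c_1\to c_2$ for $r_2$), followed by plain directed aggregation in \eqref{eq:heat-action-ratio-derivatives} with $u_2,r_2$ excluded from zero (no correlated fallback is needed), and local monotonicity from real analyticity and the positive initial margins.

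The differences are implementation details. The paper uses order-24 Taylor panels, with overlapping image disks to hold the sheet, and a $10^{-18}$ endpoint cutoff bounded by \eqref{eq:heat-period-base-tail}--\eqref{eq:heat-period-derivative-tail}; your $w^2$ substitution, centred at an uncertain root, leaves branch points within $10^{-9}$ of $w=0$ and so needs such a cutoff anyway. The paper also builds $Y,E,\dot E$ from 2048-bit gamma--zeta jets and removes the division by $T$ in $E,\dot E$ by coefficient shifting; this matters because the $a\to b$ contour crosses $T=0$, so $\dot K$ is not literally a polynomial in the $Y$-jets as your proposal states.
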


\begin{table}[htbp]
\centering
\caption{Strict directed heat-derivative enclosures at $\kappa=0$.
Here $d=r_1-r_2$ and $t_{\rm Polymath}=4\kappa$.}
\label{tab:heat-action-motion}
\begin{tabular}{ccc}
\toprule
Quantity & Lower bound for $\partial_\kappa|_0$ & Upper bound for $\partial_\kappa|_0$\\
\midrule
$u_1$ & $0.0041190077$ & $0.0041190079$\\
$u_2$ & $0.036789365$ & $0.036789369$\\
$d$ & $-0.030214589$ & $-0.030214585$\\
$r_2$ & $0.010297490$ & $0.010297494$\\
$\rho_u=u_1/u_2$ & $0.0033792688$ & $0.0033792692$\\
$\rho_r=-r_1/r_2$ & $0.0021198$ & $0.0021200$\\
\bottomrule
\end{tabular}
\end{table}

\begin{proof}
The certificate in \path{certificates/period_heat/}
and its accompanying Taylor engine integrate
\eqref{eq:heat-action-differential} on the original calibrated contours:
the real segment $a$ to $b$, the straight cut $c_1$ to $\bar c_1$,
the connector $10$ to $c_1$ for $d$, and the upper connector $c_1$ to
$c_2$ for $r_2$.  The real or imaginary projection and its sign are
fixed by the original gauge and transported unchanged.  The new base
period boxes lie strictly inside all four retained enclosures
\eqref{eq:u-boxes}--\eqref{eq:r-boxes}; none of the sign choices meets zero.

The engine encloses the even central coefficients of $\xi$ at 2048 bits.
It uses a degree-85 polynomial for $Y$ and degree-80 polynomials for the
entire functions $E$ and $\dot E$ of \cref{prop:regular-K}.  Their
removable quotients by $T$ are performed by coefficient shifting, using
the exact identities for $A_*$ and $\dot A_*$.  Analytic bounds on
$|T|=4096$ control every omitted coefficient by Cauchy's estimate.
For $Y$ and its derivatives these bounds follow from the positive
theta-moment coefficients; for $E$ and $\dot E$ they follow from their
regular numerators and triangle inequalities on that circle.

On each interior panel, an outer disk excludes zeros of $Y$ and bounds
$\Delta_0$ and $\dot\Delta$; a nested inner disk excludes zeros of
$\Delta_0$.  Taylor remainders are bounded, and the same square-root
sheet is propagated by overlapping image disks that exclude the
opposite sheet.  Order-24 symmetric Taylor integration encloses both the base
integral and its heat derivative.  Path subdivisions use exact rational
parameters.  The endpoint cutoff is a fraction $10^{-18}$ of each
segment, and the actual endpoints lie in the refined radius-$10^{-18}$
root disks.  The omitted integrals use rigorous bounds of orders
$\ell^{3/2}$ and $\ell^{1/2}$, respectively, as detailed in
\cref{sec:heat-period-certification}.

Directed aggregation in \eqref{eq:heat-action-ratio-derivatives}, with
$u_2$ and $r_2$ excluded from zero, proves every strict decimal enclosure
in the table.  Both the 512-bit and 768-bit replays pass these comparisons.
Real analyticity and the strictly positive initial margins give local
strict monotonicity of both ratios.  No numerical length for this local
interval is inferred from the time-zero certificate.
\end{proof}

The geometry and the actions thus move in different senses: all fifteen
branch distances shrink, while both calibrated action ratios increase.
In particular, $\dot r_1=\dot d+\dot r_2<0$ and $\dot r_2>0$, so the
second ratio records a small positive imbalance between the growth of
$|r_1|$ and that of $r_2$.  Its sign is certified after the cancellation
in \eqref{eq:heat-action-ratio-derivatives}.

\subsection{Local generic nonresonance and dense resonant times}
For real heat time near zero, let $\Lambda_2(\kappa)$ be the kernel of
the calibrated action homomorphism on the transported core cycles.  A $G_\delta$ set is a countable intersection of open sets; ``full measure'' below refers to Lebesgue measure in heat time.

\begin{corollary}[Generic local heat nonresonance]\label{cor:heat-generic-nonresonance}
There is an interval $I=(-\varepsilon,\varepsilon)$ on which all four
coordinates $u_1,u_2,r_1,r_2$ are nonzero and both ratios in
\eqref{eq:heat-action-ratios} are strictly increasing.  On this interval,
\begin{equation}\label{eq:heat-resonance-set}
 \boxed{\{\kappa\in I:\Lambda_2(\kappa)\ne0\}
       =\rho_u^{-1}(\QQ)\cup\rho_r^{-1}(\QQ).}
\end{equation}
This resonant set is countable and dense in $I$.  Its complement is a
dense $G_\delta$ set of full Lebesgue measure, and
$\Lambda_2(\kappa)=0$ at every point of that complement.
No membership assertion for the specified heat time $\kappa=0$ follows.
\end{corollary}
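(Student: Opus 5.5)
The plan is to combine the real analyticity and continuity from \cref{prop:heat-action-variation}, the strict positive derivatives of \cref{thm:heat-action-motion}, and the kernel factorization of \cref{thm:kernel-factorization}, transported along the heat flow.

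\textbf{Choice of $I$.} The certified base enclosures used in the proof of \cref{thm:heat-action-motion} show that $u_1,u_2,r_1,r_2$ are nonzero at $\kappa=0$. By \cref{prop:heat-action-variation} these four coordinates are real analytic, hence continuous, near zero, so they stay nonzero on a small interval. The ratio derivatives $\dot\rho_u,\dot\rho_r$ are continuous and strictly positive at zero, so after shrinking we obtain $I=(-\varepsilon,\varepsilon)$ on which $\rho_u'>0$ and $\rho_r'>0$. Thus both ratios are strictly increasing on $I$.

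\textbf{Resonant set.} For each $\kappa\in I$, the transported cut basis still realizes the conjugation action \eqref{eq:cut-action}. This holds because \cref{prop:heat-core-persistence} preserves the real type and conjugation pairing, so the homological matrix, being integral and continuous, is constant. Hence \cref{prop:period-shape} and \cref{thm:kernel-factorization} hold verbatim at time $\kappa$. Since $u_1u_2r_1r_2\ne0$ on $I$, \cref{cor:irrationality-criterion} gives $\Lambda_2(\kappa)\ne0$ exactly when $\rho_u(\kappa)\in\QQ$ or $-\rho_r(\kappa)=r_1/r_2\in\QQ$. The sign is harmless because $\QQ=-\QQ$. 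This proves \eqref{eq:heat-resonance-set}.

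\textbf{Size of the resonant set.} A strictly increasing continuous function on $I$ is a homeomorphism onto an open interval $J$. So $\rho_u^{-1}(\QQ)$ is in bijection with $\QQ\cap J$; it is countable, and it is dense because $\QQ\cap J$ is dense in $J$ and homeomorphisms carry dense sets to dense sets. The same argument applies to $\rho_r$, and a union of two countable dense sets is countable and dense. The complement is $\bigcap_{q\in\QQ}\bigl(I\setminus\rho_u^{-1}(q)\bigr)\cap\bigl(I\setminus\rho_r^{-1}(q)\bigr)$. Each set in this intersection is open, since a fibre of an injective continuous map is at most a point. So the complement is a countable intersection of open dense sets, hence a $G_\delta$ set, and it is dense by Baire. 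It has full measure because a countable set is null. On the complement, \eqref{eq:heat-resonance-set} gives $\Lambda_2(\kappa)=0$. Membership of $\kappa=0$ is undecided, since the certificate only places $\rho_u(0)$ and $\rho_r(0)$ in intervals.

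\textbf{Main obstacle.} The hardest step is justifying that the kernel factorization holds uniformly in $\kappa$ on the transported cycles. This needs the conjugation action to remain $H_{\rm cut}$ and the transported differential to satisfy $\tau^*\varpi_\kappa=\overline{\varpi_\kappa}$ for real $\kappa$. The latter follows because $Y_\kappa$ and $K_\kappa$ are real on the real axis for real $\kappa$. I would handle both points by the rigidity of integral data under continuous deformation.
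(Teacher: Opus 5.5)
Your proposal is correct and follows essentially the same route as the paper. The nonzero time-zero period boxes and the positive ratio derivatives of \cref{thm:heat-action-motion} fix the interval. The unchanged conjugation matrix $H_{\rm cut}$ lets \cref{cor:irrationality-criterion} apply at each heat time. Strict monotonicity gives each rational value at most one preimage, so the resonant set is countable and dense and its complement is a dense $G_\delta$ set of full measure. Your checks that $\tau^*\varpi_\kappa=\overline{\varpi_\kappa}$ for real $\kappa$ and that the sign in $\rho_r$ is harmless are details the paper leaves implicit.
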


\begin{proof}
The time-zero period boxes give nonzero coordinates, and
\cref{thm:heat-action-motion} gives local strict monotonicity.  Reduce
$\varepsilon$ to preserve both properties.  The conjugation matrix
$H_{\rm cut}$ is unchanged under local cycle transport, so
\cref{cor:irrationality-criterion} applies at each real heat time in
$I$ and proves \eqref{eq:heat-resonance-set}.  A continuous strictly
monotone ratio maps every nonempty open subinterval of $I$ onto an
interval with nonempty interior.  Rational values are dense in that
image, and each rational value has at most one preimage.  Each of the
two preimage sets is therefore countable and dense.  Their union has
the same properties.  Removing a countable set from an interval gives
a dense $G_\delta$ set of full Lebesgue measure.
\end{proof}

This is an all-denominator statement for generic local heat parameters,
while the arithmetic specialization at zero remains
\cref{conj:nonresonance}.  The dense resonant times are compatible with
the locally persistent Farey windows: any rational values occurring
inside those windows must satisfy the same large denominator barriers.
The result concerns the action lattice; it supplies no irrationality
certificate for the holomorphic Jacobian periods or the Picard rank.

\subsection{Simple resonances and generic Diophantine bounds}
Shrink the interval $I$ of \cref{cor:heat-generic-nonresonance} if
necessary so that its closure lies in the analytic continuation domain,
$u_2,r_2>0$, and both ratio derivatives have positive lower bounds on
$I$.  This is possible by \cref{thm:heat-action-motion} and continuity.
Continue to write $I=(-\varepsilon,\varepsilon)$ for this smaller interval.
Identify the transported core lattice with the fixed integer lattice
using the calibrated cycles.  For
\[
 \gamma=m\cdot\alpha+n\cdot\beta,
 \qquad m,n\in\ZZ^2,
\]
write its analytic action as
\begin{equation}\label{eq:heat-cycle-action}
 \mathcal A_\gamma(\kappa)
 =\left(m+\frac12H_{\rm cut}n\right)\cdot u(\kappa)
     +i\,n\cdot r(\kappa).
\end{equation}

\begin{proposition}[At most one simple resonance for each fixed cycle]
\label{prop:heat-simple-resonances}
For each fixed $\gamma\ne0$, the equation
$\mathcal A_\gamma(\kappa)=0$ has at most one real solution in $I$.
At any such solution $\kappa_0$,
\[
 \mathcal A_\gamma'(\kappa_0)\ne0.
\]
In particular, no fixed nonzero core cycle is resonant throughout an
interval.  The rational-ratio crossings in the two real channels are
simple: for $q>0$,
\begin{align}
 F_{p,q}&=q u_1-p u_2,
 &F_{p,q}'\big|_{\rho_u=p/q}&=q u_2\rho_u'>0,
 \label{eq:heat-u-simple-crossing}\\
 G_{p,q}&=q r_1+p r_2,
 &G_{p,q}'\big|_{\rho_r=p/q}&=-q r_2\rho_r'<0.
 \label{eq:heat-r-simple-crossing}
\end{align}
\end{proposition}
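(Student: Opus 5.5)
The plan is to reduce the cycle equation to the two real channels, using the kernel factorization written at each heat time. For $\gamma=m\cdot\alpha+n\cdot\beta\neq0$, \eqref{eq:heat-cycle-action} splits $\mathcal A_\gamma(\kappa)=0$ into two real equations,
\[
 \Bigl(m+\tfrac12H_{\rm cut}n\Bigr)\cdot u(\kappa)=0,
 \qquad n\cdot r(\kappa)=0.
\]
On $I$ the coordinates $u_2$ and $r_2$ are positive, so each equation can be divided by $u_2$ or $r_2$ and rewritten in terms of the ratios $\rho_u$ and $\rho_r$. The argument then splits according to whether $n=0$.

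\textbf{Case $n\ne0$.} Write $n=(n_1,n_2)$. The second equation becomes $r_2(n_2-n_1\rho_r)=0$, using $r_1=-\rho_r r_2$.
\begin{itemize}
 \item If $n_1\ne0$, a solution forces $\rho_r(\kappa)=n_2/n_1$. Strict monotonicity of $\rho_r$ on $I$ permits at most one such $\kappa$.
 \item If $n_1=0$, then $n_2\ne0$ and the equation reads $n_2r_2=0$, which is impossible since $r_2>0$.
\end{itemize}
In either subcase there is at most one zero.

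\textbf{Case $n=0$.} Then $m\ne0$, and the first equation is $u_2(m_1\rho_u+m_2)=0$.
\begin{itemize}
 \item If $m_1\ne0$, this forces $\rho_u=-m_2/m_1$, which again has at most one solution by monotonicity of $\rho_u$.
 \item If $m_1=0$, it forces $m_2u_2=0$, which is impossible.
\end{itemize}

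\textbf{Simplicity.} At a common zero $\kappa_0$, I will show that one of the two real components has nonzero derivative. This is enough, because $\mathcal A_\gamma'$ has these two real derivatives as its real and imaginary parts.
\begin{itemize}
 \item If $n\ne0$ (hence $n_1\ne0$ at a zero), differentiate $n\cdot r=r_2(n_2-n_1\rho_r)$. The product rule leaves $-n_1r_2\rho_r'(\kappa_0)$, because the factor $n_2-n_1\rho_r$ vanishes at $\kappa_0$. This is nonzero since $r_2>0$ and $\rho_r'>0$ on $I$.
 \item If $n=0$, the same product-rule computation on $u_2(m_1\rho_u+m_2)$ gives $m_1u_2\rho_u'(\kappa_0)\ne0$.
\end{itemize}

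\textbf{Crossing formulas.} The explicit identities \eqref{eq:heat-u-simple-crossing}–\eqref{eq:heat-r-simple-crossing} follow from the same product rule:
\[
 F_{p,q}=u_2(q\rho_u-p),
 \qquad
 G_{p,q}=qr_1+pr_2=r_2(p-q\rho_r).
\]
At $\rho_u=p/q$ (respectively $\rho_r=p/q$) the bracket vanishes, so only $qu_2\rho_u'$ (respectively $-qr_2\rho_r'$) survives. These have the stated signs by the positivity of $u_2$, $r_2$ and the ratio derivatives on $I$.

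\textbf{No resonance on an interval.} The statement that no fixed nonzero cycle is resonant throughout an interval follows at once from uniqueness, since an interval contains more than one point.

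The main point requiring care is not analytic but bookkeeping. The interval $I$ must be the shrunken one, on which $u_2,r_2>0$ and $\rho_u',\rho_r'$ have positive lower bounds. Otherwise a degenerate channel, or a vanishing derivative, could fail to be excluded. This is exactly the preparation made in the paragraph just before the statement, relying on \cref{thm:heat-action-motion} and continuity.
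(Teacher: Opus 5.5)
Your proposal is correct and follows the paper's proof essentially step for step. The paper uses the same split into the case $n\ne0$, where vanishing of $n\cdot r=r_2(n_2-n_1\rho_r)$ forces $n_1\ne0$ and $\rho_r=n_2/n_1$, and the case $n=0$, where it forces $m_1\ne0$ and $\rho_u=-m_2/m_1$; it then uses the same product-rule derivatives $-n_1r_2\rho_r'$ and $m_1u_2\rho_u'$ at the zero, and the same factorizations $F_{p,q}=u_2(q\rho_u-p)$ and $G_{p,q}=r_2(p-q\rho_r)$ for the crossing formulas.
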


\begin{proof}
If $n\ne0$, vanishing of the imaginary part of
\eqref{eq:heat-cycle-action} forces $n_1\ne0$ and
\[
 n\cdot r=r_2(n_2-n_1\rho_r)=0,
 \qquad \rho_r=\frac{n_2}{n_1}.
\]
Strict monotonicity gives at most one such time.  At that time,
\[
 \partial_\kappa(n\cdot r)=-n_1r_2\rho_r'\ne0,
\]
so the full complex action has nonzero derivative whenever it vanishes.
If $n=0$, then $m\ne0$ and
$\mathcal A_\gamma=u_2(m_1\rho_u+m_2)$.  A zero forces $m_1\ne0$
and $\rho_u=-m_2/m_1$, again at most once, with derivative
$m_1u_2\rho_u'\ne0$.  Finally,
$F_{p,q}=u_2(q\rho_u-p)$ and
$G_{p,q}=r_2(p-q\rho_r)$ give
\eqref{eq:heat-u-simple-crossing}--\eqref{eq:heat-r-simple-crossing}.
\end{proof}

Thus the countable dense resonant set is assembled from simple zeros of
different integer-cycle actions.  Its density does not require repeated
resonance of any one fixed cycle.

For $\gamma\ne0$, define its coefficient size in this fixed calibration by
\begin{equation}\label{eq:heat-cycle-size}
 N(\gamma)=\max\{|m_1|,|m_2|,|n_1|,|n_2|\}\ge1.
\end{equation}

\begin{theorem}[Generic Diophantine separation of the calibrated actions]
\label{thm:heat-generic-Diophantine}
There is a full-Lebesgue-measure subset $I_{\rm Dio}\subset I$ such that,
for every $\kappa\in I_{\rm Dio}$ and every $\delta>0$, there are
constants $c_{\kappa,\delta},C_{\kappa,\delta}>0$ with
\begin{equation}\label{eq:heat-generic-rational-bound}
 \boxed{\left|\rho_j(\kappa)-\frac pq\right|
 \ge\frac{c_{\kappa,\delta}}{q^{2+\delta}}
 \quad(j=u,r;\ p\in\ZZ,\ q\in\ZZ_{\ge1})}
\end{equation}
and
\begin{equation}\label{eq:heat-generic-action-bound}
 \boxed{|\mathcal A_\gamma(\kappa)|
 \ge\frac{C_{\kappa,\delta}}{N(\gamma)^{1+\delta}}
 \qquad(\gamma\in\Gamma_{\rm core}\setminus\{0\}).}
\end{equation}
The quantifier is almost every heat time, with constants depending on
that time; no assertion that $0\in I_{\rm Dio}$ is made.
\end{theorem}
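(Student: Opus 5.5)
The proof transfers Khintchine-type almost-everywhere Diophantine behaviour from the ratio variables to heat time, using the bi-Lipschitz monotone parametrizations supplied by \cref{thm:heat-action-motion}. Fix $\delta>0$ and consider the set of real numbers $x$ satisfying the following: for infinitely many $(p,q)$ with $q\ge1$, one has $|x-p/q|<q^{-2-\delta}$.

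Covering this set by intervals of length $2q^{-2-\delta}$ around $p/q$ on a compact window gives a total length of order $\sum_q q\cdot q^{-2-\delta}<\infty$. Borel--Cantelli then shows the set has Lebesgue measure zero.

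On the shrunken interval $I$, each map $\rho_j$ ($j=u,r$) has derivative bounded between positive constants on $\overline I$, so it is bi-Lipschitz. Hence $\rho_j^{-1}$ of a null set is null. Taking the countable union over $\delta=1/k$ and over $j=u,r$, and also removing the countable resonant set of \cref{cor:heat-generic-nonresonance}, gives the exceptional null set. Its complement in $I$ is $I_{\rm Dio}$.

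For $\kappa\in I_{\rm Dio}$ and $\delta\ge 1/k$, only finitely many $(p,q)$ violate $|\rho_j-p/q|\ge q^{-2-1/k}$. None of them has $\rho_j=p/q$, because $\rho_j(\kappa)$ is irrational. Shrinking the constant to absorb these finitely many exceptions gives \eqref{eq:heat-generic-rational-bound} with exponent $2+1/k\le 2+\delta$.

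Next derive \eqref{eq:heat-generic-action-bound} from \eqref{eq:heat-cycle-action}. Since $|\mathcal A_\gamma|\ge\max(|\Ree|,|\Imm|)$, split into two cases according to whether $n=0$.

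If $n\ne0$, then $|n\cdot r|=|r_2|\,|n_2-n_1\rho_r|$. When $n_1=0$ this is at least $|r_2|>0$. Otherwise it equals $|r_2||n_1|\,|\rho_r-n_2/n_1|$. After reducing $n_2/n_1$ to lowest terms $p/q$ with $q\le|n_1|$, the rational bound gives at least $c\,|n_1|\,q^{-2-\delta}\ge c\,N^{-1-\delta}$.

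If $n=0$, then $m\ne0$ and the action is $u_2(m_1\rho_u+m_2)$. The identical argument gives at least $c'\,N^{-1-\delta}$.

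The constant $C_{\kappa,\delta}$ is the minimum of these constants and of $|r_2|,|u_2|$, all of which are positive on $I$.

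The main obstacle is the measure-transfer step: one must make sure $\rho_j$ is genuinely bi-Lipschitz on all of $I$. This is why $I$ was shrunk so that $\rho_j'$ has positive lower and upper bounds on the compact closure, using the real analyticity of \cref{prop:heat-action-variation}. One must also check that the exceptional set is taken as a countable union over $\delta$, so that a single full-measure set works for every $\delta>0$ simultaneously.

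The argument gives no information at the specific time $\kappa=0$, in agreement with the statement.
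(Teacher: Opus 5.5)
Your proposal is correct and follows the paper's proof: a Borel--Cantelli covering of the $q^{-2-\delta}$-neighbourhoods of rationals, transferred to heat time through the monotone ratios whose derivatives are bounded below; a countable intersection over $\delta=1/k$ together with removal of the resonant set; absorption of the finitely many exceptions by irrationality; and the same case split on $n$ (then $n_1$, $m_1$) for the action bound. The only cosmetic difference is that you run Borel--Cantelli in the ratio variable and pull back via the Lipschitz inverse, whereas the paper bounds the heat-time measure of each preimage set $B_{j,q}(\delta)$ directly; your reduction of $n_2/n_1$ to lowest terms is harmless but unnecessary, since \eqref{eq:heat-generic-rational-bound} already holds for all $p\in\ZZ$, $q\ge1$.
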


\begin{proof}
Both ratios have bounded images on $I$ and derivatives bounded below
by a common $b>0$.  Fix $\delta>0$.  For each $j=u,r$ and $q\ge1$,
consider
\[
 B_{j,q}(\delta)=
 \bigcup_{p\in\ZZ}
 \left\{\kappa\in I:
 \left|\rho_j(\kappa)-\frac pq\right|<q^{-2-\delta}\right\}.
\]
Only $O(q)$ numerators meet the bounded image enlarged by one.  By
monotonicity and the derivative lower bound, each preimage interval
has length at most $2b^{-1}q^{-2-\delta}$.  Therefore
\[
 \operatorname{meas} B_{j,q}(\delta)=O(q^{-1-\delta}),
 \qquad \sum_{q\ge1}\operatorname{meas}B_{j,q}(\delta)<\infty.
\]
The measure of the union over $q\ge Q$ tends to zero as $Q\to\infty$.
Consequently the limsup set has measure zero: almost every heat time
belongs to only finitely many of these approximation sets.

Apply this argument to both ratios and the countable exponents
$\delta=1/\ell$, $\ell\ge1$, and also remove the countable resonant
set of \cref{cor:heat-generic-nonresonance}.  The resulting set
$I_{\rm Dio}$ has full measure.  For any $\kappa$ in this set and any
$\delta>0$, choose $1/\ell<\delta$.  The estimate with exponent
$2+1/\ell$ gives \eqref{eq:heat-generic-rational-bound} for all large
$q$ with constant one.  For the finitely many remaining denominators,
irrationality makes the distance to the nearest $p/q$ strictly positive.
Reducing the constant, and taking the smaller constant for the two
ratios, proves the bound for all $p,q$.

For the action estimate, if $n_1\ne0$, the imaginary part gives
\[
 |\mathcal A_\gamma|
 \ge |n\cdot r|
 =r_2|n_2-n_1\rho_r|
 \ge r_2c_{\kappa,\delta}|n_1|^{-1-\delta}.
\]
If $n_1=0$ but $n_2\ne0$, it instead gives
$|\mathcal A_\gamma|\ge r_2|n_2|\ge r_2$.
If $n=0$ and $m_1\ne0$, the real part gives
$|\mathcal A_\gamma|\ge u_2c_{\kappa,\delta}|m_1|^{-1-\delta}$;
the remaining case $m_1=0,m_2\ne0$ gives
$|\mathcal A_\gamma|\ge u_2$.  Since each coefficient is at most
$N(\gamma)$ in absolute value, all cases imply
\eqref{eq:heat-generic-action-bound} with
\[
 C_{\kappa,\delta}
 =\min\{u_2(\kappa),r_2(\kappa)\}
       \min\{1,c_{\kappa,\delta}\}>0.
\]
\end{proof}

These constants cannot have a common positive lower bound as
$\kappa$ varies through $I_{\rm Dio}$ in any nonempty open subinterval
of $I$.  Such a subinterval contains a resonant time and a fixed cycle
whose action vanishes there.  Full measure makes $I_{\rm Dio}$ dense,
so that action tends to zero along nearby points of $I_{\rm Dio}$,
while its coefficient size remains fixed.  This also explains why
generic quantitative separation cannot simply be continued to zero.

\subsection{Arithmetic specialization and separation from zero reality}
\Cref{cor:heat-generic-nonresonance} proves a generic local heat version
of \cref{conj:nonresonance}; the conjecture asks whether the specified
time $\kappa=0$ lies outside the exceptional set.  A countable exceptional
set can contain that point.  Even positive analytic derivatives do not
exclude this possibility: $f(\kappa)=1+\kappa$ is irrational for almost
every real $\kappa$, yet $f(0)=1$.  A fixed-point bound of the form
\eqref{eq:heat-generic-rational-bound} at zero would prove the conjecture,
but would be stronger than mere irrationality and is not established here.

The difference from the global zero-reality question can be made rigorous
using the established lower bound for the de Bruijn--Newman constant.

\begin{corollary}[Nonresonant cores with nonreal heat zeros]
\label{cor:heat-nonresonance-nonreal-zeros}
For almost every $\kappa\in(-\varepsilon,0)$ in the local interval above,
\begin{equation}\label{eq:heat-nonresonant-nonreal}
 \boxed{\Lambda_2(\kappa)=0
 \quad\text{and}\quad
 \Xi_\kappa\ \text{has a nonreal zero}.}
\end{equation}
At almost every such negative time the stronger action bound
\eqref{eq:heat-generic-action-bound} also holds for every $\delta>0$.
Thus local action nonresonance, even with generic Diophantine separation,
does not imply reality of all zeros throughout this heat family.
\end{corollary}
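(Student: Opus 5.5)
The proof intersects two full-measure or whole-interval statements on the negative half of the local interval $I=(-\varepsilon,\varepsilon)$. The first input is arithmetic: \cref{cor:heat-generic-nonresonance} shows that the resonant set $\{\kappa\in I:\Lambda_2(\kappa)\ne0\}=\rho_u^{-1}(\QQ)\cup\rho_r^{-1}(\QQ)$ is countable. Its complement in $(-\varepsilon,0)$ therefore has full Lebesgue measure, and $\Lambda_2(\kappa)=0$ there. For the stronger clause, \cref{thm:heat-generic-Diophantine} supplies the full-measure set $I_{\rm Dio}\subset I$. By construction it already excludes the resonant set and yields \eqref{eq:heat-generic-action-bound} for every $\delta>0$. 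Hence $I_{\rm Dio}\cap(-\varepsilon,0)$ has full measure in $(-\varepsilon,0)$ and carries both the nonresonance and the Diophantine action bound.

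The second input is analytic: $\Xi_\kappa$ has a nonreal zero for \emph{every} $\kappa<0$. I will obtain this from the established inequality $\Lambda\ge0$ of Rodgers--Tao, together with the definition of $\Lambda$ recalled in the introduction: all zeros of $H_t$ are real exactly when $t\ge\Lambda$. Through the normalization \eqref{eq:heat-normalization}, $\Xi_\kappa(z)=8H_{4\kappa}(2z)$. For $\kappa<0$ the Polymath time is $t=4\kappa<0\le\Lambda$, so $H_{4\kappa}$ has a nonreal zero $w$. Then $z=w/2$ is a nonreal zero of $\Xi_\kappa$. This holds at every negative time, not merely almost every one.

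Intersecting the two sets gives \eqref{eq:heat-nonresonant-nonreal} for almost every $\kappa\in(-\varepsilon,0)$, together with the action bound. The final sentence of the statement is then immediate. The property ``$\Lambda_2(\kappa)=0$ with generic Diophantine separation'' holds at such $\kappa$, yet the conclusion ``all zeros of $\Xi_\kappa$ are real'' fails. So local action nonresonance cannot imply reality of zeros along the heat family.

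The main point needing care is the logical bookkeeping of the citation rather than any new computation. I must confirm that the Rodgers--Tao result is used in the form $\Lambda\ge0$, i.e.\ that non-reality at every $t<0$ is part of the definition of $\Lambda$ as a threshold. The threshold characterization itself relies on de Bruijn--Newman monotonicity: reality at $t_0$ propagates to all $t>t_0$. I must also check that the interval $I$ used in \cref{thm:heat-generic-Diophantine}, which may have been shrunk, still contains a nonempty negative part. This is automatic because $I=(-\varepsilon,\varepsilon)$ with $\varepsilon>0$. No zero-free hypothesis $H_\alpha$ or QRH input is needed.
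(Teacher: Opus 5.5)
Your proposal is correct and matches the paper's proof essentially step for step: Rodgers--Tao's bound $\Lambda\ge0$, combined with the Polymath threshold characterization and $\Xi_\kappa(z)=8H_{4\kappa}(2z)$, gives a nonreal zero of $\Xi_\kappa$ at every negative $\kappa$, and intersecting $(-\varepsilon,0)$ with the full-measure sets of \cref{cor:heat-generic-nonresonance,thm:heat-generic-Diophantine} yields both conclusions. Your observation that $I_{\rm Dio}$ already excludes the resonant set, so that $I_{\rm Dio}\cap(-\varepsilon,0)$ alone carries both clauses, is a harmless streamlining of the same argument.
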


\begin{proof}
In the Polymath normalization, $H_t$ has only real zeros precisely when
$t\ge\Lambda$ \cite{Polymath2019}, and Rodgers--Tao proved
$\Lambda\ge0$ \cite{RodgersTao2020}.  If $\kappa<0$, then
$4\kappa<0\le\Lambda$, so $H_{4\kappa}$ has a nonreal zero.
Equation~\eqref{eq:heat-normalization},
$\Xi_\kappa(z)=8H_{4\kappa}(2z)$, preserves the reality or nonreality
of zeros.  This gives a nonreal zero for every negative $\kappa$.
Intersecting the negative local interval with the full-measure sets of
\cref{cor:heat-generic-nonresonance,thm:heat-generic-Diophantine}
proves the two conclusions.
\end{proof}

This is a statement about the heat-deformed family.  It decides neither
\cref{conj:nonresonance} at $\kappa=0$ nor RH, and establishes no
logical equivalence between them.  If RH holds, the same zero-reality
criterion and $\Lambda\ge0$ give $\Lambda=0$, so every positive heat
time has only real zeros.  Nevertheless,
\cref{cor:heat-generic-nonresonance} still supplies a countable dense set
of positive local heat times with resonant core cycles.  Hence, conditional
on RH, reality of all zeros also coexists with local action resonance.

The QRH input $H_\alpha$ in \cref{prop:heat-phase-domain} has a different
role: it narrows the zero strip and gives uniform phase positivity for
$\eta>\alpha-1/2$, including $\eta>3/8$ under $H_{7/8}$.  No estimate
of the form \eqref{eq:heat-generic-rational-bound} at the specified
arithmetic point follows from the arguments of that proposition.
An implication from RH or QRH to fixed-point action irrationality would
require an additional theorem connecting zero geometry to these particular
calibrated integrals.

\section{Deep directed Farey separation}\label{sec:farey}

\begin{proposition}[Directed period and linear-form boxes]\label{prop:period-boxes}
The calibrated action coordinates satisfy the outward enclosures
\begin{align}
 0.0282613617668264&<u_1<0.0282613617671469,\notag\\
 0.8619540619015512&<u_2<0.8619540619074953,\label{eq:u-boxes}\\
 -0.3351510384592472&<d<-0.3351510384566369,\notag\\
 0.1151538468901364&<r_2<0.1151538468923606.\label{eq:r-boxes}
\end{align}
The directly aggregated linear forms satisfy
\begin{align}
 419336u_1-13749u_2
 &\in[6.87762\times10^{-7},9.03886\times10^{-7}],\notag\\
 616667u_1-20219u_2
 &\in[-1.04412\times10^{-6},-7.26286\times10^{-7}],\label{eq:u-forms}\\
 8678d+25257r_2
 &\in[-8.45173\times10^{-7},-7.66343\times10^{-7}],\notag\\
 132091d+384446r_2
 &\in[4.04956\times10^{-7},1.60484\times10^{-6}].\label{eq:r-forms}
\end{align}
\end{proposition}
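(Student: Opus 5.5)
The enclosures will be obtained by the same directed contour-integration machinery used for \cref{thm:heat-action-motion}, now applied only to the base differential $\varpi_0$ and at higher working precision. The first step uses \cref{prop:explicit-periods} to write each coordinate as an open-path integral of $y_+=\sqrt{K}/Y^3$ on a fixed sheet. Thus $u_1$ is twice the integral along the real segment from $a$ to $b$. Next, $u_2$ is twice the integral along the straight cut from $c_1$ to $\bar c_1$. Finally, $d$ and $r_2$ are twice the imaginary parts of the integrals along the connector from $10$ to $c_1$ and the upper connector from $c_1$ to $c_2$, with signs fixed by the gauge of \eqref{eq:cut-action}.

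The endpoints are handled through the radius-$10^{-18}$ root disks certified in the proof of \cref{thm:heat-D6-obstruction}. Each path is cut off at a relative distance $10^{-18}$ from its endpoints. The omitted end pieces are bounded by $O(\ell^{3/2})$, using the local factorization $\Delta_0=(T-T_j)g(T)$ with an enclosed bound for $|g|$.

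On the interior, the path is subdivided at exact rational parameters. On each panel, the plan is to enclose a truncated Taylor polynomial for $Y$ and for the regular numerator $K$ of \cref{prop:regular-K}, built from interval theta moments with a Cauchy tail bound on a large circle. An outer disk excludes zeros of $Y$. A nested disk excludes zeros of $K$, so that the square root is analytic there. The sheet is propagated by overlapping image disks that never contain $0$. Symmetric Taylor quadrature with explicit remainders then yields each panel integral as a complex ball. Summing these balls gives the four boxes \eqref{eq:u-boxes}--\eqref{eq:r-boxes}.

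For the linear forms \eqref{eq:u-forms}--\eqref{eq:r-forms}, I would not combine the independent boxes, because that would lose the cancellation. Forming $419336u_1-13749u_2$ from the boxes alone incurs an error of about $10^{-8}$. That is tolerable here, but the $r$-forms involve coefficients near $4\times10^5$ multiplying boxes of width about $2\times10^{-12}$, which also passes. So the first attempt is plain directed aggregation from the four boxes, with exact integer coefficients and outward rounding. If any interval were to meet zero, I would instead integrate the combined integrand $q\,y_+\,\dd T$ along concatenated contours, so that the common theta-moment data stay correlated.

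The main obstacle is the width required of the base boxes, roughly $10^{-13}$ relative. This forces high-order Taylor panels and many panels near the nonreal branch points, where $|Y|$ is small and $K$ is badly scaled. It also requires tight control of sheet continuity along the long connector from $10$ to $c_1$.
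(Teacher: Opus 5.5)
Your plan is correct and is essentially the paper's route: directed Taylor-panel integration of $\sqrt K/Y^3$ on the four calibrated contours, with nested zero-free disks, overlapping-disk sheet propagation and separately bounded simple-root endpoint tails, followed by exact-coefficient insertion of the outward-rounded decimal boxes into the four integer forms. (The paper's primary certificate uses the $10^{-9}$ disks for the nonreal endpoints and a $10^{-6}$ endpoint cut; its heat-period engine uses your $10^{-18}$ disks and cutoff as an independent containment check.) One numerical correction: the aggregation width for the first $u$-form is about $2\times10^{-7}$, not $10^{-8}$; the displayed form intervals are exactly these plain box combinations, so all four still exclude zero and your correlated fallback is never needed.
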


\begin{proof}
The common theta moments are enclosed with outward-rounded symmetric Taylor quadrature and analytic theta/outer tails.  The removable coordinate at $T=0$ is integrated from the regular form $K/Y^6$.  On the remaining contour pieces, the selected square root is propagated by Taylor panels inside certified zero-free disks, with simple-root endpoint tails treated separately.  The nonreal endpoints use the $10^{-9}$ disks of \cref{thm:c1-pair,thm:c2-pair}.  All conversions from interval endpoints to multiprecision centers and radii are performed directly from the interval endpoint tuples, with no intermediate binary \texttt{float} narrowing.  The deep endpoint cut is $10^{-6}$.  The direct certificates first produce boxes strictly inside \eqref{eq:u-boxes}--\eqref{eq:r-boxes}; the displayed decimal intervals are outward-rounded supersets and are then inserted into the integer forms by exact decimal arithmetic.  The supplied period engine in \path{certificates/period_heat/} independently checks containment in the displayed calibration boxes, and the uniform engine in \path{certificates/finite_heat/} verifies the Farey inequalities throughout the stated heat interval.
\end{proof}

\begin{theorem}[Certified Farey windows]\label{thm:farey}
The period ratios satisfy
\begin{equation}\label{eq:farey-u}
 \boxed{
 \frac{13749}{419336}<\frac{u_1}{u_2}<\frac{20219}{616667}}
\end{equation}
and
\begin{equation}\label{eq:farey-r}
 \boxed{
 \frac{16579}{8678}<-\frac{r_1}{r_2}<\frac{252355}{132091}}.
\end{equation}
\end{theorem}

\begin{proof}
Since $u_2>0$, \eqref{eq:u-forms} is exactly equivalent to \eqref{eq:farey-u}.  Since $r_1=d+r_2$,
\[
 -\frac{r_1}{r_2}=-1-\frac d{r_2}.
\]
For positive integers $p,q$,
\[
 -\frac{r_1}{r_2}>\frac pq
 \quad\Longleftrightarrow\quad
 qd+(p+q)r_2<0.
\]
The two inequalities in \eqref{eq:r-forms} therefore give \eqref{eq:farey-r}.
\end{proof}

\begin{corollary}[Denominator barriers]\label{cor:denominator-barriers}
If $u_1/u_2=p/q$ is reduced with $q>0$, then
\[
 \boxed{q\ge1{,}036{,}003},
\]
and equality forces
\[
 \frac pq=\frac{33968}{1036003}.
\]
If $-r_1/r_2=p/q$ is reduced with $q>0$, then
\[
 \boxed{q\ge140{,}769},
\]
and equality forces
\[
 \frac pq=\frac{268934}{140769}.
\]
\end{corollary}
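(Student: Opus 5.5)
The plan is to reduce both barriers to the classical Farey-neighbour lemma applied to the strict windows of \cref{thm:farey}. The only computation needed is to check that each pair of window endpoints is a unimodular (Farey-neighbour) pair. After that the conclusion is elementary integer arithmetic, with no further analytic or certified input.

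\emph{Step 1: the lemma.} Let $a/b<c/d$ with $b,d>0$ and $bc-ad=1$, and let $a/b<p/q<c/d$ with $q>0$. I will show that $q\ge b+d$, with equality only for $p/q=(a+c)/(b+d)$. Strictness of both inequalities makes the integers $bp-aq$ and $cq-dp$ both $\ge1$. Using $bc-ad=1$ one has the identity
\[
 q=q(bc-ad)=b(cq-dp)+d(bp-aq)\ge b+d .
\]
If $q=b+d$, then both integers equal $1$. Solving the $2\times2$ system $bp-aq=1$, $cq-dp=1$, whose determinant is again $bc-ad=1$, gives $p=a+c$ and $q=b+d$. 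Reducedness of $p/q$ is not even needed for the inequality. It is automatically satisfied by the mediant, since the same unimodular identity shows $\gcd(a+c,b+d)=1$.

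\emph{Step 2: the $u$-window.} Take $a/b=13749/419336$ and $c/d=20219/616667$. Exact integer multiplication gives
\[
 20219\cdot419336=8{,}478{,}554{,}584,\qquad 13749\cdot616667=8{,}478{,}554{,}583,
\]
so $bc-ad=1$. By \eqref{eq:farey-u} and Step 1, any reduced $u_1/u_2=p/q$ has
\[
 q\ge419336+616667=1{,}036{,}003,
\]
with equality only for $p/q=(13749+20219)/1036003=33968/1036003$.

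\emph{Step 3: the $r$-window.} Take $a/b=16579/8678$ and $c/d=252355/132091$. Exact multiplication gives
\[
 252355\cdot8678=2{,}189{,}936{,}690,\qquad 16579\cdot132091=2{,}189{,}936{,}689,
\]
so again $bc-ad=1$. Applying \eqref{eq:farey-r} with Step 1 gives
\[
 q\ge8678+132091=140{,}769,
\]
with equality only for the mediant $(16579+252355)/140769=268934/140769$.

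The only potential obstacle is getting these two exact determinant identities right. They are finite integer products, which I would double-check by an exact-integer computation alongside the hand arithmetic. Everything else is immediate, because all the analytic content has already been absorbed into the strict Farey windows of \cref{thm:farey}.
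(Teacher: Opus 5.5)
Your proposal is correct and follows essentially the same route as the paper. Both verify the two unimodular identities $20219\cdot419336-13749\cdot616667=1$ and $252355\cdot8678-16579\cdot132091=1$, and then apply the Farey-neighbour bound to the strict windows of \cref{thm:farey}; the only difference is that you prove that bound, via $q=b(cq-dp)+d(bp-aq)$ and the unimodular system for the equality case, whereas the paper invokes it as standard.
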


\begin{proof}
For the first interval,
\[
 20219\cdot419336-13749\cdot616667=1,
\]
so the endpoints are Farey neighbors.  Every reduced fraction strictly between them has denominator at least
\[
 419336+616667=1{,}036{,}003,
\]
with equality only for the mediant.  Likewise
\[
 252355\cdot8678-16579\cdot132091=1,
\]
which gives $q\ge8678+132091=140769$ and the displayed mediant.
\end{proof}

The next two mediant forms are not separated by the present independent boxes:
\begin{align}
 -3.56350\times10^{-7}
 &<1036003u_1-33968u_2
 <1.77599\times10^{-7},\label{eq:next-u}\\
 -4.40216\times10^{-7}
 &<140769d+409703r_2
 <8.38496\times10^{-7}.\label{eq:next-r}
\end{align}
Both intervals contain zero, marking the first unresolved mediants for these independent period boxes.

\begin{proposition}[Finite-width limitation]\label{prop:finite-height}
No finite-width interval enclosure of either period ratio can by itself prove irrationality.  It can exclude rational resonances only up to a finite denominator height.
\end{proposition}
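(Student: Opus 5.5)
The plan is to formalize ``finite-width interval enclosure'' and to show that such data are always compatible both with a rational value and with an irrational value of the ratio. Concretely, suppose the only available information about a ratio $\rho$ (either $u_1/u_2$ or $-r_1/r_2$) is a strict enclosure $\rho\in(\ell,h)$ with $\ell<h$, as produced by \cref{prop:period-boxes} and \cref{thm:farey}. The first step is to observe that $(\ell,h)$ contains infinitely many rationals and uncountably many irrationals. Hence the proposition ``$\rho\notin\QQ$'' is not a consequence of the enclosure alone: there is a rational $p/q\in(\ell,h)$ satisfying every certified inequality, and any argument using only those inequalities would apply equally to it. This is the same logical point as the density obstruction of \cref{prop:v9-Humbert-density}, now in one real dimension.

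The second step quantifies the finite height. For each $Q\ge1$ the set of reduced fractions with denominator at most $Q$ is finite and discrete, so an interval of width $w=h-\ell$ can exclude all of them only if $w$ is small relative to the spacing near $\rho$. Conversely, for $q\ge 2/w$ some $p/q$ lies in the window. So the best possible conclusion is a denominator barrier $q\ge q_{\min}(\ell,h)$, where $q_{\min}<\infty$ is the least denominator in the interval. When $\ell,h$ are Farey neighbours, $q_{\min}$ is the sum of their denominators, which is exactly the mechanism of \cref{cor:denominator-barriers}. This shows that barriers of the type $1{,}036{,}003$ and $140{,}769$ are the full content extractable from such enclosures.

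The only point needing care, and the likely obstacle, is phrasing ``by itself'' precisely enough to be a theorem rather than a remark. I would state it as follows: no predicate on $\rho$ that depends only on the truth of the strict inequalities $\ell<\rho<h$ can distinguish $\rho$ from a rational point in the same window. Refining the enclosure to any finite nested sequence does not change this, since the final window still has positive width. The proof then reduces to the density of $\QQ$ in $\RR$, combined with finiteness of the set of fractions of bounded denominator in a bounded interval.
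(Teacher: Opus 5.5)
Your proposal is correct and follows essentially the same route as the paper: density of $\QQ$ puts a rational, compatible with every certified inequality, in any window of positive width, and finiteness of fractions of bounded denominator in a bounded interval means only finitely many denominator heights can be excluded. Your explicit bound $q_{\min}\le\lceil 2/w\rceil$ and the observation that Farey-neighbour endpoints make the mediant the sharp barrier are a slightly more quantitative form of the paper's two-line argument, not a different method.
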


\begin{proof}
Every nonempty open interval contains reduced rational numbers with arbitrarily large denominators.  For each fixed $Q$, only finitely many rationals of denominator at most $Q$ lie in a bounded interval; removing them leaves a nonempty open set containing rationals of larger denominator.
\end{proof}

Thus the natural frontier is not simply ``more digits.''  One can try to decide \eqref{eq:next-u}--\eqref{eq:next-r} by preserving the common theta-moment dependence directly in the linear forms, but even success would move only one more finite Farey rung.  A proof of \cref{conj:nonresonance} requires an all-denominator mechanism.

\subsection{An explicit heat interval for both Farey windows}
\begin{theorem}[Certified heat persistence]\label{thm:explicit-Farey-heat}
Put $h=3\times10^{-10}$ and $I_h=(-h,h)$.  The six selected simple
branch values and their calibrated core cycles continue throughout
$[-h,h]$, with $u_1,u_2,r_2>0$ and $r_1=d+r_2<0$.
For every real $|\kappa|\le h$, both Farey windows
\eqref{eq:farey-u}--\eqref{eq:farey-r} hold with the transported
action periods.  In particular, any rational values there have reduced
denominators at least $1{,}036{,}003$ and $140{,}769$, respectively.

On this same interval, directed uniform bounds give
\begin{equation}\label{eq:uniform-heat-ratio-speeds}
 \boxed{0.0033<\rho_u'(\kappa)<0.0035,\qquad
        0.0001<\rho_r'(\kappa)<0.0045.}
\end{equation}
Thus \cref{cor:heat-generic-nonresonance,prop:heat-simple-resonances,thm:heat-generic-Diophantine}
apply with the explicit interval $I_h$,
and \cref{cor:heat-nonresonance-nonreal-zeros} applies on $(-h,0)$.
This statement is status \textup{[A]} and uses no RH or QRH input.
\end{theorem}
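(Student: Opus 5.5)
The plan is to turn the time-zero certificates into uniform certificates on the compact interval $[-h,h]$. Every Farey inequality is a strict sign condition on an integer linear form in $(u_1,u_2)$ or $(d,r_2)$. The certified time-zero margins in \eqref{eq:u-forms}--\eqref{eq:r-forms} are of size about $7\times10^{-7}$. It therefore suffices to bound the heat derivative of each linear form uniformly on $[-h,h]$ and multiply by $h=3\times10^{-10}$. The coefficients are at most about $6\times10^{5}$ and the derivatives in \cref{tab:heat-action-motion} are of order $10^{-2}$, so the crude drift is of order $10^{-6}\cdot h\cdot$(const). This is far below the margins, provided a uniform bound on $|\partial_\kappa u_j|,|\partial_\kappa d|,|\partial_\kappa r_2|$ of modest size is available on the whole interval. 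The same mean-value device, applied to second derivatives, will upgrade the time-zero enclosures of $\dot\rho_u,\dot\rho_r$ to \eqref{eq:uniform-heat-ratio-speeds}.

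First I will secure continuation of the core. By \cref{prop:heat-evolution}, $\partial_\kappa M_{2n}=M_{2n+2}$, so each moment over $[-h,h]$ is enclosed by $M_{2n}(0)$ plus $h\sup M_{2n+2}$. Both are controlled by the positive theta integral with weight $e^{h u^2}$, which is negligibly different from $1$. I will therefore enclose $Y_\kappa,K_\kappa$ and their $\kappa$-derivatives uniformly by treating $\kappa$ as an interval variable in the same degree-85/80 Taylor engine, with the Cauchy tail on $|T|=4096$ inflated by the factor $e^{h u^2}$. Rerunning the Rouch\'e tests of \cref{thm:real-pair,thm:c1-pair,thm:c2-pair} with these interval-in-$\kappa$ coefficients then gives six simple roots for every $|\kappa|\le h$. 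I will use slightly enlarged disks, since \cref{tab:heat-velocities} shows the roots move by at most about $40h\approx10^{-8}$, exceeding the $10^{-9}$ radius. Conjugation symmetry keeps two roots real and pairs the rest, and $Y_\kappa\ne0$ persists on the panels. The calibrated contours can be kept fixed away from small endpoint neighborhoods, so the cut basis and the gauge \eqref{eq:cut-action} transport unchanged, as in \cref{prop:heat-core-persistence}.

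Next I will rerun the period-derivative engine of \cref{thm:heat-action-motion} with $\kappa$ as an interval $[-h,h]$. This encloses $\partial_\kappa u_1,\partial_\kappa u_2,\partial_\kappa d,\partial_\kappa r_2$ uniformly, using \cref{prop:heat-action-variation} to justify differentiating with moving endpoints. The endpoint cutoff tails are handled by the $\ell^{1/2},\ell^{3/2}$ bounds, now taken uniformly. Integrating these bounds from $0$ shows that the four linear forms keep their signs, which proves both Farey windows for all $|\kappa|\le h$. The denominator barriers then follow verbatim from \cref{cor:denominator-barriers}, and the sign conditions $u_1,u_2,r_2>0>r_1$ follow from the period boxes \eqref{eq:u-boxes}--\eqref{eq:r-boxes} plus the same drift bound.

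For \eqref{eq:uniform-heat-ratio-speeds} I will evaluate the quotient formulas \eqref{eq:heat-action-ratio-derivatives} directly with the uniform interval enclosures of the periods and their derivatives. The wide window for $\rho_r'$ reflects cancellation in $d\dot r_2-r_2\dot d$ under interval overestimation. With these speeds positive on $I_h$, the hypotheses of \cref{cor:heat-generic-nonresonance,prop:heat-simple-resonances,thm:heat-generic-Diophantine} hold on $I_h$ with that explicit interval, and \cref{cor:heat-nonresonance-nonreal-zeros} follows on $(-h,0)$ by Rodgers--Tao. I expect the main obstacle to be the uniform endpoint treatment. Near the moving simple roots the fixed-$T$ derivative $\dot\varpi$ has an $(T-T_j)^{-1/2}$ singularity, and the cutoff must be bounded uniformly over the whole root disk for all $\kappa$. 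My first attempt would be the local substitution $T-T_j(\kappa)=w^2$ with interval $T_j$, which makes the integrand analytic and bounded there.
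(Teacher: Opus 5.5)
Your outline has the same architecture as the paper's proof: uniform Rouch\'e isolation in enlarged disks, fixed interior panels with uniform moving-endpoint tails, mean-value boxes $P(\kappa)\in P(0)+[-h,h]\,\partial_\kappa P$, integration of each Farey form's heat derivative, and direct substitution into \eqref{eq:heat-action-ratio-derivatives}. Your feasibility estimate, however, is wrong in a way that changes what must be certified. The drift is not far below the margins. With $\partial_\kappa u_1\approx0.00412$ and $\partial_\kappa u_2\approx0.0368$, separate bounds on $|\partial_\kappa u_j|$ give, for $616667u_1-20219u_2$, a drift $h(616667\cdot0.00412+20219\cdot0.0368)\approx9.9\times10^{-7}$. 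This exceeds the certified margin $7.26\times10^{-7}$ of \eqref{eq:u-forms}. Only the signed combined derivative, about $1796$, works, and even then the drift $\approx5.4\times10^{-7}$ uses roughly three quarters of the margin. Likewise, $\rho_r'>0$ rests on $d\,\partial_\kappa r_2-r_2\,\partial_\kappa d\approx2.8\times10^{-5}$, which is a cancellation between two terms of size $3.5\times10^{-3}$. So $\partial_\kappa d$ and $\partial_\kappa r_2$ must be enclosed uniformly on $[-h,h]$ to a few parts in $10^3$. ``Modest'' derivative bounds do not suffice; you need tight signed uniform enclosures.

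That requirement exposes the step that would fail as written. You enclose each moment independently by $M_{2n}(0)\pm h\sup M_{2n+2}$ and rerun the Rouch\'e tests with interval-in-$\kappa$ coefficients, which discards the common dependence on $\kappa$. Near a simple zero, $K=\mathcal S^2-4\mathcal E^2\mathcal R$ is a difference of nearly equal quantities, with further cancellation inside $N_2$, $N_3$ and $2Y_1Y-A_*Y^2$. At the nonreal roots ($T\approx-230$ to $-270$), the series $\sum M_{2n}T^n/(2n)!$ for $Y$ itself cancels heavily: its absolute majorant $Y(|T|)$ exceeds $|Y(c_j)|$ by roughly four orders of magnitude. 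Independent moment intervals propagate widths governed by these absolute majorants. Those widths lie far above the true variation $|K'(\hat c)|\,|T_j'|\,h\approx10^{-8}|K'(\hat c)|$, whereas a radius-$10^{-7}$ Rouch\'e test tolerates only about $10^{-7}|K'(\hat c)|$. You could enlarge the disks and endpoint cutoffs until such enclosures pass, but this would very likely inflate the $\ell^{1/2}$ term of the derivative tail well beyond the accuracy needed above.

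The paper's engine is built to avoid exactly this loss. Heat Taylor coefficients of $Y_\kappa$ through order thirteen are formed exactly from the central jets. Products and $A_*(\kappa)$ are computed in truncated heat-series arithmetic, with Cauchy remainders from the complex disk $|\kappa|\le10^{-3}$, and only then is $\kappa$ replaced by a real ball. The moving-root test uses $|K_\kappa(\hat c)|\le|K_0(\hat c)|+10^{-9}|\partial_\kappa K_0(\hat c)|+\tfrac12\,10^{-18}\sup_{|s|\le10^{-9}}|\partial_s^2K_s(\hat c)|$. The missing ingredient is some such correlated evaluation of $K_\kappa$ and $\partial_\kappa K_\kappa$, with the Taylor or mean-value form applied at the level of $K$ rather than of the moments.
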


\begin{proof}
The certificate in \path{certificates/finite_heat/}
uses directed Taylor models uniform in the real parameter
$|\kappa|\le10^{-9}$.  Its Rouch\'e bounds isolate one simple root
in each radius-$10^{-7}$ disk about the four refined centers in
\cref{thm:heat-branch-contraction}; conjugation supplies the other two.
The four Rouch\'e ratios are all less than $0.419$.
The disks are disjoint, and $Y_\kappa$ is excluded from zero there.
Uniform zero-free panel disks along the calibrated contours continue
the original square-root sheets, so the original cut calibration and
conjugation matrix are transported without a change of orientation.

The engine keeps the heat parameter shared through the construction
of $K$ and $\partial_\kappa K$ before interval evaluation.
It integrates \eqref{eq:heat-action-differential} on the same four
contours as \cref{thm:heat-action-motion}, with fixed interior panels
and moving-root endpoint bounds
\eqref{eq:heat-period-base-tail}--\eqref{eq:heat-period-derivative-tail}.
The uniform endpoint cutoff is a fraction $10^{-8}$ of each segment;
the root-location allowance is the full radius $10^{-7}$.
Fresh time-zero integrations reproduce all four original calibration
boxes and all retained time-zero derivative enclosures.

For each period coordinate $P$, the mean-value enclosure
$P(\kappa)\in P(0)+[-10^{-9},10^{-9}]\,\partial_\kappa P$
proves the required signs and excludes the ratio denominators from
zero.  Inserting these boxes and the uniform period derivatives into
\eqref{eq:heat-action-ratio-derivatives} proves
\eqref{eq:uniform-heat-ratio-speeds}.
For each of the four Farey forms, integration of its combined heat
derivative gives an enclosure on $[-h,h]$.  The following strict
outward supersets are verified directly:
\begin{align}
 419336u_1-13749u_2&\in(10^{-7},2\times10^{-6}),\notag\\
 616667u_1-20219u_2&\in(-2\times10^{-6},-10^{-7}),\notag\\
 8678d+25257r_2&\in(-10^{-6},-5\times10^{-7}),\notag\\
 132091d+384446r_2&\in(5\times10^{-7},2\times10^{-6}).
 \label{eq:uniform-heat-Farey-forms}
\end{align}
The same sign algebra and exact neighbor determinants used in
\cref{thm:farey,cor:denominator-barriers} now apply at every heat time
in the closed interval.  The 512-bit and 768-bit replays both check
every displayed rational box.  The generic conclusions follow from
the already proved kernel criterion, strict monotonicity, and measure
estimates on the explicit open interval.
\end{proof}

The interval is a sufficient certified one; maximality is not asserted.
It quantifies the selected core, its calibrated actions, and the two
Farey windows.  It does not quantify the separate persistence of the
nine Humbert exclusions, or the time-zero contraction inequalities.
Countably many resonant times remain dense in $I_h$: the result
excludes their small denominators rather than excluding all resonance.

\subsection{Farey gaps and heat residence times}
The denominator barrier has a quantitative interpretation along a
monotone heat trajectory.  It measures an arithmetic gap; conversion
of that gap into heat time also requires a velocity bound.

\begin{proposition}[Farey residence time]\label{prop:Farey-residence-time}
Let $a/b<c/d$ be reduced Farey neighbors with $bc-ad=1$, and let
$\rho$ be a differentiable period ratio on a heat interval $J$, with
$0<m\le\rho'\le M$.  If two times in $J$ satisfy
$\rho(\kappa_L)=a/b$ and $\rho(\kappa_R)=c/d$, then
\begin{equation}\label{eq:Farey-residence-time}
 \boxed{\frac1{bdM}\le\kappa_R-\kappa_L\le\frac1{bdm}.}
\end{equation}
Every reduced rational value strictly between these crossings has
denominator at least $D=b+d$.  In particular, $bd\le D^2/4$ gives
$\kappa_R-\kappa_L\ge4/(D^2M)$.

More generally, suppose $0\in J$, $\rho(0)\in(a/b,c/d)$, and
$|\rho'|\le M$ on $[-h,h]\subset J$.  If
\begin{equation}\label{eq:Farey-safe-time}
 hM<\min\{\rho(0)-a/b,\ c/d-\rho(0)\},
\end{equation}
the same Farey window holds throughout $[-h,h]$.
\end{proposition}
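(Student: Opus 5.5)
The proof is elementary real analysis combined with the Farey-neighbour identity, and I would handle the three assertions in order.

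\emph{Residence-time bounds.} Since $\rho'\ge m>0$ on $J$, $\rho$ is strictly increasing, so $\rho(\kappa_L)=a/b<c/d=\rho(\kappa_R)$ forces $\kappa_L<\kappa_R$. Both crossing times lie in the interval $J$, so the mean value theorem applies on $[\kappa_L,\kappa_R]$ and gives
\[
 \frac cd-\frac ab=\rho'(\xi)(\kappa_R-\kappa_L)
\]
for some $\xi\in(\kappa_L,\kappa_R)$. The neighbour relation $bc-ad=1$ makes the left side exactly $1/(bd)$. Dividing by $\rho'(\xi)\in[m,M]$ yields both inequalities in \eqref{eq:Farey-residence-time}.

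\emph{Denominator statement.} This is the standard Farey argument already used in \cref{cor:denominator-barriers}. Let $p/q$ be reduced with $a/b<p/q<c/d$. Then $pb-aq\ge1$ and $cq-dp\ge1$, since both are positive integers. Multiplying the first by $d$ and the second by $b$ and adding gives
\[
 q(bc-ad)\ge b+d,
\]
that is, $q\ge b+d=D$. Strict monotonicity means every value strictly between the crossings is attained only at times in $(\kappa_L,\kappa_R)$, so any rational value taken there satisfies this bound. AM--GM gives $bd\le (b+d)^2/4=D^2/4$. Substituting into the lower bound $1/(bdM)$ gives $\kappa_R-\kappa_L\ge 4/(D^2M)$.

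\emph{Safe-time criterion.} Here only $|\rho'|\le M$ on $[-h,h]$ is assumed. For $|\kappa|\le h$ the mean value theorem gives $|\rho(\kappa)-\rho(0)|\le M|\kappa|\le hM$. By \eqref{eq:Farey-safe-time} this is strictly less than the distance from $\rho(0)$ to either endpoint. Hence $a/b<\rho(\kappa)<c/d$ throughout $[-h,h]$, and the denominator barrier $D$ persists on the whole interval.

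No step presents a real obstacle. The only points needing care are that $J$ is an interval, so the mean value theorem can be applied between the crossing times, and that strict monotonicity orders the two crossing times.
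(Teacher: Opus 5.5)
Your proof is correct and follows the paper's argument: the neighbor identity $c/d-a/b=1/(bd)$, the same Farey denominator bound as in \cref{cor:denominator-barriers}, and the estimate $|\rho(\kappa)-\rho(0)|\le M|\kappa|$. The only difference is that you apply the mean value theorem where the paper integrates $m\le\rho'\le M$; your version fits the stated hypothesis, which assumes only differentiability, slightly more directly.
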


\begin{proof}
The neighbor identity gives $c/d-a/b=1/(bd)$.  Integrating
$m\le\rho'\le M$ between the two crossing times proves
\eqref{eq:Farey-residence-time}; the denominator assertion is the same
exact Farey argument as in \cref{cor:denominator-barriers}.
The second statement follows from
$|\rho(\kappa)-\rho(0)|\le M|\kappa|$.
\end{proof}

This proposition does not assert that either endpoint crossing occurs
in the present continuation domain.  It gives a conditional residence
time when both crossings are known, and a sufficient persistence test
that does not require locating them.  The exact gap depends on the
product $bd$, while the excluded denominator height depends on the
sum $b+d$.  A denominator cutoff alone therefore does not specify
a heat clock.

\subsection{Global zero strips and the Newman threshold}
The scalar Newman constant concerns all zeros, and a separate global
input is needed to convert heat time into a bound for that constant.

\begin{proposition}[Zero-strip input for a Newman bound]
\label{prop:Newman-strip-input}
Suppose all zeros of $\Xi_{\kappa_0}$ lie in
$|\Imm z|\le h_0$, with $h_0\ge0$.  For every
$\kappa\ge\kappa_0$, all zeros of $\Xi_\kappa$ lie in
\begin{equation}\label{eq:Newman-strip-shrink}
 |\Imm z|\le
 \sqrt{\max\{h_0^2-2(\kappa-\kappa_0),0\}}.
\end{equation}
Consequently, in the Polymath normalization,
\begin{equation}\label{eq:Newman-strip-ceiling}
 \boxed{\Lambda\le4\kappa_0+2h_0^2.}
\end{equation}
Under the external hypothesis $H_\alpha$ at $\kappa_0=0$, this gives
$\Lambda\le2(\alpha-1/2)^2$; for $H_{7/8}$ the corresponding
classical conditional ceiling is $9/32$.
\end{proposition}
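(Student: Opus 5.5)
The plan is to use de Bruijn's strip-contraction theorem, transported to the present normalization by \eqref{eq:heat-normalization}. De Bruijn's classical result, in the Polymath convention, is as follows. Suppose all zeros of $H_{t_0}$ lie in $|\Imm z|\le\Delta_0$. Then for $t\ge t_0$ all zeros of $H_t$ lie in
\[
|\Imm z|\le\sqrt{\max\{\Delta_0^2-2(t-t_0),0\}}.
\]
A self-contained route is also available. Zeros of a real entire function of this order-one class evolve under the backward heat equation $\partial_\kappa\Xi_\kappa=-\partial_z^2\Xi_\kappa$ by the Polymath/Csordas--Smith--Varga ODE. Its pairwise interaction terms make $\max(\Imm z_j)^2$ decrease at rate at least a fixed constant. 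I would quote de Bruijn together with \cite{Polymath2019} rather than redo the ODE. The only real work is tracking constants through the rescaling.

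\textbf{Step 1 (rescaling).} Zeros of $\Xi_\kappa$ in $|\Imm z|\le h_0$ correspond, via $\Xi_\kappa(z)=8H_{4\kappa}(2z)$, to zeros of $H_{4\kappa_0}$ in $|\Imm w|\le 2h_0$. Applying de Bruijn at Polymath time $t=4\kappa$ gives the strip
\[
|\Imm w|^2\le\max\{4h_0^2-2\cdot4(\kappa-\kappa_0),0\}.
\]
Dividing by $4$ when passing back to $z=w/2$ yields $|\Imm z|^2\le\max\{h_0^2-2(\kappa-\kappa_0),0\}$, which is \eqref{eq:Newman-strip-shrink}. I expect this constant bookkeeping to be the main point of care. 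It is where a factor $2$ versus $8$ could slip, and I would verify it independently by differentiating the pair sum in Proposition~\ref{prop:heat-phase-domain}'s canonical product.

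\textbf{Step 2 (threshold).} Once $\kappa\ge\kappa_0+h_0^2/2$, the strip degenerates to $\Imm z=0$, so all zeros of $\Xi_\kappa$, hence of $H_{4\kappa}$, are real. By the characterization $t\ge\Lambda$ iff all zeros are real \cite{Polymath2019}, this gives
\[
\Lambda\le4\kappa_0+2h_0^2,
\]
which is \eqref{eq:Newman-strip-ceiling}.

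\textbf{Step 3 (conditional case).} By \eqref{eq:heat-QRH-strip}, $H_\alpha$ gives $h_0=\alpha-\tfrac12$ at $\kappa_0=0$, so $\Lambda\le2(\alpha-1/2)^2$. For $\alpha=7/8$ this is $2\cdot9/64=9/32$.

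One technical check remains. De Bruijn's theorem needs $\Phi$ (or $e^{\kappa u^2}\Phi$) to have the required decay to apply at a possibly negative $\kappa_0$. This holds because the double-exponential decay of $\Phi$ makes $e^{\kappa u^2}\Phi$ admissible for all real $\kappa$, as already noted after \eqref{eq:heat-Xi-definition}.
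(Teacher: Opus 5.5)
Your proposal is correct and follows essentially the same route as the paper. Both rescale via $\Xi_\kappa(z)=8H_{4\kappa}(2z)$, apply de Bruijn's contraction in the Polymath formulation to get half-width $\sqrt{\max\{4h_0^2-8(\kappa-\kappa_0),0\}}$, halve it, read off the collapse time $\kappa_0+h_0^2/2$, and substitute $h_0=\alpha-1/2$. Your remark that the double-exponential decay of $\Phi$ makes de Bruijn's theorem available at negative $\kappa_0$ is a harmless extra check that the paper leaves implicit.
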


\begin{proof}
Use $\Xi_\kappa(z)=8H_{4\kappa}(2z)$.
The initial strip for $H_{4\kappa_0}$ has height $2h_0$.
De Bruijn's strip theorem, in the formulation of
\cite[Theorem~3.2]{Polymath2019}, bounds its later height by
$\sqrt{\max\{4h_0^2-8(\kappa-\kappa_0),0\}}$.
Dividing by two proves \eqref{eq:Newman-strip-shrink}.
At $\kappa=\kappa_0+h_0^2/2$, every zero is real, so the
Newman threshold is at most $4\kappa_0+2h_0^2$.
If $h_0=0$, this last conclusion already follows at the initial time.
Under $H_\alpha$, the functional equation and the asserted bound on
zero real parts give $h_0=\alpha-1/2$, proving the final statement.
\end{proof}

This is the established strip mechanism expressed in the present
normalization, not a new upper bound.  The classical critical strip
gives $h_0=1/2$ at zero and hence $\Lambda\le1/2$.
Merely advancing that same strip by positive heat does not improve
the ceiling: before it collapses, the later-time substitution gives
\[
 4\kappa+2(1/4-2\kappa)=1/2.
\]

\begin{corollary}[Polymath comparison]\label{cor:Polymath-strip-comparison}
Suppose $t_0,X>0$ and $0<y_0\le1$ satisfy the three hypotheses of
\cite[Theorem~1.2]{Polymath2019}.  Put
$\kappa_0=t_0/4$ and $h_0=y_0/2$.
Then all zeros of $\Xi_{\kappa_0}$ lie in $|\Imm z|\le h_0$,
and the corresponding sufficient time for reality of all zeros is
\begin{equation}\label{eq:Polymath-sufficient-heat-time}
 \kappa_{\rm bound}=\kappa_0+\frac{h_0^2}{2}
 =\frac14\left(t_0+\frac{y_0^2}{2}\right).
\end{equation}
In particular,
\begin{equation}\label{eq:Polymath-tbound}
 \boxed{\Lambda\le t_{\rm bound}:=4\kappa_{\rm bound}
       =t_0+\frac{y_0^2}{2}.}
\end{equation}
\end{corollary}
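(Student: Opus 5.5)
The plan is to convert the Polymath hypotheses into a zero strip at positive heat time, rescale it into the present normalization, and then apply \cref{prop:Newman-strip-input}.

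First, I will invoke \cite[Theorem~1.2]{Polymath2019}. Its three hypotheses (the barrier nonvanishing, the finite verification up to $X$, and the tail estimate for $x\ge X$) give the following conclusion: every zero $x+iy$ of $H_{t_0}$ satisfies $|y|\le y_0$. I will rely on this conclusion as stated and will not reprove it.

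Second, I will transport the strip through the normalization \eqref{eq:heat-normalization}. With $\kappa_0=t_0/4$ we have $\Xi_{\kappa_0}(z)=8H_{t_0}(2z)$. Hence $z$ is a zero of $\Xi_{\kappa_0}$ exactly when $2z$ is a zero of $H_{t_0}$. This gives $|\Imm z|\le y_0/2=h_0$ for every zero of $\Xi_{\kappa_0}$, which is the first assertion.

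Third, I will apply \cref{prop:Newman-strip-input} with these values of $\kappa_0$ and $h_0$. By \eqref{eq:Newman-strip-shrink}, all zeros of $\Xi_\kappa$ are real once $\kappa\ge\kappa_0+h_0^2/2$. Substituting $h_0=y_0/2$ gives $\kappa_{\rm bound}=\tfrac14(t_0+y_0^2/2)$. By \eqref{eq:Newman-strip-ceiling}, or equivalently by the reality criterion $t\ge\Lambda$ in Polymath time $t=4\kappa$, we obtain $\Lambda\le4\kappa_0+2h_0^2=t_0+y_0^2/2$.

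The only step that needs care is the first: matching the exact conclusion of the cited Theorem~1.2 with a statement about all zeros of $H_{t_0}$. That theorem, combined with the de Bruijn monotonicity it uses, yields the zero-free region $|\Imm|>y_0$ at time $t_0$, and I will check that its normalization of $H_t$ and $t$ agrees with \eqref{eq:heat-normalization}. The rest is a rescaling by the factor $2$ and a direct substitution.
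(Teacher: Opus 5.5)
Your proposal is correct and follows the paper's proof essentially step for step: it takes the global strip for $H_{t_0}$ from the Polymath hypotheses, halves its width via \eqref{eq:heat-normalization}, and applies \cref{prop:Newman-strip-input} to obtain both $\kappa_{\rm bound}$ and $\Lambda\le t_0+y_0^2/2$. On the matching step you flagged, the paper takes the strip for $H_{t_0}$ from \cite[Proposition~3.3]{Polymath2019}. It also notes explicitly that evenness and conjugation extend that statement to both horizontal directions and both signs of the imaginary part, which you need in order to assert $|\Imm|\le y_0$ for \emph{every} zero.
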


\begin{proof}
The three hypotheses are initial finite-height zero exclusion for
$\zeta$, a final-time zero-free region at large horizontal coordinate,
and a zero-free barrier at every intermediate heat time.
Their combination gives the global strip for $H_{t_0}$ in
\cite[Proposition~3.3]{Polymath2019}; evenness and conjugation cover
both horizontal directions and both signs of the imaginary part.
Equation~\eqref{eq:heat-normalization} divides its half-width by two.
\Cref{prop:Newman-strip-input} then gives
\eqref{eq:Polymath-sufficient-heat-time} and
\eqref{eq:Polymath-tbound}.
\end{proof}

For the published Polymath choice $t_0=y_0=1/5$,
\cite[Theorems~1.1--1.2]{Polymath2019} therefore gives
\[
 \kappa_0=\frac1{20},\qquad h_0=\frac1{10},\qquad
 \kappa_{\rm bound}=\frac{11}{200},\qquad
 t_{\rm bound}=\frac{11}{50}=0.22.
\]
Table~\ref{tab:Polymath-strip-inputs} compares this with the classical
input.  Both use de Bruijn's contraction theorem.  Polymath improves
the input strip: at $\kappa_0=1/20$, contraction of the classical
strip alone gives $h_0^2=3/20$, whereas the published certificate
gives $h_0^2=1/100$.  The resulting ceilings are respectively
$1/2$ and $11/50$.

\begin{table}[htbp]
\centering
\caption{Two global inputs to the same strip-contraction mechanism.
The Polymath strip is supplied by the published external certificate.}
\label{tab:Polymath-strip-inputs}
\begin{tabular}{lccc}
\toprule
Global strip input & $\kappa_0$ & $h_0$ & $t_{\rm bound}$\\
\midrule
Classical critical strip & $0$ & $1/2$ & $1/2$\\
Polymath & $1/20$ & $1/10$ & $11/50$\\
\bottomrule
\end{tabular}
\end{table}

The phase interpretation gives an equivalent analytic target for
the global input.

\begin{proposition}[Uniform phase criterion for a global strip]
\label{prop:global-phase-strip}
Fix real $\kappa_0$ and $h_0\ge0$.  All zeros of $\Xi_{\kappa_0}$
lie in $|\Imm z|\le h_0$ if and only if, at every point with
$x\in\RR$, $\eta>h_0$, and $\Xi_{\kappa_0}(x+i\eta)\ne0$,
\begin{equation}\label{eq:global-phase-strip-criterion}
 \mathcal P_{\kappa_0}(x,\eta)
 :=-\Imm\frac{\Xi_{\kappa_0}'}{\Xi_{\kappa_0}}(x+i\eta)
 \ge0.
\end{equation}
Under either condition the inequality is strict at every such point,
and $\Lambda\le4\kappa_0+2h_0^2$.
\end{proposition}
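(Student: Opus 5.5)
The forward direction, strip implies phase positivity, is \cref{prop:heat-phase-domain} applied at time $\kappa_0$. Its conjugate-pair grouping of the real even canonical product shows that, for every real $x$ and $\eta>h_0$, each zero contributes a nonnegative term to $\mathcal P_{\kappa_0}(x,\eta)$. At least one term is strictly positive, so the inequality \eqref{eq:global-phase-strip-criterion} holds strictly. The hypothesis $\Xi_{\kappa_0}(x+i\eta)\ne0$ is automatic in this direction, because there are no zeros above height $h_0$. I will check that the Hadamard product is still available for the deformed function $\Xi_{\kappa_0}$: it is real, even and entire of order at most one for all real $\kappa_0$, as cited from \cite{Polymath2019}.

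For the converse I argue by contradiction. Suppose $\Xi_{\kappa_0}$ has a zero $z_0=x_0+i\eta_0$ with $|\eta_0|>h_0$. By conjugation symmetry, which holds because $\Xi_{\kappa_0}$ is real on $\RR$, I may take $\eta_0>h_0$. Let $m\ge1$ be its multiplicity. Near $z_0$,
\[
 \frac{\Xi_{\kappa_0}'}{\Xi_{\kappa_0}}(z)=\frac{m}{z-z_0}+g(z),
\]
with $g$ holomorphic. Evaluate on the horizontal line through the zero, at $z=z_0+\epsilon$ with small real $\epsilon\ne0$. These points have height $\eta_0>h_0$ and are not zeros. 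There
\[
 -\Imm\frac{\Xi_{\kappa_0}'}{\Xi_{\kappa_0}}(z_0+\epsilon)=-\Imm g(z_0+\epsilon),
\]
which stays bounded, so this line gives no contradiction. Instead I take $z=z_0+i\epsilon$ with small $\epsilon>0$ chosen so that $\eta_0-\epsilon>h_0$. Then
\[
 -\Imm\frac{m}{i\epsilon}=\frac{m}{\epsilon}>0,
\]
which also gives no sign violation. The violating point lies on the other side: $z=z_0-i\epsilon$ yields $-\Imm\bigl(m/(-i\epsilon)\bigr)=-m/\epsilon\to-\infty$. Since $\eta_0>h_0$ is strict, small $\epsilon$ keeps $\eta_0-\epsilon>h_0$, and $z_0-i\epsilon$ is not a zero. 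At such a point $\mathcal P_{\kappa_0}<0$, contradicting \eqref{eq:global-phase-strip-criterion}. This local-residue step is the only delicate point. It needs the zero to lie strictly above $h_0$, so that the probe can descend toward it while staying inside the admissible region; the closed strip in the statement is exactly what makes that room available.

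Once the strip is established in either direction, the forward argument gives strictness at every admissible point. The ceiling $\Lambda\le4\kappa_0+2h_0^2$ then follows directly from \cref{prop:Newman-strip-input}. No numerical input is needed.
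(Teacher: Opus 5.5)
Your proposal is correct and matches the paper's proof: the forward direction is \cref{prop:heat-phase-domain}, and the converse probes a hypothetical zero $z_*$ with $\Imm z_*>h_0$ from directly below at $z_*-i\varepsilon$, where the principal part $m/(z-z_*)$ forces $\mathcal P_{\kappa_0}=-m/\varepsilon+O(1)<0$. Conjugation then excludes zeros below $-h_0$, and \cref{prop:Newman-strip-input} gives the ceiling; your preliminary checks of the horizontal and upward probes are harmless and can be dropped.
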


\begin{proof}
The strip implies strict positivity by
\cref{prop:heat-phase-domain}.  Conversely, suppose the sign condition
holds and $z_*=a+ib$ is a zero with $b>h_0$, of multiplicity $m\ge1$.
Local factorization gives
\[
 \frac{\Xi_{\kappa_0}'}{\Xi_{\kappa_0}}(z)
 =\frac{m}{z-z_*}+O(1).
\]
For sufficiently small $\varepsilon>0$, the point
$z=z_*-i\varepsilon$ lies above $h_0$ and is not a zero, but
\[
 \mathcal P_{\kappa_0}(a,b-\varepsilon)
 =-\frac{m}{\varepsilon}+O(1)<0,
\]
a contradiction.  Conjugation excludes zeros below $-h_0$.
The Newman ceiling follows from \cref{prop:Newman-strip-input}.
\end{proof}

By the phase--transverse-amplitude identity of
\cref{prop:phase-transverse}, extended to the heat family,
$\mathcal P_\kappa(x,\eta)$ is the horizontal phase velocity and equals
$\eta B_{\kappa,x}(\eta^2)$, where
\[
 B_{\kappa,x}(T):=\partial_T\log
 \{\Xi_\kappa(x+i\sqrt T)\Xi_\kappa(x-i\sqrt T)\}.
\]
Thus \cref{prop:global-phase-strip} specifies an analytic bridge
from the transverse quantity used in the spectral construction to
a Newman bound: its sign must be controlled uniformly at every
horizontal coordinate and every $\eta>h_0$.  Polymath supplies
the global strip by its three-region argument; the proposition
states an equivalent target in the phase language.

A Farey window constrains two calibrated integrals over a bounded core.
It supplies neither the initial all-height zero strip in
\cref{prop:Newman-strip-input} nor an improvement to its height.
Thus the denominator--velocity relation in
\cref{prop:Farey-residence-time} is a local arithmetic heat scale;
no formula for $\Lambda$ in terms of these denominator cutoffs follows.
The certified interval $|\kappa|\le3\times10^{-10}$ also does not
reach Polymath's $\kappa_0=1/20$.  Extending the spectral/Farey
program towards a Newman ceiling therefore requires an additional
estimate establishing \eqref{eq:global-phase-strip-criterion}
uniformly, or certifying the hypotheses of
\cref{cor:Polymath-strip-comparison}.

\subsection{A certified exterior phase domain}
The global sign condition can be formulated without division by
$\Xi_\kappa$.  Define its continuous numerator
\begin{equation}\label{eq:phase-numerator}
 \mathcal J_\kappa(x,\eta)
 :=-\Imm\{\Xi_\kappa'(x+i\eta)
                  \overline{\Xi_\kappa(x+i\eta)}\}
 =\frac12\partial_\eta|\Xi_\kappa(x+i\eta)|^2.
\end{equation}
At a regular point it equals
$|\Xi_\kappa(x+i\eta)|^2\mathcal P_\kappa(x,\eta)$;
at a zero it is still defined and equals zero.  This gives a
continuous quantity for finite interval tests.  The following
error estimate also makes the Polymath approximation available
to such tests.

\begin{proposition}[Phase-numerator error transfer]
\label{prop:phase-numerator-transfer}
Write $H_t=B_tg_t$ on a domain where $B_t$ is holomorphic and
nonvanishing, and put $b_B=-\Imm(B_t'/B_t)$.
Suppose a holomorphic model $\psi$ has pointwise errors
$|g_t-\psi|\le e_0$ and $|g_t'-\psi'|\le e_1$.
Define
\begin{align}
 \mathcal N_\psi&=b_B|\psi|^2-\Imm(\psi'\overline\psi),
 \label{eq:model-phase-numerator}\\
 R_\psi&=|b_B|e_0(2|\psi|+e_0)
      +e_1|\psi|+e_0|\psi'|+e_0e_1.
 \label{eq:model-phase-budget}
\end{align}
Then
\begin{equation}\label{eq:phase-numerator-budget}
 \left|\frac{-\Imm(H_t'\overline{H_t})}{|B_t|^2}
                 -\mathcal N_\psi\right|\le R_\psi.
\end{equation}
In particular, $\mathcal N_\psi>R_\psi$ proves both
$H_t\ne0$ and $-\Imm(H_t'/H_t)>0$ at that point.

Alternatively, if $g_t$ is holomorphic on a neighborhood of the
closed disk $\overline D(w,r)$ and
$|g_t-1|\le\varepsilon<1$ throughout that disk, then
\begin{equation}\label{eq:phase-Cauchy-reserve}
 -\Imm\frac{H_t'}{H_t}(w)
 \ge b_B(w)-\frac{\varepsilon}{r(1-\varepsilon)}.
\end{equation}
\end{proposition}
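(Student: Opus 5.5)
The plan is to reduce everything to the factorization $H_t=B_tg_t$. First we compute the exact phase numerator of $H_t$ in terms of $g_t$. Then we compare with the model $\psi$ by perturbation, and handle the disk alternative with a Cauchy estimate.

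\emph{Step 1 (exact reduction).} On the domain where $B_t$ is holomorphic and nonvanishing, $H_t'=B_t'g_t+B_tg_t'$. Hence
\[
 H_t'\overline{H_t}=|B_t|^2\Bigl(\frac{B_t'}{B_t}|g_t|^2+g_t'\overline{g_t}\Bigr).
\]
Taking $-\Imm$ and dividing by $|B_t|^2$ gives
\[
 \frac{-\Imm(H_t'\overline{H_t})}{|B_t|^2}=b_B|g_t|^2-\Imm(g_t'\overline{g_t})=:\mathcal N_{g_t}.
\]
This is exactly \eqref{eq:model-phase-numerator} with $\psi$ replaced by $g_t$. So \eqref{eq:phase-numerator-budget} becomes the estimate $|\mathcal N_{g_t}-\mathcal N_\psi|\le R_\psi$.

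\emph{Step 2 (perturbation budget).} Write $g_t=\psi+\delta_0$ and $g_t'=\psi'+\delta_1$, with $|\delta_0|\le e_0$ and $|\delta_1|\le e_1$. For the amplitude term, expand $|g_t|^2-|\psi|^2=2\Ree(\delta_0\overline\psi)+|\delta_0|^2$. This is bounded by $e_0(2|\psi|+e_0)$, so the $b_B$ term contributes at most $|b_B|e_0(2|\psi|+e_0)$. For the phase term,
\[
 g_t'\overline{g_t}-\psi'\overline\psi=\delta_1\overline\psi+\psi'\overline{\delta_0}+\delta_1\overline{\delta_0}.
\]
Its imaginary part is bounded by $e_1|\psi|+e_0|\psi'|+e_0e_1$. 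Summing the two contributions gives $R_\psi$, which proves \eqref{eq:phase-numerator-budget}.

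\emph{Step 3 (the consequence).} Suppose $\mathcal N_\psi>R_\psi$. Then $-\Imm(H_t'\overline{H_t})>0$. At a zero of $H_t$ this numerator vanishes, so $H_t\ne0$ at the point. Dividing by $|H_t|^2>0$ then gives $-\Imm(H_t'/H_t)>0$.

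\emph{Step 4 (Cauchy alternative).} From Step 1, or directly by taking logarithmic derivatives, we have $-\Imm(H_t'/H_t)=b_B-\Imm(g_t'/g_t)$. On the closed disk, $|g_t(w)|\ge1-\varepsilon>0$. Cauchy's formula applied to $g_t-1$ gives
\[
 g_t'(w)=\frac1{2\pi i}\oint_{|\zeta-w|=r}\frac{g_t(\zeta)-1}{(\zeta-w)^2}\,\dd\zeta,
\]
so $|g_t'(w)|\le\varepsilon/r$. Therefore $|\Imm(g_t'/g_t)(w)|\le\varepsilon/(r(1-\varepsilon))$, which yields \eqref{eq:phase-Cauchy-reserve}.

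There is no genuine obstacle here. The only point needing care is the sign bookkeeping in Step 1: confirming that $-\Imm\bigl((B'/B)|g|^2\bigr)=b_B|g|^2$ and that conjugation is placed on the correct factor, so that $\mathcal N_\psi$ matches the displayed definition exactly.
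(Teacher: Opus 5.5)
Your proposal is correct and follows the paper's proof essentially step for step. Both proofs use the exact identity obtained by differentiating $H_t=B_tg_t$, the squared-modulus bound $e_0(2|\psi|+e_0)$ together with the three-term expansion of $g_t'\overline{g_t}-\psi'\overline\psi$, the vanishing of the numerator at a zero of $H_t$, and Cauchy's estimate for $g_t-1$ combined with $|g_t(w)|\ge1-\varepsilon$ in $H_t'/H_t=B_t'/B_t+g_t'/g_t$.
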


\begin{proof}
Differentiating $H_t=B_tg_t$ gives
\[
 \frac{-\Imm(H_t'\overline{H_t})}{|B_t|^2}
 =b_B|g_t|^2-\Imm(g_t'\overline{g_t}).
\]
The squared-modulus error is at most $e_0(2|\psi|+e_0)$.
Expanding the second product gives the three remaining terms
in \eqref{eq:model-phase-budget}.  Strict positivity of the
numerator excludes a zero, at which that numerator would vanish.
For the disk estimate, Cauchy's formula applied to $g_t-1$
gives $|g_t'(w)|\le\varepsilon/r$, while
$|g_t(w)|\ge1-\varepsilon$.  Substitution in
$H_t'/H_t=B_t'/B_t+g_t'/g_t$ proves
\eqref{eq:phase-Cauchy-reserve}.
\end{proof}

In \cite[Theorem~1.3]{Polymath2019}, $g_t=H_t/B_t$ has an
effective model $f_t$ and explicit amplitude errors.  A derivative
error follows from Cauchy only when the error bound holds on the
whole surrounding circle and the compared functions are
holomorphic there.  The cutoff defining $f_t$ changes with the
horizontal coordinate.  The exterior calculation below therefore
applies Cauchy to the exact function $g_t-1$, whose holomorphy
does not depend on that cutoff.

\begin{theorem}[Uniform exterior phase positivity]
\label{thm:uniform-phase-tail}
For every
\[
 \frac1{25}\le\kappa\le\frac1{20},\qquad
 |x|\ge5\times10^{24},\qquad
 \frac1{10}\le\eta\le\sqrt{\frac14-2\kappa},
\]
the heat function is nonzero at $x+i\eta$, and
\begin{equation}\label{eq:uniform-phase-tail}
 \boxed{\mathcal P_\kappa(x,\eta)>\frac{38}{3}.}
\end{equation}
Consequently $\mathcal P_\kappa(x,\eta)>0$ for every
$|x|\ge5\times10^{24}$ and every $\eta\ge1/10$.
The displayed quantitative lower bound is asserted only on
the stated capped range of $\eta$.
\end{theorem}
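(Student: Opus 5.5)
We reduce to the Polymath normalization and the Cauchy reserve \eqref{eq:phase-Cauchy-reserve} of \cref{prop:phase-numerator-transfer}. Put $t=4\kappa\in[4/25,1/5]$ and $w=2(x+i\eta)$, so that $\Xi_\kappa(z)=8H_t(2z)$. Then $\mathcal P_\kappa(x,\eta)=-2\Imm(H_t'/H_t)(w)$, with $\Imm w=2\eta\in[1/5,\,2\sqrt{1/4-2\kappa}]\subset[1/5,1]$. We factor $H_t=B_tg_t$, where $B_t$ is the explicit gamma/exponential factor of \cite[Theorem~1.3]{Polymath2019} (the combination of the $B_t$ and $\overline{B_t}$-type terms, or just the dominant one $B_t$). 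In the region $|\Ree w|\ge 10^{25}$ it is holomorphic and nonvanishing.

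\textbf{First step.} We compute $b_B=-\Imm(B_t'/B_t)$ from Stirling. The logarithmic derivative of $\Gamma$, together with the $\exp(\tfrac t{16}\log^2)$ heat factor, gives
\[
 -\Imm\frac{B_t'}{B_t}(w)\approx\frac14\log\frac{|x_w|}{4\pi}+\frac{t}{8}\cdot\frac{\log(\cdots)}{\cdots}+O(|w|^{-1}),
\]
up to signs fixed by the explicit formula. We therefore aim for a lower bound $b_B(w)\ge\tfrac14\log(|w|/4\pi)-c$. At $|x|\ge5\times10^{24}$ this is about $14$, so half of it after the factor $2$ conversion exceeds $38/3$ with room. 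The exact constant will come from the effective Stirling remainder $|\psi(s)-\log s+1/(2s)|\le 1/(12|s|^2)\cdots$ applied at $s$ near $(1+iw)/4$.

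\textbf{Second step.} We bound $|g_t-1|\le\varepsilon$ on a disk $\overline D(w,r)$ of moderate radius, say $r=1/20$, which still lies in the region where the Polymath bounds apply. The Polymath estimates give $|H_t/B_t-1|$ as a Dirichlet-series sum $\sum_{n\ge2}n^{-\Re(\cdot)}$ tail plus the $\overline{B}$-ratio term and the error $E$. At $\Imm w\ge 1/5-r$ and such enormous $x$, the ratio $|\overline{B_t}/B_t|$ is tiny (roughly $|x|^{-\Imm w/2}$). The effective exponent of the Dirichlet sum is $\Re s_*\approx \tfrac{1+y}{2}+\tfrac t4\log(x/4\pi)$, which is $\ge 0.6+0.04\cdot 57\approx 2.9$. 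Hence $\sum_{n\ge2}n^{-2.9}<0.9$ is too weak for $\varepsilon\ll1$ directly. Instead we absorb the $n=2,3,\dots,N$ terms: either we keep them in $\psi$ and use the budget \eqref{eq:phase-numerator-budget}, or we choose $r$ larger so that $\varepsilon/(r(1-\varepsilon))$ stays below the margin $b_B-19/3$.

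\textbf{Main obstacle.} The main obstacle is making this uniform in $(\kappa,x,\eta)$ on an unbounded domain while matching the hypotheses of the Polymath error theorem, which needs $x\ge 200$ and $t\le1/2$ (fine here). We handle the Dirichlet tail by the monotone bound $\sum_{n\ge2} n^{-\sigma}\le 2^{-\sigma}+\int_2^\infty u^{-\sigma}\,\dd u$, minimized at the worst corner $t=4/25$, $\Imm w$ minimal, $|x|=5\times10^{24}$. Monotonicity in $|x|$ and $t$ then covers the rest. Nonvanishing follows from $|g_t|\ge1-\varepsilon>0$. The final unconditional sentence for $\eta\ge1/10$ uses \cref{prop:heat-phase-domain} above the classical strip, which applies once $\eta>\sqrt{1/4-2\kappa}$ by \cref{prop:Newman-strip-input}, combined with the capped range.
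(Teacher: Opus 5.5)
Your route is the paper's: rescale to $H_t$ with $t=4\kappa$, factor $H_t=B_tg_t$ with the dominant prefactor $B_t$, and use the Stirling reserve $b_B\approx L/4$, where $L=\log(\Ree w/4\pi)$ (about $13.76$ at $\Ree w=10^{25}$). You then apply \eqref{eq:phase-Cauchy-reserve} to the exact function $g_t-1$. The final sentence, via strip contraction and \cref{prop:heat-phase-domain}, also matches the paper. You should add evenness and conjugation symmetry for $x<0$, since the Polymath approximation is stated for positive real part.

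\textbf{The gap.} The theorem is a quantitative claim, and the proposal never closes the inequality $b_B-\varepsilon/(r(1-\varepsilon))>19/3$; the choices you float would not close it.
\begin{itemize}
\item A uniform majorant for $|g_t-1|$ cannot be small. The $n=2$ primary term alone is about $e^{-1.89}\approx0.15$. The dual sum, whose $n=1$ term has weight one, carries $|\gamma|\approx e^{-yL/2}$. At the lower edge $y=1/10$ of a radius-$1/10$ disk this is $e^{-2.75}\approx0.064$, which is not ``tiny''. Realistically $\varepsilon\approx0.38$; the paper certifies $0.38094<\varepsilon_*<0.38095$.
\item With your $r=1/20$, the Cauchy loss is about $0.3/(0.05\cdot0.7)\approx8.6$. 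This exceeds the available margin $b_B-19/3\approx7.4$, so $38/3$ is not reached.
\item Enlarging $r$ does not help monotonically. The disk's lower edge $1/5-r$ approaches the real axis, $|\gamma|$ grows like $e^{-(1/5-r)L/2}$, and $\varepsilon$ becomes of order one as $r\to1/5$.
\item The paper works in this narrow window at $r=1/10$. It checks that the disk stays in $1/10\le\Imm w\le19/20<1$, proves $\varepsilon_*<2/5$ and $b_B>13$, and obtains $13-\frac{2/5}{(1/10)(3/5)}=\frac{19}{3}$, doubled to $\frac{38}{3}$.
\end{itemize}

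\textbf{Two supporting steps also need repair.}
\begin{itemize}
\item Your first alternative, keeping the head in $\psi$ and using \eqref{eq:phase-numerator-budget}, needs a derivative error $|g_t'-\psi'|$. Cauchy cannot supply this from the amplitude error of the Polymath model $f_t$. Its cutoff $N=\lfloor\sqrt{u/4\pi+t/16}\rfloor$ can jump across the disk, so $f_t$ is not holomorphic there. That is exactly why the paper differentiates only the exact $g_t-1$. The later centred theorem, by contrast, needs a fixed-cutoff model plus a separate jump bound $J_{\rm cut}$.
\item Your tail bound $2^{-\sigma}+\int_2^\infty v^{-\sigma}\,\dd v$ drops the heat weights $b_n^t=e^{t\log^2n/4}>1$, so it is not a majorant. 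Monotonicity in $|x|$ is also not automatic, because new terms enter as $N$ grows.
\item The paper handles this with a head through $n=100$ plus convex endpoint tails $T_v(L)$ (Polymath's Lemma~8.2) evaluated at $U(L)=L/2+\delta$. Explicit derivative guards show that every piece decreases for $L\ge L_0$, including the approximation-error majorants $d_1$ and $E_C$.
\item The paper also notes that the worst heat time is $t_-=4/25$ for the amplitude sums but $t_+=1/5$ for the reserve $b_B$.
\end{itemize}
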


\begin{proof}
We work first in the Polymath coordinates $w=u+iy$, $t=4\kappa$.
Put
\[
 t_-=\frac4{25},\quad t_+=\frac15,\quad
 X=10^{25},\quad A=X-1,\quad
 r=\beta=\frac1{10},\quad y_+=\frac{19}{20}.
\]
If $u\ge X$ and $1/5\le y\le\sqrt{1-2t}$, the closed
disk of radius $r$ lies in $\Re w\ge A$ and
$\beta\le\Imm w\le y_+<1$: indeed
$\sqrt{1-2t_-}+r<19/20$.
The prefactor $B_t$ is holomorphic and nonzero on a neighborhood
of each such disk.

Here are explicit majorants for the exact function $g_t=H_t/B_t$.
For a point in this expanded region, write
\[
 L=\log\frac{\Re w}{4\pi},\quad
 L_0=\log\frac A{4\pi},\quad \delta=10^{-20},\quad
 c=\frac{t_-}{4},\quad a=\frac{1+\beta}{2}-\delta,\quad
 b=a-\beta,\quad U(L)=\frac L2+\delta.
\]
The bounds (20)--(22) of \cite{Polymath2019} give
$\Re s_*\ge(1+y)/2+tL/4-\delta$,
$|\kappa_{\rm RS}|\le\delta y$, and
$\log N\le U(L)$; here $\kappa_{\rm RS}$ denotes the small
shift in the Riemann--Siegel model, not the heat time.
The three losses are bounded by $\delta$ already at $\Re w=A$
and decrease thereafter.  Also $\log N<L$.
Thus increasing $t$ from $t_-$ decreases each absolute coefficient
after the lower bound for $\Re s_*$ is inserted.
The combined dual coefficient decreases as $y$ increases from
$\beta$: its logarithmic $y$-derivative is bounded by
$1/50-L/2+(1/2+\delta)U(L)<0$.

Put $\ell=\log100$ and define
\begin{align}
 T_v(L)&=U(L)\left\{
 e^{(1-v-cL)\ell+c\ell^2}
 +e^{(1-v-cL)U(L)+cU(L)^2}\right\},
 \label{eq:phase-convex-tail}\\
 \gamma_*(L)&=\exp\!\left\{
 \beta\left(\frac1{50}-\frac L2+\delta U(L)\right)\right\}.
 \label{eq:phase-dual-envelope}
\end{align}
The decreasing-sum integral test and the convexity argument of
\cite[Lemma~8.2]{Polymath2019}, with head length $100$, give
\begin{align}\label{eq:phase-model-majorant}
 |f_t-1|\le M(L):={}&
 \sum_{n=2}^{100}e^{c\log^2n-(a+cL)\log n}+T_a(L)\nonumber\\
 &+\gamma_*(L)\left\{
 \sum_{n=1}^{100}e^{c\log^2n-(b+cL)\log n}+T_b(L)\right\}.
\end{align}
The dual factor includes $N^{|\kappa_{\rm RS}|}$, so the same
majorant also controls the sum multiplying the exponential error
in (23) of \cite{Polymath2019}, after restoring the primary
$n=1$ term.  Replacing the maximum of the two endpoint values
by their sum gives the displayed $T_v$.
The endpoint with $\log N$ is bounded by its value at $U(L)$:
its derivative in that endpoint is positive on
$L/2-2e^{-L_0/2}\le\log N\le U(L)$.

Every term in $M$ decreases for $L\ge L_0$.
For the two primary endpoint tails, the logarithmic derivatives
are $1/(2U)-c\ell$ and
$1/(2U)+(1-a)/2-cL/2$.
The dual versions replace $a$ by $b$ and subtract
$\beta(1-\delta)/2$.  These expressions are negative at $L_0$
and decrease thereafter; the finite-head terms also decrease.
All these endpoint and monotonicity guards are strict directed
comparisons in the supplementary program.

For completeness, let $\nu=e^{L_0/2}-1$ and put
\begin{align}
 d_1&=\frac{t_+^2L_0^2/16+0.626}{A-6.66},
 \label{eq:phase-AB-majorant}\\
 E_C&=\exp\!\left\{-\frac{1+\beta}{4}L_0
       -\frac{t_-}{16}L_0^2
       +\frac{1.24(3^{y_+}+3^{-y_+})}{\nu-1/8}
       +\frac{3\sqrt{L_0^2+\pi^2/4}+10.44}{A-12}\right\}.
 \label{eq:phase-C-majorant}
\end{align}
Equations (23)--(24) of \cite{Polymath2019} imply throughout
the expanded region
\begin{equation}\label{eq:phase-amplitude-majorant}
 |g_t-1|\le\varepsilon_*:=M(L_0)
                  +(e^{d_1}-1)(1+M(L_0))+E_C<\frac25.
\end{equation}
The error majorants decrease with the horizontal coordinate;
the logarithmic square in (23) is bounded by $L^2$ because
$-2\delta\le L-2\log n\le L$.
The decreasing factors in (24) are evaluated at $L_0$,
using $N\ge e^{L_0/2}-1$.
The strict numerical inequality in
\eqref{eq:phase-amplitude-majorant} is checked at both $160$
and $256$ bits, together with every guard used above.
The enclosures give $0.38094<\varepsilon_*<0.38095$.

The prefactor itself supplies a positive phase reserve.
With $s=(1+y-iu)/2$ and
\[
 \alpha(s)=\frac1{2s}+\frac1{s-1}
                      +\frac12\log\frac{s}{2\pi},
\]
its exact logarithmic derivative is
\[
 b_B=\frac12\Ree\left\{
 \alpha(s)\left(1+\frac t2\alpha'(s)\right)\right\}.
\]
The estimates $|s|,|s-1|\ge u/2$ and $0<\Re s\le1$ give
\begin{equation}\label{eq:phase-prefactor-reserve}
 b_B\ge\frac L4-\frac3{2u}
 -\frac{t_+}{4}\left(\frac L2+\frac\pi4+\frac4u\right)
                       \left(\frac1u+\frac6{u^2}\right)>13.
\end{equation}
The negative error terms decrease for $u\ge A$, while $L/4$
increases.  At $A$ the directed lower enclosure is greater
than $13.7584$.
\Cref{prop:phase-numerator-transfer}, with $r=1/10$, now yields
\[
 -\Imm\frac{H_t'}{H_t}(u+iy)
 >13-\frac{2/5}{(1/10)(3/5)}=\frac{19}{3}.
\]
The scaling \eqref{eq:heat-normalization} multiplies this phase
velocity by two and halves both coordinates, proving
\eqref{eq:uniform-phase-tail}.  Evenness and reality symmetry
cover negative $x$.
For $\eta>\sqrt{1/4-2\kappa}$, the classical strip contraction
and \cref{prop:heat-phase-domain} already give strict positivity,
which proves the final qualitative assertion.
\end{proof}

The preceding estimate differentiates the whole amplitude error by
Cauchy.  A sharper estimate retains the logarithmic generators of the
finite model and applies Cauchy only to its small remainder.  Centring
those generators also preserves part of the correlation between
the model and its derivative.

\begin{theorem}[Exterior phase via centred moments]
\label{thm:centered-phase-tail}
For every
\[
 \frac1{25}\le\kappa\le\frac1{20},\qquad
 |x|\ge5\times10^{15},\qquad
 \frac1{10}\le\eta\le\sqrt{\frac14-2\kappa},
\]
the heat function is nonzero at $x+i\eta$, and
\begin{equation}\label{eq:centered-phase-tail}
 \boxed{\mathcal P_\kappa(x,\eta)>\frac12.}
\end{equation}
Consequently its phase is positive for every
$|x|\ge5\times10^{15}$ and every $\eta\ge1/10$.
The quantitative margin is asserted only in the displayed capped range.
\end{theorem}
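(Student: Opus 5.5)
The argument will follow the proof of \cref{thm:uniform-phase-tail}, in Polymath coordinates $w=u+iy$, $t=4\kappa\in[4/25,1/5]$, with $X=10^{16}$ and $A=X-1$. The new ingredient is the handling of the amplitude. Instead of differentiating the whole error $g_t-1$ by Cauchy, write $g_t=f_t+E_t$. Here $f_t$ is the effective Riemann--Siegel model of \cite[Theorem~1.3]{Polymath2019} and $E_t$ is its exponential remainder. The finite head of $f_t$ is kept explicitly: its terms are $\exp\{c\log^2 n-(s_*+\dots)\log n\}$, and their $w$-derivatives are $-(\text{logarithmic generator})\cdot(\text{term})$. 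These generators are affine in $\log n$, so the head derivative can be written as a centred moment. Set $\bar\ell=\sum_n \ell_n a_n/\sum_n a_n$. Then
\[
 f_t'=\sum_n(-\ell_n+\bar\ell)\,a_n-\bar\ell f_t ,
\]
and the term $-\bar\ell f_t$ merges with $b_B|f_t|^2$ in the model numerator $\mathcal N_\psi$ of \cref{prop:phase-numerator-transfer}. What remains is a centred sum whose size is controlled by a variance-type bound, $\sum|\ell_n-\bar\ell|\,|a_n|$, instead of by $\varepsilon/r$.

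The steps are as follows.
\begin{enumerate}
\item Recheck that the disk/strip geometry, the nonvanishing of $B_t$, and the monotonicity guards of \cref{thm:uniform-phase-tail} all survive at $L_0=\log(A/4\pi)$ with $A\approx10^{16}$. These are the endpoint-tail derivatives and the dual coefficient derivative $1/50-L/2+(1/2+\delta)U(L)<0$. They should, since each guard only requires $L_0$ to be moderately large.
\item Bound the tail majorant $M(L_0)$ and the remainder $E_t$ exactly as in \eqref{eq:phase-model-majorant}--\eqref{eq:phase-C-majorant}, but assign them only to $e_0$. Obtain $e_1$ by applying Cauchy on radius $r=1/10$ to the small remainder alone, namely the tail beyond the head plus the $d_1$ and $E_C$ pieces. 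This is legitimate because those remainders are holomorphic on the disk independent of the cutoff. Alternatively, differentiate the explicit tail termwise.
\item Bound the centred head derivative and the cross term $\Imm(\psi'\overline\psi)$ directly. Show that
\[
 \mathcal N_\psi\ge (b_B-\bar\ell_{\max})|\psi|^2-(\text{centred sum})|\psi|
\]
exceeds $R_\psi$ from \eqref{eq:model-phase-budget} by a margin giving $-\Imm(H_t'/H_t)>1/4$. The reserve $b_B\gtrsim L_0/4\approx8.5$ from \eqref{eq:phase-prefactor-reserve} is reduced by the mean generator, which is of order $cL\log n$ weighted by the small coefficients $|a_n|$, $n\ge2$.
\item Use the scaling \eqref{eq:heat-normalization} to double the phase velocity and obtain \eqref{eq:centered-phase-tail}. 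Evenness and conjugation cover negative $x$. For $\eta>\sqrt{1/4-2\kappa}$, use strip contraction together with \cref{prop:heat-phase-domain} to get the qualitative positivity.
\end{enumerate}

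The main obstacle is step 3. At $L_0\approx 34$ the amplitude error $|g_t-1|$ is no longer below $2/5$: the $n=2,3$ head terms have size about $2^{-a-cL}$, which is not small. The crude bound $\varepsilon/(r(1-\varepsilon))$ therefore fails, and the centred-moment bookkeeping has to carry the whole margin. I would first try a finite directed interval enclosure over a grid in $(t,y,\text{phase of } n^{-iu/2})$. Treating the oscillating phases $n^{-iu}$ as free unimodular variables keeps the estimate uniform in $u$. The worst case over the phases is then a finite-dimensional optimisation, which would be certified by monotonicity in $L$ from $L_0$ onward.
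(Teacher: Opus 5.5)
Your diagnosis of the obstacle is correct. At $X=10^{16}$ one only has $|g_t-1|\le m$ with $m<0.9071$, so $\varepsilon/(r(1-\varepsilon))$ is useless. But step~3 has no mechanism that survives this, under either reading of $\bar\ell$.
\begin{itemize}
\item If $\bar\ell=\sum\ell_na_n/\sum a_n$ is formed with the complex terms $a_n$, the centred sum $\sum(\ell_n-\bar\ell)a_n$ vanishes identically. The decomposition then merely renames the problem: everything sits in $\bar\ell$, i.e.\ in $\tfrac12\Ree(V/g_t)$ with $V=\sum_n\log n\,\mathfrak a_n+\sum_n(L-\log n)\mathfrak d_n$. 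A complex-weighted mean is not controlled by the range of the $\ell_n$.
\item If instead you centre at a fixed real value, your inequality $\mathcal N_\psi\ge(b_B-\bar\ell_{\max})|\psi|^2-S|\psi|$ has a quadratic gain, a linear loss and no constant term. At $|\psi|$ near $1-m\approx0.093$, which your bounds cannot exclude, it therefore requires $S<b_B(1-m)\approx0.8$. In phase units the generators are $\tfrac12\log n$ on the primary terms and $\tfrac12(L-\log n)$, at least about $L/4\approx8.6$, on the dual ones. The $L$ comes from $\gamma'/\gamma$, not from $cL\log n$. For every real centre $S$ is of order $2$; for comparison, the paper's optimized centred envelope is $D/2\approx2.27$.
\end{itemize}
So the loss is not a small mean generator: even the paper's sharp bound spends about $8.29$ of the reserve $b_B>8.5775$.

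The missing idea is to centre only the non-constant terms and to exploit the geometry of the disk $|g-1|\le m$. Write $V=\lambda(g_t-1)+R$ with a fixed real $\lambda$ ($\lambda=4$ in the paper), so that $|R|\le D:=P_\lambda+Q_\lambda+\lambda E_0$. Since $|g-1|\le m$ implies $\Ree g\ge(|g|^2+1-m^2)/2$,
\[
 \Ree\frac{V}{g}
 \le\frac\lambda2-\frac{\lambda(1-m^2)}{2|g|^2}+\frac{D}{|g|}
 \le\frac\lambda2+\frac{D^2}{2\lambda(1-m^2)}.
\]
The term $-\lambda/g$ arises because the $n=1$ term has zero generator and is not centred. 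It gives a gain of order $|g|^{-2}$ that absorbs the $D/|g|$ loss, so only $m<1$ is needed. With $|g_t'-iV/2|\le\tau$ this gives $-\Imm(H_t'/H_t)\ge b_B-\lambda/4-D^2/(4\lambda(1-m^2))-\tau/(1-m)>0.29$, and the scaling doubles it.

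Your fallback grid over free phases is not a substitute: the head alone involves $168$ independent prime phases. The inequality above is exactly the phase-uniform reduction that makes a certificate feasible.

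There is also a secondary error in step~2: $g_t-f_t$ is not holomorphic on the disk, because the cutoff of $f_t$ jumps with $u$. The paper fixes the centre's cutoff $N_c$, applies Cauchy to the holomorphic function $g_t-\psi_{N_c}$, and adds the single possible jump $J_{\rm cut}$ to the disk amplitude.
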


\begin{proof}
Use $w=u+iy$, $t=4\kappa$, and put
\[
 X=10^{16},\quad A=X-1,\quad r=\frac1{20},\quad
 t_-=\frac4{25},\quad t_+=\frac15,\quad
 \delta=10^{-12},\quad c=\frac1{25}.
\]
For $u\ge X$ and $1/5\le y\le\sqrt{1-2t}$ the surrounding
disk lies in $u\ge A$, $3/20\le y\le9/10<1$.
Write $L=\log(u/(4\pi))$, $L_0=\log(A/(4\pi))$,
$U=L/2+\delta$, $n_0=1000$, $\ell=\log n_0$, and $\lambda=4$.
The coefficient and loss comparisons in the preceding proof remain
valid with this $A,\delta$; in particular $\log N\le U<L$.

For $\beta\in\{1/5,3/20\}$ define
\[
 a_\beta=\frac{1+\beta}{2}-\delta,\quad
 b_\beta=\frac{1-\beta}{2}-\delta,\quad
 \Gamma_\beta(L)=
 \exp\!\left\{\beta\left(\frac1{50}-\frac L2+\delta U\right)\right\}.
\]
For $v=a_\beta,b_\beta$ let
\begin{align}
 I_j^v(L)&=\int_\ell^U q^j
              e^{(1-v-cL)q+cq^2}\,\dd q,\qquad j=0,1,
 \label{eq:centered-tail-integrals}\\
 P_j^\beta(L)&=\sum_{n=2}^{n_0}(\log n)^j
                 e^{c\log^2n-(a_\beta+cL)\log n}+I_j^{a_\beta}(L),
 \nonumber\\
 Q_j^\beta(L)&=\Gamma_\beta(L)\left\{
       \sum_{n=1}^{n_0}(\log n)^j
          e^{c\log^2n-(b_\beta+cL)\log n}+I_j^{b_\beta}(L)\right\}.
 \label{eq:centered-tail-moments}
\end{align}
In the $j=0$ sums the weight is $1$, including at $n=1$.
The decreasing-sum integral test bounds the model mass and its
logarithmic moments by these quantities.  The logarithmic and centred
weights also pass the decreasing-sum guards.  The integrals are evaluated
by the exact $\operatorname{erfi}$ antiderivative, where $\operatorname{erfi}(z)=(2/\sqrt\pi)\int_0^z e^{v^2}\,\dd v$, as given in
\cite[Remark~8.3]{Polymath2019}; integration by parts gives $I_1$.
At the centres use $\beta=1/5$ and omit that superscript.
Define the centred envelopes
\begin{align}
 P_\lambda(L)&=\sum_{n=2}^{n_0}|\log n-\lambda|
          e^{c\log^2n-(a_{1/5}+cL)\log n}
             +I_1^{a_{1/5}}-\lambda I_0^{a_{1/5}},
 \label{eq:centered-primary-envelope}\\
 Q_\lambda(L)&=\Gamma_{1/5}(L)\left\{
       \sum_{n=1}^{n_0}(L-\log n-\lambda)
          e^{c\log^2n-(b_{1/5}+cL)\log n}
       +(L-\lambda)I_0^{b_{1/5}}-I_1^{b_{1/5}}\right\}.
 \label{eq:centered-dual-envelope}
\end{align}
Here $\ell>\lambda$ and $L-U-\lambda>0$, so the tail weights
and all dual weights have the indicated signs.

We justify freezing every envelope at $L_0$ despite the growing endpoint
$U$.  Put $d=3$, $Z=U-d$, and
\[
 K_v=1-v+2c\delta,\qquad
 F_v=\frac{1-e^{-K_vd}}{K_v}.
\]
Since $\partial_q\{(1-v-cL)q+cq^2\}\le K_v$ for $q\le U$,
the integral over $[U-d,U]$ of $q^j$ times the integrand is at
least $Z^jF_v$ times its value at $U$.  Differentiating
\eqref{eq:centered-tail-integrals} with respect to $L$, the primary
mass and logarithmic tails decrease provided
\[
 cZF_a>\frac12,\qquad cZ^2F_a>\frac U2,\qquad
 2cZF_a>\frac12.
\]
For the dual tails, put $\rho=\beta(1-\delta)/2$ and replace these
guards respectively by
\[
 (\rho+cZ)F_b>\frac12,\qquad
 (\rho+cZ)ZF_b>\frac U2,\qquad
 (\rho+2cZ)F_b>\frac12.
\]
The final guard in each group makes the preceding quadratic margin
increase with $U$.  These guards hold at $L_0$ for both choices
of $\beta$, and their margins thereafter increase.
For the centred primary tail the corresponding guards are
\[
 cZ(Z-\lambda)F_a>\frac{U-\lambda}{2},\qquad
 c(2Z-\lambda)F_a>\frac12.
\]
For the centred dual tail, with $\beta=1/5$ and $V_*=L-U-\lambda$,
they are
\begin{align*}
 (\rho+c\ell)V_*&>1,\\
 \{(\rho+cZ)V_*-1\}F_b&>\frac{V_*}{2},\\
 (\rho+cZ+cV_*)F_b&>\frac12.
\end{align*}
Indeed its differentiated integrand is
$\{1-(\rho+cq)(L-q-\lambda)\}$ times the original exponential.
The first guard makes it negative throughout; the last two control
the positive moving-endpoint term and the increase of its margin.
The finite primary heads decrease term by term.  The centred dual
head decreases since $1/(L_0-\ell-\lambda)<\rho$; the other dual
heads decrease as well.  All these strict guards, the decreasing-sum
guards, and $Z>\ell$ are directed comparisons in the certificate.
Thus every mass, logarithmic, and centred envelope used below
is bounded by its value at $L_0$, uniformly for all $L\ge L_0$.

For a fixed centre let $N_c$ be its integer cutoff.  The
holomorphic fixed-cutoff model of \cite[Theorem~1.3]{Polymath2019}
can be written
\begin{equation}\label{eq:holomorphic-fixed-cutoff}
 \psi_{N_c}=\sum_{n\le N_c}\mathfrak a_n+
                  \sum_{n\le N_c}\mathfrak d_n,\quad
 \mathfrak a_n=b_n^t n^{-p_+},\quad
 \mathfrak d_n=\gamma b_n^t n^{-p_-},
\end{equation}
where $s_\pm=(1\mp iw)/2$, $p_\pm=s_\pm+t\alpha(s_\pm)/2$,
$b_n^t=\exp(t\log^2n/4)$, and $\gamma=M_t(s_-)/M_t(s_+)$.
This is their formula (14) after combining its conjugate exponent,
$n^y$, and the small shift.  In particular it does not conjugate
$w$ when differentiated.

Let $d_\pm=1+t\alpha'(s_\pm)/2$ and
$A_\pm=\alpha(s_\pm)d_\pm$.  Direct differentiation gives
\[
 \frac{2\psi_{N_c}'}{i}=
 \sum_{n\le N_c}d_+\log n\,\mathfrak a_n+
 \sum_{n\le N_c}(A_++A_--d_-\log n)\mathfrak d_n.
\]
Introduce
\[
 V=\sum_{n\le N_c}\log n\,\mathfrak a_n+
       \sum_{n\le N_c}(L-\log n)\mathfrak d_n.
\]
Then $|V-\lambda(\psi_{N_c}-1)|\le P_\lambda(L_0)+Q_\lambda(L_0)$.
The logarithmic contribution to $\alpha(s_+)+\alpha(s_-)$ is
$\tfrac12\log((1+w^2)/(16\pi^2))$ on the principal branches.
With
\[
 A'_*=\frac1A+\frac6{A^2},\quad
 A_*=\frac{L_0}{2}+\frac\pi4+\frac4A,\quad
 \epsilon_{\log}=\frac2A+\frac2{A^2},\quad
 C_A=\frac3A+\frac{\epsilon_{\log}}{4(1-\epsilon_{\log})}
                    +\frac{t_+}{2}A_*A'_*,
\]
the same rational and logarithmic estimates as before give
$|A_++A_--L|\le2C_A$ and $|d_\pm-1|\le t_+A'_*/2$.
Their error terms decrease with $u$, so
\begin{equation}\label{eq:centered-generator-error}
 |2\psi_{N_c}'/i-V|\le
 E_{\rm gen}:=\frac{t_+}{2}A'_* (P_1(L_0)+Q_1(L_0))
                          +2C_AQ_0(L_0).
\end{equation}

It remains to bound the derivative of the remainder across possible
cutoff changes.  Let $d_1,E_C$ be
\eqref{eq:phase-AB-majorant}--\eqref{eq:phase-C-majorant} with
the present $A,L_0$, $\beta=3/20$, and $y_+=9/10$, and set
\[
 M_{\rm disk}=P_0^{3/20}(L_0)+Q_0^{3/20}(L_0),\qquad
 E_0=(e^{d_1}-1)(1+M_{\rm disk})+E_C.
\]
This bounds $|g_t-f_t|$ throughout the disk.  No bound
$M_{\rm disk}<1$ is needed.
The continuous cutoff $\sqrt{u/(4\pi)+t/16}$ varies by less than one
on the disk, so its floor differs from $N_c$ by at most one.
For the boundary pair, put $h=\delta+4e^{-L_0/2}$.
Its index satisfies $|\log n-L/2|\le h$.
Since $\log n<L$, the primary weight is at most
$\exp(-a_{3/20}L/2-cL^2/4+a_{3/20}h+ch^2)$.
The ratio of the dual weight to this primary bound is at most
$\exp(1/50+h(1+\delta)+\delta L/2)$.
Consequently a uniform bound for the single possible jump is
\begin{equation}\label{eq:cutoff-jump-majorant}
 J_{\rm cut}:=(1+e^{1/50+h(1+\delta)})
 \exp\!\left\{-\frac{a_{3/20}-\delta}{2}L_0
             -\frac c4L_0^2+a_{3/20}h+ch^2\right\}.
\end{equation}
The bound decreases for $L\ge L_0$.  Cauchy is now applied to the
holomorphic function $g_t-\psi_{N_c}$, whose disk amplitude is at
most $E_0+J_{\rm cut}$, giving
\[
 |g_t'-\psi_{N_c}'|\le E_1:=\frac{E_0+J_{\rm cut}}r.
\]
At the centre the cutoffs agree, so $|g_t-\psi_{N_c}|\le E_0$.

Put
\[
 m=P_0(L_0)+Q_0(L_0)+E_0,\qquad
 D=P_\lambda(L_0)+Q_\lambda(L_0)+\lambda E_0,\qquad
 \tau=E_{\rm gen}/2+E_1.
\]
Thus $|g_t-1|\le m<1$, $|V-\lambda(g_t-1)|\le D$,
and $|g_t'-iV/2|\le\tau$.  These imply the centred phase bound
\begin{equation}\label{eq:centered-phase-reserve}
 -\Imm\frac{H_t'}{H_t}
 \ge b_B-\frac\lambda4
       -\frac{D^2}{4\lambda(1-m^2)}-\frac{\tau}{1-m}.
\end{equation}
To see this without independent bounds on numerator and denominator,
write $V=\lambda(g_t-1)+R$, $|R|\le D$.  From $|g_t-1|\le m$,
$\Ree g_t\ge(|g_t|^2+1-m^2)/2$, whence
\[
 \Ree(V/g_t)\le\frac\lambda2
     -\frac{\lambda(1-m^2)}{2|g_t|^2}+\frac D{|g_t|}
 \le\frac\lambda2+\frac{D^2}{2\lambda(1-m^2)}.
\]
Completing the square proves the last inequality; the derivative
error contributes at most $\tau/(1-m)$.

The directed $160$- and $256$-bit replays give
\[
 m<0.9071,\quad D<4.5453,\quad E_0<10^{-9},\quad
 E_1<10^{-7},\quad E_{\rm gen}<2\times10^{-16}.
\]
The endpoint prefactor reserve in \eqref{eq:phase-prefactor-reserve}
exceeds $8.5775$.  Substitution of the full directed enclosures in
\eqref{eq:centered-phase-reserve} gives an $H$-phase lower bound
greater than $0.29>1/4$.  The factor-two scaling, evenness and
conjugation prove \eqref{eq:centered-phase-tail}.
Above the classical strip use \cref{prop:heat-phase-domain}, as before.
\end{proof}

\begin{corollary}[Finite phase-cap reduction]
\label{cor:finite-phase-cap}
Fix $\kappa_0\in[1/25,1/20]$.  If the continuous numerator
\eqref{eq:phase-numerator} is nonnegative throughout the compact cap
\begin{equation}\label{eq:finite-phase-cap}
 |x|\le5\times10^{15},\qquad
 \frac1{10}\le\eta\le\sqrt{\frac14-2\kappa_0},
\end{equation}
then
\begin{equation}\label{eq:finite-cap-ceiling}
 \boxed{\Lambda\le4\kappa_0+\frac1{50}.}
\end{equation}
In particular, the finite sign condition at $\kappa_0=1/25$
would imply $\Lambda\le9/50=0.18$.
\end{corollary}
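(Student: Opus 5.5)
The plan is to combine the finite cap hypothesis with the exterior estimate of \cref{thm:centered-phase-tail} and the strip-equals-phase equivalence of \cref{prop:global-phase-strip}, with $h_0=1/10$. The goal is to show that all zeros of $\Xi_{\kappa_0}$ lie in $|\Imm z|\le1/10$. \Cref{prop:Newman-strip-input} then gives $\Lambda\le4\kappa_0+2h_0^2=4\kappa_0+1/50$. At $\kappa_0=1/25$ this is $4/25+1/50=9/50$.

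\textbf{Step 1: reduce to the phase criterion.} Rather than using \cref{prop:global-phase-strip} with the open condition $\eta>h_0$, I would argue directly by contradiction. Suppose $z_*=a+ib$ is a zero of $\Xi_{\kappa_0}$ with $b>1/10$. By conjugation we may take $b>0$, and by evenness $a\ge0$ is harmless.

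\textbf{Step 2: place the putative zero.* } There are three cases.
\begin{enumerate}[label=(\roman*)]
\item If $b>\sqrt{1/4-2\kappa_0}$, the zero contradicts classical strip contraction. \Cref{prop:Newman-strip-input} with $h_0=1/2$ at $\kappa=0$ confines the zeros of $\Xi_{\kappa_0}$ to $|\Imm z|\le\sqrt{1/4-2\kappa_0}$; this is real since $\kappa_0\le1/20<1/8$.
\item If $|a|\ge5\times10^{15}$, \cref{thm:centered-phase-tail} already states that $\Xi_{\kappa_0}$ is nonzero at every such point with $1/10\le b\le\sqrt{1/4-2\kappa_0}$. This is where $\kappa_0\in[1/25,1/20]$ is needed.
\item Otherwise $z_*$ lies in the interior region of the compact cap \eqref{eq:finite-phase-cap}.
\end{enumerate}

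\textbf{Step 3: rule out a zero inside the cap.} Here I use the numerator $\mathcal J_{\kappa_0}$ of \eqref{eq:phase-numerator}. Near $z_*$ of multiplicity $m$, write $\Xi=(z-z_*)^m g$ with $g(z_*)\ne0$. Then
\[
\tfrac12\partial_\eta|\Xi(a+i\eta)|^2=m(\eta-b)^{2m-1}|g|^2+O(|\eta-b|^{2m}).
\]
This is strictly negative for $\eta$ slightly below $b$. If $b>1/10$ strictly, such $\eta$ still lie in the cap, contradicting $\mathcal J\ge0$.

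\textbf{Main obstacle: the boundary case $b=1/10$.} This is allowed by the target conclusion, so I would not try to exclude it; the strip $|\Imm z|\le h_0$ is closed. The only delicate point is therefore that Step 3 needs points strictly below $b$ but above $1/10$, which exist precisely when $b>1/10$. The point $|a|$ exactly equal to $5\times10^{15}$ is covered by both the cap and the exterior theorem, so there is no gap between cases (ii) and (iii).

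With the closed strip $|\Imm z|\le1/10$ established at time $\kappa_0$, \eqref{eq:Newman-strip-ceiling} finishes the proof. I would note that this gives $\Lambda\le4\kappa_0+2(1/10)^2=4\kappa_0+1/50$ directly, without invoking the strictness clause of \cref{prop:global-phase-strip}.
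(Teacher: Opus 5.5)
Your argument is correct and is essentially the paper's proof. It uses the same three regions: the cap itself, the exterior $|x|\ge5\times10^{15}$ handled by \cref{thm:centered-phase-tail}, and the region above $\sqrt{1/4-2\kappa_0}$ handled by classical strip contraction, followed by \cref{prop:Newman-strip-input} with $h_0=1/10$; the only difference is that the paper divides $\mathcal J_{\kappa_0}$ by $|\Xi_{\kappa_0}|^2$ and invokes \cref{prop:global-phase-strip}, whereas you run that proposition's local-factorization contradiction directly on the numerator inside the cap and use the nonvanishing statements outside it.
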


\begin{proof}
At every regular point in the cap, divide the nonnegative
numerator by $|\Xi_{\kappa_0}|^2$.
Outside its horizontal sides use \cref{thm:centered-phase-tail};
above its top use the classical strip and
\cref{prop:heat-phase-domain}.  The phase sign is therefore
nonnegative at every regular point above $1/10$.
\Cref{prop:global-phase-strip} gives a global strip of half-width
$1/10$, and \cref{prop:Newman-strip-input} gives
\eqref{eq:finite-cap-ceiling}.
\end{proof}

The exterior estimates are unconditional consequences of the
published Polymath approximation and the directed constants above.
The centred estimate reduces the initial cap radius by $10^9$.
The next subsection closes the cap by proving a mollified zero-free
canopy and combining it with published initial and time-barrier
inputs. This gives the global strip, and hence the phase sign, without
a separate direct phase calculation at every point of the cap.
The statement concerns the entire heat function and does not assert
continuation of the calibrated genus-two core to $[1/25,1/20]$.

\subsection{Closing the cap with a mollified canopy}
\label{subsec:cap-closure}
At $\kappa_0=1/25$, the remaining cap can be closed by proving a
zero-free region and then applying the global strip criterion.
This avoids a separate logarithmic-derivative calculation at every
point of the cap.  The new numerical input is the following
coefficient estimate; the initial finite-RH and time-barrier inputs
are published results.

\begin{lemma}[Positive thermal coefficients and paired tail overlap]
\label{lem:thermal-mollifier-overlap}
Fix $t\ge0$, $\nu\ge0$, a positive integer $N$, and a finite set
$P$ of primes containing $2$.  Write
\[
 b_n=e^{t\log^2n/4},\quad D_P=\prod_{p\in P}p,\quad
 S_\nu(s)=\sum_{n\le N}b_n n^{\nu-s},\quad
 \mu_P(s)=\prod_{p\in P}(1-b_pp^{-s}),
\]
where $s=\sigma+iT$ and $\rho_p=b_pp^{-\sigma}<1$.
For squarefree $d\mid D_P$ put
$\rho_d=\prod_{p\mid d}\rho_p$ and define the real positive sums
\[
 S_\nu^*=\sum_{n\le N}b_n n^{\nu-\sigma},\quad
 T_\nu(d)=\sum_{N/d<n\le N}b_n n^{\nu-\sigma},\quad
 C_\nu(d)=\sum_{N/d<n\le N/2}b_n n^{\nu-\sigma}.
\]
Then
\begin{align}
 \mathcal B_\nu:={}&
 \prod_{p\in P}(1-\rho_p)S_\nu^*
 +2\!\!\sum_{\substack{d\mid D_P\\\omega(d)\ {\rm odd}}}
                 \rho_dT_\nu(d)
 -2\!\!\sum_{\substack{e\mid D_P/2\\e>1}}
                 \rho_{2e}C_\nu(2e),
 \label{eq:thermal-mollifier-budget}\\
 |\mu_P(s)S_\nu(s)|&\le\mathcal B_\nu,\qquad
 |\mu_P(s)S_0(s)|\ge2-\mathcal B_0.
 \label{eq:thermal-mollifier-transfer}
\end{align}
\end{lemma}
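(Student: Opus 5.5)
The plan is to read $\mathcal B_\nu$ as an $\ell^1$ bound on the Dirichlet coefficients of the product $\mu_P(s)S_\nu(s)$, organised so that the signed part of that bound telescopes into $\prod_{p\in P}(1-\rho_p)S_\nu^*$.

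\emph{Expansion.} Expand
\[
\mu_P(s)S_\nu(s)=\sum_{d\mid D_P}(-1)^{\omega(d)}\Bigl(\prod_{p\mid d}b_p\Bigr)\sum_{n\le N}b_n n^{\nu}(dn)^{-s}.
\]
Every term has phase $(dn)^{-iT}$. Collect the terms by $m=dn$ and write the coefficient of $m^{-s}$ as $m^{-\sigma}$ times the real number
\[
x_m=\sum_{d\mid(m,D_P),\ m/d\le N}(-1)^{\omega(d)}w_{d,m/d},\qquad w_{d,n}=\rho_d\,b_n n^{\nu-\sigma}\,d^{\sigma}n^{\sigma}m^{-\sigma}.
\]
Equivalently, after absorbing the normalisation, the $(d,n)$ term has modulus $\rho_d b_n n^{\nu-\sigma}$. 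The triangle inequality then gives $|\mu_PS_\nu|\le\sum_m|x_m|$ in this normalisation. The elementary identity $|x|=x+2(-x)^+$ splits this into a signed part and a correction.

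\emph{Signed part.} Summing the signed $x_m$ over all $m$ simply re-sums the double sum over $(d,n)$ without absolute values. It factors exactly as $\sum_{d}(-1)^{\omega(d)}\rho_d\cdot S_\nu^*=\prod_{p\in P}(1-\rho_p)S_\nu^*$. This accounts for the first term of \eqref{eq:thermal-mollifier-budget}.

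\emph{Correction $2\sum_m(-x_m)^+$.} This is the step I expect to be the main obstacle. I would bound $(-x_m)^+$ by the odd-$\omega$ contributions that lose their even partners because of the cutoff $m/d\le N$. For $m$ well inside the range, every divisor $d\mid(m,D_P)$ contributes. There I would show $x_m\ge0$ by pairing each odd $d$ with $d/p$ for a prime $p\mid d$. The supermultiplicativity $b_{pn}=b_pb_ne^{t\log p\log n/2}\ge b_pb_n$ and $\rho_p<1$ should make the even member of each pair dominate.

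Negative mass can then only come from odd $d$ whose partner index falls outside $(0,N]$, that is from $n=m/d\in(N/d,N]$. Bounding these crudely gives $2\sum_{\omega(d)\ \mathrm{odd}}\rho_dT_\nu(d)$. Some of these tail terms are counted with an even partner that is still present. This happens for $d=2e$ with $m/d\le N/2$: the partner $e$ sees index $2n\le N$. For those terms the loss can be recovered, and this gives the subtracted overlap $2\sum_{e\mid D_P/2,\,e>1}\rho_{2e}C_\nu(2e)$. Here $2\in P$ is used, and $C_\nu(2e)$ is exactly the part of $T_\nu(2e)$ with $n\le N/2$. The delicate point is the inequality direction from $b_{2n}$ against $b_2b_n$ combined with the power $n^{\nu}$. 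I would first verify this with $\nu=0$ and then handle $\nu>0$ using monotonicity of $n^{\nu}$.

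\emph{Lower bound.} For $\nu=0$ the $m=1$ coefficient is exactly $1$: only $d=1$, $n=1$ contribute, with $b_1=1$. The argument above therefore shows $|\mu_PS_0-1|\le\mathcal B_0-1$. The reverse triangle inequality then gives $|\mu_PS_0|\ge1-(\mathcal B_0-1)=2-\mathcal B_0$, which completes \eqref{eq:thermal-mollifier-transfer}.
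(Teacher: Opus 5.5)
\textbf{Gap: positivity of the coefficients.} The missing step is positivity of $x_m$ for $m\le N$, which is the substantive half of the lemma, and your pairing argument for it fails. Your local comparison is right. At a fixed index, the term with divisor $d/p$ dominates the term with $d$, because $b_{pk}\ge b_pb_k$ and $p^{\nu}\ge1$; the hypothesis $\rho_p<1$ plays no role here. But there is no pairing of the odd-$\omega$ divisors with \emph{smaller} even ones. If you fix $p_0$ and pair $A$ with $A\cup\{p_0\}$, then every pair whose smaller member is odd has negative sum. Concretely, take $2,3\in P$, $\nu=0$, $t>0$ and $12\le N$. Then
\[
 12^{\sigma}x_{12}=b_{12}-b_2b_6-b_3b_4+b_2^2b_3 .
\]
The only two sign-alternating pairings of $\{2,3\}$ with $\{1,6\}$ contain $b_3(b_2^2-b_4)<0$ and $b_2(b_2b_3-b_6)<0$, respectively. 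So $x_{12}\ge0$ is a genuine four-term cancellation, not a sum of nonnegative pairs.

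The idea you need is the paper's expansion. Write $m=m'\prod_{p\in E}p$, where $E$ is the set of primes of $P$ dividing $m$, put $r=\log m'$, and remove the common factor $m'^{\nu}b_{m'}\prod_{p\in E}p^{\nu}b_p$. The contribution of $A\subseteq E$ becomes $(-1)^{|A|}\prod_{p\in A}p^{-\nu}\,W(E\setminus A)$, where
$W(B)=\prod_{p\in B}e^{(t/2)r\log p}\prod_{p<q,\ p,q\in B}e^{(t/2)\log p\log q}$.
Expand every exponential as $1$ plus a nonnegative increment. A monomial whose factors cover the vertex set $F$ occurs exactly for the $A\subseteq E\setminus F$. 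Its net coefficient is therefore $\prod_{p\in E\setminus F}(1-p^{-\nu})\ge0$.

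\textbf{Two smaller repairs in the tail.} First, the credit $\sum_{e>1}\rho_{2e}C_\nu(2e)$ runs over both parities of $\omega(e)$, but your description (an ``even partner $e$'') covers only $\omega(e)$ even. When $\omega(e)$ is odd, the negative term at index $2ek$ is the $d=e$ term, counted in $T_\nu(e)$, and the positive one is the $d=2e$ term. The same inequality $b_{2k}(2k)^{\nu}\ge b_2b_kk^{\nu}$ shows that the $2e$ term is still the smaller, so the credit is again $\rho_{2e}b_kk^{\nu-\sigma}$.

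Second, you should state that $e\leftrightarrow 2e$ is a perfect matching of the divisors of $D_P$. This is what lets the credits add at a fixed index. Let $\Sigma_m^{+},\Sigma_m^{-}$ be the positive and negative masses at $m>N$. Then $(\Sigma_m^{-}-\Sigma_m^{+})^{+}=\Sigma_m^{-}-\min(\Sigma_m^{+},\Sigma_m^{-})$, and this is at most $\Sigma_m^{-}$ minus the sum of the pairwise minima.

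With the positivity argument and these two points in place, the rest of your outline agrees with the paper. That covers the signed sum, the crude tail bound, the identity $|x|=x+2(-x)^{+}$ and the reverse triangle inequality.
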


\begin{proof}
Expand the finite product as a Dirichlet polynomial.  Its coefficient
before the factor $n^{-s}$ is
\[
 c_n^\nu=
 \sum_{\substack{d\mid D_P,\ d\mid n\\n/d\le N}}
       (-1)^{\omega(d)}
       \left(\prod_{p\mid d}b_p\right)b_{n/d}(n/d)^\nu.
\]
For $n\le N$ these coefficients are nonnegative.
Indeed, let $E$ be the selected primes dividing $n$ and write
$n=m\prod_{p\in E}p$, $r=\log m\ge0$.
After removing the common positive factor
$m^\nu b_m\prod_{p\in E}p^\nu b_p$, the contribution of a removed
subset $A\subset E$ is
\[
 (-1)^{|A|}\prod_{p\in A}p^{-\nu}
 \prod_{p\in E\setminus A}e^{(t/2)r\log p}
 \prod_{\substack{p<q\\p,q\in E\setminus A}}
                         e^{(t/2)\log p\log q}.
\]
Expand each of the last two products into $1$ plus its nonnegative
increment.  If an expanded term covers a set $F$ of prime vertices,
the alternating sum over the remaining vertices is
$\prod_{p\in E\setminus F}(1-p^{-\nu})\ge0$.
This proves positivity, including repeated prime factors in $m$.
Also $c_1^\nu=1$.

The signed real sum of all coefficients weighted by $n^{-\sigma}$
is $\prod_p(1-\rho_p)S_\nu^*$.
Negative coefficients can occur only at $n>N$.  Their total negative
mass is bounded by the odd-subset tail mass in
\eqref{eq:thermal-mollifier-budget}, with a further overlap credit.
Pair each subset $e\mid D_P/2$ with $2e$.
For $n=2ek>N$ and $k\le N/2$, both paired terms are present, have
opposite signs, and the smaller magnitude is the $2e$ term, because
\[
 b_{2k}(2k)^\nu\ge b_2b_k k^\nu.
\]
The credited terms from different pairs use disjoint positive and
negative components.  Thus their sum is at most each of the total
positive and negative tail masses, and it can be subtracted from
the latter.  It is exactly
$\sum_{e>1}\rho_{2e}C_\nu(2e)$.
The absolute coefficient mass is the signed mass plus twice the
negative mass, proving the first inequality.
The constant coefficient is $1$, so the reverse triangle inequality
gives the second one.
\end{proof}

\begin{theorem}[Directed mollified canopy at $t=4/25$]
\label{thm:mollified-canopy}
For every $u\ge5\times10^{12}$ and
$1/5\le y\le\sqrt{17/25}$,
\begin{equation}\label{eq:mollified-canopy-nonzero}
 \left|\frac{H_{4/25}(u+iy)}{B_{4/25}(u+iy)}\right|>\frac1{2000}.
\end{equation}
Consequently $\Xi_{1/25}(x+i\eta)\ne0$ whenever
$|x|\ge(5/2)\times10^{12}$ and
$1/10\le\eta\le\sqrt{17/100}$.
\end{theorem}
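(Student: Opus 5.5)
We prove \eqref{eq:mollified-canopy-nonzero} by combining the Polymath amplitude approximation with \cref{lem:thermal-mollifier-overlap} at $t=4/25$. The point is that $\mu_P$ is nonvanishing, so a lower bound for $|\mu_P(s)f_t|$ gives a lower bound for $|f_t|$. The plan has four steps: write $g_t=H_t/B_t$ through the fixed-cutoff model $\psi_{N}$ of \eqref{eq:holomorphic-fixed-cutoff}, mollify, bound the primary and dual pieces separately, and absorb the effective errors of \cite[Theorem~1.3]{Polymath2019}.

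\textbf{Setup and main term.} In Polymath coordinates, take $w=u+iy$ with $u\ge5\times10^{12}$ and $1/5\le y\le\sqrt{17/25}$. The exponent of the primary sum has real part $\sigma=\Re p_+\ge(1+y)/2+tL/4-\delta$. We take $N$ to be the current cutoff. Then
\[
 |\mu_P(s)\psi_N|\ge|\mu_P S_0^{\rm prim}|-|\mu_P|\,|S^{\rm dual}|,
\]
where the shift $\nu$ in the lemma absorbs $|\kappa_{\rm RS}|$ and the power $n^{y}$ coming from combining conjugate exponents. The lemma gives $|\mu_P S_0|\ge2-\mathcal B_0$ for the primary sum. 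For the dual sum it gives $|\mu_P S_\nu|\le\mathcal B_\nu$ after pulling out the factor $|\gamma|\le\Gamma_\beta(L)$. Also $|\mu_P|\le\prod_p(1+\rho_p)$. This yields
\[
 |g_t|\ge\frac{2-\mathcal B_0-\Gamma\mathcal B_\nu-\prod_p(1+\rho_p)E}{\prod_p(1+\rho_p)},
\]
with $E=(e^{d_1}-1)(1+M)+E_C$ the Polymath error from \eqref{eq:phase-AB-majorant}--\eqref{eq:phase-C-majorant}. I would take $P=\{2,3,5,7\}$, or whatever small set makes the numerator exceed $\prod(1+\rho_p)/2000$.

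\textbf{Uniformity.} The bound must hold for all $u\ge5\times10^{12}$, which is infinitely many $N$. I would handle this as in the proof of \cref{thm:centered-phase-tail}. The tails $T_\nu(d)$, $C_\nu(d)$ and $S_\nu^*$ are bounded by decreasing-sum integral tests in $\log n$: the $\operatorname{erfi}$ antiderivative for a finite head up to $n_0$, and integrals up to the moving endpoint $U=L/2+\delta$. I would then verify guard inequalities showing these majorants are monotone in $L$ for $L\ge L_0$, so it suffices to evaluate at $u=A$. The dependence on $y$ is monotone: increasing $y$ increases $\sigma$ and decreases the dual factor, as in the earlier coefficient comparisons. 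Hence the worst case is $y=1/5$. The upper limit $\sqrt{17/25}<1$ keeps the Polymath hypotheses valid. The overlap credit in \eqref{eq:thermal-mollifier-budget} must be kept, since $S^*_0$ itself is not small at height $10^{12}$.

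\textbf{Main obstacle.} I expect the main difficulty to be the quantitative margin. At $t=4/25$ and $L\approx\log(5\times10^{12}/4\pi)\approx26$, the primary Dirichlet mass exceeds $1$, so the naive bound $|f_t-1|<1$ fails. The positive-coefficient mollifier is what has to bring $\mathcal B_0+\Gamma\mathcal B_\nu$ below $2$. I would first run the directed evaluation with $P=\{2,3,5\}$. If that fails, I would enlarge $P$ and use the separate dual budget with its own $\nu$, checking all guards at $160$ and $256$ bits.

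\textbf{Conclusion.} The final assertion follows from the scaling \eqref{eq:heat-normalization}, $\Xi_{1/25}(z)=8H_{4/25}(2z)$, together with $B_t\ne0$. Together with evenness and conjugation, this gives nonvanishing for $|x|\ge(5/2)\times10^{12}$ and $1/10\le\eta\le\sqrt{17/100}$.
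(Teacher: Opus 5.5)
\textbf{Gap: the reduction to a single corner.} Your algebraic skeleton matches the paper's. You mollify, apply \cref{lem:thermal-mollifier-overlap} to $S_0$ and to $S_y$ using $|\overline{S_y}|=|S_y|$, charge the Polymath errors with the factor $E_+=\prod_p(1+\rho_p)$, and divide by $E_+$. Your final inequality has the same shape as the paper's $\Delta_{\rm cell}/E_+$. The gap is the uniformity step.

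You freeze everything at the corner $u=A$, $y=1/5$, on the grounds that each majorant is monotone, as in the proof of \cref{thm:centered-phase-tail}. That works there because every term is a positive Dirichlet mass which only decreases as $L$ and $y$ grow. In the mollified budget \eqref{eq:thermal-mollifier-budget}, two ingredients move the other way as $\sigma=(1+y)/2+cL$ increases:
\begin{itemize}
\item the factor $\prod_p(1-\rho_p)$, with $\rho_p=b_pp^{-\sigma}$, increases towards $1$;
\item the overlap credits $\rho_{2e}C_\nu(2e)$, which enter with a minus sign, shrink as $y$ grows and tend to zero as $L\to\infty$.
\end{itemize}
Their corner values are therefore a lower bound for the first and (as far as one can see) an upper bound for the second on the rest of the domain. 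Both are the wrong sides, so ``it suffices to evaluate at $u=A$'' and ``the worst case is $y=1/5$'' do not follow.

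What you would actually need is monotonicity of the whole combined budget, including the floor jumps in $T_\nu(d)$ and $C_\nu(d)$. You neither state nor prove this. With cell margins of order $10^{-3}$ in the paper's certificate, there is little room to extract it from crude derivative guards.

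Two smaller points. You record only a lower bound for $\sigma$, but an upper bound for $\prod_p(1-\rho_p)$ requires the upper bound $\sigma\le(1+y)/2+cL+\delta$. Also, the Riemann--Siegel shift cannot be absorbed into $\nu$, because the positivity argument of the lemma needs real $\nu\ge0$. It must be charged as a separate perturbation, the paper's $\mathsf Q(e^{\delta U_+}-1)$ multiplied by $E_+$.

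\textbf{How the paper handles this.} It makes no monotonicity claim for the mollified budget. It uses two-sided enclosures of $\sigma$ and of $\log|\gamma|$, the latter via the paired estimate $|\log|\gamma|+yL/2|\le2\times10^{-12}$. It then covers only the finite range $L_0\le L\le L_0+153/20$, $1/5\le y\le 33/40$, by $1993$ small rectangles. In each cell:
\begin{itemize}
\item the main mass is bounded by $\prod_p(1-\rho_p^-)(\mathsf P+\mathsf Q)$, with $\rho_p^-$ taken at $\sigma_+$ and $\mathsf P,\mathsf Q$ evaluated at $L_-$;
\item the spills use $\rho_d^+$ at $\sigma_-$ with floor-uniform integral majorants;
\item the credits use $\rho_d^-$, $\Gamma_-$, and integral lower bounds whose endpoints are uniform over the changing cutoff;
\item the prime catalogue is chosen cell by cell, reaching $29$ in a few cells.
\end{itemize}
The cells must be small because these decoupled endpoint choices lose precision and the margin is thin.

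The infinite range is handled not by the mollifier but by a handoff. For $u\ge10^{16}$, the unmollified centred bound $|H_t/B_t-1|<0.9071$ from \cref{thm:centered-phase-tail} already gives the required lower bound. Your proposal has no counterpart to this handoff. Without either that handoff or a genuine proof of combined monotonicity, the argument does not cover all $u\ge5\times10^{12}$.
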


\begin{proof}
Put $t=4/25$, $c=t/4=1/25$, $A=5\times10^{12}-1$,
$L_0=\log(A/(4\pi))$, and $\delta=10^{-12}$.
The Polymath coefficient, shift, and cutoff losses are bounded
as before.  We also need an upper bound for $\sigma=\Ree s_*$:
the rational and logarithmic estimates for $\alpha$ give
\[
 \frac{1+y}{2}+cL-\delta\le\sigma
       \le\frac{1+y}{2}+cL+\delta.
\]
Here $2t/A<\delta$ bounds the upper loss.
The paired-logarithm estimate from the proof of
\cref{thm:centered-phase-tail} supplies the sharper two-sided bound
\begin{equation}\label{eq:paired-gamma-amplitude}
 \left|\log|\gamma|+\frac{yL}{2}\right|
 \le2\times10^{-12}=:\delta_\gamma .
\end{equation}
Indeed $|\gamma|=1$ at $y=0$, and
$\partial_y\log\gamma=-(A_++A_-)/2$.
The directed paired error $C_A$ at this $A,L_0$ is less than
$\delta_\gamma$; integrating its uniform bound for $0\le y\le1$
proves \eqref{eq:paired-gamma-amplitude}.

We cover the finite logarithmic range by rectangles
\[
 L_0+\frac j{20}\le L\le L_0+\frac{j+1}{20},
 \quad 0\le j<153,\qquad
 \frac15\le y\le\frac{33}{40}.
\]
The final horizontal edge exceeds $\log(10^{16}/(4\pi))$ and
$33/40>\sqrt{17/25}$.  Within each column the $y$ interval is
partitioned by exact rational subdivision.  Denote a cell by
$[L_-,L_+]\times[\beta,\beta_+]$ and put
\[
 \sigma_-=\frac{1+\beta}{2}+cL_- -\delta,\quad
 \sigma_+=\frac{1+\beta_+}{2}+cL_+ +\delta,\quad
 \Gamma_+=e^{-\beta L_-/2+\delta_\gamma},\quad
 \Gamma_-=e^{-\beta_+L_+/2-\delta_\gamma}.
\]
Evaluate the primary and combined dual mass bounds
$\mathsf P,\mathsf Q$ at $L_-$, with a head ending at $1000$ and the exact
$\operatorname{erfi}$ tail integral of
\eqref{eq:centered-tail-integrals}.  The primary exponent is
$(1+\beta)/2-\delta$; the dual exponent is
$(1-\beta)/2-\delta$, multiplied by $\Gamma_+$.
Their terminal-band monotonicity guards hold at every $L_-$.
The combined dual weights decrease with $y$, since
$\log n/2-L/2<0$.

For each prime catalogue, form upper and lower bounds
$\rho_d^+,\rho_d^-$ using $\sigma_-,\sigma_+$.
The main mass in \eqref{eq:thermal-mollifier-budget} is at most
$\prod_p(1-\rho_p^-)(\mathsf P+\mathsf Q)$.
Odd-subset spill sums are bounded by decreasing-sum integrals.
Specifically, when $2(d+1)<e^{L_-/2}$, put
\[
 h_d=-\log(1-(d+1)e^{-L_-/2}),\qquad
 \ell_d=L_-/2-\log d-h_d.
\]
If $\ell_d>0$, integrate from $\ell_d$ to $L_-/2+\delta$;
otherwise use the full mass bound.  The same full bound is used
when the displayed floor guard fails.  This includes the case $d>N$.
The floor estimate follows from
$\lfloor N/d\rfloor\ge(e^{L/2}-1)/d-1$.
After shifting the integral variable by $L/2$, its exponential
factor is $e^{(1-v)L/2-cL^2/4}$.
The asserted guard $cL_->1-v$ makes it decrease throughout
the cell; the dual factor decreases as well.

For the overlap credits, the decreasing-sum lower estimate
\[
 \sum_{a<n\le b}f(n)\ge\int_{a+1}^{b}f(v)\,\dd v
\]
gives a lower bound uniform over the changing cutoff.
For even $d>2$ its log endpoints can be fixed at
\[
 a_d^+=\log(e^{L_+/2+\delta}/d+1),\qquad
 b^-=\log((e^{L_-/2}-1)/2).
\]
Integrate between these endpoints with exponents $\sigma_+$
and $\sigma_+-\beta$ for the primary and dual credits, respectively,
and multiply the dual integral by $\Gamma_-$.
An empty interval contributes zero.  Denote the resulting combined
lower credit by $C_d^-$ and the combined upper spill by $T_d^+$.
Thus a uniform bound for $\mathcal B_0+|\gamma|\mathcal B_y$ is
\begin{equation}\label{eq:canopy-cell-budget}
 B_{\rm cell}=
 \prod_{p\in P}(1-\rho_p^-)(\mathsf P+\mathsf Q)
 +2\!\!\sum_{\substack{d\mid D_P\\\omega(d)\ {\rm odd}}}\rho_d^+T_d^+
 -2\!\!\sum_{\substack{d\mid D_P\\2\mid d,\ d>2}}\rho_d^-C_d^- ,
\end{equation}
where in the last sum $C_d^-$ denotes the integral credit before
its $\rho_d^-$ factor.

Dropping the small exponent shift from Polymath's second sum gives
$S_0(s_*)+\gamma\overline{S_y(s_*)}$ with error at most
$\mathsf Q(e^{\delta U_+}-1)$, where $U_+=L_+/2+\delta$.
Equations (23)--(24) of \cite{Polymath2019} give the additional
$E_{AB},E_C$ majorants at $L_-$, using $\beta_+$ for the positive
$3^y+3^{-y}$ correction and $\mathsf Qe^{\delta U_+}$ in the $AB$ sum.
Put
\[
 E_+=\prod_{p\in P}(1+\rho_p^+),\qquad
 \varepsilon_{\rm cell}
 =E_+\{\mathsf Q(e^{\delta U_+}-1)+E_{AB}+E_C\}.
\]
Applying \cref{lem:thermal-mollifier-overlap} separately to $S_0$
and $S_y$, and then the triangle inequality, yields
\begin{equation}\label{eq:canopy-cell-margin}
 |\mu_P(s_*)|\,|H_t/B_t|
 \ge \Delta_{\rm cell}:=2-B_{\rm cell}-\varepsilon_{\rm cell}.
\end{equation}
The equality $|\overline{S_y}|=|S_y|$ is used here; we do not
multiply the conjugated sum by an incorrectly conjugated mollifier.

The directed program covers all $1993$ rectangles at both $160$
and $256$ bits, with identical rational seams and catalogue choices.
The catalogues end at $5,11,19,29$; their respective cell counts
are $1977,7,6,3$.  Every cell asserts
\[
 \Delta_{\rm cell}>\frac1{1000},\qquad
 \frac{\Delta_{\rm cell}}{E_+}>\frac1{2000}.
\]
The smallest directed values exceed $0.0014164$ and $0.0007914$,
respectively.  No oscillatory phase is sampled; the interval and
monotonicity bounds cover all real coordinates in each rectangle.
This proves \eqref{eq:mollified-canopy-nonzero} up to the handoff.
Beyond $u=10^{16}$ the amplitude bound $|H_t/B_t-1|<0.9071$
in the proof of \cref{thm:centered-phase-tail} is stronger.
Evenness and conjugation give the stated Xi region.
\end{proof}

\begin{corollary}[Global closure with the published time barrier]
\label{cor:cap-closure-018}
Combining \cref{thm:mollified-canopy} with the published finite-RH
verification and rectangular time barrier gives
\begin{equation}\label{eq:cap-closure-strip}
 \Xi_{1/25}(z)\ne0\quad\text{for }|\Imm z|\ge\frac1{10},
 \qquad
 \boxed{\Lambda\le\frac9{50}=0.18.}
\end{equation}
In particular the finite-cap phase-numerator condition of
\cref{cor:finite-phase-cap} holds throughout its stated heat interval.
\end{corollary}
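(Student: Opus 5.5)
The plan is to assemble a global zero-free statement for $\Xi_{1/25}$ in the region $|\Imm z|\ge1/10$ by covering it with four pieces, and then to invoke \cref{prop:Newman-strip-input}. In Polymath coordinates ($t=4/25$, $w=2z$), the target is that $H_{4/25}(u+iy)\ne0$ for all real $u$ and all $y\ge1/5$. By evenness and conjugation it is enough to treat $u\ge0$ and $y>0$.

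\textbf{Step 1: split the region.}
\begin{enumerate}[label=(\roman*)]
\item \emph{Top.} For $y>\sqrt{1-2t}=\sqrt{17/25}$, the classical strip contracted by de Bruijn's theorem (\cref{prop:Newman-strip-input} with $h_0=1/2$ at $\kappa=0$) already excludes zeros.
\item \emph{Right canopy.} For $u\ge5\times10^{12}$ and $1/5\le y\le\sqrt{17/25}$, \cref{thm:mollified-canopy} gives $|H_t/B_t|>1/2000$. Since $B_t$ is nonvanishing there, $H_t\ne0$.
\item \emph{Left cap.} The remaining compact box is $0\le u\le5\times10^{12}$, $1/5\le y\le\sqrt{17/25}$. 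Here I would use the published finite-RH verification together with the rectangular time-barrier argument of \cite[Theorem~1.2, Proposition~3.3]{Polymath2019}.
\end{enumerate}

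\textbf{Step 2: the left cap.} The mechanism is the Polymath argument-principle one. Initially, at $t=0$, there are no zeros with $y\ge1/5$ up to height corresponding to $u\le X_{\rm barrier}$; this follows from RH verified to that height, mapped through $\Xi(z)=\xi(1/2+iz)$. A zero-free barrier rectangle around $u\approx X_{\rm barrier}$, valid for all $0\le t\le4/25$, then prevents zeros from entering horizontally from the right. Zeros also cannot enter from above, by the contracted classical strip, and they cannot descend across $y=1/5$ from the real line, because de Bruijn's monotonicity makes the imaginary heights of off-axis zeros nonincreasing in time. Continuity of zeros in $t$ then keeps the region $u\le X_{\rm barrier}$, $y\ge1/5$ zero-free at $t=4/25$.

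The hypotheses to verify are that the published barrier abscissa satisfies $X_{\rm barrier}\ge5\times10^{12}$, and that its $y$-range and $t$-range cover $[1/5,1]$ and $[0,4/25]$. If the barrier sits at a smaller abscissa, the canopy of \cref{thm:mollified-canopy} would have to extend down to it. I expect checking these published parameters against our normalization to be the main obstacle: the factor $4$ in $t=4\kappa$, the factor $2$ in $w=2z$, and the exact region in which the barrier is zero-free.

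\textbf{Step 3: conclude.} The pieces give that all zeros of $\Xi_{1/25}$ lie in $|\Imm z|<1/10$. \Cref{prop:Newman-strip-input} with $\kappa_0=1/25$ and $h_0=1/10$ then yields
\[
 \Lambda\le\frac{4}{25}+\frac{2}{100}=\frac{9}{50}.
\]
The final claim follows from \cref{prop:global-phase-strip}: the strip implies strict phase positivity at every regular point with $\eta>1/10$. Near $\eta=1/10$ itself, the numerator \eqref{eq:phase-numerator} is nonnegative by continuity, since zeros have $\eta<1/10$.

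To cover the whole heat interval $[1/25,1/20]$, apply \cref{prop:Newman-strip-input} again: the strip of half-width $1/10$ persists, and in fact shrinks, for $\kappa\ge1/25$. The cap condition therefore holds for each $\kappa_0$ in that interval.
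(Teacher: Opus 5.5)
Your route is the paper's: apply \cite[Theorem~1.2]{Polymath2019} with $t_0=4/25$ and $y_0=1/5$, use \cref{thm:mollified-canopy} for the asymptotic hypothesis, then use de Bruijn contraction. Your treatment of the phase numerator on $[1/25,1/20]$ is fine. But the proposal stops before the proof. Checking the three hypotheses against published data is the whole content of this corollary, and you defer it as ``the main obstacle.'' The needed facts are these.
\begin{itemize}
\item The barrier in the second row of \cite[Section~10, Table~1]{Polymath2019} has left edge $X_B$ near $5\times10^{12}+194858$. So $X_B>5\times10^{12}$, and the canopy covers $u\ge X_B+\sqrt{1-y_0^2}$, $y_0\le y\le\sqrt{1-2t_0}$ with overlap. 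This is hypothesis (ii).
\item That barrier's \emph{rectangular} verification covers $0\le t\le0.186$ and $y_B\le y\le1$ with $y_B<0.17$, and has unit width. Unit width is at least $\sqrt{1-y_0^2}$. Restricting to $t\le4/25$, $y\ge1/5$ gives hypothesis (iii). It must be the rectangular run: the curved barrier hypothesis for Polymath's own parameters does not contain the new region $\sqrt{y_0^2+2(t_0-t)}\le y\le\sqrt{1-2t}$.
\item \cite[Theorem~1]{PlattTrudgian2021} verifies RH to height $3{,}000{,}175{,}332{,}800$. This exceeds $X_B/2$, the zeta height corresponding to the horizontal coordinate $X_B$ in Polymath's normalization. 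This is hypothesis (i).
\end{itemize}
Only with these inclusions does Polymath's criterion give the strip and $\Lambda\le t_0+y_0^2/2=9/50$.

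Second, the mechanism you sketch in Step~2 is not a valid substitute for citing the theorem. It rests on a false claim. Individual imaginary parts of zeros are not nonincreasing under the flow. In $\dot z_k=2\sum_{j\ne k}(z_k-z_j)^{-1}$, a higher zero $z_j$ together with its conjugate makes a net \emph{upward} contribution to $\dot y_k$ whenever $|x_k-x_j|^2<y_j^2-y_k^2$. De Bruijn's theorem controls only the strip height. The argument behind Polymath's criterion controls only the \emph{highest} zero to the left of the barrier, for which $\frac{d}{dt}y^2\le-2$. This needs every higher zero to lie at horizontal distance at least $\sqrt{1-y_0^2}$, which is exactly why the barrier must have that width. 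Your list of conditions omits the width entirely. Drop the re-derivation, invoke Theorem~1.2 as a black box, and check its hypotheses as above.
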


\begin{proof}
Use \cite[Theorem~1.2]{Polymath2019} with $t_0=4/25$ and $y_0=1/5$.
Let $X_B$ be the left edge of the unit time barrier located near
$5\times10^{12}+194858$ in the second row of
\cite[Section~10, Table~1]{Polymath2019}.
The rectangular verification there, performed by the fixed-rectangle
method of Section~8.4, covers $0\le t\le0.186$ and
$y_B\le y\le1$ with $y_B<0.17$.
Restricting it to $t\le4/25$ and $y\ge1/5$ supplies hypothesis (iii),
since $\sqrt{1-y_0^2}<1$.
It is the rectangular verification that is reused: the original
curved barrier hypothesis alone would not imply this restriction.
The whole barrier lies between $5\times10^{12}$ and
$5\times10^{12}+200000$; the half-unit convention for its centre
versus left edge does not affect any inclusion used here.

Platt--Trudgian's \cite[Theorem~1]{PlattTrudgian2021} verifies RH
through height $3{,}000{,}175{,}332{,}800$, which exceeds $X_B/2$.
This supplies hypothesis (i).
For hypothesis (ii), \cref{thm:mollified-canopy} covers every
$u\ge X_B+\sqrt{1-y_0^2}$ and
$y_0\le y\le\sqrt{1-2t_0}$, with a strict overlap at the barrier.
All three hypotheses are therefore satisfied, giving a global
zero-free region at and above $y_0$ and
$\Lambda\le t_0+y_0^2/2=4/25+1/50=9/50$.
The normalization gives the Xi strip in
\eqref{eq:cap-closure-strip}.

The strip-product phase estimate is strict at every $\eta\ge1/10$:
there are no zeros on its boundary.  Hence its continuous numerator
is positive on the entire cap at $\kappa=1/25$.
For later heat times use de Bruijn strip contraction; by
$\kappa=9/200$ all zeros are real.
This proves the last assertion throughout $[1/25,1/20]$.
The finite-RH computation and the published time-barrier run are
external numerical inputs, not replays of the accompanying program.
\end{proof}

The values $0.18$ and $0.1732$ are numerically larger than the companion comparison
$\lambda_{\rm C}=0.148611789321711$ in
\cite{PlanatNewmanClassical2026}.
The latter assumes the explicit model-to-function, localisation,
and uniform-contact premises (A1)--(A3).  The transfer here instead
uses the published finite-RH and time-barrier results together with
the new canopy certificate.
To improve on $\lambda_{\rm C}$ while keeping $h_0=1/10$ would require
\[
 \kappa_0<\frac{\lambda_{\rm C}-1/50}{4}
       =0.03215294733042775.
\]
The present fixed-time canopy is not a certificate at those smaller
times.  At $\kappa_0=1/25$, even $h_0=0$ would leave the ceiling
$4\kappa_0=0.16$.  Thus improving on $\lambda_{\rm C}$ requires reducing the heat
time as well as controlling its canopy and barrier.
None of this transports the calibrated genus-two core beyond its
previously certified local heat interval.

Table~\ref{tab:Newman-comparison} collects the successive ceilings and selected external comparisons.  ``Unconditional'' describes the absence of an unproved zero-location hypothesis; it does not mean that a manuscript has undergone independent refereeing.  The present canopy rows use the published finite-RH and rectangular time-barrier computations.  Their numerical certificates are supplied here.  The conditional and audit-relative rows retain their separate premises; the table is not an exhaustive record survey.

\Needspace{36\baselineskip}
\begingroup\small
\setlength{\tabcolsep}{4pt}
\renewcommand{\arraystretch}{1.12}
\begin{longtable}{@{}>{\raggedright\arraybackslash}p{0.19\textwidth}>{\raggedright\arraybackslash}p{0.36\textwidth}>{\raggedright\arraybackslash}p{0.40\textwidth}@{}}
\caption{Selected upper bounds for $\Lambda$, and the certified progression in this paper.}\label{tab:Newman-comparison}\\
\toprule Ceiling or target & Source & Logical and computational status\\\midrule
\endfirsthead
\multicolumn{3}{l}{\textit{Table~\thetable\ continued}}\\
\toprule Ceiling or target & Source & Logical and computational status\\\midrule
\endhead
\bottomrule\endfoot
$0.22$ & Polymath \cite{Polymath2019} & Published unconditional computer-assisted bound.\\
$0.2$ & Platt--Trudgian \cite[Corollary~2]{PlattTrudgian2021} & Published unconditional bound using the Polymath criterion and their finite-RH verification.\\
$0.1787854$ & Gomila \cite{Gomila2026} & Public computer-assisted claim, unconditional in form; not a new replay or independent audit here.\\
$0.177953255$ & Companion \cite[Corollary~4.3]{PlanatNewmanCompanion2026} & Rounded-up audit-relative ceiling; assumes the stated imported Gomila frontier premises.\\
\midrule
$0.18$ (target) & \Cref{cor:finite-phase-cap} & Sufficient finite-cap sign condition; this reduction alone is not a completed certificate.\\
$0.18$ & \Cref{cor:cap-closure-018} & Unconditional canopy closure with the two published numerical inputs.\\
$0.1791$ & \Cref{thm:canopy-frontier} & Unconditional joint time/height refinement and exterior certificate.\\
$0.17795$ & \Cref{thm:inverse-canopy} & Unconditional finite reciprocal-head convolution and norm transfer.\\
$0.175$ & \Cref{thm:localized-canopy} & Unconditional localization of reference norms.\\
$0.1738$ & \Cref{cor:fine-canopy} & Unconditional finer localization with $J=512$.\\
$0.1732$ & \Cref{cor:extended-head-canopy} & Unconditional extended head $J=1024$; final ceiling of this paper.\\
\midrule
$<0.148612$ & Classical-strip companion \cite{PlanatNewmanClassical2026} & Conditional on its model-to-function, localisation and uniform-contact premises (A1)--(A3).\\
$0.13963$ & Hakimoglu-Brown \cite{HakimogluBrown2026} & Conditional on $H_{7/8}$; retains the published numerical boundary input.\\
\end{longtable}
\endgroup

\subsection{Joint strip-height optimization and its present limits}
\label{subsec:canopy-frontier}

The preceding ceiling can be improved by varying both parameters in
$t_0+y_0^2/2$.  A smaller final ceiling need not use a smaller heat
time: a sufficiently narrower strip can compensate for an increase
in $t_0$.

\begin{theorem}[Optimized directed canopy and global ceiling]
\label{thm:canopy-frontier}
Put
\begin{equation}\label{eq:frontier-parameters}
 t_* = \frac{12959}{80000}=0.1619875,
 \qquad y_* = \frac{37}{200}=0.185,
 \qquad \kappa_* = \frac{12959}{320000}.
\end{equation}
For every $u\ge5\times10^{12}$ and
$y_*\le y\le\sqrt{1-2t_*}$,
\begin{equation}\label{eq:frontier-canopy}
 \left|\frac{H_{t_*}(u+iy)}{B_{t_*}(u+iy)}\right|
       >\frac1{30000}.
\end{equation}
Together with the same published finite-RH and rectangular
time-barrier verifications as in \cref{cor:cap-closure-018}, this gives
\begin{equation}\label{eq:frontier-global}
 \Xi_{\kappa_*}(z)\ne0\quad
       \text{for }|\Imm z|\ge\frac{37}{400},
 \qquad
 \boxed{\Lambda\le\frac{1791}{10000}=0.1791.}
\end{equation}
\end{theorem}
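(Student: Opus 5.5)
The plan is to rerun the proof of \cref{thm:mollified-canopy} and \cref{cor:cap-closure-018} with $(t,y_0)$ replaced by $(t_*,y_*)$.

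\emph{Step one: the canopy bound \eqref{eq:frontier-canopy}.} Put $c=t_*/4$, $A=5\times10^{12}-1$, $\delta=10^{-12}$. We first recheck that the Polymath coefficient, shift and cutoff losses, and the paired bound \eqref{eq:paired-gamma-amplitude} on $\log|\gamma|$, still hold at these values; they depend only on $A$ and $t\le 1/5$. Next we cover the logarithmic range from $L_0=\log(A/(4\pi))$ up to the handoff at $\log(10^{16}/(4\pi))$ by columns of width $1/20$. The vertical range is $y_*\le y\le \sqrt{1-2t_*}\approx0.8222$, so $33/40$ still caps it. Each column is split by exact rational subdivision in $y$. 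On every cell we form $B_{\rm cell}$ as in \eqref{eq:canopy-cell-budget}, now with the mollifier coefficients $b_p=e^{t_*\log^2p/4}$. This uses \cref{lem:thermal-mollifier-overlap}, which holds for any $t\ge0$. We then form $\varepsilon_{\rm cell}$ from the Polymath $E_{AB},E_C$ majorants, and verify in directed arithmetic that
\[
\Delta_{\rm cell}=2-B_{\rm cell}-\varepsilon_{\rm cell}>0,
\qquad
\frac{\Delta_{\rm cell}}{E_+}>\frac1{30000}.
\]
Beyond $u=10^{16}$ we use the amplitude bound $|H_t/B_t-1|<0.9071$ from \cref{thm:centered-phase-tail}. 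That proof was written for $t\in[4/25,1/5]$ and $y\ge3/20$, and $t_*$ lies in this range. Its disk argument needs $y$ down to $y_*-1/20\ge 3/20$, which holds since $y_*=0.185$. So this bound still applies.

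\emph{Main obstacle.} The hard step is making every cell pass near the bottom edge $y=y_*$ for small $L$. There the primary exponent $(1+y_*)/2+cL$ is smaller than in the $y_0=1/5$ run, so the mass $\mathsf P+\mathsf Q$ is closer to the critical value $2$. This is why the required margin is relaxed to $1/30000$. I would try the following in order: first refine the $y$-subdivision near $y_*$; next lengthen the mollifier prime catalogue (to $31,37,\dots$) in the worst low-$L$ cells; then extend the head past $1000$. Each cell also has to pass the guard $cL_->1-v$ and the decreasing-sum and overlap guards. Since $c$ is slightly larger than $1/25$, I expect these to still hold at $L_0$.

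\emph{Step two: globalization.} Apply \cite[Theorem~1.2]{Polymath2019} with $t_0=t_*$ and $y_0=y_*$.
\begin{itemize}
\item Hypothesis (i) comes from Platt--Trudgian, exactly as in \cref{cor:cap-closure-018}.
\item Hypothesis (iii) comes from restricting the same rectangular time barrier, which covers $0\le t\le0.186$ and $y\ge y_B$ with $y_B<0.17$. We have $t_*<0.186$ and $y_*>0.17$, so the restriction is legitimate.
\item Hypothesis (ii) is Step one, with strict overlap at the barrier's right edge.
\end{itemize}
This gives a global zero-free region for $H_{t_*}$ above $y_*$. The normalization \eqref{eq:heat-normalization} converts it to the Xi strip $|\Imm z|<37/400$ at $\kappa_*=t_*/4$. \Cref{prop:Newman-strip-input} then gives
\[
\Lambda\le t_*+\frac{y_*^2}{2}=0.1619875+0.0171125=0.1791,
\]
which is \eqref{eq:frontier-global}.
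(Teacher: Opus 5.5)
Your globalization step and the overall shape of the finite canopy cover agree with the paper. However, the handoff to the infinite tail does not work as you state it.

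The constant $0.9071$ in \cref{thm:centered-phase-tail} is the \emph{centre} bound $m=P_0(L_0)+Q_0(L_0)+E_0$. There the masses are evaluated at $\beta=1/5$, with primary exponent $(1+1/5)/2-\delta$ and dual exponent $(1-1/5)/2-\delta$. The primary and combined dual weights decrease as $y$ increases, so that bound is an upper bound only for $y\ge 1/5$. It says nothing on $y_*\le y<1/5$.

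The range $3/20\le y$ you cite is the range of the surrounding Cauchy disks. The corresponding disk mass $M_{\rm disk}$ at $\beta=3/20$ is used only for the derivative estimate, and the paper explicitly does not claim it is below one. Your containment check is also wrong: $y_*-1/20=0.135<3/20$.

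Hence the region $u\ge10^{16}$, $y_*\le y<1/5$ is left uncovered, and \eqref{eq:frontier-canopy} is not proved there. This is exactly why the paper notes that the old exterior theorem cannot be used below $y=1/5$.

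The missing ingredient is a fresh terminal-band amplitude bound at $(t_*,y_*)$. The paper takes $L_h=\log((10^{16}-1)/(4\pi))$, $a=(1+y_*)/2-\delta$ and $b=(1-y_*)/2-2\delta$, where the extra $\delta$ absorbs the conjugate-exponent shift. It then certifies:
\begin{itemize}
\item $P_h=M(a,L_h)-1<0.807010$;
\item $Q_h=e^{-y_*L_h/2+2\delta}M(b,L_h)<0.105657$;
\item $E_{\rm AB}<4.90\times10^{-16}$ and $E_{\rm C}<2.57\times10^{-10}$.
\end{itemize}
Terminal-band guards make these bounds uniform for $L\ge L_h$, which gives $|H_{t_*}/B_{t_*}|>0.0873$ on the whole tail. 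This budget, about $0.9127$, is larger than $0.9071$, so the old constant cannot simply be transplanted.

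A lesser point concerns your finite cover. The cell budget evaluates $\mathsf P,\mathsf Q$ at $L_-$, but $\sigma_+$, $\Gamma_-$ and the overlap credits at $L_+$. Column width therefore costs reserve exactly where it is scarcest, near $L_0$. The paper uses columns of width $1/200$ on the first $1/5$ of the logarithmic range ($189$ columns, $3591$ rectangles), together with catalogues through $53$. Your fallback list should include horizontal refinement, not only $y$-subdivision and longer catalogues.
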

\begin{proof}
Use exactly the finite coefficient inequality of
\cref{lem:thermal-mollifier-overlap} and the rectangular error budget
\eqref{eq:canopy-cell-budget}--\eqref{eq:canopy-cell-margin}, with
$t=t_*$ and $c=t_*/4$.  The generalized engine is supplied in
\path{certificates/canopy_frontier/}.  Its domain, coefficient-shift,
cutoff, paired-gamma and terminal-band guards are checked anew.
The prime catalogues now end at $5,11,19,29,37,43,53$.
With $L_0=\log((5\times10^{12}-1)/(4\pi))$, the exact horizontal
offsets are $j/200$, $0\le j\le40$, followed by
$1/5+j/20$, $1\le j\le149$.  These give $189$ columns ending at
$L_0+153/20>\log(10^{16}/(4\pi))$.
An adaptive rational partition covers $37/200\le y\le33/40$;
the latter upper edge exceeds $\sqrt{1-2t_*}$.

At each of $160$ and $256$ bits, all $3591$ rectangles satisfy
$\Delta_{\rm cell}>1/10000$ and
$\Delta_{\rm cell}/E_+>1/30000$.
The two precisions give identical rational seams and prime choices.
The respective counts for the seven catalogues are
$3324,111,113,23,12,6,2$.
The smallest margins exceed $0.0002457$ and $0.0001296$,
respectively.  Thus \eqref{eq:frontier-canopy} holds throughout
the finite logarithmic range.

The old exterior theorem cannot simply be used below $y=1/5$.
Instead, at $L_h=\log((10^{16}-1)/(4\pi))$, put
\[
 a=\frac{1+y_*}{2}-\delta,
 \qquad b=\frac{1-y_*}{2}-2\delta,
 \qquad \delta=10^{-12}.
\]
Let $M(v,L)$ denote the positive head through $1000$ plus its
decreasing-sum integral tail, as in the preceding proof.  The extra
$\delta$ in $b$ absorbs the conjugate-exponent shift directly.
Consequently the primary and shifted dual amplitudes are bounded by
\[
 P_h=M(a,L_h)-1<0.807010,
 \qquad
 Q_h=e^{-y_*L_h/2+2\delta}M(b,L_h)<0.105657.
\]
The terminal-band tests prove these bounds uniformly for $L\ge L_h$,
and the combined dual weights decrease with $y$.
The AB remainder is less than $4.90\times10^{-16}$ and the C
remainder less than $2.57\times10^{-10}$.
Their endpoint bounds decrease with $L$: the AB exponent has the
form $(a_1L^2+a_2)/(4\pi e^L-a_3)$ with positive constants and
$L>2$, while the C exponential and its positive correction terms
decrease in this range.  Hence
\[
 |H_{t_*}/B_{t_*}|\ge1-P_h-Q_h-E_{\rm AB}-E_{\rm C}
       >0.0873>\frac1{30000}
\]
on the infinite tail.  This completes the canopy proof without
extending the earlier phase theorem beyond its stated domain.

The published rectangle contains this new barrier because
$t_*<0.186$, $y_*>0.17$, and $\sqrt{1-y_*^2}<1$.
The initial RH height and the canopy/barrier overlap are unchanged.
The same Polymath theorem therefore gives the global strip and
\[
 t_*+\frac{y_*^2}{2}
  =\frac{12959}{80000}+\frac{1369}{80000}
  =\frac{1791}{10000}.
\]
The normalization gives \eqref{eq:frontier-global}.
As before, the external finite-RH and time-barrier computations are
reused published inputs, not independent replays.
\end{proof}

The decrease from $0.18$ is exactly $0.0009$.
Here $t_*$ is \emph{larger} than $0.16$ by $0.0019875$, whereas
the strip contribution decreases from $0.02$ to $0.0171125$.
This is why joint optimization improves on varying the time alone.
No smaller ceiling is asserted by that Euler-only full-domain certificate.
Directed point diagnostics with larger prime catalogues are supplied
separately.  They test this particular coefficient budget at specified
parameter pairs; a negative reserve is not a zero and does not prove
an intrinsic lower limit for the method.  For example, at $L=L_0$,
$y_*=37/200$ and $t=12943/80000$ (which would give $0.1789$),
the margins for catalogues through $53$ and $61$ are respectively
less than $-0.00492$ and $-0.00298$ at both precisions.
This diagnoses a failure of the stated budget at that parameter pair,
not an obstruction to every other pair or proof strategy.
The diminishing gains from
successive prime catalogues suggest that substantially larger progress
will require stronger cancellation or a sharper treatment of the
finite coefficient norm, rather than only finer rectangles.
The reciprocal-head construction below supplies a sharper finite norm.
The conditional comparison $0.148611789321711$ remains numerically
smaller; none of its premises (A1)--(A3) is used here.

\subsection{A finite reciprocal head crosses the collective endpoint}
\label{subsec:inverse-canopy}

The loss in the Euler-only estimate is not intrinsic to a canopy
argument.  Instead of bounding each signed component separately,
we now retain a complete finite coefficient convolution.  Put
$b_n=e^{c\log^2n}$ and define the finite Dirichlet-inverse head by
\begin{equation}\label{eq:inverse-head}
 a_1=1,\qquad
 a_n=-\sum_{\substack{d\mid n\\d<n}}a_d b_{n/d},
 \qquad
 \mu_J(s)=\sum_{d\le J}a_d d^{-s}.
\end{equation}
For $N\ge J$, the coefficients of
$\mu_J(s)\sum_{n\le N}b_n n^{-s}$ at indices $2,\ldots,J$
vanish exactly.  No convergence or inversion of an infinite
Dirichlet series is asserted.

\begin{lemma}[Frozen finite-polynomial norm transfer]
\label{lem:inverse-norm-transfer}
Fix $c>0$, $L_r>0$, $y_r>0$, and a positive integer $N_r<e^{L_r/2}$.
Let $a_1=1,a_2,\ldots,a_J$ be real, and put
\[
 \sigma_r=\frac{1+y_r}{2}+cL_r,
 \quad G_r=e^{-y_rL_r/2},
 \quad S_{\nu,N}(s)=\sum_{n\le N}b_n n^{\nu-s}.
\]
Write $\|\cdot\|_1$ for the absolute coefficient mass of a
Dirichlet polynomial at the indicated real part.  Suppose
$I_P,I_Q,\varepsilon_P,\varepsilon_Q,E\ge0$ and
\[
 \|\mu_J S_{0,N_r}\|_{1,\sigma_r}\le I_P+\varepsilon_P,
 \qquad
 G_r\|\mu_J S_{y_r,N_r}\|_{1,\sigma_r}\le I_Q+\varepsilon_Q,
 \quad \varepsilon_P+\varepsilon_Q\le E.
\]
Define
\[
 M=\sum_{d\le J}|a_d|d^{-\sigma_r},\qquad
 Q_1=G_r\sum_{n\le N_r}b_n n^{y_r-\sigma_r}\log n.
\]
Consider $L\ge L_r$, $y_r\le y\le y_r+D$, and
$N_r\le N\le e^{U_+}$, with $U_+<L_r$.
Assume that $b_vv^{-\sigma_r}$ and
$b_vv^{y_r-\sigma_r}$ decrease on $[N_r,e^{U_+}]$, where
$b_v=e^{c\log^2v}$, and that $MQ_1\ge L_r I_Q/2$.
For $\sigma=(1+y)/2+cL$ and $G=e^{-yL/2}$,
\begin{align}
 \|\mu_J S_{0,N}\|_{1,\sigma}
 +G\|\mu_J S_{y,N}\|_{1,\sigma}
 &\le I_P+e^{-DL_r/2}I_Q+E+DMQ_1+MT_+,
 \label{eq:inverse-frozen-budget}\\
 T_+&=\int_{N_r}^{e^{U_+}}
   b_v\bigl(v^{-\sigma_r}+G_rv^{y_r-\sigma_r}\bigr)\,dv.
 \nonumber
\end{align}
\end{lemma}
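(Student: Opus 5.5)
The plan is to work coefficient by coefficient. For $\nu\in\{0,y\}$, the coefficient of $\mu_JS_{\nu,N}$ at the index $n$ is
\[
 c_n^\nu=\sum_{dm=n,\ d\le J,\ m\le N}a_d\,b_m m^\nu .
\]
The $\|\cdot\|_{1,\sigma}$ norm is $\sum_n|c_n^\nu|n^{-\sigma}$. Two monotonicity facts will be used repeatedly. First, $L\ge L_r$ and $y\ge y_r$ give $\sigma\ge\sigma_r$, so every weight $n^{-\sigma}$ and $d^{-\sigma}$ is at most its value at $\sigma_r$. Second, $m\le N_r<e^{L_r/2}\le e^{L/2}$ gives $m^{y-y_r}e^{-(y-y_r)L/2}\le1$, and hence $G\,m^{y-y_r}\le G_r\,e^{-(y-y_r)L_r/2}$ for every index $m$ in the frozen head.

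\emph{Step 1: split off the cutoff change.} Write $S_{\nu,N}=S_{\nu,N_r}+\sum_{N_r<m\le N}b_mm^{\nu-s}$. By the triangle inequality and multiplicativity of the weight $n^{-\sigma}=d^{-\sigma}m^{-\sigma}$, the added block contributes at most
\[
 \Bigl(\sum_{d\le J}|a_d|d^{-\sigma}\Bigr)\sum_{N_r<m\le N}b_mm^{\nu-\sigma}.
\]
The first factor is at most $M$. For the second factor, $\sigma\ge\sigma_r$, and $G\,m^{y}\le G_r\,m^{y_r}$ holds in the range $\log m\le U_+<L_r\le L$. I would add a check here of whether $U_+$ must also satisfy $U_+\le L/2$ for this step. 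With that inequality, the $\nu=0$ and $G$-weighted $\nu=y$ blocks are dominated by $b_m(m^{-\sigma_r}+G_rm^{y_r-\sigma_r})$. The assumed decrease of these functions on $[N_r,e^{U_+}]$ then bounds the sum by the integral from $N_r$ to $e^{U_+}$. This gives the term $MT_+$.

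\emph{Step 2: the frozen primary part.} Since the norm decreases in the real part, $\|\mu_JS_{0,N_r}\|_{1,\sigma}\le\|\mu_JS_{0,N_r}\|_{1,\sigma_r}\le I_P+\varepsilon_P$.

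\emph{Step 3: the frozen dual part, linearised in $y$.} Write $m^{y}=m^{y_r}+m^{y_r}(m^{y-y_r}-1)$. The difference polynomial has coefficient mass at most
\[
 M\sum_{m\le N_r}b_m\,m^{y_r-\sigma}\,m^{y-y_r}(y-y_r)\log m .
\]
After multiplication by $G$ and use of the second monotonicity fact, this is at most $(y-y_r)\,e^{-(y-y_r)L_r/2}MQ_1\le(y-y_r)MQ_1$. For the main piece, $G=e^{-yL/2}$, $G_r=e^{-y_rL_r/2}$ and $L\ge L_r$ give $G\le G_r\,e^{-(y-y_r)L_r/2}$, so
\[
 G\|\mu_JS_{y_r,N_r}\|_{1,\sigma}\le e^{-(y-y_r)L_r/2}(I_Q+\varepsilon_Q).
\]
Bounding $\varepsilon_P+e^{-(y-y_r)L_r/2}\varepsilon_Q$ by $E$ leaves $\phi(x)=e^{-xL_r/2}I_Q+xMQ_1$ with $x=y-y_r\in[0,D]$.

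\emph{Step 4: close the optimisation in $y$.} The function $\phi$ is convex, and the hypothesis $MQ_1\ge L_rI_Q/2$ makes $\phi'(0)\ge0$. Hence $\phi$ is nondecreasing on $[0,D]$ and is maximised at $x=D$, which yields $e^{-DL_r/2}I_Q+DMQ_1$. Adding Steps 1--4 gives \eqref{eq:inverse-frozen-budget}.

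The main obstacle is Step 3. There the $y$-dependence cannot be absorbed termwise inside $|c_n^y|$, because the frozen norm hypothesis controls only the signed convolution at $y_r$. The linearisation is therefore what makes the argument go through, together with the sign bookkeeping $\log m<L/2$ that lets the factor $m^{y-y_r}$ be absorbed into $G$, and the convexity argument that removes the interior values of $y$. The same Step~3 difference estimate also applies to the $m>N_r$ tail of Step 1 if needed.
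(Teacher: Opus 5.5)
Your decomposition is the paper's own. The cutoff block is bounded by $MT_+$ through the integral test, and the frozen primary norm by monotonicity in $\sigma$. The dual head is split as $m^{y}=m^{y_r}+m^{y_r}(m^{h}-1)$ with $h=y-y_r$, and the guard $MQ_1\ge L_rI_Q/2$ pushes the maximum of $e^{-hL_r/2}I_Q+hMQ_1$ to $h=D$. Steps 2 and 4 are correct.

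The genuine gap is the check you leave open in Step 1. You bound $m^{-\sigma}\le m^{-\sigma_r}$ and $Gm^{y}\le G_rm^{y_r}$ separately. The second inequality then requires $e^{-hL/2}m^{h}\le 1$, i.e.\ $\log m\le L/2$, and nothing in the hypotheses gives this. The lemma allows $L=L_r$ together with $N$ up to $e^{U_+}$, with $U_+$ arbitrarily close to $L_r$, so $\log m$ may be almost $L$. In the canopy applications, $U_+=L_+/2+\delta$ already exceeds $L/2$ near the bottom of each cell. Your fallback of reusing the Step 3 estimate on the tail fails for the same reason: that estimate needs $m<e^{L/2}$, and it would add a debit that is not in the budget.

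The missing observation is that $\sigma$ itself contains $y/2$, and that factor must not be discarded. For every index $m$,
\[
 b_m\,G\,m^{y-\sigma}=b_m\,m^{-1/2-cL}\exp\Bigl\{\frac{y}{2}\,(\log m-L)\Bigr\}.
\]
This is nonincreasing in $L$, and also in $y$ as soon as $\log m\le L$. At $(L,y)=(L_r,y_r)$ it equals $b_mG_rm^{y_r-\sigma_r}$. On the added block $\log m\le U_+<L_r\le L$, so the dual tail weight is dominated by its frozen value with no condition $U_+\le L/2$. This is exactly the paper's remark that the dual weights decrease ``because $\log n\le U_+<L_r\le L$''. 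With this replacement, Step 1 gives $MT_+$ as you intended.

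There is also a false intermediate in Step 3, although the bound you finally use is right. The ``hence'' in your second monotonicity fact, $Gm^{y-y_r}\le G_re^{-(y-y_r)L_r/2}$, fails already at $L=L_r$, where it reduces to $m^{h}\le 1$. Consequently the displayed bound $(y-y_r)e^{-(y-y_r)L_r/2}MQ_1$ for the difference term is wrong. For example, with $J=1$, $N_r=2<e^{L_r/2}$, $L=L_r$ and $h>0$, the true term is proportional to $2\sinh(\tfrac{h\log 2}{2})$, while your bound is proportional to $h\log 2$.

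What is true, and all Step 4 needs, is the weaker bound $(y-y_r)MQ_1$. It follows by combining $m^{h}-1\le h\log m\,m^{h}$ with $e^{-hL/2}m^{h}\le 1$ for $m\le N_r<e^{L_r/2}\le e^{L/2}$, and with $e^{-y_rL/2}\le G_r$. Equivalently, use the paper's inequality $e^{-hL_r/2}(n^{h}-1)\le h\log n$ for $n<e^{L_r/2}$. The fix is to delete the spurious factor $e^{-(y-y_r)L_r/2}$ from that line.
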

\begin{proof}
For the fixed cutoff and fixed $\nu$, every coefficient of
$\mu_J S_{\nu,N_r}$ is independent of $\sigma$.  Increasing $\sigma$
therefore decreases its absolute coefficient mass.  This immediately
handles the primary term, without estimating derivatives of the
two factors separately.

Put $h=y-y_r$.  In the dual term, split $n^{y_r+h}$ into
$n^{y_r}+n^{y_r}(n^h-1)$.  The first part has mass at most
$e^{-hL_r/2}(I_Q+\varepsilon_Q)$: the additional real-part
reweighting and the additional heat-height factor both decrease it.
For the second part use
\[
 e^{-hL_r/2}(n^h-1)\le h\log n,
 \qquad n\le N_r<e^{L_r/2},\quad h\ge0.
\]
The convolution inequality bounds it by $hMQ_1$.
The resulting upper bound
$I_P+e^{-hL_r/2}I_Q+E+hMQ_1$ increases for $0\le h\le D$
by the stated guard, giving its endpoint value at $D$.

The added indices above $N_r$ have total primary and dual mass at
most $T_+$ by the decreasing-sum integral test.
For them, the primary weights decrease as $L,y$ increase; the dual
weights do likewise because $\log n\le U_+<L_r\le L$.
The mollifier mass is at most $M$.
This gives the last debit in \eqref{eq:inverse-frozen-budget}.
\end{proof}

\begin{theorem}[Reciprocal-head canopy and global ceiling]
\label{thm:inverse-canopy}
Put
\[
 t_\dagger=\frac{12867}{80000}=0.1608375,
 \qquad y_\dagger=\frac{37}{200},
 \qquad \kappa_\dagger=\frac{12867}{320000}.
\]
For every $u\ge5\times10^{12}$ and
$y_\dagger\le y\le\sqrt{1-2t_\dagger}$,
\[
 \left|\frac{H_{t_\dagger}(u+iy)}{B_{t_\dagger}(u+iy)}\right|
       >\frac1{30000}.
\]
Using the published initial-RH and rectangular time-barrier
verifications gives
\begin{equation}\label{eq:inverse-global-ceiling}
 \Xi_{\kappa_\dagger}(z)\ne0
       \quad\text{for }|\Imm z|\ge\frac{37}{400},
 \qquad
 \boxed{\Lambda\le\frac{3559}{20000}=0.17795.}
\end{equation}
\end{theorem}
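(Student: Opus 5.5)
The plan reuses the architecture of \cref{thm:canopy-frontier}. The one substantive change is the coefficient budget: the Euler-only mollifier of \cref{lem:thermal-mollifier-overlap} is replaced by the finite reciprocal head $\mu_J$ of \eqref{eq:inverse-head}. The global conclusion follows from the canopy exactly as in \cref{cor:cap-closure-018}. We apply \cite[Theorem~1.2]{Polymath2019} with $t_0=t_\dagger$ and $y_0=y_\dagger$. The published rectangular barrier still contains the new one, since $t_\dagger<0.186$, $y_\dagger>0.17$ and $\sqrt{1-y_\dagger^2}<1$. The Platt--Trudgian height is unchanged. Then
\[
 t_\dagger+\frac{y_\dagger^2}{2}=\frac{12867+1369}{80000}=\frac{3559}{20000},
\]
and the normalization \eqref{eq:heat-normalization} gives the Xi strip. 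The real work is therefore the canopy lower bound $|H_{t_\dagger}/B_{t_\dagger}|>1/30000$ on $u\ge5\times10^{12}$, $y_\dagger\le y\le\sqrt{1-2t_\dagger}$.

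For the finite logarithmic range $L_0\le L\le\log(10^{16}/(4\pi))$, I will cover the $(L,y)$ region by rectangles $[L_r,L_r+\Delta L]\times[y_r,y_r+D]$. At each reference corner $(L_r,y_r)$ with $N_r=\lfloor e^{L_r/2}\rfloor$-type cutoff, I will compute exactly, in directed rational/ball arithmetic, the $\ell^1$ coefficient masses of the two finite convolutions
\[
 \mu_J S_{0,N_r}\quad\text{and}\quad\mu_JS_{y_r,N_r}\quad\text{at }\sigma_r.
\]
The coefficients at indices $2,\dots,J$ vanish, so the mass is $1$ plus a small residual from indices in $(J,JN_r]$. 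This mass is too large to enumerate directly, so I will enumerate the convolution over small indices and bound the remainder by decreasing-sum integrals, supplying $I_P+\varepsilon_P$ and $I_Q+\varepsilon_Q$. \Cref{lem:inverse-norm-transfer} then transports this budget to the whole rectangle, including the moving cutoff via $T_+$ and the height increment via $DMQ_1$. Its monotonicity guards and $MQ_1\ge L_rI_Q/2$ are checked directly. This yields $\|\mu_JS_0\|+|\gamma|\|\mu_JS_y\|\le B_{\rm cell}$. Combining this with the reverse triangle inequality (the constant coefficient is $1$), the paired-gamma bound \eqref{eq:paired-gamma-amplitude}, and the Polymath $E_{AB},E_C$ and shift errors multiplied by the mollifier mass $M$, gives
\[
 |\mu_J|\,|H/B|\ge\Delta_{\rm cell}=2-B_{\rm cell}-\varepsilon_{\rm cell}.
\]
Dividing by $M$ as an upper bound for $|\mu_J|$ gives the displayed $1/30000$. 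The infinite tail $L\ge L_h$ will be handled exactly as in the proof of \cref{thm:canopy-frontier}, by $1-P_h-Q_h-E_{AB}-E_C>0$. That tail bound depends only on $y_*=y_\dagger$ and a monotone dependence on $t$, and it improves as $t$ decreases only through $c$. I will therefore recheck it at $c=t_\dagger/4$ rather than reuse the earlier numbers.

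The main obstacle is the first columns near $L_0$, where $t_\dagger$ is smaller than $t_*$. There the thermal decay $c\log^2 n$ is weakest and the Euler-only budget already failed (compare the negative diagnostics at $t=12943/80000$). The plan is to choose the head length $J$ (trying $J$ around $64$--$256$ first) and adaptive $y$-subdivision so that $\Delta_{\rm cell}$ stays positive there. If the residual convolution mass is still too large, I will refine $\Delta L$ near $L_0$, since the frozen-transfer loss $MT_+$ scales with the cutoff growth across the cell. All cells are to be replayed at $160$ and $256$ bits with identical rational seams.
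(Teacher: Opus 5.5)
\textbf{Where the proposal fails: the tail of the reference convolution.} Your outer architecture matches the paper. That includes the global transfer through \cite[Theorem~1.2]{Polymath2019} with the published rectangle and the Platt--Trudgian height, the frozen-norm transfer of \cref{lem:inverse-norm-transfer}, the reverse triangle inequality, the debits multiplied by $Me^{\delta\log J}$, and a fresh terminal-band tail. The gap is in the one step that carries the new content.

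You propose to enumerate $\mu_JS_{\nu,N_r}$ only over small indices and to bound the rest of its $\ell^1$ mass by decreasing-sum integrals. An integral bound cannot see signs. On the unenumerated range $dm>X$ it must use the triangle-inequality majorant
$\sum_{n>X}|c_n|n^{-\sigma_r}\le\sum_{d\le J}|a_d|d^{-\sigma_r}\sum_{m>X/d}b_m m^{\nu-\sigma_r}$.
This is roughly $M\approx1.9$ times the unmollified tail. It discards exactly the signed cancellation that makes the reciprocal head beat the Euler budget; the paper's stated principle is to retain the complete finite convolution rather than bound signed components separately.

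This loss is not affordable, because the residual is not small. At the reference $(L_0,37/200)$ the exact masses are $I_P\approx1.436$ and $I_Q\approx0.447$, so $I_P+I_Q+E<1.883023$. That leaves only about $0.117$ below the threshold $2$, before the debits $DMQ_1$ (with $MQ_1\approx9$), $MT_+$ and AB/C. Near the top of the range the primary weights behave like $b_nn^{-\sigma_r}\approx n^{-(\sigma_r-cL_r/2)}\approx n^{-1.13}$, and the dual weights decay more slowly still. Hence a substantial part of the mass lies at large indices and in the spill $(N_r,JN_r]$. Replacing the exact mass there by the majorant would exhaust the margin in precisely the low cells ($y\le1/5$ near $L_0$). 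Those are the cells where the Euler budget already fails and the reciprocal head is needed.

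\textbf{The missing idea: full enumeration is cheap.} The premise that the convolution is too large to enumerate is false. With $J=128$ and $N_r=\lfloor e^{L_r/2}-1\rfloor=630782$, there are only $JN_r=80{,}740{,}096$ output indices, about $8\times10^7$ multiply--adds per source array. The paper proceeds as follows:
\begin{itemize}
\item it quantizes the weighted mollifier coefficients at scale $2^{27}$ and the two weighted source arrays at scale $2^{32}$;
\item it computes the entire convolution in exact signed 64-bit integer arithmetic, with proved overflow guards;
\item it charges quantization by $E\le M(E_P+E_Q)+E_\mu(\|P_{\rm int}\|_1+\|Q_{\rm int}\|_1)$.
\end{itemize}
This is what produces admissible $I_P,I_Q,E$ in \cref{lem:inverse-norm-transfer}.

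\textbf{Smaller corrections.} Only the primary convolution has vanishing coefficients at indices $2,\dots,J$; the weights $(n/d)^{y_r}$ break the inverse relation in the dual. The terminal-band bound gets worse, not better, as $t$ decreases: it is $0.0873$ at $t_*$ but only $0.0698$ at $t_\dagger$. Your decision to recompute it is therefore necessary. Finally, the paper needs just one reference, applied to nine thin low rectangles, with the remaining $2440$ handled by \cref{lem:thermal-mollifier-overlap}. A reference per rectangle is allowed but unnecessary.
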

\begin{proof}
Set $c=t_\dagger/4$, $J=128$, and
$L_r=\log((5\times10^{12}-1)/(4\pi))$.
Choose $N_r=\lfloor e^{L_r/2}-1\rfloor=630782$ and
$y_r=y_\dagger$.  The program computes the coefficients
\eqref{eq:inverse-head} in Arb ball arithmetic and quantizes the
weighted mollifier coefficients at scale $2^{27}$ and the two
weighted source arrays at scale $2^{32}$.
It then enumerates their entire finite convolution through
$JN_r=80740096$, using exact signed integer multiplication and
addition.  Prior exact bounds exclude signed 64-bit overflow in
each product and partial sum and unsigned 64-bit overflow in each
absolute-value reduction.  No floating-point array operation
contributes to these coefficient norms.

The integer absolute masses, divided by $2^{59}$, are
\[
 I_P=\frac{827653878373035760}{2^{59}},
 \qquad
 I_Q=\frac{257672719430126216}{2^{59}}.
\]
For source-array errors $E_P,E_Q$ and weighted-coefficient error
$E_\mu$, the convolution inequality gives the total quantization
debit
\[
 E\le M(E_P+E_Q)
       +E_\mu\bigl(\|P_{\rm int}\|_1+\|Q_{\rm int}\|_1\bigr),
\]
where the integer arrays on the right are divided by $2^{32}$.
Every rounding error is enclosed in Arb before summation.
At both $160$ and $256$ bits, $E<0.000282$ and
\[
 I_P+I_Q+E<1.883023,
 \qquad M<1.902432,
 \qquad Q_1<4.731960.
\]
The monotonicity guard in \cref{lem:inverse-norm-transfer} is strict.
Both precisions produce the same integer masses and array hashes.
This is a consistency check, not an independent transcendental
arithmetic implementation.

Apply \eqref{eq:inverse-frozen-budget} on the short low corner.
The real-part loss $\delta=10^{-12}$ multiplies the ideal norm by
at most $\exp(\delta(\log J+U_+))$, because the output indices
are at most $Je^{U_+}$.  The paired gamma loss contributes at most
$e^{2\delta}$; the conjugate-exponent shift is charged separately
by the positive dual mass, as in the previous canopy proof.
The AB and C remainders are multiplied by the actual mollifier
bound $M e^{\delta\log J}$.
For completeness, the dual triangle step uses
$|\mu_J\overline{S_y}|=|\mu_J|\,|S_y|=|\mu_J S_y|$;
it does not assert a cancellation between incorrectly conjugated
polynomials.

The complete cover has $153$ logarithmic columns of width $1/20$
and adaptive exact rational height seams in $[37/200,33/40]$.
All $2449$ rectangles pass at both precisions.
Nine use the reciprocal-head budget, all at heights at most $1/5$.
The other $2440$ use \cref{lem:thermal-mollifier-overlap}, with
catalogues through $5,11,19,29$ in respective counts $2414,17,8,1$.
Every accepted cell has mollified margin $>1/10000$ and
true normalized amplitude $>1/30000$ after all error debits.
The smallest recorded margins exceed $0.000297$ and $0.000161$,
respectively.  The geometry and method choices agree at both
precisions.  The changing integer cutoff is covered by $T_+$,
rather than by sampling selected cutoff values.

At $u\ge10^{16}$, a fresh terminal-band amplitude calculation at
$(t_\dagger,y_\dagger)$, with the exponent shift absorbed as in
\cref{thm:canopy-frontier}, gives $|H/B|>0.0698$.
The last log edge overlaps this infinite tail, completing the
canopy.  The same published rectangle applies since
$t_\dagger<0.186$, $y_\dagger>0.17$, and
$\sqrt{1-y_\dagger^2}<1$.  The finite-RH height and horizontal
overlap are unchanged.  Polymath's global criterion now gives
\[
 t_\dagger+\frac{y_\dagger^2}{2}
   =\frac{12867+1369}{80000}
   =\frac{3559}{20000}.
\]
The normalization yields \eqref{eq:inverse-global-ceiling}.
The initial-RH and time-barrier runs remain external published
inputs; neither is independently replayed here.
\end{proof}

The new ceiling is below the audit-relative collective-contraction
endpoint $0.177953254642861\ldots$ of
\cite[Corollary~4.3]{PlanatNewmanCompanion2026} by approximately
$3.25464\times10^{-6}$.
The earlier endpoint consumes the stated imported Gomila frontier
premises; the present transfer instead uses the newly certified
finite polynomial norm and the published Polymath and
Platt--Trudgian inputs.  This compares different proof architectures,
not independently accepted numerical records.
The gain uses global coefficient information and does not continue
the local genus-two core to these heat times.

\begin{remark}[What the QRH input changes here]
\label{rem:QRH-inverse-canopy}
Under $H_{7/8}$, the functional equation places the initial $H_0$
zeros in $|\Imm z|\le3/4$.  De Bruijn contraction then gives
\[
 |\Imm z(H_t)|\le
       \sqrt{\max\{9/16-2t,0\}}.
\]
At $t=t_\dagger$, this lowers the cap top from
$\sqrt{27133}/200\approx0.823605$ to
$\sqrt{9633}/200\approx0.490739$.
It reduces the region requiring an additional canopy check, but
does not remove any of the nine reciprocal-head rectangles, which
all lie at $y\le0.2$.  In particular it does not improve
\eqref{eq:inverse-global-ceiling} merely by truncating this cover.
The classical contraction consequence of $H_{7/8}$ alone is
$\Lambda\le9/32$, already weaker than the present ceiling.
QRH may help a different quantitative contraction or barrier argument;
no such additional saving is certified here.  The directed geometry
check treats $H_{7/8}$ as an external hypothesis and does not audit
the QRH release.  It also supplies no new irrationality statement
for the local period ratios.
\end{remark}

\subsection{Localized reciprocal heads: the ceiling \texorpdfstring{$7/40$}{7/40}}
\label{sec:localized-inverse}

The preceding proof transports one reference norm across several columns.
This can lose reserve even when the norm at a nearby reference is smaller.
The following refinement uses the same transfer lemma with several reference
pairs; no optimization of the inverse coefficients is needed.

\begin{theorem}[Localized reciprocal-head canopy]\label{thm:localized-canopy}
Put
\[
 t_\ell=\frac{12631}{80000}=0.1578875,\qquad
 y_\ell=\frac{37}{200},\qquad
 \kappa_\ell=\frac{12631}{320000}.
\]
For every $u\ge5\times10^{12}$ and
$y_\ell\le y\le\sqrt{1-2t_\ell}$,
\[
 \left|\frac{H_{t_\ell}(u+iy)}{B_{t_\ell}(u+iy)}\right|>
 \frac1{30000}.
\]
With the same published initial-RH and rectangular time-barrier inputs as
\cref{thm:inverse-canopy},
\begin{equation}\label{eq:localized-global-ceiling}
 \Xi_{\kappa_\ell}(z)\ne0\quad
 \text{for }|\Imm z|\ge\frac{37}{400},
 \qquad \boxed{\Lambda\le\frac7{40}=0.175.}
\end{equation}
No QRH hypothesis is used.
\end{theorem}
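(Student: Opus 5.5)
The plan follows the architecture of \cref{thm:inverse-canopy}, with one change: the reference norm is localized. Instead of transporting a single reference pair $(L_r,y_r)$ with $L_r=L_0$ across all reciprocal-head cells, I would choose a finite list of reference pairs $(L_r^{(k)},y_r^{(k)})$, one for each short block of logarithmic columns and height bands near the low corner. To each pair I would apply \cref{lem:inverse-norm-transfer} only on its own block $L_r^{(k)}\le L\le L_r^{(k)}+\Delta L$, $y_r^{(k)}\le y\le y_r^{(k)}+D_k$.

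For each reference, the first step is to set $c=t_\ell/4$ and $N_r^{(k)}=\lfloor e^{L_r^{(k)}/2}-1\rfloor$, and to compute the reciprocal head \eqref{eq:inverse-head} with $J=128$ in ball arithmetic. I would then enumerate the exact quantized integer convolution to obtain $I_P,I_Q,E,M,Q_1$. I would also check the lemma's monotonicity guards: the decrease of $b_vv^{-\sigma_r}$ and $b_vv^{y_r-\sigma_r}$ on $[N_r,e^{U_+}]$, and the inequality $MQ_1\ge L_rI_Q/2$. The cell budget is then the right side of \eqref{eq:inverse-frozen-budget}, with two additions. The first is the losses $\exp(\delta(\log J+U_+))$ and $e^{2\delta}$, together with the positive-dual charge for the conjugate-exponent shift. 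The second is the AB and C remainders multiplied by $Me^{\delta\log J}$. As before, a cell is accepted when its mollified margin exceeds $1/10000$ and its normalized amplitude bound exceeds $1/30000$.

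The rest of the cover stays as before. Above the reciprocal-head corner, I would reuse the Euler-catalogue budget \eqref{eq:canopy-cell-budget}--\eqref{eq:canopy-cell-margin} from \cref{lem:thermal-mollifier-overlap} at $t=t_\ell$, using $153$ columns of width $1/20$ and adaptive rational seams in $[37/200,33/40]$; this range still exceeds $\sqrt{1-2t_\ell}$. For $u\ge10^{16}$, I would redo the terminal-band amplitude bound $1-P_h-Q_h-E_{\rm AB}-E_{\rm C}>0$ at $(t_\ell,y_\ell)$, absorbing the exponent shift into $b$ as in \cref{thm:canopy-frontier}. The global conclusion then goes through unchanged. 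Since $t_\ell<0.186$, $y_\ell>0.17$ and $\sqrt{1-y_\ell^2}<1$, the published rectangular barrier and Platt--Trudgian height still supply hypotheses (i) and (iii) of \cite[Theorem~1.2]{Polymath2019}, and the canopy supplies (ii). The ceiling is
\[
t_\ell+\frac{y_\ell^2}{2}=\frac{12631+1369}{80000}=\frac{7}{40},
\]
and the normalization \eqref{eq:heat-normalization} converts this to the stated Xi strip of half-width $37/400$.

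I expect the main obstacle to be the low corner near $L_0$ and $y\approx y_\ell$. Lowering $t$ from $t_\dagger$ to $t_\ell$ lowers $\sigma$ and therefore raises the coefficient masses. The transfer debits $DMQ_1$ and $MT_+$ may then consume the reserve, which was only about $3\times10^{-4}$ at $t_\dagger$. My first attempt would be to shrink the blocks, taking $D_k$ and $\Delta L$ small enough (one column or less) that $DMQ_1+MT_+$ is negligible, and to place a fresh reference at the lower-left corner of every failing cell. If that is still insufficient, I would increase $J$, which is the refinement later used in \cref{cor:fine-canopy}. Throughout, the reported margins should be replayed at both $160$ and $256$ bits.
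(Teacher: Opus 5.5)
Your architecture matches the paper's. You localize reference pairs for \cref{lem:inverse-norm-transfer} and use Euler-catalogue cells from \cref{lem:thermal-mollifier-overlap} elsewhere. You add a fresh terminal-band exterior for $u\ge10^{16}$ and keep the Polymath transfer, giving $t_\ell+y_\ell^2/2=7/40$.

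\textbf{The gap.} One step fails as written. You cover heights in $[37/200,33/40]$ and assert that $33/40$ still exceeds $\sqrt{1-2t_\ell}$. It does not: $1-2t_\ell=54738/80000$, so $\sqrt{1-2t_\ell}=\sqrt{27369}/200\approx0.82718>0.825$. Lowering $t$ from $t_\dagger$ to $t_\ell$ raises the classical cap top. At $t_\dagger$ it was $\sqrt{27133}/200\approx0.82361$, which is why $33/40$ sufficed in \cref{thm:inverse-canopy}.

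Your cover therefore leaves the band $5\times10^{12}\le u<10^{16}$, $33/40<y\le\sqrt{1-2t_\ell}$ uncertified. Nothing else in your plan reaches it:
\begin{itemize}
\item the terminal-band bound only starts at $u=10^{16}$;
\item the classical strip contraction only acts above $\sqrt{1-2t_\ell}$;
\item the centred exterior theorem is stated only for $\kappa\ge1/25>\kappa_\ell$.
\end{itemize}
Hypothesis (ii) of \cite[Theorem~1.2]{Polymath2019} requires nonvanishing on exactly this band. So neither the displayed canopy bound nor the ceiling $7/40$ is yet proved.

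The fix is routine: the paper extends the adaptive cover up to $17/20$. These extra cells must actually be certified, though. An Euler cell's budget holds only for heights inside that cell, because the mollifier factors $\rho_p^\pm$ are evaluated at the cell's own $\sigma_\pm$.

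\textbf{A smaller point about $J$.} The paper uses $J=512$ with eleven reference pairs, not $J=128$. Even then its first reference already has $I_P+I_Q+E<1.940527$, compared with $1.883023$ at $J=128$ and $t_\dagger$. Nothing in the paper settles whether $J=128$ with localization alone would pass at $t_\ell$; your fallback of increasing $J$ is the route the paper takes from the outset. Also, $J=512$ is not introduced in \cref{cor:fine-canopy}. That corollary keeps $J=512$ and refines the columns and references, and the head is lengthened only in \cref{cor:extended-head-canopy}.
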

\begin{proof}
Use the exact inverse head \eqref{eq:inverse-head} with $J=512$ and
$c=t_\ell/4$.  At each reference choose
\[
 L_r=\log\frac{5\times10^{12}-1}{4\pi}+\ell_r,
 \qquad N_r=\lfloor e^{L_r/2}-1\rfloor,
 \qquad \sigma_r=(1+y_r)/2+cL_r.
\]
The eleven pairs $(\ell_r,y_r)$ are
\[
\begin{split}
 &(0,37/200),\ (0,31/160),\ (0,303/1600),\\
 &(1/10,37/200),\ (1/10,1/5),\ (1/10,313/1600),\\
 &(1/5,37/200),\ (3/10,37/200),\ (2/5,37/200),\\
 &(1/2,37/200),\ (3/5,37/200).
\end{split}
\]
These are reference locations, not numerical samples used to infer a
uniform sign.  Each accepted rectangle satisfies $L\ge L_r$, $y\ge y_r$;
\cref{lem:inverse-norm-transfer} bounds the whole rectangle, including
all intervening integer cutoffs.  Different rectangles may use different
mollifiers, since each proves nonvanishing of the same function.

The complete reference convolution and quantization procedure is that of
\cref{thm:inverse-canopy}, with the same integer scales and overflow guards.
For the first pair, the two exact integer masses are
\[
 I_P=\frac{832080961139007258}{2^{59}},\qquad
 I_Q=\frac{286385155079489980}{2^{59}}.
\]
The total quantization debit is less than $0.000297$, and
$I_P+I_Q+E<1.940527$.
All eleven reference receipts retain both integer masses, array hashes,
coefficient bounds and directed quantization errors.

The $153$ logarithmic columns of width $1/20$ are covered up to
$y=17/20$, which exceeds $\sqrt{1-2t_\ell}$.
The exact-rational adaptive cover consists of $2483$ rectangles:
$51$ reciprocal-head rectangles and $2390,26,11,5$ Euler rectangles
with catalogues ending at $5,11,19,29$, respectively.
The directed real-part, gamma, exponent-shift, AB and C debits are
charged exactly as in \cref{thm:inverse-canopy}.
Every mollified margin exceeds $0.0002248$ and every normalized amplitude
lower bound exceeds $0.0001194>1/30000$.
At $u\ge10^{16}$ the fresh terminal-band exterior bound gives
$|H/B|>0.02185$; all its monotonicity guards are strict.
The finite cover overlaps that exterior.

Both $160$- and $256$-bit runs pass.  They give identical integer
reference arrays and masses, the same rectangle assignments and seams,
and overlapping directed cell intervals.  The program and separate
receipt auditor are in \path{certificates/canopy_localized/}.
The auditor checks every cell, reference assignment and rational seam;
the two precision runs are not independent implementations.

Finally $t_\ell<0.186$, $y_\ell>0.17$ and the required barrier width
$\sqrt{1-y_\ell^2}<1$ fit the same published rectangular run.
The finite-RH height and horizontal overlap are unchanged.
The global criterion therefore gives
\[
 t_\ell+\frac{y_\ell^2}{2}
 =\frac{12631+1369}{80000}=\frac7{40}.
\]
This proves \eqref{eq:localized-global-ceiling}.  The two external
numerical verifications are adopted from their published sources,
not independently replayed here.
\end{proof}

\begin{remark}[Transfer losses and the scope of a failed norm]\label{rem:localized-limits}
For $t=t_\ell$, on the rectangle with logarithmic offsets
$[1/4,3/10]$ and $y\in[0.185,0.19]$, the original reference at offset
zero gives margin less than $-0.0853$, whereas the reference at offset
$1/4$ gives margin greater than $0.0867$.
The cutoff debit decreases from more than $0.1243$ to less than $0.01835$.
The total improvement also captures the favorable change of the reference
norm with height; it is not entirely a cutoff effect.

Conversely, at the trial ceiling $0.173$, with $y_r=0.185$, $J=512$
and the initial reference cutoff $N_r=630782$, the absolute reference
coefficient norm is greater than $2.01715$.
The exact quantized norm minus its Arb transfer debit proves this lower
bound.  Narrower rectangles alone cannot make this particular reference
pass the sufficient norm test.  This is neither an optimality theorem
nor an obstruction to other heads, other heights, or phase-sensitive
bounds.  At $0.174$, narrow local rectangles still pass; \cref{cor:fine-canopy}
completes a stronger global certificate.
At the improved time $t_\ell$, the same conditional QRH comparison
lowers the classical cap top from $\sqrt{27369}/200$ to
$\sqrt{9869}/200$.  All $51$ reciprocal-head rectangles have
$y\le43/200<\sqrt{9869}/200$, so none is removed by this truncation.
\end{remark}

\begin{corollary}[Finer localization below $0.174$]\label{cor:fine-canopy}
Set
\[
 t_f=\frac{2507}{16000}=0.1566875,\qquad
 y_f=\frac{37}{200},\qquad
 \kappa_f=\frac{2507}{64000}.
\]
For $u\ge5\times10^{12}$ and $y_f\le y\le\sqrt{1-2t_f}$,
\[
 \left|\frac{H_{t_f}(u+iy)}{B_{t_f}(u+iy)}\right|>\frac1{30000}.
\]
With the same published external inputs as \cref{thm:localized-canopy},
\begin{equation}\label{eq:fine-global-ceiling}
 \Xi_{\kappa_f}(z)\ne0\quad\text{for }|\Imm z|\ge\frac{37}{400},
 \qquad \boxed{\Lambda\le\frac{869}{5000}=0.1738.}
\end{equation}
\end{corollary}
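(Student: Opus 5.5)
The plan is to rerun the localized reciprocal-head architecture of \cref{thm:localized-canopy} at the smaller heat time $t_f=2507/16000$. The main change is a finer choice of reference pairs $(\ell_r,y_r)$; the Dirichlet head stays at $J=512$. The structure has three parts.

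\emph{Finite logarithmic range.} Set $c=t_f/4$ and cover $L\in[L_0,L_0+153/20]$ by columns of width $1/20$, and $y\in[37/200,17/20]$, noting that $17/20>\sqrt{1-2t_f}$. On each exact rational rectangle I use either the Euler budget of \cref{lem:thermal-mollifier-overlap} with catalogues ending at $5,11,19,29$, or the reciprocal-head budget \eqref{eq:inverse-frozen-budget} of \cref{lem:inverse-norm-transfer}, anchored at a reference with $L\ge L_r$ and $y\ge y_r$.

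\emph{Exterior and external inputs.} For $u\ge10^{16}$, a fresh terminal-band amplitude bound at $(t_f,y_f)$ gives $|H/B|\ge 1-P_h-Q_h-E_{\rm AB}-E_{\rm C}$, with the exponent shift absorbed into the dual exponent exactly as in \cref{thm:canopy-frontier}. Since $t_f<0.186$, $y_f>0.17$ and $\sqrt{1-y_f^2}<1$, the published Polymath rectangular barrier and the Platt--Trudgian finite-RH height apply unchanged. \cite[Theorem~1.2]{Polymath2019}, in the form of \cref{cor:Polymath-strip-comparison}, then gives
\[
 \Lambda\le t_f+\frac{y_f^2}{2}=\frac{12536+1369}{80000}=\frac{869}{5000}.
\]
The normalization \eqref{eq:heat-normalization} converts the $H$-strip into the Xi strip $|\Imm z|\ge 37/400$.

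\emph{Reference norms.} For each reference pair I compute the exact integer convolution masses $I_P,I_Q$ of $\mu_J S_{0,N_r}$ and $G_r\mu_J S_{y_r,N_r}$, using quantized weighted arrays with the same overflow guards. I then enclose the debits $E$, $M$, $Q_1$ and $T_+$ in Arb at $160$ and $256$ bits. Each rectangle's margin must exceed $1/10000$ after subtracting the real-part loss, the paired-gamma loss, the conjugate-exponent shift and the AB/C debits, and its normalized amplitude lower bound must exceed $1/30000$.

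\emph{Main obstacle.} The hard step is the low band $y\approx0.185$--$0.2$ near $L_0$. Remark~\ref{rem:localized-limits} shows that single-reference transfer loses too much there: the cutoff debit $MT_+$ and the $DMQ_1$ height term dominate. My first attempt is to add references at offsets $\ell_r\in\{j/20\}$ rather than $j/10$, together with intermediate heights such as $y_r\in\{37/200,\,0.19,\,0.195\}$, and to shrink rectangle widths adaptively. Each reference then only has to cover a short $[L_r,L_r+1/20]\times[y_r,y_r+D]$ window, which shrinks both $T_+$ and $D$. If a cell still fails, I would try $J=1024$ locally before concluding that $t_f$ must be relaxed. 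The argument is complete once a finite list of references and an exact rational partition pass at both precisions, with an independent receipt auditor confirming every seam and assignment.
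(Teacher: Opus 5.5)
Your overall architecture matches the paper's: localized reciprocal heads at $J=512$ via \cref{lem:inverse-norm-transfer}, Euler cells from \cref{lem:thermal-mollifier-overlap} elsewhere, a fresh exterior, and the unchanged Polymath/Platt--Trudgian transfer. The gap is in the refinement you propose for the binding low corner, because it targets the wrong parameter.

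\textbf{The low corner.} In \eqref{eq:inverse-frozen-budget} the cutoff debit $MT_+$ depends only on how far a rectangle extends to the right of its reference, through $U_+\approx L_+/2$. Adding reference heights shrinks $D$, not $T_+$. Spacing references at offsets $j/20$ does nothing for the first column, whose reference cannot lie to the right of $L_0$.

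At that reference the paper's bound $I_P+I_Q+E<1.986018$ leaves a slack of only about $0.014$. For a column of width $1/20$, the primary and dual integrands of $T_+$ near $v=e^{L_0/2}\approx6.3\times10^5$ are each about $e^{-cL_0^2/4-(1+y_f)L_0/4}\approx3.4\times10^{-7}$. The $v$-range is about $1.6\times10^4$. So $MT_+\approx0.02$, consistent with the $0.01835$ quoted in \cref{rem:localized-limits} at $t_\ell$. The Euler budget is no rescue there: it is already negative at that corner at the larger time $t=12943/80000$.

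Hence uniform $1/20$ columns at $J=512$ fail at the bottom of the first column, whatever height seams you choose. The missing step is horizontal refinement near $L_0$; your ``shrink rectangle widths'' would have to mean this, not height seams. The paper uses width $1/100$ up to offset $1/5$ and width $1/20$ thereafter, with $37$ reference pairs. Your fallback to $J=1024$ is the different head of \cref{cor:extended-head-canopy}, not this corollary's proof.

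\textbf{Two smaller points.} First, your closing arithmetic is wrong as written. Since $t_f=2507/16000=12535/80000$, one gets $t_f+y_f^2/2=(12535+1369)/80000=13904/80000=869/5000$. Your $(12536+1369)/80000=13905/80000$ is not $869/5000$.

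Second, the paper does not hand off at $u=10^{16}$. It extends the finite cover through offset $199/20$ ($215$ columns, $3571$ rectangles) and starts a fresh exterior at $u=10^{17}$, where $|H/B|>0.1929$. Your $10^{16}$ handoff is asserted rather than checked. The successive exterior bounds at $10^{16}$ were $0.0873$, $0.0698$ and $0.02185$ at $t_*,t_\dagger,t_\ell$, so the room there shrinks quickly as $t$ decreases.
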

\begin{proof}
Keep $J=512$ and apply \cref{lem:inverse-norm-transfer} to $37$
local reference pairs, with every reference location and exact integer
mass retained in \path{certificates/canopy_fine/}.
At the initial reference $I_P+I_Q+E<1.986018$.
The finer logarithmic grid has width $1/100$ up to offset $1/5$,
then width $1/20$ through offset $199/20$.
Its $215$ columns and adaptive rational height seams cover $3571$
rectangles up to $y=17/20$.
Of these, $199$ use reciprocal heads; the Euler catalogues ending at
$5,11,19,29$ cover $3313,25,27,7$ rectangles, respectively.
The real-part, gamma, cutoff, conjugate-exponent and AB/C remainder
allowances are those of \cref{thm:inverse-canopy}.
Every mollified margin exceeds $0.0001320$, and every amplitude
lower bound exceeds $0.0000797>1/30000$.

A fresh exterior calculation at $t_f$, now starting at $u=10^{17}$,
gives $|H/B|>0.1929$ with strict monotonicity guards.
The finite logarithmic endpoint overlaps this exterior.
The older phase-tail statement for $\kappa\ge1/25$ is not extended
below its domain or used at the new time.
The $160$- and $256$-bit reference computations give identical
integer masses and array hashes.  Replaying the same finite plan at
both precisions checks every cell and exact rational seam.
These are consistency replays with the same arithmetic library.

The same published time-barrier rectangle contains $t_f<0.186$,
$y_f>0.17$, and the required width $\sqrt{1-y_f^2}<1$.
The initial finite-RH height and horizontal overlap are unchanged.
Consequently the global criterion gives
\[
 t_f+\frac{y_f^2}{2}
 =\frac{12535+1369}{80000}=\frac{869}{5000}.
\]
The external verifications are not independently rerun, and no QRH
hypothesis is used.  This improves the certificate without changing
the head length or using LP coefficient optimization.
\end{proof}

The earlier estimate near $0.174$ was therefore a conservative working
frontier, not a saturation theorem.  The failed $0.173$ reference in
\cref{rem:localized-limits} still concerns only its specified head,
height and sufficient norm test; no exact saturation constant follows.

\Needspace{14\baselineskip}
\begin{corollary}[Extended reciprocal head]\label{cor:extended-head-canopy}
Set
\[
 y_e=\frac{18257}{100000}=0.18257,\qquad
 t_e=\frac{3130681951}{20000000000}=0.15653409755,\qquad
 \kappa_e=\frac{t_e}{4}.
\]
For $u\ge5\times10^{12}$ and $y_e\le y\le\sqrt{1-2t_e}$,
\[
 \left|\frac{H_{t_e}(u+iy)}{B_{t_e}(u+iy)}\right|>\frac1{30000}.
\]
With the same published external inputs as \cref{thm:localized-canopy},
\begin{equation}\label{eq:extended-head-global-ceiling}
 \Xi_{\kappa_e}(z)\ne0\quad\text{for }|\Imm z|\ge\frac{18257}{200000},
 \qquad \boxed{\Lambda\le\frac{433}{2500}=0.1732.}
\end{equation}
\end{corollary}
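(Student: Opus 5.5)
The proof will run the same three-stage architecture as \cref{thm:localized-canopy} and \cref{cor:fine-canopy}, changing only the head length and the time/height parameters.

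\textbf{Arithmetic check and parameter choice.} First verify the ceiling exactly:
\[
 t_e+\frac{y_e^2}{2}=\frac{3130681951}{20000000000}+\frac{333318049}{20000000000}=\frac{3464000000}{20000000000}=\frac{433}{2500}.
\]
So $t_e$ was chosen so that the height $y_e$ and the time together land exactly on $0.1732$. The height $y_e=0.18257$ is slightly below $37/200$. Hence the lowest rectangles sit at a height that none of the previous covers treated, and the reference pairs must start at $y_r=y_e$.

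\textbf{Canopy cover.} Compute the exact inverse head \eqref{eq:inverse-head} with $J=1024$ and $c=t_e/4$. Choose a family of reference pairs $(\ell_r,y_r)$ with $y_r\ge y_e$, clustered at small offsets and low heights; this is where \cref{rem:localized-limits} locates the losses. For each reference pair:
\begin{enumerate}[label=(\alph*)]
\item run the exact-integer quantized convolution of \cref{thm:inverse-canopy} through $JN_r$;
\item redo the signed and unsigned 64-bit overflow guards, since $J$ has doubled and so have the lengths of the partial sums;
\item record $I_P$, $I_Q$, $E$, $M$ and $Q_1$.
\end{enumerate}
Then cover the finite logarithmic range, from offset $0$ up past $\log(10^{16}/(4\pi))$ or further, and heights up to $17/20>\sqrt{1-2t_e}$, by an adaptive exact-rational grid. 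Each rectangle is certified in one of two ways:
\begin{enumerate}[label=(\alph*)]
\item by \cref{lem:inverse-norm-transfer} from a reference with $L\ge L_r$ and $y\ge y_r$, charging the real-part, paired-gamma, conjugate-exponent, AB and C debits exactly as in \cref{thm:inverse-canopy}, with the mollifier mass now including $e^{\delta\log J}$ for $J=1024$;
\item or by the Euler budget of \cref{lem:thermal-mollifier-overlap} and \eqref{eq:canopy-cell-budget}--\eqref{eq:canopy-cell-margin}.
\end{enumerate}
Every rectangle must be accepted with mollified margin above $1/10000$ and normalized amplitude above $1/30000$, at both $160$ and $256$ bits.

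\textbf{Exterior tail and global closure.} Close the infinite tail with a fresh terminal-band amplitude bound at $(t_e,y_e)$, with the exponent shift absorbed into $b$ as in \cref{thm:canopy-frontier}. Start it at $u=10^{16}$ or $10^{17}$, whichever the monotonicity guards permit, and make the finite cover overlap it. Then apply \cite[Theorem~1.2]{Polymath2019} with $t_0=t_e$ and $y_0=y_e$:
\begin{enumerate}[label=(\roman*)]
\item the Platt--Trudgian height is unchanged;
\item the published rectangular barrier still applies, since $t_e<0.186$, $y_e>0.17$ and $\sqrt{1-y_e^2}<1$;
\item the canopy proved above supplies hypothesis (ii).
\end{enumerate}
This gives the global strip $|\Imm w|\le y_e$ for $H_{t_e}$. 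Halving via \eqref{eq:heat-normalization} gives the $\Xi_{\kappa_e}$ region $|\Imm z|\ge 18257/200000$, and \cref{cor:Polymath-strip-comparison} gives $\Lambda\le t_e+y_e^2/2=433/2500$.

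\textbf{Main obstacle.} The hard step is the lowest band, near $y_e$ and small offsets. There the failed-norm diagnostic of \cref{rem:localized-limits} showed that $J=512$ references exceed the budget: the norm exceeds $2.017$ near $0.173$. The plan depends on the doubled head lowering $I_P+I_Q+E$ below roughly $2$ minus the cutoff and transfer debits. I would attack this in order: first densify the reference pairs in offset and height so that the $DMQ_1$ and $MT_+$ terms stay small; then shrink the rectangles there; only then tune $t_e$ downward from the target until every cell passes. The last resort is the only reason a non-round $t_e$ would appear.
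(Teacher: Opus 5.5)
Your plan is essentially the paper's proof: a $J=1024$ reciprocal head applied through \cref{lem:inverse-norm-transfer} at local reference pairs (the paper uses $59$), Euler budgets on the remaining cells of an exact-rational cover of $y_e\le y\le 17/20$ ($3741$ rectangles in $215$ columns through offset $199/20$), a fresh exterior bound from $u=10^{17}$, and the unchanged Polymath/Platt--Trudgian/rectangular-barrier closure with $t_e+y_e^2/2=433/2500$ (the paper's replays are at $192$ and $256$ bits). One correction to your heuristic aside: lowering $t$ at fixed $y$ makes cells harder rather than easier, since each source weight $b_nn^{-\sigma}=\exp\{-c\log n\,(L-\log n)-\tfrac{1+y}{2}\log n\}$ increases as $c=t/4$ decreases (cf.\ the failed $t=12943/80000$ diagnostic); moreover the non-round $t_e$ is simply $433/2500-y_e^2/2$ for the lowered height $y_e$, as your own first paragraph already shows.
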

\begin{proof}
Apply \cref{lem:inverse-norm-transfer} with head length $J=1024$
and $59$ local reference pairs.  The programs, exact integer masses,
array hashes and directed receipts are retained in the ancillary
\path{certificates/canopy_1024/} package.
The logarithmic grid has $215$ columns: width $1/100$ up to offset
$1/5$, then width $1/20$ through offset $199/20$.
Exact rational height seams partition the canopy from $y_e$ to $17/20$
into $3741$ rectangles.  Of these, $341$ use the reciprocal head;
the Euler catalogues ending at $5,11,19,29$ cover
$3315,45,27,13$ rectangles, respectively.
The quantization-transfer debit and every real-part, gamma, cutoff,
conjugate-exponent and AB/C remainder allowance are included.
The integer convolutions use explicit overflow guards.
Every finite margin exceeds $0.00011010472$, and every normalized
amplitude lower bound exceeds $0.00005539885>1/30000$.

A fresh exterior estimate at $t_e$, starting at $u=10^{17}$,
gives $|H/B|>0.18596685$, with all monotonicity guards strict.
The finite logarithmic endpoint overlaps this exterior, and
$17/20>\sqrt{1-2t_e}$.
The retained fresh computations at $192$ and $256$ bits agree on
integer masses, array hashes and rectangle geometry.  For this release,
the receipt checker and both interval audits were rerun; these audits
recompute every rectangle inequality, the exact partition and the
exterior from the saved reference norms.  They do not regenerate the
full reference convolutions and are not independent implementations.

The published rectangular time barrier contains $t_e<0.186$,
$y_e>0.17$ and width $\sqrt{1-y_e^2}<1$.
The same finite-RH height and horizontal overlap therefore close
Polymath's global criterion, exactly as in \cref{thm:localized-canopy}.
Finally,
\[
 t_e+\frac{y_e^2}{2}
 =\frac{3130681951+333318049}{20000000000}
 =\frac{433}{2500}.
\]
The external finite-RH and time-barrier computations are adopted from
the published sources, not rerun.  No QRH hypothesis or LP coefficient
optimization is used.
\end{proof}

\section{Certification architecture}\label{sec:certification}

We summarize the validated-numerics ingredients because they are part of the theorem statements, not merely exploratory computation.

\subsection{Theta tails and directed moments}
The theta sum \eqref{eq:Phi} is truncated after finitely many terms and the remainder is bounded analytically using the doubly exponential majorant
\begin{equation}\label{eq:Phi-majorant}
 \Phi(u)<4\pi^2\exp\left(\frac92u-\pi e^{2u}\right),
 \qquad u\ge0.
\end{equation}
The moments $M_{2n}$ are integrated on a finite interval with outward-rounded symmetric Taylor quadrature, while the outer $u$-tail is enclosed by \eqref{eq:Phi-majorant}.  The current high-precision run uses a common directed table through $M_{80}$.  Since the same moment intervals feed every period, all branch and period certificates remain independent of any Riemann-zero table.

\subsection{Regularization at \texorpdfstring{$T=0$}{T=0}}
The formula \eqref{eq:Delta-general} contains a removable quotient by $T$.  Direct interval propagation across an interval containing zero would be badly conditioned.  Proposition~\ref{prop:regular-K} replaces it by $K/Y^6$.  On the small symmetric central segment, an even Taylor jet of $\sqrt{\Delta_0(T)}$ is integrated analytically; odd contributions cancel.  In the computation underlying \cref{thm:farey}, the resulting central integral enclosure has width below $10^{-24}$, so $T=0$ is no longer a significant error source.

\subsection{Complex branch points}
At a proposed center $c_0$, write
\[
 K(c_0+w)=k_0+k_1w+\sum_{m\ge2}k_mw^m.
\]
A directed Rouch\'e test proves
\[
 |k_0|+\sum_{m\ge2}|k_m|\rho^m<|k_1|\rho
\]
on $|w|=\rho$.  This certifies exactly one simple zero in the disk.  A separate lower enclosure for $|Y|$ excludes a transverse pole.  For both certified nonreal pairs, the final branch-disk radius is $\rho=10^{-9}$.

\subsection{Heat jets, branch contraction, and the projective tangent certificate}
The heat program uses the exact gamma--zeta representation of $\xi$ in
Arb/Acb Taylor arithmetic through \texttt{python-flint} 0.9.0.  The identity
$M_{2n}=\xi^{(2n)}(1/2)/4$ relates its central jets to the theta moments;
the program does not import the earlier numerical moment table.  Four
Rouch\'e checks independently refine the upper and real selected roots,
with conjugation supplying the remaining two.  All source centers are
exact decimal rationals; their role is to propose disks, and acceptance
requires the displayed strict Rouch\'e inequalities.

The shifted jets give $\partial_\kappa K$, while automatic polynomial
differentiation along \eqref{eq:D6-flow} gives the scalar Lax tangent.
The input for the latter is the complete static parameter box of
\cref{thm:D6-certified-realization}; its existence remains the content of
that earlier realization theorem.  The new program does not replace its
Krawczyk proof.  Each derivative denominator is excluded from zero, and
the rational rectangle for $\mathcal D$ is checked by strict interval
comparisons.  The supplement records successful 192-bit and 256-bit
replays.  Agreement between these precisions is a consistency check, not
independence of transcendental arithmetic implementations.
The separate contraction program reuses the refined-root and heat-jet
identities, evaluates all fifteen logarithmic rates in the natural $T$
coordinate, and checks $c_{jk}>1/9$ by strict directed comparisons.  It
also checks both representative rational intervals in
\eqref{eq:heat-branch-contraction-examples}; its 192-bit and 256-bit logs
and JSON receipts are included.

\subsection{Contour periods}
Away from branch endpoints, $\Delta_0$ is expanded on certified zero-free disks.  Interval Taylor coefficients for the selected branch of $\sqrt{\Delta_0}$ are obtained recursively from
\[
 \left(\sum_{n\ge0}b_nw^n\right)^2=\sum_{n\ge0}d_nw^n.
\]
Even terms are integrated exactly on symmetric panels.  A Cauchy bound on a larger certified disk controls the Taylor remainder.  Adjacent square-root image disks overlap on the same sheet and are disjoint from the opposite-sheet disks; this propagates the branch without repeated pointwise reevaluation.  Simple-root endpoint tails are treated separately.

\subsection{Directed heat derivatives of periods}\label{sec:heat-period-certification}
The period engine constructs entire Taylor models for
$Y,E,\dot E$ from central gamma--zeta jets, with rigorous global Cauchy
tails on $|T|=4096$.  This handles the removable division by $T$ in
$E$ and $\dot E$ before contour evaluation, including panels that
cross zero.  The exact shifted heat jets produce $\dot K$ and
$\dot\Delta$.  Interior panels integrate the continued functions
$\sqrt{\Delta_0}$ and $\dot\Delta/(2\sqrt{\Delta_0})$ together.

For a simple endpoint $T_j$, write $K(T)=(T-T_j)k(T)$.  On a disk
containing the root and the omitted segment, the certificate proves
\[
 m\le|k(T)|\le M,\quad |Y(T)|\ge y>0,\quad
 |\dot K(T)|\le B,\quad |\dot Y(T)|\le H,
\]
with $m>0$.  The bounds on $k$ follow from a nonzero enclosure for
$K'$ at the rational root center and a Taylor--Cauchy variation bound.
For an omitted segment of length at most $\ell$, including root-location
uncertainty, the two endpoint errors are bounded by
\begin{align}
 \left|\int_{\rm tail}\varpi_0\right|
 &\le\frac{2\sqrt M}{3y^3}\ell^{3/2},\label{eq:heat-period-base-tail}\\
 \left|\int_{\rm tail}\dot\varpi\right|
 &\le\frac{B}{y^3\sqrt m}\ell^{1/2}
       +\frac{2\sqrt M H}{y^4}\ell^{3/2}.
       \label{eq:heat-period-derivative-tail}
\end{align}
These follow by integrating the pointwise square-root and
inverse-square-root majorants.  In particular, the derivative tail is
bounded analytically rather than discarded because its endpoint moves.
Both precision replays reproduce period boxes strictly inside the
original calibration boxes and verify the six rational derivative
intervals in \cref{tab:heat-action-motion}.  The countable-dense-set
conclusion of \cref{cor:heat-generic-nonresonance} is then an analytic
consequence of monotonicity and the kernel criterion, requiring no
sampling of rational heat times.
The simple-resonance proposition and generic Diophantine theorem are
further analytic deductions from the same inputs.  Their measure estimate
uses a summable rational-approximation covering, rather than a new
numerical certificate.  The negative-heat corollary combines this local
arithmetic with the independently established Rodgers--Tao lower bound.

\subsection{Uniform heat models and explicit Farey persistence}
\label{sec:uniform-heat-certification}
The uniform heat engine constructs heat Taylor coefficients through
order thirteen from exact central jets.  If
$Y_0(T)=\sum_{n\ge0}c_nT^n$, then
\[
 [\kappa^j]Y_\kappa(T)
 =\sum_{n\ge0}\frac{(2n+2j)!}{(2n)!j!}\,c_{n+j}T^n.
\]
Products and the normalization $A_*(\kappa)$ are formed in truncated
heat series; the exact removable constant term of $E$ is shifted away
before evaluation.  Spatial degree 85 for $Y$ and degree 80 for $E$
are retained.  The spatial jets of each heat coefficient are evaluated
first, and $K$ is then formed in heat-series arithmetic.  Only afterwards
is $\kappa$ replaced by the real ball $[-10^{-9},10^{-9}]$.
This order retains cancellations lost by independent interval
substitution into the primitive factors of $K$.

Joint Cauchy estimates use the complex heat disk
$|\kappa|\le\rho=10^{-3}$, the global spatial circle $|T|=4096$,
and the local circle $|T|=512$.  Positivity of the theta kernel bounds
$Y$ and its derivatives on these circles.  Split its integral at
$u=8$: on the finite part, $|e^{\kappa u^2}|\le e^{64\rho}$;
on the outer part, \eqref{eq:Phi-majorant} bounds integrals with
additional factors $u^m e^{\rho u^2+vu}$.
The logarithmic derivative of this majorant is decreasing beyond eight
and negative there, giving an explicit first-point divided-by-slope
bound for the integral.  Moreover,
\[
 |Y_\kappa(0)|\ge(2-e^{64\rho})Y_0(0)
 -\int_8^\infty e^{\rho u^2}\Phi(u)\,\dd u>0.
\]
Thus $A_*(\kappa)$ is holomorphic on this complex heat disk, and
triangle inequalities also bound the entire functions $E$ and $K$.

Write $q=10^{-9}/\rho$.  If $M_L$ bounds an entire spatial function
$F_\kappa$ on $|T|=512$, uniformly on the complex heat disk, its
heat remainder after order twelve and the corresponding derivative
remainder are bounded by
\begin{align*}
 \left|F_\kappa-\sum_{j=0}^{12}F_j\kappa^j\right|
 &\le\frac{M_Lq^{13}}{1-q},\\
 \left|\partial_\kappa F_\kappa
 -\sum_{j=0}^{12}(j+1)F_{j+1}\kappa^j\right|
 &\le\frac{M_L}{\rho}
       \frac{q^{13}(14-13q)}{(1-q)^2},
 \qquad F_j=[\kappa^j]F_\kappa.
\end{align*}
Cauchy bounds on the remaining spatial radius control derivatives
of these remainders; the global circle controls omitted spatial
coefficients of each heat Taylor coefficient.  Both error terms are
included in every jet enclosure.

For the moving-root certificate, the center value is bounded with
its common heat dependence:
\[
 |K_\kappa(c)|\le |K_0(c)|
 +10^{-9}|\partial_\kappa K_0(c)|
 +\tfrac12\,10^{-18}
       \sup_{|s|\le10^{-9}}|\partial_s^2K_s(c)|.
\]
A uniform nonzero linear spatial jet and a second-derivative bound
then give the Rouch\'e test on radius $10^{-7}$.
The same uniform jets and endpoint majorants control the continued
periods and their heat derivatives.  Each Farey form is bounded by
integrating its combined derivative, retaining the common heat time
instead of assigning an independent time to each period coordinate.
Both precision replays check the explicit interval, all four strict
form boxes, and both positive velocity boxes in
\cref{thm:explicit-Farey-heat}.

\subsection{Directed exterior phase constants}
The \path{certificates/phase_tail/} directory contains two standalone
Arb programs with \texttt{python-flint==0.9.0}, a pinned requirement,
and successful $160$- and $256$-bit logs and JSON receipts for each.
The baseline evaluates the two positive heads ending at $n=100$, the four convex
endpoint tails, the two approximation-error majorants, and the
prefactor phase reserve from \cref{thm:uniform-phase-tail}.
Every published rational bound is a strict program assertion.
It also asserts each domain, loss, endpoint, and monotonicity guard
used in the proof.  The uniform conclusion over unbounded horizontal
coordinates and the heat interval follows from the analytic
monotonicity and coefficient comparisons, rather than from sampling
points or times.  Cauchy is applied to the exact $H_t/B_t-1$ on
the expanded disks, so no derivative is taken across a change of
the Riemann--Siegel cutoff.  Agreement at two precisions is a
consistency check within the same arithmetic implementation.
The centred refinement uses heads ending at $n=1000$, exact
$\operatorname{erfi}$ mass integrals, and their logarithmic and centred
moments.  Its $74$ strict assertions also check the terminal-band
monotonicity guards, the one-pair cutoff-jump bound, and every
published constant in \cref{thm:centered-phase-tail}.
It differentiates a holomorphic model with the centre's cutoff fixed;
Cauchy bounds only its remainder after the possible jump is included.
In particular, the expanded model's amplitude bound need not be
less than one.  The exact centre amplitude bound proves
nonvanishing before the phase estimate is applied.
The earlier baseline program and receipts are retained unchanged.
Neither exterior program checks the finite cap directly.
The following canopy certificate supplies the complementary input
for \cref{cor:cap-closure-018}.

\subsection{Mollified canopy and published-input transfer}
The \path{certificates/finite_cap/} directory contains the directed
canopy engine and covering driver, a pinned requirement, $160$- and
$256$-bit logs and full cell receipts.  All $1993$ rectangles pass
the true-function nonvanishing margins; exact rational seams cover
the entire finite logarithmic range and the height interval.
Thermal coefficient positivity and the tail-overlap formula have
an analytic proof in \cref{lem:thermal-mollifier-overlap}.
A separate finite enumeration checks twelve small convolutions at
each precision and rejects an incorrect overlap range.
An exact-rational checker records the domain inclusions for the
global transfer.  It checks neither the original time-barrier run
nor Platt--Trudgian's large finite-RH computation.
Those two published results are explicitly cited in
\cref{cor:cap-closure-018}; only the canopy is newly computed here.
The separate \path{certificates/canopy_frontier/} directory generalizes
the same finite coefficient budget to \cref{thm:canopy-frontier}.
Its $3591$ rectangles, fresh infinite-tail bound and exact transfer
inclusions pass at both precisions.  Separate point diagnostics up
to prime $61$ document failed budgets without interpreting them
as zeros or optimality statements.
The additional \path{certificates/canopy_inverse/} directory supplies
the finite inverse head and exact integer convolution, all $2449$
covering rectangles at both precisions, a fresh infinite tail and a
separate receipt/index auditor. Its QRH geometry checker assumes
$H_{7/8}$ only to compare cap heights; the new ceiling does not use it.

The localized extension in \path{certificates/canopy_localized/}
retains all eleven reference-pair receipts, the $2483$-rectangle covers
at $160$ and $256$ bits, a fresh exterior bound, a separate exact-seam
and interval auditor, and point/transfer diagnostics.
The enlarged rank test in \path{certificates/lax_enlarged/} recertifies
the branch disks and proves the rational determinant and rate bounds
at $192$ and $256$ bits.  Its inverse-function consequence concerns
local branch moduli; no Stokes-data certificate is supplied.

The fine extension in \path{certificates/canopy_fine/} retains
the $37$ reference pairs, full $3571$-cell receipts at both precisions,
a fresh $10^{17}$ exterior, a portable frozen-plan replay and an auditor.

\subsection{Arithmetic aggregation}
The period intervals themselves are not the final object.  Integer forms such as
\[
 419336u_1-13749u_2
\]
are aggregated with outward rounding, and the sign is accepted only when the resulting interval is strictly separated from zero.  Farey-neighbor determinants are exact integer identities.  The supplementary certificate prints the four decisive linear-form intervals and both determinant checks.

\section{Discussion and open arithmetic problems}\label{sec:discussion}

The construction links three levels of description: a real genus-two core extracted from the Xi discriminant, its calibrated action arithmetic, and the global phase of the heat-deformed entire function.  Table~\ref{tab:certificate} records the certified connections.  The principal open questions concern the distinguished arithmetic point, rather than further finite numerical refinement.

The first is \cref{conj:nonresonance}: are $u_1/u_2$ and $r_1/r_2$ both irrational at $\kappa=0$?  The Farey barriers exclude rational relations up to substantial denominator height, but finite-width enclosures cannot establish irrationality.  Positive heat velocities give complementary generic statements, including almost-everywhere Diophantine bounds, whose constants depend on heat time and deteriorate near the dense resonant set.  They do not single out zero.  A direct arithmetic interpretation of the theta-moment action coordinates, or a functional identity giving fixed-point Diophantine bounds, would address this gap.  Standard number-field period theorems are not directly applicable because no algebraic number-field model of this Xi-derived core is known.

The second is \cref{conj:v9-minimal-Picard}.  The finite Humbert exclusions and the Mordell--Weil height barrier constrain possible Picard enhancements, while \cref{thm:v8-global-Humbert} reduces their complete exclusion to irrationality of $\det\Imm\Omega$ and $\QQ$-linear independence of the three entries of $\Imm\Omega$.  Humbert divisors are dense on the real slice, so extending the finite exclusion ladder cannot settle this question.  The numerical period matrix is consistent with minimal rank, but a suitable exact arithmetic argument remains necessary.

The $D_6$ correspondence clarifies the deformation problem without resolving either arithmetic conjecture.  The heat tangent differs from the matched fixed-exponent isomonodromic tangent.  Allowing the two exponent directions gives a local lift of the branch moduli, but does not identify the full scalar potentials, preserve Stokes data, or establish global continuation.  A global positive canonical system with Weyl--Titchmarsh function
\[
 m(z)=-\frac{\Xi'(z)}{\Xi(z)}
\]
would require an additional Herglotz/positivity argument beyond this local curve correspondence.  A Herglotz function is a holomorphic function mapping the upper half-plane into itself; a Weyl--Titchmarsh function records the boundary response of a canonical differential system, a first-order system governed by a positive semidefinite Hamiltonian.

The global canopy certificate answers a separate analytic question: combined with the published finite-RH and rectangular time-barrier inputs, it gives $\Lambda\le0.1732$.  The bound uses no QRH assumption and makes no optimality claim.  Its proof requires neither conjecture above.  Conversely, the coexistence of nonresonant cores and nonreal heat zeros at negative times shows that generic action separation alone is not a zero-reality criterion.  Any proposed RH interpretation at $\kappa=0$ must therefore supply additional fixed-point arithmetic structure.

\section*{Supplementary computational files}
The ancillary directory \path{anc/} contains twelve certificate packages, listed in Table~\ref{tab:supplement}.  Directory names in that table are relative to \path{anc/certificates/}; other displayed \path{certificates/} paths are relative to \path{anc/}.  The file \path{anc/COMPUTATIONAL_INDEX.md} gives the exact Python inventory, entry points, dependencies and replay scope.  Each package retains its own instructions and numerical records.

\Needspace{38\baselineskip}
\begingroup\small
\setlength{\tabcolsep}{4pt}
\renewcommand{\arraystretch}{1.12}
\begin{longtable}{@{}>{\raggedright\arraybackslash}p{0.22\textwidth}>{\raggedright\arraybackslash}p{0.42\textwidth}>{\raggedright\arraybackslash}p{0.31\textwidth}@{}}
\caption{Computational packages supplied with the manuscript.}\label{tab:supplement}\\
\toprule Directory & Purpose & Retained evidence\\\midrule
\endfirsthead
\multicolumn{3}{l}{\textit{Table~\thetable\ continued}}\\
\toprule Directory & Purpose & Retained evidence\\\midrule
\endhead
\bottomrule\endfoot
\path{humbert/} & Nine low-discriminant Humbert exclusions. & Eleven principal programs, two support modules, exact $H_{16}$ data and replay logs.\\
\path{heat/} & Branch contraction and heat--$D_6$ tangent separation. & Directed root/velocity bounds; $192/256$-bit logs and receipts.\\
\path{lax_enlarged/} & Enlarged real-moduli tangent rank and matching rates. & $192/256$-bit determinant/rate certificates; uses the heat package.\\
\path{period_heat/} & Calibrated periods and their heat derivatives. & Taylor engine; $512/768$-bit period-containment and velocity checks.\\
\path{finite_heat/} & Uniform core and Farey persistence. & Uniform Taylor engine; $512/768$-bit interval certificates.\\
\path{phase_tail/} & Baseline and centred-moment exterior phase estimates. & $160/256$-bit constants, analytic-tail and monotonicity checks.\\
\path{finite_cap/} & Mollified canopy and global transfer geometry. & $1993$ rectangles; structural and parameter-inclusion checks.\\
\path{canopy_frontier/} & Joint strip-height/time refinement. & $3591$ rectangles, exterior bound, receipt auditor and point diagnostics.\\
\path{canopy_inverse/} & Finite reciprocal-head convolution and transfer. & $2449$ rectangles, integer guards, exterior and index audits.\\
\path{canopy_localized/} & Local placement of reference norms. & Eleven references; $2483$ rectangles, exterior and transfer diagnostics.\\
\path{canopy_fine/} & Finer localization with $J=512$. & $37$ references; $3571$ rectangles, frozen-plan replay and exterior.\\
\path{canopy_1024/} & Extended head giving $\Lambda\le0.1732$. & $59$ references; $3741$ rectangles, $192/256$-bit records and saved-norm audits.\\
\end{longtable}
\endgroup

The eleven programs in the Humbert proof are supplied under \path{certificates/humbert/}, together with the generated files required by its $H_{16}$ chain.  The extended-head package is nested under \path{canopy_1024/head_extension_J1024/}; its README distinguishes full convolution regeneration from audits using saved norms.  Floating point diagnostics are identified separately and are not theorem inputs.

The archive provides the computational coverage listed above; it does not contain every standalone calculation used during development.  In particular, separate static theta-moment, initial Lax-matching, Kummer-model and homology-check scripts are not included as independent replay packages.  Their displayed derivations and bounds remain in the manuscript; the supplied heat and period packages also recertify selected branch and period data, within their documented scope.  The computational index records these limits explicitly.  The published finite-RH and time-barrier computations, and the external QRH release, are cited inputs rather than computations replayed by this supplement.

\section*{Disclosure of AI-assisted preparation}
OpenAI ChatGPT 6.1 and Anthropic Claude Opus 5.5 assisted with mathematical exploration, code generation and review, manuscript organization, and language editing.  The author is responsible for the mathematical claims, references, computational certificates, and final text.  AI assistance and successful computational replay do not constitute independent mathematical refereeing.

\end{document}